\documentclass[11pt]{article}
\usepackage{amsmath,amsthm,amssymb}
\usepackage[usenames,dvipsnames]{xcolor}
\usepackage{enumerate}
\usepackage{graphicx}
\usepackage{cite}
\usepackage{comment}
\usepackage{oands}
\usepackage{tikz}
\usepackage{changepage}
\usepackage{bbm}
\usepackage{mathtools}
\usepackage[margin=1in]{geometry}
\usepackage[pagewise,mathlines]{lineno}
\usepackage{appendix}
\usepackage{stmaryrd} 
\usepackage{multicol}
\usepackage{microtype}
\usepackage[colorinlistoftodos]{todonotes}

\usepackage[normalem]{ulem}

\usepackage[pdftitle={Weak LQG metrics and Liouville first passage percolation},
  pdfauthor={Julien Dubedat, Hugo Falconet, Ewain Gwynne, Joshua Pfeffer, Xin Sun},
colorlinks=true,linkcolor=NavyBlue,urlcolor=RoyalBlue,citecolor=PineGreen,bookmarks=true,bookmarksopen=true,bookmarksopenlevel=2,unicode=true,linktocpage]{hyperref}

\setcounter{tocdepth}{2}






\theoremstyle{plain}
\newtheorem{thm}{Theorem}[section]

\newtheorem{lem}[thm]{Lemma}
\newtheorem{prop}[thm]{Proposition}

\def\@rst #1 #2other{#1}
\newcommand\MR[1]{\relax\ifhmode\unskip\spacefactor3000 \space\fi
\MRhref{\expandafter\@rst #1 other}{#1}}
\newcommand{\MRhref}[2]{\href{http://www.ams.org/mathscinet-getitem?mr=#1}{MR#2}}


\theoremstyle{definition}
\newtheorem{defn}[thm]{Definition}
\newtheorem{remark}[thm]{Remark}

\numberwithin{equation}{section}

\newcommand{\dsb}{\begin{adjustwidth}{2.5em}{0pt}
\begin{footnotesize}}
\newcommand{\dse}{\end{footnotesize}
\end{adjustwidth}}

\newcommand{\ssb}{\begin{adjustwidth}{2.5em}{0pt}}
\newcommand{\sse}{\end{adjustwidth}}

\newcommand{\aryb}{\begin{eqnarray*}}
\newcommand{\arye}{\end{eqnarray*}}
\def\alb#1\ale{\begin{align*}#1\end{align*}}
\def\allb#1\alle{\begin{align}#1\end{align}}
\newcommand{\eqb}{\begin{equation}}
\newcommand{\eqe}{\end{equation}}
\newcommand{\eqbn}{\begin{equation*}}
\newcommand{\eqen}{\end{equation*}}

\newcommand{\BB}{\mathbbm}
\newcommand{\ol}{\overline}

\newcommand{\op}{\operatorname}

\newcommand{\frk}{\mathfrak}
\newcommand{\eqD}{\overset{d}{=}}
\newcommand{\ep}{\varepsilon}
\newcommand{\rta}{\rightarrow}
\newcommand{\xrta}{\xrightarrow}

\newcommand{\wt}{\widetilde}
\newcommand{\wh}{\widehat} 
\newcommand{\mcl}{\mathcal}

\newcommand{\bdy}{\partial}
\newcommand{\rng}{\mathring}


\let\originalleft\left
\let\originalright\right
\renewcommand{\left}{\mathopen{}\mathclose\bgroup\originalleft}
\renewcommand{\right}{\aftergroup\egroup\originalright}

\title{Weak LQG metrics and Liouville first passage percolation
}
\author{Julien Dub\'edat, Hugo Falconet, Ewain Gwynne, Joshua Pfeffer, and Xin Sun
}

\date{   }

\begin{document}

\maketitle 

\begin{abstract}
For $\gamma \in (0,2)$, we define a \emph{weak $\gamma$-Liouville quantum gravity (LQG) metric} to be a function $h\mapsto D_h$ which takes in an instance of the planar Gaussian free field (GFF) and outputs a metric on the plane satisfying a certain list of natural axioms. 
We show that these axioms are satisfied for any subsequential limits of Liouville first passage percolation. Such subsequential limits were proven to exist by Ding-Dub\'edat-Dunlap-Falconet (2019).  
It is also known that these axioms are satisfied for the $\sqrt{8/3}$-LQG metric constructed by Miller and Sheffield (2013-2016). 

For any weak $\gamma$-LQG metric, we obtain moment bounds for diameters of sets as well as point-to-point, set-to-set, and point-to-set distances.
We also show that any such metric is locally bi-H\"older continuous with respect to the Euclidean metric and compute the optimal H\"older exponents in both directions.
Finally, we show that LQG geodesics cannot spend a long time near a straight line or the boundary of a metric ball.  These results are used in subsequent work by Gwynne and Miller which proves that the weak $\gamma$-LQG metric is unique for each $\gamma \in (0,2)$, which in turn gives the uniqueness of the subsequential limit of Liouville first passage percolation. However, most of our results are new even in the special case when $\gamma=\sqrt{8/3}$.  
\end{abstract}

\tableofcontents

\section{Introduction}
\label{sec-intro}

 \subsection{Overview}
\label{sec-overview}

Let $\gamma \in (0,2)$, let $U\subset \BB C$ be open, and let $h$ be some variant of the Gaussian free field (GFF) on $U$. 
The \emph{$\gamma$-Liouville quantum gravity (LQG)} surface corresponding to $(U,h)$ is, heuristically speaking, the random two-dimensional Riemannian manifold with metric tensor $e^{\gamma h} (dx^2+dy^2)$, where $dx^2+dy^2$ denotes the Euclidean metric tensor. LQG surfaces are the scaling limits of various types of random planar maps: the case when $\gamma=\sqrt{8/3}$ corresponds to uniform random planar maps. Other values of $\gamma$ correspond to random planar maps weighted by the partition function of a statistical mechanics model on the map, e.g., the uniform spanning tree for $\gamma=\sqrt 2$ or the critical Ising model for $\gamma=\sqrt 3$. More generally, convergence to $\gamma$-LQG is expected if the planar map is weighted by the partition function of a critical statistical mechanics model with central charge $\mathbf{c} = 25 - 6(2/\gamma + \gamma/2)^2$; see, e.g.,~\cite[Section 3.1]{ghs-mating-survey} and the references therein for further discussion. 

The above definition of a $\gamma$-LQG surface does not make rigorous sense since the GFF is a random distribution, not a function. In particular, it does not have well-defined pointwise values and so cannot be exponentiated. 
Therefore, one needs to use various regularization procedures to make rigorous sense of LQG surfaces.
For example, one can construct a random measure $\mu_h$ on $U$, called the \emph{$\gamma$-LQG area measure}, as a limit of regularized versions of ``$e^{\gamma h} dz$", where $dz$ denotes Lebesgue measure~\cite{kahane,shef-kpz,rhodes-vargas-review}. 
This measure can be thought of as the volume form associated with the $\gamma$-LQG surface.
One way to construct $\mu_h$ is as follows. 
Let $p_s(z,w) =   \frac{1}{2\pi s} \exp\left( - \frac{|z-w|^2}{2s} \right) $ be the heat kernel on $\BB C$. For $\ep >0$, we define a mollified version of the GFF by
\eqb \label{eqn-gff-convolve}
h_\ep^*(z) := (h*p_{\ep^2/2})(z) = \int_{U} h(w) p_{\ep^2/2}^U(z,w) \, dw ,\quad \forall z\in U  ,
\eqe
where the integral is interpreted in the sense of distributional pairing (see Remark~\ref{rmk:tight} for some discussion on the particular choice of mollifier). 
One can then define the $\gamma$-LQG measure $\mu_h$ as the a.s.\ weak limit~\cite{rhodes-vargas-review,berestycki-gmt-elementary}
\eqb \label{eqn-measure-construct}
\lim_{\ep\rta 0} \ep^{\gamma^2/2} e^{\gamma h_\ep^*(z)} \,dz .
\eqe   

The LQG measure $\mu_h$ satisfies a conformal coordinate change formula: if $\phi : \wt U \rta U$ is a conformal map and
\eqb \label{eqn-lqg-coord}
\wt h := h\circ\phi + Q \log |\phi'| ,\quad \text{where} \quad Q = \frac{2}{\gamma} + \frac{\gamma}{2} 
\eqe
then $\mu_h(A) = \mu_{\wt h}(\phi^{-1}(A))$ for each Borel set $A\subset U$. 
We think of two pairs $(U,h)$ and $(\wt U , \wt h)$ which are related by a conformal map as in~\eqref{eqn-lqg-coord} as being two different parametrizations of the same LQG surface. Thus the coordinate change formula for $\mu_h$ says that this measure depends only  on the quantum surface, not on the particular choice of parametrization. 

Since $\gamma$-LQG surfaces are thought of as random Riemannian manifolds, one expects that such a surface also gives rise to a random metric $D_h$ on $U$.
Constructing such a metric is a much harder problem than constructing the measure $\mu_h$. 
Miller and Sheffield~\cite{lqg-tbm1,lqg-tbm2,lqg-tbm3} constructed such a metric in the special case when $\gamma=\sqrt{8/3}$ by using a process called \emph{quantum Loewner evolution}~\cite{qle} to define $\sqrt{8/3}$-LQG metric balls. 
They also showed that in this case, the metric space $(U,D_h)$ for certain special choices of $U$ and $D_h$ is isometric to a known \emph{Brownian surface} --- like the Brownian map~\cite{legall-uniqueness,miermont-brownian-map} or the Brownian disk~\cite{bet-mier-disk}. Brownian surfaces are random metric spaces which arise as the scaling limits of uniform random planar maps with respect to the Gromov-Hausdorff topology.

This paper is part of a program whose eventual goal is to construct a metric on $\gamma$-LQG for all $\gamma \in (0,2)$ as a limit of regularized metrics analogous to~\eqref{eqn-measure-construct}. These regularized metrics are called \emph{Liouville first passage percolation (LFPP)}. We recall the precise definition of LFPP just below. It was previously shown by Ding, Dub\'edat, Dunlap, and Falconet~\cite{dddf-lfpp} that LFPP admits non-degenerate subsequential limits in law w.r.t.\ the local uniform topology (i.e., the topology of uniform convergence on compact sets). The main contributions of this paper are as follows.
\begin{itemize}
\item \textbf{Properties of subsequential limits of LFPP.} We prove, using a general theorem from~\cite{local-metrics}, that every subsequential limit of LFPP can be realized as a measurable function of the field, so the convergence occurs in probability, not just in distribution. We also check that every subsequential limit of LFPP satisfies a certain natural list of axioms which one would expect any reasonable notion of a metric on $\gamma$-LQG to satisfy (see Section~\ref{sec-axioms}).
We call a metric satisfying these axioms a \emph{weak LQG metric}. A closely related list of axioms appeared previously in~\cite{mq-geodesics}.
\item \textbf{Properties of weak LQG metrics.} We prove several quantitative properties for a general weak LQG metric. We compute the optimal H\"older exponents between the LQG metric and the Euclidean metric in both directions. We also give moment bounds for LQG diameters and for point-to-point, set-to-set, and point-to-set distances; these bounds are analogous to known moment bounds for the $\gamma$-LQG measure (see, e.g.,~\cite{rhodes-vargas-review}). 
See Section~\ref{sec-main-results} for precise statements. Since our list of axioms is satisfied for the Miller-Sheffield $\sqrt{8/3}$-LQG metric, our results apply to this metric as well. Even in this special case, most of our results are new. 
\end{itemize}

The results in this paper are used to prove further properties of weak LQG metrics (including subsequential limits of LFPP) in~\cite{gm-uniqueness,gm-confluence,gm-coord-change}, eventually culminating in the proof in~\cite{gm-uniqueness} that there is only one weak $\gamma$-LQG metric for each $\gamma \in (0,2)$, which establishes the existence and uniqueness of the $\gamma$-LQG metric for all $\gamma \in (0,2)$.
However, even after this program is completed, we expect that our results will continue to be a useful tool in the study of the $\gamma$-LQG metric. 
For example, our estimates for the LQG metric are used in~\cite{gp-kpz} to prove a version of the KPZ formula~\cite{shef-kpz,kpz-scaling} for this metric.
Moreover, as explained in Remark~\ref{rmk:tight}, our results for subsequential limits of LFPP apply to variants of LFPP defined using different continuous approximations for the GFF (other than convolution with the heat kernel) once tightness is established for these variants.

We remark that versions of some of the estimates for weak LQG metrics which are proven in this paper (including tail estimates for the distance across a rectangle, the first moment bound for diameters, and H\"older continuity) were previously proven for subsequential limits of LFPP in~\cite{dddf-lfpp}. However, it is important to have these estimates for general weak $\gamma$-LQG metrics: indeed, such estimates will be used in~\cite{gm-uniqueness} to show the uniqueness of the weak $\gamma$-LQG metric (which is a stronger statement than just the uniqueness of the subsequential limit for the variant of LFPP considered in~\cite{dddf-lfpp}). Many of our estimates are also new for subsequential limits of LFPP, e.g., the optimality of the H\"older exponents in Theorem~\ref{thm-optimal-holder}, the moment bounds in Theorems~\ref{thm-moment}, \ref{thm-pt-to-circle-moment}, and~\ref{thm-pt-to-pt-moment}, and the estimates for geodesics in Section~\ref{sec-geodesic-bounds}.  

Due to our axiomatic approach, our proofs do not require any outside input besides the existence of LFPP subsequential limits from~\cite{dddf-lfpp} and a general theorem about local metrics from~\cite{local-metrics} (both of which can be taken as black boxes). To understand the paper, the reader only needs to be familiar with basic properties of the GFF, as reviewed, e.g., in~\cite{shef-gff} and the introductory sections of~\cite{ss-contour,ig1,ig4}. 
\bigskip

\noindent
\textbf{Acknowledgments.}
We thank an anonymous referee for helpful comments on an earlier version of the manuscript. We thank Jian Ding, Alex Dunlap, Jason Miller, Scott Sheffield, as well as Vincent Tassion, Wendelin Werner, and their research group at ETH Z\"urich for helpful discussions. Part of the project was carried out during E. Gwynne and X. Sun's visit to MIT  and E.\ Gwynne and J.\ Pfeffer's visit to Columbia University in Fall 2018. We thank the two institutions for their hospitality. 
J.\ Dub\'edat was partially supported by NSF grant DMS-1512853.
E.\ Gwynne was supported by a Herchel Smith fellowship and a Trinity College junior research fellowship.
J.\ Pfeffer was partially supported by the National Science Foundation Graduate Research Fellowship under Grant No. 1122374.
 X.\ Sun was supported by the Simons Foundation as a Junior Fellow at Simons Society of Fellows, by NSF grant DMS-1811092, and by the Minerva fund at the Department of Mathematics at Columbia University.

\subsection{Weak LQG metrics and subsequential limits of LFPP}
\label{sec-axioms}

Let us now discuss the approximations of LQG metrics which we will be interested in.   
We first need to introduce an exponent which plays a fundamental role in the study of $\gamma$-LQG distances.
It is shown in~\cite{dg-lqg-dim} that for each $\gamma \in (0,2)$, there is an exponent $d_\gamma > 2$ which arises in various approximations of LQG distances.
For example, for certain random planar maps in the $\gamma$-LQG universality class, a graph-distance ball of radius $r \in \BB N$ in the map typically has of order $r^{d_\gamma + o_r(1)}$ vertices. 
It is shown in~\cite{gp-kpz} that $d_\gamma$ is the Hausdorff dimension of the $\gamma$-LQG metric. 
The value of $d_\gamma$ is not known explicitly except for $d_{\sqrt{8/3}}= 4$, but reasonably tight upper and lower bounds are available; see~\cite{dg-lqg-dim}.
We define
\eqb \label{eqn-xi-def}
\xi  = \xi_\gamma := \frac{\gamma}{d_\gamma} .
\eqe 

For concreteness, we will primarily focus on the whole-plane case.
We say that a random distribution $h$ on $\BB C$ is a \emph{whole plane GFF plus a continuous function}
 if there exists a coupling of $h$ with a random continuous function $f : \BB C\rta\BB R$ such that the law of $h-f$ is that of a whole-plane GFF. 
If such a coupling exists for which $f$ is bounded, then we say that $h$ is a \emph{whole-plane GFF plus a bounded continuous function}.\footnote{The reason why we sometimes restrict to bounded continuous functions is that it ensures that the convolution with the whole-plane heat kernel is finite (so $D_h^\ep$ is defined) and it makes parts of the proof of Theorem~\ref{thm-lfpp-axioms} simpler.}
 Note that the whole-plane GFF is defined only modulo a global additive constant, but these definitions do not depend on the choice of additive constant.

If $h$ is a whole-plane GFF, or more generally a whole-plane GFF plus a bounded continuous function, we define the mollified GFF $h_\ep^*(z)$ for $\ep > 0$ and $z\in\BB C$ as in~\eqref{eqn-gff-convolve}.
For $z,w\in\BB C$ and $\ep> 0$, we define the \emph{$\ep$-LFPP metric} by\footnote{The intuitive reason why we look at $e^{\xi h_\ep^*(z)}$ instead of $e^{\gamma h_\ep^*(z)}$ to define the metric is as follows. By~\eqref{eqn-measure-construct}, we can scale LQG areas by a factor of $C>0$ by adding $\gamma^{-1}\log C$ to the field. By~\eqref{eqn-lfpp}, this results in scaling distances by $C^{\xi/\gamma} = C^{1/d_\gamma}$, which is consistent with the fact that the ``dimension" should be the exponent relating the scaling of areas and distances.}
\eqb \label{eqn-lfpp}
D_h^\ep(z,w) := \inf_{P : z\rta w} \int_0^1 e^{\xi h_\ep^*(P(t))} |P'(t)| \,dt 
\eqe
where the infimum is over all piecewise continuously differentiable paths from $z$ to $w$. One should think of LFPP as the metric analog of the approximations of the LQG measure in~\eqref{eqn-measure-construct}. 

\begin{remark}\label{rmk:tight}
The reason why we define LFPP using $h_\ep^*$ instead of some other continuous approximation of the GFF is that this is the approximation for which tightness is proven in~\cite{dddf-lfpp}. If we had a tightness result similar to those in \cite{dddf-lfpp} for LFPP defined using a different approximation (such as the circle average process of~\cite[Section 3.1]{shef-kpz} or the convolution of $h$ with $\ep^{-1} \phi(|z-w|/\sqrt\ep)$, where $\phi$ is a continuous non-negative radially symmetric function with total integral one), then similar arguments to those in Section~\ref{sec-checking-axioms} would show that the subsequential limits are also weak LQG metrics. Together with the uniqueness of weak LQG metrics proven in~\cite{gm-uniqueness}, this means that in order to show that such approximations converge to the $\gamma$-LQG metric one only needs to prove tightness.
\end{remark}
 
For $\ep > 0$, let $\frk a_\ep$ be the median of the $D_h^\ep$-distance between the left and right boundaries of the unit square along paths which stay in the unit square.
It follows from results in~\cite{dddf-lfpp} (see Lemma~\ref{lem-lfpp-tight} below) that the laws of the metrics $\{\frk a_\ep^{-1} D_h^\ep\}_{\ep > 0}$ are tight with respect to the local uniform topology on $\BB C\times \BB C$ and every subsequential limit induces the Euclidean topology on $\BB C$. 

Building on this, we will prove that in fact the metrics $\frk a_\ep^{-1} D_h^\ep$ admit subsequential limits in probability and that every subsequential limit satisfies a certain natural list of axioms. 
To state these axioms, we need some preliminary definitions.
Let $(X,D)$ be a metric space.  
\medskip
 
\noindent
For a curve $P : [a,b] \rta X$, the \emph{$D$-length} of $P$ is defined by 
\eqbn
\op{len}\left( P ; D  \right) := \sup_{T} \sum_{i=1}^{\# T} D(P(t_i) , P(t_{i-1})) 
\eqen
where the supremum is over all partitions $T : a= t_0 < \dots < t_{\# T} = b$ of $[a,b]$. Note that the $D$-length of a curve may be infinite.
\medskip

\noindent
For $Y\subset X$, the \emph{internal metric of $D$ on $Y$} is defined by
\eqb \label{eqn-internal-def}
D(x,y ; Y)  := \inf_{P \subset Y} \op{len}\left(P ; D \right) ,\quad \forall x,y\in Y 
\eqe 
where the infimum is over all paths $P$ in $Y$ from $x$ to $y$. 
Then $D(\cdot,\cdot ; Y)$ is a metric on $Y$, except that it is allowed to take infinite values.  
\medskip
 
\noindent
We say that $(X,D)$ is a \emph{length space} if for each $x,y\in X$ and each $\ep > 0$, there exists a curve of $D$-length at most $D(x,y) + \ep$ from $x$ to $y$. 
\medskip

\noindent
A \emph{continuous metric} on a domain $U\subset\BB C$ is a metric $D$ on $U$ which induces the Euclidean topology on $U$, i.e., the identity map $(U,|\cdot|) \rta (U,D)$ is a homeomorphism. 
We equip the space of continuous metrics on $U$ with the local uniform topology for functions from $U\times U$ to $[0,\infty)$ and the associated Borel $\sigma$-algebra.
We allow a continuous metric to have $D(u,v) = \infty$ if $u$ and $v$ are in different connected components of $U$.
In this case, in order to have $D^n\rta D$ w.r.t.\ the local uniform topology we require that for large enough $n$, $D^n(u,v) = \infty$ if and only if $D(u,v)=\infty$.   
\medskip

\noindent
Let $\mcl D'(\BB C)$ be the space of distributions (generalized functions) on $\BB C$, equipped with the usual weak topology.
For $\gamma \in (0,2)$, a \emph{weak $\gamma$-LQG metric} is a measurable function $h\mapsto D_h$ from $\mcl D'(\BB C)$ to the space of continuous metrics on $\BB C$ such that the following is true whenever $h$ is a whole-plane GFF plus a continuous function.
\begin{enumerate}[I.] 
\item \textbf{Length space.} Almost surely, $(\BB C , D_h)$ is a length space, i.e., the $D_h$-distance between any two points of $\BB C$ is the infimum of the $D_h$-lengths of $D_h$-continuous paths (equivalently, Euclidean continuous paths) between the two points. \label{item-metric-length}
\item \textbf{Locality.} Let $U\subset\BB C$ be a deterministic open set. 
The $D_h$-internal metric $D_h(\cdot,\cdot ; U)$ is determined a.s.\ by $h|_U$.  \label{item-metric-local}  
\item \textbf{Weyl scaling.} Let $\xi$ be as in~\eqref{eqn-xi-def} and for each continuous function $f : \BB C\rta \BB R$, define \label{item-metric-f}  
\eqb \label{eqn-metric-f}
(e^{\xi f} \cdot D_h) (z,w) := \inf_{P : z\rta w} \int_0^{\op{len}(P ; D_h)} e^{\xi f(P(t))} \,dt , \quad \forall z,w\in \BB C ,
\eqe
where the infimum is over all continuous paths from $z$ to $w$ parametrized by $D_h$-length. Then a.s.\ $ e^{\xi f} \cdot D_h = D_{h+f}$ for every continuous function $f : \BB C\rta\BB R$.  
\item \textbf{Translation invariance.} For each deterministic point $z \in \BB C$, a.s.\ $D_{h(\cdot + z)} = D_h(\cdot+ z , \cdot+z)$.  \label{item-metric-translate}
\item \textbf{Tightness across scales.} Suppose that $h$ is a whole-plane GFF and let $\{h_r(z)\}_{r > 0, z\in\BB C}$ be its circle average process. For each $r > 0$, there is a deterministic constant $\frk c_r > 0$ such that the set of laws of the metrics $\frk c_r^{-1} e^{-\xi h_r(0)} D_h (r \cdot , r\cdot)$ for $r > 0$ is tight (w.r.t.\ the local uniform topology). Furthermore, the closure of this set of laws w.r.t.\ the Prokhorov topology on continuous functions $\BB C\times \BB C \rta [0,\infty)$ is contained in the set of laws on continuous metrics on $\BB C$ (i.e., every subsequential limit of the laws of the metrics $\frk c_r^{-1} e^{-\xi h_r(0)} D_h (r \cdot  , r \cdot )$ is supported on metrics which induce the Euclidean topology on $\BB C$). Finally, there exists  \label{item-metric-coord} 
$\Lambda > 1$ such that for each $\delta \in (0,1)$, 
\eqb \label{eqn-scaling-constant}
\Lambda^{-1} \delta^\Lambda \leq \frac{\frk c_{\delta r}}{\frk c_r} \leq \Lambda \delta^{-\Lambda} ,\quad\forall r  > 0.
\eqe
\end{enumerate}
We emphasize that the definition of a weak $\gamma$-LQG metric depends on $\gamma$ only via the parameter $\xi$ in Axiom~\ref{item-metric-f}. 
We will therefore sometimes say that a metric satisfying the above axioms is a \emph{weak LQG metric with parameter $\xi$}. 

It is easy to see, at least heuristically, why Axioms~\ref{item-metric-length} through~\ref{item-metric-coord} should be satisfied for subsequential limits of LFPP, although there is some subtlety involved in checking these axioms rigorously. The first main result of this paper is the following statement, whose proof builds on results from~\cite{dddf-lfpp,local-metrics}. 

\begin{thm} \label{thm-lfpp-axioms}
Let $\gamma \in (0,2)$. For every sequence of $\ep$'s tending to zero, there is a weak $\gamma$-LQG metric $D$ and a subsequence $\{\ep_n\}_{n\in\BB N}$ for which the following is true. Let $h$ be a whole-plane GFF, or more generally a whole-plane GFF plus a bounded continuous function. Then the re-scaled LFPP metrics $\frk a_{\ep_n}^{-1} D_h^{\ep_n}$ from~\eqref{eqn-lfpp} converge in probability to $D_h$. 
\end{thm}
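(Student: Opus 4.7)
The plan is to start from the tightness statement of \cite{dddf-lfpp}, which guarantees that the laws of $\{\frk a_\ep^{-1} D_h^\ep\}_{\ep > 0}$ are tight in the local uniform topology on $\BB C \times \BB C$ and that every subsequential limit induces the Euclidean topology on $\BB C$. Along any sequence of $\ep$'s tending to zero, I extract a subsequence $\{\ep_n\}$ along which $\frk a_{\ep_n}^{-1} D_h^{\ep_n}$ converges in law to some random continuous metric; call this limit $D_h$. The first real task is to upgrade distributional convergence to convergence in probability, and simultaneously to realize $D_h$ as a measurable function of $h$. For this I would appeal to the general result on local metrics from \cite{local-metrics}: since $h\mapsto D_h^\ep$ is a deterministic measurable functional of $h$ whose internal metric on any open $U$ depends only on $h|_{U^{+\ep}}$, any subsequential limit is a local metric for $h$, and the theorem of \cite{local-metrics} then produces a coupling in which the limit is $\sigma(h)$-measurable and the convergence is in probability.

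Next I would check each of the five axioms in turn, working first under the assumption that $h$ is a whole-plane GFF and later extending to a GFF plus a bounded continuous function. Axiom~\ref{item-metric-length} (length space) and Axiom~\ref{item-metric-local} (locality) pass to the limit almost by construction: each $D_h^\ep$ is manifestly a length metric, and the length space property is preserved under local uniform convergence of continuous metrics inducing the Euclidean topology; locality is exactly what the local metrics theorem gives. Axiom~\ref{item-metric-translate} (translation invariance) is immediate from translation invariance of the LFPP functional together with the translation invariance of the law of $h$ modulo additive constant. For Axiom~\ref{item-metric-f} (Weyl scaling), I would use that $(h+f)_\ep^* = h_\ep^* + f_\ep^*$ and that $f_\ep^*\to f$ locally uniformly for any continuous $f$, so on the event that $\frk a_{\ep_n}^{-1} D_h^{\ep_n}\to D_h$ locally uniformly, the corresponding LFPP metric for $h+f$ converges to $e^{\xi f}\cdot D_h$; a density argument (e.g.\ restricting to a countable dense family of $f$'s and using continuity) upgrades this to the simultaneous identity $D_{h+f} = e^{\xi f}\cdot D_h$ for all continuous $f$.

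The most substantive axiom is Axiom~\ref{item-metric-coord} (tightness across scales). Here I would exploit the explicit scaling relation for LFPP: the law of $(h(r\cdot) - h_r(0), \{h_{r\delta}(0) - h_r(0)\}_{\delta>0})$ does not depend on $r$, and a direct change of variables in the path integral \eqref{eqn-lfpp} gives an identity of the form $D_h^\ep(rz,rw) = r\, e^{\xi h_r(0)}\, D_{h(r\cdot)-h_r(0)}^{\ep/r}(z,w)$. Taking the subsequential limit along $\ep_n\to 0$, this identity forces the limiting metrics $\frk c_r^{-1} e^{-\xi h_r(0)} D_h(r\cdot,r\cdot)$ to have the same distribution as $D_{\wt h}$ for a whole-plane GFF $\wt h$, provided one defines $\frk c_r$ as a suitable limit of ratios $\frk a_{\ep/r}/\frk a_\ep$. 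Tightness and the fact that subsequential limits induce the Euclidean topology then follow from the corresponding statements for LFPP. The polynomial bound \eqref{eqn-scaling-constant} on $\frk c_r$ comes from the polynomial bounds on $\frk a_\ep$ available from \cite{dddf-lfpp} together with the scaling identity.

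The main obstacle I anticipate is the Weyl scaling step, where one must simultaneously pass to the limit for all continuous $f$ while handling the fact that the LFPP integrand is exponential in a mollified field; the subsequential limit has to commute nicely with the operation $h\mapsto h+f$, which requires combining the local metric machinery with uniform-on-compacta estimates on $f_\ep^*-f$. A secondary obstacle is the passage from a pure whole-plane GFF to a GFF plus a bounded continuous function, but once Weyl scaling is established this is handled by writing the perturbed field as $h_0 + f$ for a GFF $h_0$ and applying Axiom~\ref{item-metric-f}, combined with standard absolute continuity arguments to transfer the in-probability convergence.
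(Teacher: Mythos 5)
Your outline of Weyl scaling, translation invariance, the length-space property, and tightness across scales essentially matches the paper's route (the scaling identity you write down is Lemma~\ref{lem-lfpp-scale}, and defining $\frk c_r$ as a limit of the ratios $r\frk a_{\ep/r}/\frk a_\ep$ is exactly Lemma~\ref{lem-lfpp-constant}). The genuine gap is in your treatment of locality and measurability, which you dispose of in one sentence but which is the hardest part of the argument. First, your premise that the internal metric of $D_h^\ep$ on $U$ ``depends only on $h|_{U^{+\ep}}$'' is false: $h_\ep^*$ is the convolution of $h$ with the heat kernel, which does not have compact support, so $D_h^\ep$-lengths of paths are not locally determined by $h$. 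The paper has to introduce a localized variant $\wh D_h^\ep$ built from a truncated heat kernel and prove (Lemma~\ref{lem-localized-approx}) that it uniformly approximates $D_h^\ep$ before any locality statement can even be formulated at the approximating level. Second, and more seriously, even granting locality of the approximations, the assertion that ``any subsequential limit is a local metric for $h$'' does not follow: locality is a conditional independence statement, and conditional independence is not in general preserved under convergence in law. This is precisely Difficulty~\ref{item-issue-cond} named before Lemma~\ref{lem-lfpp-local}, and resolving it occupies all of Section~\ref{sec-lfpp-local}: one uses the Markov property of the whole-plane GFF to write $h|_{\BB C\setminus\ol V}=\frk h+\rng h$ with $\rng h$ genuinely (unconditionally) independent of $h|_V$, subtracts the harmonic part $\frk h$ via Weyl scaling with a bump function, passes honest independence through the limit using the joint convergence of internal metrics on dyadic domains, and then adds the harmonic part back.

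Relatedly, you have the logical direction of the local-metrics input reversed. The theorem from~\cite{local-metrics} does not ``produce'' locality or a coupling; it takes as \emph{hypotheses} that the limit is a $\xi$-additive local metric \emph{and} that two conditionally i.i.d.\ samples satisfy the bi-Lipschitz comparability condition~\eqref{eqn-bilip}, and only then concludes that the limit is $\sigma(h)$-measurable (after which Lemma~\ref{lem-in-prob} upgrades convergence in law to convergence in probability). So you must first prove locality of the subsequential limit by the argument sketched above, and separately verify~\eqref{eqn-bilip} using tightness across scales, before the measurability machinery applies. A smaller omission: the definition of a weak LQG metric requires the axioms for a whole-plane GFF plus a possibly \emph{unbounded} continuous function, so after handling the bounded case one still needs the localization argument of Step~3 of the paper's proof to extend the definition of $D_{h+f}$ consistently.
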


We will explain why we get convergence in probability, instead of just in law, in Theorem~\ref{thm-lfpp-axioms} just below.
Let us first discuss the axioms for a weak LQG metric.
Axioms~\ref{item-metric-length} through~\ref{item-metric-translate} are natural from the perspective that $\gamma$-LQG is a ``random two-dimensional Riemannian manifold" obtained by exponentiating $h$. 
Axiom~\ref{item-metric-coord} is a substitute for exact scale invariance of the metric. 
To explain this, it is expected (and will be proven in~\cite{gm-uniqueness,gm-coord-change}) that the $\gamma$-LQG metric, like the $\gamma$-LQG measure, is invariant under coordinate changes of the form~\eqref{eqn-lqg-coord}. 
In particular, it should be the case that for any $a\in\BB C \setminus \{0\}$, a.s.\
\eqb \label{eqn-lqg-metric-coord}
 D_h \left( a\cdot  , a \cdot   \right) = D_{h(a\cdot )  +Q\log |a| }(\cdot,\cdot)  ,\quad \text{for} \quad Q =\frac{2}{\gamma} + \frac{\gamma}{2} .
\eqe
Under Axiom~\ref{item-metric-f}, the formula~\eqref{eqn-lqg-metric-coord} together with the scale invariance of the law of $h$, modulo an additive constant, implies Axiom~\ref{item-metric-coord} with $\frk c_r = r^{\xi Q}$. 
We define a \emph{strong LQG metric} to be a mapping $h\mapsto D_h$ which satisfies Axioms~\ref{item-metric-length} through~\ref{item-metric-translate} as well as~\eqref{eqn-lqg-metric-coord}. 

A similar definition of a strong LQG metric has appeared in earlier literature.
Indeed, the paper~\cite{mq-geodesics} proved several properties of geodesics for any metric associated with $\gamma$-LQG which satisfies a similar list of axioms to the ones in our definition of a strong LQG metric; however, at that point such a metric had only been constructed for $\gamma=\sqrt{8/3}$.\footnote{Although the axioms in~\cite{mq-geodesics} are formulated in a slightly different way from our axioms for a strong LQG metric, it can be proven, with some work, that the two notions are equivalent. The analog of Axiom~\ref{item-metric-local} in~\cite{mq-geodesics}, which asserts that metric balls are local sets, is proven to be equivalent to our Axiom~\ref{item-metric-local} in~\cite[Lemma 2.2]{local-metrics}. The analog of Axiom~\ref{item-metric-f} in~\cite{mq-geodesics} is stated only for constant functions, but it is easy to check that this axiom implies Axiom~\ref{item-metric-f}. For example, this is explained in~\cite[Section 2.4]{gms-poisson-voronoi} in the special case when $\gamma=\sqrt{8/3}$, and the same argument works for general $\gamma \in (0,2)$. In~\cite[Assumption 1.1]{mq-geodesics}, the authors allow for fields on any open domain in $\BB C$ and assume that the metric satisfies a LQG coordinate change formula for general conformal maps, not just complex affine maps. It is shown in~\cite{gm-coord-change} that a strong LQG metric in the sense of this paper gives rise to a metric associated with a GFF on any proper sub-domain of $\BB C$ which satisfies the LQG coordinate change formula for general conformal maps.}

It far from obvious that subsequential limits of LFPP satisfy~\eqref{eqn-lqg-metric-coord}. The reason for this is that scaling space results in scaling the value of $\ep$ in~\eqref{eqn-lfpp}, which in turn changes the subsequence which we are working with. It will eventually be proven in~\cite{gm-uniqueness} that every weak LQG metric satisfies~\eqref{eqn-lqg-metric-coord}, i.e., every weak LQG metric is a strong LQG metric, but the proof requires all of the results of the present paper as well as those of~\cite{local-metrics,gm-confluence}.

Nevertheless, Axiom~\ref{item-metric-coord} can be used in place of~\eqref{eqn-lqg-metric-coord} in many situations. 
Basically, this axiom allows us to compare distance quantities at the same Euclidean scale. 
For example, Axiom~\ref{item-metric-coord} implies that if $U\subset\BB C$ is open and $K\subset U$ is compact, then the laws of 
\eqb\label{eq:tight}
\left( \frk c_r^{-1} e^{-\xi h_r(0)} D_h (r K , r\bdy U) \right)^{-1} \quad \text{and} \quad \frk c_r^{-1} e^{-\xi h_r(0)} \sup_{u,v\in r K} D_h (  u ,  v ; r U )
\eqe
as $r$ varies are tight.  

Part of the proof of Theorem~\ref{thm-lfpp-axioms} is to show that for any joint subsequential limit $(h,D_h)$ of the laws of the pairs $(h,\frk a_\ep^{-1} D_h^\ep)$, the limiting metric $D_h$ is a measurable function of $h$. 
This is not obvious since convergence in law does not in general preserve measurability.
In our setting, we will prove that $D_h$ is determined by $h$ by checking the conditions of~\cite[Corollary 1.8]{local-metrics}, which gives a list of conditions under which a random metric coupled with the GFF is determined by the GFF. 
The reason why we have convergence in probability, instead of convergence in law, in Theorem~\ref{thm-lfpp-axioms} is the following elementary probabilistic lemma (see e.g. \cite[Lemma~4.5]{ss-contour}).\footnote{Since the space of continuous metrics is not complete w.r.t.\ any natural choice of metric which induces the local uniform topology, we apply the lemma with $(\Omega_2,d_2)$ equal to the larger space of continuous functions $\BB C\times\BB C \rta [0,\infty)$ equipped with the local uniform topology, which is completely metrizable.}
 
\begin{lem} \label{lem-in-prob}
Let $(\Omega_1 , d_1) $ and $(\Omega_2 ,d_2)$ be complete separable metric spaces.
Let $X$ be a random variable taking values in $\Omega_1$ and let $\{Y^n\}_{n\in\BB N}$ and $Y$ be random variables taking values in $\Omega_2$, all defined on the same probability space, such that $(X,Y^n) \rta (X,Y)$ in law.
If $Y$ is a.s.\ determined by $X$, then $Y^n\rta Y$ in probability.
\end{lem}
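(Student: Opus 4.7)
The plan is to show that $d_2(Y^n, Y) \to 0$ in distribution, which for a nonnegative real-valued random variable is equivalent to convergence in probability to $0$. The heart of the argument is to upgrade the assumed joint convergence $(X, Y^n) \to (X, Y)$ in law to a statement about the triple $(X, Y^n, Y)$, and then exploit the fact that $Y$ is a deterministic function of $X$.

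First I would observe that, since $Y$ is a.s.\ determined by $X$, there exists a measurable function $F: \Omega_1 \to \Omega_2$ with $Y = F(X)$ almost surely. The natural impulse is to apply the continuous mapping theorem to $(X, Y^n) \mapsto d_2(Y^n, F(X))$, but this fails because $F$ need not be continuous (indeed, in our intended application $F$ sends a GFF instance to a metric and is only measurable). So this direct route is the main obstacle, and the strategy is to bypass it via a subsequential-limit argument.

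The key step is tightness: since each of the sequences $\{X\}$, $\{Y^n\}$, $\{Y\}$ is tight (the first and third are constant in $n$, and $\{Y^n\}$ is tight because it converges in law), the joint family $\{(X, Y^n, Y)\}_{n \in \mathbb{N}}$ is tight in $\Omega_1 \times \Omega_2 \times \Omega_2$ by Prokhorov's theorem. Given any subsequence, I would extract a further subsequence along which $(X, Y^n, Y)$ converges in law to some triple $(X', Y_1', Y_2')$. By construction, the marginal laws satisfy $(X', Y_1') \stackrel{d}{=} (X, Y)$ and $(X', Y_2') \stackrel{d}{=} (X, Y)$.

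Now I would apply the determination hypothesis to both marginals: since $Y = F(X)$ a.s., it follows that $Y_1' = F(X')$ a.s.\ and $Y_2' = F(X')$ a.s., hence $Y_1' = Y_2'$ almost surely. Therefore $d_2(Y_1', Y_2') = 0$ a.s., which means that along this subsequence $d_2(Y^n, Y)$ converges in law to $0$. Since every subsequence of $\{d_2(Y^n, Y)\}$ admits a further subsequence converging in law to $0$, the full sequence $d_2(Y^n, Y)$ converges in law to $0$, and hence in probability to $0$, completing the proof.
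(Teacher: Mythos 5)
Your proof is correct and follows essentially the same route as the paper's: both arguments pass to a subsequential limit of the triple $(X, Y^n, Y)$, use the hypothesis $Y = F(X)$ a.s.\ to identify both $\Omega_2$-coordinates of the limit with $F$ applied to the first coordinate, and conclude that $d_2(Y^n,Y)\rta 0$ in law, hence in probability. The only differences are cosmetic (ordering of the coordinates in the triple and the phrasing of the final step).
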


Theorem~\ref{thm-lfpp-axioms} will be proven in Section~\ref{sec-checking-axioms}. Once this is done, throughout the rest of the paper we will only ever work with a weak $\gamma$-LQG metric --- we will not need to make explicit reference to LFPP.
An important advantage of this approach is that the Miller-Sheffield $\sqrt{8/3}$-LQG metric from~\cite{lqg-tbm1,lqg-tbm2,lqg-tbm3} is known to satisfy the axioms for a weak $\sqrt{8/3}$-LQG metric. See~\cite[Section 2.4]{gms-poisson-voronoi} for a careful explanation of why this is the case.
Note that~\cite[Section 2.4]{gms-poisson-voronoi} checks the coordinate change relation~\eqref{eqn-lqg-metric-coord} for the Miller-Sheffield metric which (as discussed above) implies Axiom~\ref{item-metric-coord}.    
Hence all of our results for weak $\gamma$-LQG metrics apply to both this $\sqrt{8/3}$-LQG metric and to subsequential limits of LFPP.\footnote{
The uniqueness of the weak LQG metric proven in~\cite{gm-uniqueness} implies that the Miller-Sheffield $\sqrt{8/3}$-LQG metric is the limit of LFPP for $\gamma=\sqrt{8/3}$.}
 
\begin{remark}[Liouville graph distance]
Besides LFPP, there is another natural scheme for approximating LQG metrics called \emph{Liouville graph distance} (LGD). 
The $\ep$-LGD distance between two points in $\BB C$ is defined to be the minimum number of Euclidean balls with LQG mass $\ep$ whose union contains a path between the two points.
It has been proven in~\cite{ding-dunlap-lgd} that for each $\gamma\in (0,2)$, the $\ep$-LGD metric, appropriately renormalized, admits subsequential limiting metrics as $\ep\rta 0$ which induce the Euclidean topology. 
In the contrast to LFPP, for subsequential limits of LGD the coordinate change relation~\eqref{eqn-lqg-metric-coord} is easy to verify but Weyl scaling (Axiom~\ref{item-metric-f}) appears to be very difficult to verify, so these subsequential limits are \emph{not} known to be weak LQG metrics in the sense of this paper.  
It is still an open problem to establish uniqueness of the scaling limit for LGD. 
Similar considerations apply to variants of LGD defined using embedded planar maps (such as maps constructed from LQG square subdivision~\cite{shef-kpz,ghpr-central-charge} or mated-CRT maps~\cite{ghs-dist-exponent,gms-tutte}) instead of Euclidean balls, although for these variants tightness has not been checked. 
\end{remark}

\subsection{Quantitative properties of weak LQG metrics}
\label{sec-main-results}

In what follows, we assume that $D$ is a weak $\gamma$-LQG metric and $h$ is a whole-plane GFF. 
Perhaps surprisingly, the axioms for a weak LQG metric imply much sharper bounds on the scaling constants $\frk c_r$ than~\eqref{eqn-scaling-constant}. 

\begin{thm} \label{thm-metric-scaling}
Let $\xi$ be as in~\eqref{eqn-xi-def} and let $Q =2/\gamma+\gamma/2$.  
Then for $r>0$, the scaling constants satisfy
\eqb \label{eqn-metric-scaling}
\frac{\frk c_{\delta  r } }{ \frk c_{r} } = \delta^{\xi Q + o_\delta(1)} \quad \text{as $\delta \rta 0$},
\eqe
at a rate which is uniform over all $r>0$. 
\end{thm}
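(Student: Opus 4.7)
The plan is to prove $\log(\frk c_{\delta r}/\frk c_r) = \xi Q \log \delta + o_\delta(\log \delta)$ uniformly in $r$ in two stages: first establish an approximate power law with some exponent $\alpha$, then identify $\alpha = \xi Q$.

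\textbf{Stage 1 (Approximate power law).} I would first prove approximate multiplicativity of the scaling constants: there exists $C > 0$ such that for all $\delta_1, \delta_2 \in (0,1)$ and $r > 0$,
\[
\left|\log \frac{\frk c_{\delta_1\delta_2 r}}{\frk c_r} - \log \frac{\frk c_{\delta_1 r}}{\frk c_r} - \log \frac{\frk c_{\delta_2 r}}{\frk c_r}\right| \le C .
\]
The idea is to compare the Axiom~\ref{item-metric-coord} tight families $M_r := \frk c_r^{-1} e^{-\xi h_r(0)} D_h(r\cdot, r\cdot)$ at the three scales $r$, $\delta_2 r$, $\delta_1\delta_2 r$, using the scale invariance $h(r\cdot) - h_r(0) \eqD h$ of the whole-plane GFF (modulo additive constant) together with Weyl scaling (Axiom~\ref{item-metric-f}) and translation invariance (Axiom~\ref{item-metric-translate}) to match the tight families across scales up to a log-normal factor and the deterministic ratio of the $\frk c$'s. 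Combined with the two-sided bound $\Lambda^{-1}\delta^\Lambda \le \frk c_{\delta r}/\frk c_r \le \Lambda \delta^{-\Lambda}$ from Axiom~\ref{item-metric-coord}, a Fekete-type argument applied to $\log(\frk c_{\delta^n r}/\frk c_r)$ yields the existence of some $\alpha \in [-\Lambda, \Lambda]$ with $\log(\frk c_{\delta r}/\frk c_r) = \alpha \log \delta + o_\delta(\log \delta)$; the same invariances, applied to the base scale, force $\alpha$ to be independent of $r$ and the error to be uniform in $r$.

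\textbf{Stage 2 (Identifying $\alpha = \xi Q$).} The axioms for a weak LQG metric do not include the coordinate change relation~\eqref{eqn-lqg-metric-coord}, so the exponent cannot be read off directly from the GFF's spatial scaling. To produce the exponent $\xi Q$, I would use Weyl scaling with a truncated log singularity $f_\rho(z) = Q \log(1/(|z|\vee \rho))$. By Axiom~\ref{item-metric-f}, $D_{h+f_\rho}$-distances restricted to $B_\rho(0)$ are exactly $\rho^{-\xi Q}$ times the corresponding $D_h$-distances. On the other hand, outside $B_\rho(0)$ the field $h + f_\rho$ agrees with $h + Q \log(1/|\cdot|)$, which is (modulo additive constant) the field of a $Q$-quantum cone. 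The quantum cone enjoys the exact scale invariance $\hat h(\lambda\cdot) + Q\log\lambda \eqD \hat h$, which via Weyl scaling induces an exact distributional scaling of its associated metric. Transferring this scaling back to $h+f_\rho$ via local absolute continuity and combining with the Axiom~\ref{item-metric-coord} tightness for $D_h$ at scale $\rho$ should force $\frk c_\rho \asymp \rho^{\xi Q}$, pinning down $\alpha = \xi Q$.

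The principal obstacle is Stage~2: extracting the exact exponent $\xi Q$ requires a careful absolute continuity comparison between $h + f_\rho$ and a genuine $Q$-quantum cone field, with quantitative control of the Radon--Nikodym derivative so that tightness and scaling estimates survive the change of measure. Without the coordinate change axiom~\eqref{eqn-lqg-metric-coord} at our disposal, the exponent $\xi Q$ arises precisely from the balance between the additive log shift $Q\log(1/|\cdot|)$ (which rescales distances by $\rho^{-\xi Q}$ via Weyl scaling) and the scale symmetry of the cone. Stage~1 is more routine by comparison, but still requires careful coupling of the fields across scales so that Axioms~\ref{item-metric-local}--\ref{item-metric-coord} can be applied cleanly to three scales simultaneously.
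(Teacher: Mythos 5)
There is a genuine gap, and it is in Stage 2. Your key step — ``the exact scale invariance $\hat h(\lambda\cdot)+Q\log\lambda\eqD\hat h$ \ldots via Weyl scaling induces an exact distributional scaling of its associated metric'' — presupposes the coordinate change formula~\eqref{eqn-lqg-metric-coord}. Weyl scaling (Axiom~\ref{item-metric-f}) only controls $D_{h+f}$ for an \emph{additive} perturbation $f$; it says nothing about how $D_{h(\lambda\cdot)}$ relates to $D_h(\lambda\cdot,\lambda\cdot)$, and that relation is exactly~\eqref{eqn-lqg-metric-coord}, which is not among the axioms and is only established later in~\cite{gm-uniqueness} \emph{using} Theorem~\ref{thm-metric-scaling}. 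A quick sanity check shows the inference cannot be valid as stated: for \emph{every} $\alpha\in\BB R$ the field $h-\alpha\log|\cdot|$ satisfies $(h-\alpha\log|\cdot|)(\lambda\cdot)+\alpha\log\lambda\eqD h-\alpha\log|\cdot|$ modulo additive constants, so if field scale invariance plus Weyl scaling forced metric scale invariance with factor $\lambda^{\xi\alpha}$, you would obtain $\frk c_\rho\asymp\rho^{\xi\alpha}$ for all $\alpha$ simultaneously. The special role of $Q$ for a \emph{strong} metric comes precisely from the $Q\log|\phi'|$ term in~\eqref{eqn-lqg-metric-coord}, i.e.\ from the metric's spatial transformation rule, not from the field's; so Stage 2 is circular, and no amount of absolute-continuity bookkeeping repairs it. Stage 1 is less problematic in spirit (approximate multiplicativity plus subadditivity would indeed yield \emph{some} exponent $\alpha\in[-\Lambda,\Lambda]$), but note that the natural chaining/covering arguments available from Axiom~\ref{item-metric-coord} and Proposition~\ref{prop-two-set-dist} lose polynomial factors in $\delta$ (e.g.\ the $\delta^{-2}$ count of sub-squares), so the claimed $O(1)$ error needs justification; an $o(\log\delta^{-1})$ error would suffice for the power law but still leaves the exponent unidentified.

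For comparison, the paper pins down the exponent by an entirely different external input: it introduces a discretized LFPP metric $\wt D_h^{\delta\BB r}$ built from circle averages on the lattice $\delta\BB r\BB Z^2$, shows via Proposition~\ref{prop-two-set-dist} (distances across and around $\delta\BB r$-annuli) that the $D_h$-crossing distance of $\BB r\BB S$ is $\delta^{\pm o_\delta(1)}\frk c_{\delta\BB r}$ times the $\wt D_h^{\delta\BB r}$-crossing distance, and then invokes the LFPP distance exponent $\delta^{-\xi Q+o_\delta(1)}$ from~\cite[Theorem 1.5]{dg-lqg-dim} (Lemma~\ref{lem-lfpp-dist}). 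Comparing with Axiom~\ref{item-metric-coord} at scale $\BB r$ gives $\frk c_{\BB r}/\frk c_{\delta\BB r}=\delta^{-\xi Q+o_\delta(1)}$ in one stroke. The moral is that within the weak-metric axioms the value $\xi Q$ cannot be produced internally; it must be imported from an exponent computation for an approximating model, which is what~\cite{dg-lqg-dim} supplies.
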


The definition of a weak LQG metric uses only the parameter $\xi$. Theorem~\ref{thm-metric-scaling} connects this definition to the coordinate change parameter $Q$.
This will be important for the proof in~\cite{gm-uniqueness} that any weak LQG metric satisfies the coordinate change formula~\eqref{eqn-lqg-metric-coord}.
Theorem~\ref{thm-metric-scaling} will be proven in Section~\ref{sec-metric-scaling} by comparing $D_h$-distances to LFPP distances and using the fact that the $\delta$-LFPP distance between two fixed points is typically of order $\delta^{1-\xi Q + o_\delta(1)}$~\cite[Theorem 1.5]{dg-lqg-dim} (for convenience, for this argument we will work with a variant of LFPP which is defined in a slightly different manner than the version in~\eqref{eqn-lfpp}). 

\begin{remark} \label{remark-sqrt8/3}
Theorem~\ref{thm-metric-scaling} gives a proof purely in the continuum that the exponent $d_{\sqrt{8/3}}$ of~\cite{dzz-heat-kernel,dg-lqg-dim} is equal to $4$. Previously, this was proven in~\cite{dg-lqg-dim} (building on~\cite{ghs-map-dist}) using the known ball volume growth exponent for random triangulations~\cite{angel-peeling}.
To see why Theorem~\ref{thm-metric-scaling} implies that $d_{\sqrt{8/3}}=4$, we observe that the $\sqrt{8/3}$-LQG metric of~\cite{lqg-tbm1,lqg-tbm2,lqg-tbm3} satisfies the axioms for a weak LQG metric with parameter $\xi = 1/\sqrt 6$. 
Moreover, by the LQG coordinate change formula for the $\sqrt{8/3}$-LQG metric, Axiom~\ref{item-metric-coord} holds for this metric with with $\frk c_r = r^{5/6}$.
Theorem~\ref{thm-metric-scaling} therefore implies that if $\gamma \in (0,2)$ is chosen so that $\gamma / d_\gamma = 1/\sqrt 6$, then the associated parameter $Q =2/\gamma + \gamma/2$ satisfies $Q/\sqrt 6 = 5/6$, i.e., $Q = 5/\sqrt 6$ which is equivalent to $\gamma = \sqrt{8/3}$. 
Hence $\gamma/d_\gamma = 1/\sqrt 6$ when $\gamma =\sqrt{8/3}$, so $d_{\sqrt{8/3}}=4$. 
\end{remark}

Our next main result gives the optimal H\"older exponents for $D_h$ with respect to the Euclidean metric. 
 
\begin{thm}[Optimal H\"older exponents] \label{thm-optimal-holder}
Let $U\subset\BB C$ be open and bounded.
Almost surely, the identity map from $U$, equipped with the Euclidean metric, to $(U,D_h)$ is locally H\"older continuous with any exponent smaller than $\xi(Q-2)$ and is not locally H\"older continuous with any exponent larger than $\xi(Q-2)$. 
Furthermore, the inverse of this map is a.s.\ locally H\"older continuous with any exponent smaller than $\xi^{-1}(Q+2)^{-1}$ and is not locally H\"older continuous with any exponent larger than $\xi^{-1}(Q+2)^{-1}$. 
\end{thm}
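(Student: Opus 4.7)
The plan is to deduce both the H\"older bounds and their sharpness from three ingredients: (i) the tightness axiom (Axiom~\ref{item-metric-coord}) together with translation invariance (Axiom~\ref{item-metric-translate}) and Weyl scaling (Axiom~\ref{item-metric-f}), which controls the typical $D_h$-size of a Euclidean ball of radius $r$ centered at $z$ in terms of $\frk c_r e^{\xi h_r(z)}$; (ii) the sharp scaling estimate $\frk c_r = r^{\xi Q + o(1)}$ from Theorem~\ref{thm-metric-scaling}; and (iii) classical extremal theory for the circle average process, namely the uniform bound $\sup_{z\in K} h_r(z) \le (2+o(1))\log(1/r)$ (and its symmetric lower analogue) a.s.\ as $r\to 0$, together with the existence of $\alpha$-thick and $\alpha$-thin points of the GFF for every $\alpha<2$.

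For the upper H\"older bound on the identity map $(U,|\cdot|)\to(U,D_h)$, I would first apply Axiom~\ref{item-metric-coord} and translation invariance to obtain, for each compact $K$, a tight family of random variables $X_r$ with
\[
\op{diam}\bigl(B_r(z); D_h\bigr) \le X_r \cdot \frk c_r\, e^{\xi h_r(z)},\qquad z\in K,\ r\in(0,1].
\]
A union bound over a dyadic grid of centers and dyadic scales $r=2^{-n}$, combined with Gaussian tail bounds for $h_{2^{-n}}(z)$ and a Borel--Cantelli argument, would then give $\op{diam}(B_{2^{-n}}(z); D_h) \le 2^{-n(\xi(Q-2)-\ep)}$ for all large $n$ and all grid points $z$ simultaneously; a standard chaining argument over dyadic scales promotes this to H\"older continuity with any exponent less than $\xi(Q-2)$. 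The corresponding lower bound $D_h(z,w) \ge |z-w|^{\xi(Q+2)+\ep}$ is obtained symmetrically, using the tight lower bound on $\frk c_r^{-1} e^{-\xi h_r(z)} D_h(z,\bdy B_r(z))$ from~\eqref{eq:tight} combined with the lower bound $h_r(z) \ge -(2+\ep)\log(1/r)$.

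For sharpness, fix $\alpha<2$ and use the fact that the set of $\alpha$-thick points of the whole-plane GFF is a.s.\ dense in any open set. At such a point $z$ one has $h_{r_n}(z)\ge \alpha\log(1/r_n)$ along a sequence $r_n\to 0$, and the tight lower bound on point-to-boundary distances would give
\[
D_h\bigl(z,\bdy B_{r_n}(z)\bigr) \ge r_n^{\xi(Q-\alpha)+o(1)}
\]
with positive conditional probability. Choosing $w_n\in\bdy B_{r_n}(z)$ realizing the infimum yields $D_h(z,w_n)/|z-w_n|^\beta\to\infty$ whenever $\beta>\xi(Q-\alpha)$; letting $\alpha\uparrow 2$ then rules out H\"older continuity of the identity with any exponent greater than $\xi(Q-2)$. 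Sharpness of the reverse H\"older exponent is handled by the same argument with $\alpha$-thin points in place of $\alpha$-thick points and the diameter upper bound in place of the point-to-boundary lower bound.

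The main obstacle I anticipate is the correlation between the thick (or thin) point event at $z$ and the local geometry of $D_h$ near $z$: one cannot simply insert a pointwise bound on $h_r(z)$ into the unconditional tightness estimate. I would resolve this by conditioning on the value of the circle average $h_{r_n}(z)$ and using Cameron--Martin to identify the conditional law of $h$ with the law of $h+f_n$ for an explicit deterministic function $f_n$ with $\sup|f_n| = \alpha\log(1/r_n)+O(1)$. Weyl scaling together with locality (Axiom~\ref{item-metric-local}) then reduces the conditional distance estimate to the unconditional tightness statement, the effect of $f_n$ being absorbed into the exponent $\xi\alpha$ up to a bounded multiplicative error that does not affect the final H\"older exponent.
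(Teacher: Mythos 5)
Your architecture matches the paper's: dyadic union bounds plus scale-by-scale estimates for the two H\"older bounds, and thick points for sharpness. But two steps hide the real work. First, for the union bound you write that Axiom~\ref{item-metric-coord} gives ``a tight family of random variables $X_r$'' controlling $\op{diam}(B_r(z);D_h)/(\frk c_r e^{\xi h_r(z)})$. Tightness alone is useless here: to survive a union bound over $\asymp \ep^{-2}$ grid points at scale $\ep$ and a Borel--Cantelli over scales, you need a \emph{polynomial} tail for $X_r$, uniform in $r$, and the Gaussian part $e^{\xi h_r(z)}$ already saturates the available probability budget (the exponent $(\xi Q-s)^2/(2\xi^2)$ exceeds $2$ exactly when $s<\xi(Q-2)$, which is where the threshold comes from). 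The needed tail is the content of the diameter moment bound (Theorem~\ref{thm-moment} / Proposition~\ref{prop-diam-moment}, moments up to order $4d_\gamma/\gamma^2$) and, for the reverse direction, of Proposition~\ref{prop-two-set-dist}; the paper's Lemmas~\ref{lem-ep-diam} and~\ref{lem-ep-cross} package these as one-ball tail estimates via Chebyshev with the optimal moment $p=(\xi Q-s)/\xi^2$. You must invoke those results; you cannot get them from Axiom~\ref{item-metric-coord} directly.

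Second, the sharpness argument is not rigorous as stated. You correctly identify the correlation obstacle, but your fix --- condition on $h_{r_n}(z)$ and apply Cameron--Martin --- only makes sense for a \emph{deterministic} $z$, whereas the $\alpha$-thick points you want to exploit are exceptional, field-measurable points; ``with positive conditional probability'' at such a point has no meaning, and for a fixed $z$ the thickness event has probability zero. The paper closes this loop by sampling $\BB z$ from the $\alpha$-LQG measure of the zero-boundary part of $h|_V$ and using the rooted-measure identity ($h = \wt h - \alpha\log|\cdot-\BB z| + \frk g_{\BB z}$ under the reweighted law), so that the point-to-circle estimate of Proposition~\ref{prop-dist-to-int} applies a.s.\ at $\BB z$ and mutual absolute continuity transfers the conclusion back to the unweighted law. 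Some device of this kind (or a genuine second-moment/multiscale construction of a thick point with the desired metric behavior) is needed; without it your sharpness claim is a heuristic, not a proof. Also note that your thin-point argument for the reverse exponent needs the diameter \emph{upper} bound at a log-singular field, i.e.\ essentially Proposition~\ref{prop-dist-to-int} again rather than the plain tightness statement.
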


For $\gamma=\sqrt{8/3}$, one has $\xi = 1/\sqrt 6$ and $Q = 5/\sqrt 6$, so the optimal H\"older exponents are given by 
\eqb
\xi(Q-2) = \frac{1}{6} (5 - 2 \sqrt 6) \approx 0.0168
\quad \text{and} \quad
\xi^{-1}(Q+2)^{-1} = 30 - 12 \sqrt 6 \approx 0.6061 .
\eqe

The intuitive reason why Theorem~\ref{thm-optimal-holder} is true is as follows. If $z$ is an $\alpha$-thick point for $h$, i.e., the circle average satisfies $h_\ep(z) = (\alpha+o_\ep(1)) \log\ep^{-1}$ as $\ep\rta 0$, then we can show that the $D_h$-distance from $z$ to $\bdy B_\ep(z)$ behaves like $\ep^{\xi(Q-\alpha)+ o_\ep(1)}$ as $\ep\rta 0$. Indeed, this is an easy consequence of the estimates in Section~\ref{sec-ptwise-dist}. 
Almost surely, $\alpha$-thick points exist for $\alpha \in (-2,2)$ but not for $|\alpha| >2$~\cite{hmp-thick-pts}.

We next state some basic moment estimates for distances which are metric analogues of the well-known fact that the $\gamma$-LQG measure has finite moments of all orders in $(-\infty , 4/\gamma^2)$~\cite[Theorems 2.11 and 2.12]{rhodes-vargas-review}.

\begin{thm}[Moment bounds for diameters] \label{thm-moment}
Let $U\subset \BB C$ be open and let $K\subset U$ be a compact connected set with more than one point. 
Then the $U$-internal diameter of $K$ satisfies 
\eqb \label{eqn-moment}
\BB E\left[ \left( \sup_{z, w \in K} D_h(z,w ; U)  \right)^p \right] < \infty ,\quad \forall p \in \left( -\infty , \frac{4 d_\gamma }{\gamma^2} \right) .
\eqe 
\end{thm}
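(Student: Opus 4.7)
I would treat the negative and positive moment ranges separately, building on Weyl scaling (Axiom~\ref{item-metric-f}), the tightness Axiom~\ref{item-metric-coord}, Theorem~\ref{thm-metric-scaling}, and the upper/lower tail estimates for LQG distances proved in the earlier sections of the paper.

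For negative moments ($p \leq 0$), pick two distinct points $z_0, w_0 \in K$; then $\sup_{z,w\in K} D_h(z,w;U) \geq D_h(z_0, w_0; U)$, so it suffices to produce negative moments of a fixed point-to-point distance. Weyl scaling gives $D_{h-C} = e^{-\xi C} D_h$, so subtracting a large constant from $h$ shrinks all distances by a definite factor. Combined with the tightness of $\frk c_1^{-1} e^{-\xi h_1(0)} D_h(\cdot,\cdot)$ from Axiom~\ref{item-metric-coord} at scale $1$ and the Gaussian tail of the circle average $h_1(0)$ (whose variance is finite once the additive constant of the whole-plane GFF is fixed), one obtains a lower tail estimate
\begin{equation*}
\BB P[D_h(z_0, w_0; U) < \epsilon] \lesssim \exp\!\bigl(-c (\log 1/\epsilon)^2\bigr) \quad \text{as } \epsilon \to 0,
\end{equation*}
which gives finite moments of all negative orders.

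For positive moments ($0 < p < 4d_\gamma/\gamma^2$), the plan is to cover $K$ by finitely many Euclidean balls $B_r(z_i)$ at some small scale $r > 0$ chosen so that $B_{2r}(z_i) \subset U$ for every $i$, and bound $\sup_{z,w \in K} D_h(z,w;U)$ above by a sum of $B_{2r}(z_i)$-internal diameters of these balls, plus a few short connecting crossings. By Axiom~\ref{item-metric-coord}, each local diameter equals $\frk c_r e^{\xi h_r(z_i)} X_i$ with $X_i$ having tight law. The crucial quantitative input is that the $X_i$ admit a polynomial upper tail with exponent strictly greater than $p$ --- a refinement of the qualitative tightness in Axiom~\ref{item-metric-coord} that is provided by the upper tail estimates on the distance across a rectangle/ball proved earlier in the paper. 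Taking $p$-th moments, applying H\"older, and using the lognormal moment $\mathbb{E}[e^{p\xi h_r(z_i)}] = r^{-p^2\xi^2/2 + O(1)}$ together with $\frk c_r = r^{\xi Q + o_r(1)}$ from Theorem~\ref{thm-metric-scaling}, the estimate stays finite in the range $p < 4d_\gamma/\gamma^2$. This threshold matches the moment threshold $4/\gamma^2$ for the $\gamma$-LQG area measure under the heuristic identification ``distance $\sim$ area${}^{1/d_\gamma}$'' (cf.\ Remark~\ref{remark-sqrt8/3}).

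The main obstacle is the positive-moment case, specifically producing the sharp threshold $4d_\gamma/\gamma^2$: the qualitative tightness in Axiom~\ref{item-metric-coord} alone is insufficient and must be upgraded to a polynomial upper tail bound on the normalized local diameters $X_i$ that is uniform in the scale $r$. This uniform polynomial tail, which is exactly the content of the earlier rectangle-crossing estimates, is what allows the final balance between the Gaussian moments of $e^{p\xi h_r}$ and the scaling $\frk c_r^p$ to produce the correct cutoff at $4d_\gamma/\gamma^2$.
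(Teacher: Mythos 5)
Your negative-moment reduction is fine in outline, but the mechanism you cite is not enough: tightness of $\frk c_1^{-1}e^{-\xi h_1(0)}D_h$ plus the Gaussian tail of $h_1(0)$ only yields a fixed, non-decaying failure probability for the event $\{D_h(z_0,w_0;U)<\ep\}$; to get a tail decaying in $\ep$ (hence all negative moments) one needs the near-independence of the field across disjoint concentric annuli, which is how the paper's Proposition~\ref{prop-two-set-dist} produces a superpolynomial lower tail for set-to-set distances. If you simply invoke that proposition (any path from $z_0$ to $w_0$ crosses a fixed annulus separating them), the negative moments follow at once; the specific rate $\exp(-c(\log 1/\ep)^2)$ you wrote is neither needed nor available from the axioms at this stage.

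The positive-moment argument has a genuine gap, and it is essentially circular. You cover $K$ at a \emph{single} scale $r$ and need the normalized local diameters $X_i = \frk c_r^{-1}e^{-\xi h_r(z_i)}\sup_{u,v\in B_r(z_i)}D_h(u,v;B_{2r}(z_i))$ to have a polynomial upper tail with exponent up to $4d_\gamma/\gamma^2$, and you attribute this to ``the earlier rectangle-crossing estimates.'' But the only upper-tail inputs available before this theorem are of set-to-set type (Proposition~\ref{prop-two-set-dist}): the distance between two \emph{disjoint} compact sets, which has finite moments of \emph{all} orders. A diameter is a strictly harder quantity (it must control all pairs, including pairs near an atypically thick point), and its moment threshold $4d_\gamma/\gamma^2$ cannot be extracted from crossing estimates at one scale; the tail bound you need for $X_i$ is, after rescaling, exactly the statement being proved (Proposition~\ref{prop-internal-moment}). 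In the paper, the threshold instead comes from a multi-scale chaining argument: one builds crossings of dyadic $2^{-n}\BB r\times 2^{-n-1}\BB r$ rectangles at \emph{every} scale $n$ using Proposition~\ref{prop-two-set-dist} (which contributes no moment restriction), strings them together into a $\#$-shaped web as in Figure~\ref{fig-diam}, and the bottleneck is the simultaneous control of the circle averages $h_{2^{-n}\BB r}(w)$ over all scales and positions (Lemma~\ref{lem-circle-avg-all}). The exponent $q+\sqrt{q^2-4}$, which equals $4/\gamma$ at $q=Q$, arises from optimizing a thick-point-style entropy-versus-Gaussian-tail tradeoff over the scale at which the field first becomes too large --- not from the balance of $\BB E[e^{p\xi h_r}]$ against $\frk c_r^p$ at a fixed $r$, which for fixed $r$ is merely a constant and produces no threshold at all. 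To repair your argument you would need to replace the single-scale covering by this all-scales construction, or otherwise supply an independent proof of the uniform polynomial tail for $X_i$.
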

 
For $\gamma=\sqrt{8/3}$, we get finite moments up to order 6. 
We also have the following bound for distances between sets. 
In this case, we get finite moments of all orders.

\begin{thm}[Distance between sets] \label{thm-two-set-dist0}
Let $U \subset \BB C$ be an open set (possibly all of $\BB C$) and let $K_1,K_2\subset U$ be connected, disjoint compact sets which are not singletons. 
Then 
\eqb \label{eqn-two-set-dist0}
 \BB E\left[ \left( D_h( K_1,  K_2 ;   U) \right)^p \right] < \infty,\quad\forall p \in \BB R .
\eqe
\end{thm}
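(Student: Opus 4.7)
The plan is to establish super-polynomial tail decay in both directions for $D := D_h(K_1, K_2; U)$: for every $M > 0$, $\BB P[D > t] \leq C_M t^{-M}$ as $t \to \infty$ and $\BB P[D < \delta] \leq C_M \delta^M$ as $\delta \to 0$, which together yield finite moments of all orders. (We restrict implicitly to the case when $K_1, K_2$ lie in a common connected component of $U$; otherwise $D = \infty$ a.s.\ and there is nothing to prove on the negative-moment side.) In both directions the crucial input is that $K_1, K_2$ are connected compact sets of positive diameter, which will allow us to produce many ``essentially independent'' short paths between $K_1$ and $K_2$ (upper tail) or many ``essentially independent'' barriers separating $K_1$ and $K_2$ (lower tail).

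\emph{Upper tail.} For each $N \in \BB N$, I would construct $N$ pairwise disjoint open topological rectangles $R_1, \ldots, R_N$ with $\bar R_i \subset U$ such that two opposite sides of each $\bar R_i$ lie in $K_1$ and $K_2$, respectively; this is topologically possible because $K_1, K_2$ are connected and of positive diameter. Any path across $R_i$ joining these two sides is a path from $K_1$ to $K_2$ in $U$, so $D \leq \min_i D_h(K_1 \cap \bar R_i, K_2 \cap \bar R_i; R_i)$. Using the Markov property of the whole-plane GFF, decompose $h = \tilde h + h^\circ$ on $\bigsqcup_i R_i$, where $h^\circ$ is a zero-boundary GFF (so its restrictions to the different $R_i$'s are independent), and $\tilde h$ is independent of $h^\circ$ and harmonic on each $R_i$. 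On any compact sub-rectangle $R_i^\star \Subset R_i$, $\tilde h$ is a continuous function with Gaussian-tailed $L^\infty$ norm. Applying Weyl scaling (Axiom~\ref{item-metric-f}) inside $R_i$ will give
\[
D_h(K_1 \cap \bar R_i, K_2 \cap \bar R_i; R_i) \leq e^{\xi \sup_{R_i^\star} \tilde h}\, D_{h^\circ}(K_1 \cap \bar R_i, K_2 \cap \bar R_i; R_i)
\]
once $R_i^\star$ is chosen to contain an essentially optimal path. Conditional on $\tilde h$, the right-hand sides are independent across $i$; by Theorem~\ref{thm-moment} together with an absolute-continuity comparison between $h^\circ$ and a whole-plane GFF plus a continuous function, each has polynomial tail $\BB P[\cdot > s] \leq C s^{-q_0}$ for some $q_0 > 0$. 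Multiplying $N$ such tails conditional on $\tilde h$ and integrating against the Gaussian tails of $\sup_{R_i^\star} \tilde h$ yields $\BB P[D > t] \leq C_N t^{-N q_0}$ for every $N$, hence super-polynomial decay.

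\emph{Lower tail.} The dual construction uses $N$ pairwise disjoint annuli $A_1, \ldots, A_N \subset U$, each separating $K_1$ from $K_2$ (for instance, concentric annuli around $K_1$ whose radii lie in a range avoiding $K_2$). Every path in $U$ from $K_1$ to $K_2$ must cross each $A_i$, so $D \geq \max_i D_h(\text{inner edge of } A_i, \text{outer edge of } A_i; A_i)$. The analogous Markov decomposition together with Weyl scaling applied in the lower direction (using $e^{\xi \inf_{A_i^\star} \tilde h}$ to extract a lower factor), combined with a polynomial lower-tail bound on a single annular crossing obtained from Axiom~\ref{item-metric-coord} (tightness across scales) and Theorem~\ref{thm-moment}, should yield $\BB P[D < \delta] \leq C_N \delta^{N q_0'}$ for every $N$.

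\emph{Main obstacle.} The main technical difficulty, present in both parts, will be handling the Markov decomposition carefully: ensuring the harmonic part $\tilde h$ has Gaussian-tailed $L^\infty$ bounds on the compact sub-rectangles (resp.\ sub-annuli) that carry the Weyl scaling step, and transferring the moment bounds from Theorem~\ref{thm-moment} (stated for whole-plane GFF variants) to the zero-boundary GFF $h^\circ$ by an explicit absolute continuity argument. The topological construction of $N$ pairwise disjoint tubes or annuli given only that $K_1, K_2$ are connected compact sets of positive diameter also deserves some care.
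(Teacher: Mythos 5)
Your overall architecture --- boost a one-region polynomial tail to a superpolynomial tail by multiplying over $N$ nearly independent regions --- is reasonable in principle, but as written the proof has a circularity that cannot be repaired without essentially redoing the paper's argument. The per-tube (and per-annulus) polynomial tail is the crux, and you obtain it from Theorem~\ref{thm-moment}. In this paper, however, Theorem~\ref{thm-moment} is proved \emph{after} and \emph{from} Theorem~\ref{thm-two-set-dist0}: the proof of Proposition~\ref{prop-internal-moment} builds its paths across rectangles using the upper bound of Proposition~\ref{prop-two-set-dist} (the uniform-in-scale form of Theorem~\ref{thm-two-set-dist0}), and it also relies on Theorem~\ref{thm-metric-scaling}, whose proof again uses Proposition~\ref{prop-two-set-dist}. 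If you instead try to source the one-tube tail directly from the axioms, you only get tightness (Axiom~\ref{item-metric-coord} and~\eqref{eq:tight}), i.e.\ a bound of the form $\BB P[D_i>t_0]\le 1-p_0$ for a single fixed $t_0$, with no decay in $t$; multiplying $N$ such bounds over fixed tubes gives a constant, not a tail, and letting $N$ grow with $t$ requires quantitative control of how the tightness constants degrade as the tubes shrink --- which is exactly the content of the estimate you are trying to prove. There is also a smaller problem on the lower-tail side: the negative moments in Theorem~\ref{thm-moment} control the event that a \emph{diameter} is small, which does not bound the probability that an annulus \emph{crossing} distance is small (crossing $\le$ diameter), so even granting Theorem~\ref{thm-moment} the per-annulus lower-tail input is missing.

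The paper's mechanism for upgrading tightness to superpolynomial bounds is different from yours and is worth comparing. Instead of $N$ disjoint tubes at a fixed scale, it uses, for each point $w$, the $\asymp\log\ep^{-1}$ concentric annuli $\BB A_{r,2r}(w)$ with $r\in[\ep^{1+\nu}\BB r,\ep\BB r]$ dyadic: the good-annulus event $E_r(w;C)$ of~\eqref{eqn-annulus-event} has probability $\ge p$ at every scale by tightness, and the events at distinct scales are nearly independent by the Markov property (Lemma~\ref{lem-annulus-iterate}), so with probability $1-O_\ep(\ep^M)$ every point of $\BB r U$ has a nearby good annulus (Lemma~\ref{lem-good-annulus-all}). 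The lower bound then follows because any path from $\BB r K_1$ to $\BB r K_2$ must cross some good annulus, and the upper bound because a chain of good circles meeting both $K_1$ and $K_2$ can be strung together (this is where connectedness and non-degeneracy of the $K_i$ enter, via Lemma~\ref{lem-connected}); the circle averages $e^{\xi h_r(w)}$ and the constants $\frk c_r$ are controlled by Gaussian tails and~\eqref{eqn-scaling-constant}, and finally $\ep$ is taken to be a suitable power of $A^{-1}$. Your spatial-independence idea via the Markov decomposition could in principle replace the chaining step once a per-tube polynomial tail is available, but it cannot produce that tail on its own; the independence you need must come from iterating over scales, not only over disjoint regions.
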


The results of~\cite{dddf-lfpp} show that if $D_h$ is a subsequential scaling limit of the LFPP metrics~\eqref{eqn-lfpp}, then one has the following slightly stronger version of Theorem~\ref{thm-two-set-dist0}: 
\eqb \label{eqn-lognormal}
\BB P\left[A^{-1} \leq \frk a_\ep^{-1} D_h^\ep( K_1,  K_2 ;   U) \leq A \right] \geq 1 - c_0 e^{-c_1 (\log A)^2/\log\log A} ,\quad\forall A > 2 e^e
\eqe
for constants $c_0 , c_1 > 0$ allowed to depend on $K_1,K_2,U$. \emph{A posteriori}, one gets~\eqref{eqn-lognormal} for every weak LQG metric since~\cite{gm-uniqueness} proves that the weak LQG metric is unique for each $\gamma \in (0,2)$, so in particular it is the limit of LFPP.
 
We now turn our attention to point-to-point distances. These estimates also work if we allow the field to have a log singularity. To make sense of the metric in this case, we note that since $\log|\cdot|$ is continuous away from 0, we can define $D_{h-\alpha\log|\cdot|} $ as a continuous length metric on $\BB C\setminus \{0\}$ by $D_{h-\alpha\log|\cdot|} = |\cdot|^{-\alpha\xi} \cdot D_h$, in the notation~\eqref{eqn-metric-f}. We can then extend $D_{h-\alpha\log|\cdot|}$ to a metric defined on all of $\BB C$ which is allowed to take the value $\infty$ by taking the infima of the $D_{h-\alpha\log|\cdot|}$-lengths of paths.  We can similarly define the metric associated with fields with two or more log singularities.

\begin{thm}[Distance from a point to a circle] \label{thm-pt-to-circle-moment}
Let $\alpha \in \BB R$ and let $h^\alpha := h - \alpha\log|\cdot|$. 
If $\alpha \in (-\infty ,Q)$, then 
\eqb\label{eqn-pt-to-circle-moment0} 
\BB E\left[ \left(  D_{h^\alpha}\left(  0 , \bdy \BB D  \right)  \right)^p \right]  < \infty , \quad \forall p\in \left(-\infty , \frac{2d_\gamma }{\gamma}(Q-\alpha)\right) .
\eqe     
If $\alpha >Q$, then a.s.\ $D_{h^\alpha}(0,z) = \infty$ for every $z\in\BB C\setminus \{0\}$. 
\end{thm}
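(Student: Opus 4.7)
The proof has two parts: (a) finite moments when $\alpha < Q$, and (b) infinite distance when $\alpha > Q$. Both are based on a dyadic decomposition around $0$. Set $A_k := B_{2^{-k}}(0) \setminus \ol{B_{2^{-k-2}}(0)}$ for $k \geq 0$. Any continuous path from $0$ to a point of $\bdy \BB D$ must cross each annulus $\{2^{-k-1} \leq |z| \leq 2^{-k}\}$, so
\eqbn
D_{h^\alpha}(0, \bdy \BB D) \leq \sum_{k=0}^\infty D_{h^\alpha}\bigl(\bdy B_{2^{-k-1}}(0), \bdy B_{2^{-k}}(0); A_k\bigr).
\eqen
On $A_k$ the function $-\alpha \log|\cdot|$ lies within $O(1)$ of $\alpha k \log 2$, so by Weyl scaling (Axiom~\ref{item-metric-f}), the $k$-th summand is at most $C\, 2^{k\alpha\xi} T_k$, where $T_k := D_h\bigl(\bdy B_{2^{-k-1}}(0), \bdy B_{2^{-k}}(0); A_k\bigr)$. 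By Axiom~\ref{item-metric-coord} and Theorem~\ref{thm-metric-scaling}, $T_k \eqD \frk c_{2^{-k}} e^{\xi h_{2^{-k}}(0)} Y_k$ with $\frk c_{2^{-k}} = 2^{-k\xi Q + o(k)}$ and the $Y_k$ drawn from a tight family (obtained by evaluating the rescaled metric of Axiom~\ref{item-metric-coord} on the fixed unit-scale annulus).

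For the positive moments in (a), fix $p \in (0, 2(Q-\alpha)/\xi)$ and choose $r > 1$ close to $1$ so that $p < 2(Q-\alpha)/(r\xi)$; set $s := r/(r-1)$. Minkowski's inequality (or subadditivity of $x \mapsto x^p$ when $p \in (0,1)$) combined with H\"older's inequality applied to $\BB E[e^{p\xi h_{2^{-k}}(0)} Y_k^p]$ and the Gaussian moment identity $\BB E[e^{pr \xi h_{2^{-k}}(0)}] = 2^{p^2 r^2 \xi^2 k/2}$ yields
\eqbn
\| k\text{-th summand} \|_{L^p} \leq C \cdot 2^{-k\xi(Q - \alpha - pr\xi/2) + o(k)}\, \|Y_k\|_{L^{ps}}.
\eqen
Provided $\sup_k \|Y_k\|_{L^{ps}} < \infty$ (see the main obstacle below), our choice of $r$ makes the exponent strictly negative, hence the series converges and $\BB E[D_{h^\alpha}(0, \bdy \BB D)^p] < \infty$. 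The negative moments are easier: any path from $0$ to $\bdy \BB D$ contains a sub-arc from $\bdy B_{1/2}(0)$ to $\bdy \BB D$ through an annular region where $-\alpha \log|\cdot|$ is bounded, so by Weyl scaling
\eqbn
D_{h^\alpha}(0, \bdy \BB D) \geq c\, D_h\bigl(\bdy B_{1/2}(0), \bdy \BB D;\, B_2(0) \setminus \ol{B_{1/4}(0)}\bigr),
\eqen
whose right-hand side has all negative moments by Theorem~\ref{thm-two-set-dist0}.

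For (b), the same dyadic decomposition produces a \emph{lower} bound: for any continuous path $\gamma$ from $0$ to $z \in \BB C \setminus \{0\}$,
\eqbn
\op{len}(\gamma; D_{h^\alpha}) \geq \sum_k c\, 2^{k\xi(\alpha - Q) + o(k)}\, e^{\xi h_{2^{-k}}(0)}\, Y_k',
\eqen
where $Y_k' > 0$ is drawn from a tight family. Since $k \mapsto h_{2^{-k}}(0)$ is (modulo an additive constant) a Brownian motion at time $k \log 2$, the strong law gives $h_{2^{-k}}(0) = o(k)$ a.s., so each summand grows like $2^{k\xi(\alpha - Q) + o(k)}$ with only a subexponential random correction, which forces divergence in probability. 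Almost-sure divergence is obtained by exploiting the independence of the Brownian increments of the circle-average process together with the Markov decomposition of the GFF across well-separated annuli, which allows a Borel--Cantelli argument along a sparse subsequence. Hence $D_{h^\alpha}(0, z) = \infty$ a.s.\ for each fixed $z$, and the ``for every $z$'' statement follows because $D_{h^\alpha}$ is a continuous metric on $\BB C \setminus \{0\}$ (so finiteness for one $z_0 \neq 0$ would propagate to every $z \neq 0$ via the triangle inequality).

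\textbf{Main obstacle.} The key technical difficulty is to establish the uniform moment bound $\sup_k \|Y_k\|_{L^{ps}} < \infty$. Axiom~\ref{item-metric-coord} provides only tightness of the laws of $Y_k$, not uniform integrability. Bridging this gap requires transferring the moment estimate of Theorem~\ref{thm-two-set-dist0} from the unit scale to all dyadic scales using locality, Weyl scaling, and the Markov decomposition of the GFF --- equivalently, one must prove an estimate of the form $\BB E[T_k^p] \leq C_p\, \frk c_{2^{-k}}^p \cdot 2^{p^2 \xi^2 k/2 + o(k)}$ uniformly in $k$ for any fixed $p$.
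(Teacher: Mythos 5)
Your strategy is essentially the paper's: the same dyadic annulus decomposition around $0$, the same reduction via Weyl scaling and Theorem~\ref{thm-metric-scaling} to normalized crossing distances times $e^{\xi h_{2^{-k}}(0)}$, and the same Borel--Cantelli/zero-growth-of-$h_{2^{-k}}(0)$ mechanism for the $\alpha>Q$ case. Where you diverge is the final extraction of moments: the paper (Propositions~\ref{prop-pt-to-circle-moment} and~\ref{prop-dist-to-int}) sandwiches $D_{h^\alpha}(0,\bdy B_{\BB r}(0))$ between constants times $\int_0^\infty e^{\xi h_{\BB r e^{-t}}(0)-\xi(Q-\alpha)t\pm\psi(t)}dt$ and then invokes the exact Dufresne density for exponential functionals of Brownian motion, which gives the precise polynomial tail $C^{-2(Q-\alpha)/\xi-o_C(1)}$; you instead use the $L^p$ triangle inequality plus H\"older with $r\to 1^+$. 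Both yield the same moment range $p<2(Q-\alpha)/\xi=\tfrac{2d_\gamma}{\gamma}(Q-\alpha)$; the paper's route costs an exact computation but also delivers the sharp tail asymptotic and the two-sided integral comparison, which are reused elsewhere (e.g.\ for the two-point bound and the optimality of the H\"older exponents). Also, your ``main obstacle'' is not actually an obstacle in the paper's framework: the uniform-across-scales two-set estimate (Proposition~\ref{prop-two-set-dist}) gives superpolynomial tails for $\frk c_{\BB r}^{-1}e^{-\xi h_{\BB r}(0)}D_h(\BB rK_1,\BB rK_2;\BB rU)$ uniformly in $\BB r$, hence $\sup_k\|Y_k\|_{L^{q}}<\infty$ for every $q$, and that proposition is proven before this theorem.

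There is one genuine gap in your upper bound. The justification ``any path must cross each annulus'' supports a \emph{lower} bound; the claimed inequality $D_{h^\alpha}(0,\bdy\BB D)\leq\sum_k D_{h^\alpha}(\bdy B_{2^{-k-1}}(0),\bdy B_{2^{-k}}(0);A_k)$ does not follow, because near-optimal crossings of consecutive annuli land at different points of the shared circle and cannot be concatenated into a single path. You must add, for each $k$, a path in $A_k$ that \emph{disconnects} the inner and outer boundaries (a ``distance around the annulus'' term), which is exactly the extra ingredient in Proposition~\ref{prop-dist-to-int}. The fix is harmless: the disconnecting path is built in the proof of Proposition~\ref{prop-two-set-dist} by stringing together circles $\bdy B_r(w)$ at a well-chosen good scale, so it obeys the same superpolynomial tail bound $\leq A\frk c_{2^{-k}}e^{\xi h_{2^{-k}}(0)}$ as the crossing distance (note this is \emph{not} the internal diameter of a fixed circle, which only has moments up to $4d_\gamma/\gamma^2$), and your H\"older computation goes through verbatim with the extra summand. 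With that correction, and with Proposition~\ref{prop-two-set-dist} supplying the uniform moments, your argument is complete.
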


For example, if $\gamma =\sqrt{8/3}$ and $\alpha = 0$, we get finite moments up to order 10. 
If instead $\gamma =\sqrt{8/3}$ and $\alpha = \gamma$ (which corresponds to the case when 0 is a ``quantum typical" point, see, e.g.,~\cite[Proposition 3.4]{shef-kpz}) we only get finite moments up to order 2. 
In the critical case when $\alpha =  Q$, our estimates at this point are not sufficiently sharp to determine whether $D_{h^Q}\left(  0 , \bdy \BB D  \right) $ is finite.
However, once we know that every weak LQG metric is a strong LQG metric (which is proven in~\cite{gm-uniqueness}) it is not hard to check that a.s.\ $D_{h^Q}\left(  0 , z \right) = \infty$ for every $z\in\BB C\setminus \{0\}$. Similar comments apply in the case when $\alpha=Q$ or $\beta=Q$ in Theorem~\ref{thm-pt-to-pt-moment} just below. 
 
\begin{thm}[Distance between two points] \label{thm-pt-to-pt-moment}
Let $\alpha , \beta \in \BB R$, let $z,w\in \BB C$ be distinct, and let $h^{\alpha,\beta} := h - \alpha \log|\cdot - z| - \beta \log|\cdot - w|$.  
If $\alpha,\beta \in (-\infty,Q)$, then 
\eqb\label{eqn-pt-to-pt-moment0}
\BB E\left[ \left(   D_{h^\alpha}\left(  z,w ; B_{4|z-w|}(z)  \right)  \right)^p \right] < \infty, \quad \forall p\in \left(-\infty , \frac{2d_\gamma}{\gamma}(Q- \max\{\alpha,\beta\} ) \right).
\eqe   
If either $\alpha > Q$ or $\beta > Q$, then a.s.\ $D_{h^{\alpha,\beta}}(z,w) = \infty$.
\end{thm}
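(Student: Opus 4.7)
The plan is to reduce Theorem~\ref{thm-pt-to-pt-moment} to Theorem~\ref{thm-pt-to-circle-moment} and Theorem~\ref{thm-two-set-dist0} via a three-leg decomposition of any path realizing $D_{h^{\alpha,\beta}}(z,w;B_{4|z-w|}(z))$. By translation invariance (Axiom~\ref{item-metric-translate}) we assume $z=0$, and by Axiom~\ref{item-metric-coord} together with Theorem~\ref{thm-metric-scaling} we normalize $|w|=1$: the rescaling introduces a multiplicative factor $\frk c_{|z-w|}e^{\xi h_{|z-w|}(0)}$ and additive shifts of the logarithmic singularities, both of which contribute only lognormal factors that can be decoupled from the distance via H\"older's inequality without changing the moment threshold.

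After this reduction, the triangle inequality for the internal metric gives
\begin{equation*}
D_{h^{\alpha,\beta}}(0,w;B_4(0))\leq D_{h^{\alpha,\beta}}(0,\partial B_{1/4}(0)) + D_{h^{\alpha,\beta}}(\partial B_{1/4}(0),\partial B_{1/4}(w);A) + D_{h^{\alpha,\beta}}(w,\partial B_{1/4}(w)),
\end{equation*}
with $A:=B_4(0)\setminus(B_{1/8}(0)\cup B_{1/8}(w))$. The first leg is controlled by writing $h^{\alpha,\beta}|_{B_{1/2}(0)}=(h-\alpha\log|\cdot|)|_{B_{1/2}(0)}+f_0|_{B_{1/2}(0)}$ with $f_0(u):=-\beta\log|u-w|$ bounded continuous on $B_{1/2}(0)$. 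By locality (Axiom~\ref{item-metric-local}) and Weyl scaling (Axiom~\ref{item-metric-f}),
\begin{equation*}
D_{h^{\alpha,\beta}}(0,\partial B_{1/4}(0))\leq e^{\xi\|f_0\|_{L^\infty(B_{1/2}(0))}}D_{h-\alpha\log|\cdot|}(0,\partial B_{1/4}(0)),
\end{equation*}
and Theorem~\ref{thm-pt-to-circle-moment} (rescaled from the unit disk to $B_{1/4}(0)$ using Axiom~\ref{item-metric-coord}) gives finite $p$-th moments for $p<(2d_\gamma/\gamma)(Q-\alpha)$. The third leg is treated symmetrically, yielding finite moments for $p<(2d_\gamma/\gamma)(Q-\beta)$. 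On $A$ the function $h^{\alpha,\beta}-h$ is bounded continuous, so Weyl scaling combined with Theorem~\ref{thm-two-set-dist0} gives the middle leg finite moments of every order. Combining the three legs via $(a+b+c)^p\leq 3^p(a^p+b^p+c^p)$ for $p\geq 1$ handles the positive range; for negative $p$ one bounds $D_{h^{\alpha,\beta}}(z,w;B_{4|z-w|}(z))\geq D_{h^{\alpha,\beta}}(0,\partial B_{1/8}(0))$ and applies Theorem~\ref{thm-pt-to-circle-moment} again after the same local-absorbing trick (using locality to replace the internal metric in $B_{1/2}(0)$ with a cutoff version of $f_0$ that is globally bounded).

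For the infinite-distance claim when $\alpha>Q$ (the case $\beta>Q$ is symmetric), choose $\epsilon\in(0,1/4)$ so that $f_0=-\beta\log|\cdot-w|$ is bounded on $B_\epsilon(0)$. By locality and Weyl scaling,
\begin{equation*}
D_{h^{\alpha,\beta}}(0,\partial B_\epsilon(0))\geq e^{-\xi\|f_0\|_{L^\infty(B_\epsilon(0))}}D_{h-\alpha\log|\cdot|}(0,\partial B_\epsilon(0)),
\end{equation*}
and the right-hand side is a.s.\ infinite by Theorem~\ref{thm-pt-to-circle-moment}, forcing $D_{h^{\alpha,\beta}}(0,w)=\infty$ a.s. The main technical obstacle lies not in the decomposition but in the rescaling step: translating Theorem~\ref{thm-pt-to-circle-moment} from $\partial\BB D$ to $\partial B_r(0)$ introduces the factors $\frk c_r e^{\xi h_r(0)}$ from Axiom~\ref{item-metric-coord} together with shifts of the log singularities, and one must check, using Theorem~\ref{thm-metric-scaling} and Gaussian moment bounds for $h_r(0)$, that these contributions can be absorbed via H\"older's inequality without eating into the target exponent $(2d_\gamma/\gamma)(Q-\max\{\alpha,\beta\})$.
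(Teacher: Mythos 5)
Your overall strategy---control the two point-to-circle legs near $z$ and $w$ by Theorem~\ref{thm-pt-to-circle-moment} and the remaining macroscopic piece by Theorem~\ref{thm-two-set-dist0}---is the same basic idea as the paper's, but your displayed ``triangle inequality'' is not valid, and this is exactly the point the paper has to work to get around. The quantity $D_{h^{\alpha,\beta}}(0,\bdy B_{1/4}(0))$ is an infimum over endpoints $x\in\bdy B_{1/4}(0)$, and $D_{h^{\alpha,\beta}}(\bdy B_{1/4}(0),\bdy B_{1/4}(w);A)$ is an infimum over a possibly different pair of endpoints; to concatenate the three legs you must travel between the two optimizing points of $\bdy B_{1/4}(0)$, so an honest bound carries an extra term of the form $\sup_{u,v\in\bdy B_{1/4}(0)}D_{h^{\alpha,\beta}}(u,v;\,\cdot\,)$ (and likewise at $w$). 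If you control that term by the diameter bound of Theorem~\ref{thm-moment}, you only get moments up to $4d_\gamma/\gamma^2$, which is \emph{strictly smaller} than the target $\tfrac{2d_\gamma}{\gamma}(Q-\max\{\alpha,\beta\})$ whenever $\max\{\alpha,\beta\}<\gamma/2$ (e.g.\ for $\gamma=\sqrt{8/3}$ and $\alpha=\beta=0$ the target is $10$ but the diameter bound caps you at $6$). So as written the argument proves a weaker theorem.

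The paper's fix is to replace the circle diameter by the \emph{distance around an annulus} surrounding each singularity (Definition~\ref{def-around-annulus}): Proposition~\ref{prop-dist-to-int} is proved together with the extra clause that there is a loop in $B_{\BB r}(0)\setminus B_{\BB r/e}(0)$ disconnecting its boundary components whose $D_{h^\alpha}$-length obeys the \emph{same} upper bound as $D_{h^\alpha}(0,\bdy B_{\BB r}(0))$ (it is built from the same annular crossings, and such loops are set-to-set quantities with superpolynomial tails, unlike circle diameters). The union of the radial path from $z$, this loop, and the radial path from $w$ then genuinely contains a path from $z$ to $w$ in $B_{8\BB r}(z)$; see the proof of Proposition~\ref{prop-dist-to-int-pt} and the remark that the distance-around statement ``is essential to link up paths in these two disks.'' Two smaller points: (i) Theorem~\ref{thm-pt-to-circle-moment} as stated bounds the unrestricted distance $D_{h^\alpha}(0,\bdy\BB D)$, whereas your concatenation needs internal distances inside $B_{4|z-w|}(z)$, so you need the internal version of the point-to-circle upper bound (which the paper's construction does provide); (ii) the paper does not actually deduce the two-point bound from Theorem~\ref{thm-pt-to-circle-moment} as a black box but from the sharper integral estimates of Propositions~\ref{prop-dist-to-int}--\ref{prop-dist-to-int-pt} combined with the exact Dufresne density for $\int_0^\infty e^{\wt B_s-cs}\,ds$, which is where the threshold $\tfrac{2d_\gamma}{\gamma}(Q-\alpha)$ comes from. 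Your treatment of the $\alpha>Q$ case is correct and matches the paper's.
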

 
As applications of our main results, in Section~\ref{sec-geodesic-bounds} we will also prove some estimates which constrain the behavior of $D_h$-geodesics and which will be important in~\cite{gm-uniqueness}.
To be more precise, the first main estimate of Section~\ref{sec-geodesic-bounds} is Proposition~\ref{prop-line-path}, which gives an upper bound for the amount of time that a $D_h$-geodesic can spend in a small neighborhood of a line segment or a circular arc. Intuitively, one expects that this amount of time is small since LQG geodesics should be fractal and hence should look very different from smooth curves. The particular bound given in Proposition~\ref{prop-line-path} is used in~\cite[Section 3]{gm-uniqueness} to prevent a geodesic from spending a long time in an annulus with a small aspect ratio; and in~\cite[Section 5]{gm-uniqueness} in order to force a geodesic to enter a ``good" region of the plane in which certain distance bounds hold. 

The other main estimate in Section~\ref{sec-geodesic-bounds} is Proposition~\ref{prop-geo-bdy}, which is an upper bound for how much time an LQG geodesic can spend near the boundary of an LQG metric ball centered at its starting point. Intuitively, this amount of time should be small since if $P$ is a $D_h$-geodesic, then $D_h(P(0) , P(t))  = t$ but $D_h(P(0) , \cdot) $ is constant on the boundary of a $D_h$-ball centered at $P(0)$. The bound given in Proposition~\ref{prop-geo-bdy} is used in~\cite[Lemma 4.7]{gm-uniqueness}.

\begin{remark}[The case when $\xi  >2/d_2$] \label{remark-c>1}
Throughout this paper, we focus on the case of weak $\gamma$-LQG metrics.
Since $\gamma \mapsto \gamma/d_\gamma$ is increasing~\cite[Proposition 1.7]{dg-lqg-dim}, weak $\gamma$-LQG metrics have parameter $\xi \in (0,2/d_2)$ (here, $d_2 := \lim_{\gamma\rta 2^-} d_\gamma$). It is natural to wonder whether one can say anything about weak LQG metrics which satisfy the same axioms but with a parameter $\xi \geq 2/d_2$. 
In the critical case when $\xi = 2/d_2$ (i.e., $\gamma=2$), we expect that a weak LQG metric still exists and is the scaling limit of LFPP with parameter $2/d_2$. 
This metric should be the $\gamma$-LQG metric with $\gamma=2$ (the $\gamma=2$ metric should also be the limit as $\gamma\nearrow 2$ of the $\gamma$-LQG metrics, appropriately renormalized). 
We expect that all of the theorem statements in this section still hold for $\xi=2/d_2$, except that the metric $D_h$ is not H\"older continuous w.r.t.\ the Euclidean metric. 

For $\xi > 2/d_2$, we do not expect that any weak LQG metrics with parameter $\xi$ exist. 
However, there should be metrics which satisfy a similar list of properties except that such metrics no longer induce the Euclidean topology.
Instead, there should be an uncountable, dense set of points $z\in\BB C$ such that $D_h(z,w) = \infty$ for every $w\in\BB C\setminus \{z\}$. 
More precisely, let $\lambda(\xi)$ be the exponent for the typical LFPP distance between the left and right sides of $[0,1]^2$ and let $Q(\xi) = (1-\lambda(\xi))/\xi$. By~\cite[Theorem 1.5]{dg-lqg-dim}, $Q(\gamma/d_\gamma) = 2/\gamma+\gamma/2 > 2$. By~\cite[Lemma 4.1]{gp-lfpp-bounds} and~\cite[Theorem 1.1]{lfpp-pos}, $Q(\xi)  \in (0,2)$ for $\xi  >2/d_2$. 
For $\xi  >2/d_2$, the points $z\in\BB C$ which lie at infinite $D_h$-distance from every other point should correspond to so-called \emph{thick points} of $h$ (as defined in~\cite{hmp-thick-pts}) with thickness $\alpha > Q$. 

It is shown in~\cite{dg-supercritical-lfpp} that LFPP with parameter $\xi > 2/d_2$ admits subsequential scaling limits in law w.r.t.\ the topology on lower semicontinuous functions. We expect that the subsequential limit is unique, satisfies the properties discussed in the preceding paragraph, and is related to LQG with matter central charge $\mathbf c \in (1,25)$ (LQG with $\gamma \in (0,2]$ corresponds to $\mathbf c \in (-\infty,1]$). 
In particular, with $Q(\xi)$ as above, the central charge should be related to $\xi$ by $\mathbf c = 25 - 6Q(\xi)^2$. 
See~\cite{ghpr-central-charge,gp-lfpp-bounds,dg-supercritical-lfpp,lfpp-pos,apps-central-charge} for further discussion of this extended phase of LQG and some justification for the above predictions.
\end{remark}

\subsection{Outline}
\label{sec-outline}

In Section~\ref{sec-checking-axioms}, we prove Theorem~\ref{thm-lfpp-axioms}, which says that subsequential limits of LFPP are weak $\gamma$-LQG metrics, taking~\cite{dddf-lfpp} as a starting point. 
Throughout the rest of the paper, we work with an arbitrary weak $\gamma$-LQG metric (not necessarily assumed to arise as a subsequential limit of LFPP). 
Section~\ref{sec-a-priori-estimates} contains the proofs of the results stated in Section~\ref{sec-main-results}. 
In fact, for most of these results, we will prove more quantitative versions which are required to be uniform over all Euclidean scales.
At this point, these statements are not implied by the statements in Section~\ref{sec-main-results} since we are working with a weak $\gamma$-LQG metric, which is only known to be ``tight across scales" (Axiom~\ref{item-metric-coord}) instead of exactly scale invariant.

The first result that we prove for a weak $\gamma$-LQG metric is the estimate for the distance between two sets from Theorem~\ref{thm-two-set-dist0}; this is the content of Section~\ref{sec-perc-estimate}.
In Section~\ref{sec-metric-scaling}, we use this estimate to relate $D_h$-distances to LFPP distances and thereby prove Theorem~\ref{thm-metric-scaling}.
Once Theorem~\ref{thm-metric-scaling} is established, we have some ability to compare $D_h$-distances at different Euclidean scales.
This allows us to prove the moment estimate~\eqref{eqn-moment} of Theorem~\ref{thm-moment} in Section~\ref{sec-diam-moment} as well as the moment estimates of Theorems~\ref{thm-pt-to-circle-moment} and~\ref{thm-pt-to-pt-moment} in Section~\ref{sec-ptwise-dist}.
Using these moment estimates, we then prove Theorem~\ref{thm-optimal-holder} in Section~\ref{sec-holder}.

In Section~\ref{sec-geodesic-bounds}, we apply the estimates of Section~\ref{sec-main-results} to prove some bounds for $D_h$-geodesics.

\subsection{Basic notation}
\label{sec-notation}

\noindent
We write $\BB N = \{1,2,3,\dots\}$ and $\BB N_0 = \BB N \cup \{0\}$. 
\medskip

\noindent
For $a < b$, we define the discrete interval $[a,b]_{\BB Z}:= [a,b]\cap\BB Z$. 
\medskip

\noindent
If $f  :(0,\infty) \rta \BB R$ and $g : (0,\infty) \rta (0,\infty)$, we say that $f(\ep) = O_\ep(g(\ep))$ (resp.\ $f(\ep) = o_\ep(g(\ep))$) as $\ep\rta 0$ if $f(\ep)/g(\ep)$ remains bounded (resp.\ tends to zero) as $\ep\rta 0$. We similarly define $O(\cdot)$ and $o(\cdot)$ errors as a parameter goes to infinity. 
\medskip

\noindent
If $f,g : (0,\infty) \rta [0,\infty)$, we say that $f(\ep) \preceq g(\ep)$ if there is a constant $C>0$ (independent from $\ep$ and possibly from other parameters of interest) such that $f(\ep) \leq  C g(\ep)$. We write $f(\ep) \asymp g(\ep)$ if $f(\ep) \preceq g(\ep)$ and $g(\ep) \preceq f(\ep)$. 
\medskip

\noindent
Let $\{E^\ep\}_{\ep>0}$ be a one-parameter family of events. We say that $E^\ep$ occurs with
\begin{itemize}
\item \emph{polynomially high probability} as $\ep\rta 0$ if there is a $p > 0$ (independent from $\ep$ and possibly from other parameters of interest) such that  $\BB P[E^\ep] \geq 1 - O_\ep(\ep^p)$. 
\item \emph{superpolynomially high probability} as $\ep\rta 0$ if $\BB P[E^\ep] \geq 1 - O_\ep(\ep^p)$ for every $p>0$.  
\end{itemize}
We similarly define events which occur with polynomially or superpolynomially high probability as a parameter tends to $\infty$. 
\medskip

\noindent
We will often specify any requirements on the dependencies on rates of convergence in $O(\cdot)$ and $o(\cdot)$ errors, implicit constants in $\preceq$, etc., in the statements of lemmas/propositions/theorems, in which case we implicitly require that errors, implicit constants, etc., appearing in the proof satisfy the same dependencies. 
\medskip

\noindent
For $z\in\BB C$ and $r>0$, we write $B_r(z)$ for the Euclidean ball of radius $r$ centered at $z$. We also define the open annulus
\eqb \label{eqn-annulus-def}
\BB A_{r_1,r_2}(z) := B_{r_2}(z) \setminus \ol{B_{r_1}(z)} ,\quad\forall 0 < r_r < r_2 < \infty .
\eqe 
\medskip

\noindent
We write $\BB S = (0,1)^2$ for the open Euclidean unit square. 
\medskip

\section{Subsequential limits of LFPP are weak LQG metrics}
\label{sec-checking-axioms}

The goal of this section is to deduce Theorem~\ref{thm-lfpp-axioms} from the tightness result of~\cite{dddf-lfpp}.
We start in Section~\ref{sec-localized-lfpp} by introducing a ``localized" variant of LFPP, defined using the convolution of $h$ with a truncated version of the heat kernel, which (unlike the $\ep$-LFPP metric $D_h^\ep$ defined in~\eqref{eqn-lfpp}) depends locally on $h$. We then show that this localized variant of LFPP is a good approximation for $D_h^\ep$ (Lemma~\ref{lem-localized-approx}). 
In Section~\ref{sec-lfpp-tight}, we explain why the results of~\cite{dddf-lfpp} imply that the re-scaled LFPP metrics $\frk a_\ep^{-1} D_h^\ep$ as well as the associated internal metrics on certain domains in $\BB C$ are tight w.r.t.\ the local uniform topology and that every subsequential limit is a continuous length metric on $\BB C$. 
In Sections~\ref{sec-weyl-scaling}, \ref{sec-lfpp-coord}, and~\ref{sec-lfpp-local}, respectively, we will prove versions of Weyl scaling, tightness across scales, and locality for the subsequential limits (i.e., Axioms~\ref{item-metric-f}, \ref{item-metric-coord}, and~\ref{item-metric-local}).
In Section~\ref{sec-lfpp-msrble}, we use a theorem from~\cite{local-metrics} to show that subsequential limits of LFPP can be realized as measurable functions of $h$.
We then conclude the proof of Theorem~\ref{thm-lfpp-axioms}.

\newcommand{\hg}{h}
\newcommand{\hf}{{\mathsf h}}

Throughout this section, we will frequently need to switch between working with a whole-plane GFF and working with a whole-plane GFF plus a continuous function. 
As such, we will always write $\hg$ for a whole-plane GFF (with some choice of additive constant, specified as needed) and $\hf$ for a whole-plane GFF plus a continuous function (usually, this will be a whole-plane GFF plus a bounded continuous function). 
Note that this differs from the convention elsewhere in the paper, where $h$ is sometimes used to denote a whole-plane GFF plus a continuous function.

\subsection{A localized version of LFPP}
\label{sec-localized-lfpp}

Let $\hf$ be a whole-plane GFF plus a bounded continuous function. 
The mollified field $\hf_\ep^*(z)$ of~\eqref{eqn-gff-convolve} does not depend on $\hf$ in a local manner, and hence $D_\hf^\ep$-distances do not depend on $\hf$ in a local manner.
However, as $\ep\rta 0$ the heat kernel $p_{\ep^2/2}(z,w)$ concentrates around the diagonal, so we expect that $\hf_\ep^*(z)$ ``almost" depends locally on $\hf$ when $\ep$ is small. 
To quantify this, we will introduce an approximation $\wh \hf_\ep^*$ of $\hf_\ep^*$ which depends locally on $\hf$ and prove a lemma (Lemma~\ref{lem-localized-approx}) to the effect that $\wh \hf_\ep^*$ and $\hf_\ep^*$ are close when $\ep$ are small.
This will be useful at several places in this section, especially for the proof of locality (essentially, Axiom~\ref{item-metric-local}) in Section~\ref{sec-lfpp-local}.

For $\ep > 0$, let $\psi_\ep  : \BB C\rta [0,1]$ be a deterministic, smooth, radially symmetric bump function which is identically equal to 1 on $B_{\ep^{1/2}/2}(0)$ and vanishes outside of $B_{\ep^{1/2}}(0)$ (in fact, the power $1/2$ could be replaced by any $p\in (0,1)$). 
We can choose $\psi_\ep$ in such a way that $\ep\mapsto \psi_\ep$ is a continuous mapping from $(0,\infty)$ to the space of continuous functions on $\BB C$, equipped with the uniform topology. Recalling that $p_s(z,w)$ denotes the heat kernel, we define
\eqb \label{eqn-localized-def}
\wh \hf_\ep^*(z) :=  \int_{\BB C} \psi_\ep(z-w) \hf(w) p_{\ep^2/2} (z,w) \, dw ,
\eqe
with the integral interpreted in the sense of distributional pairing.
Since $\psi_\ep$ vanishes outside of $B_{\ep^{1/2}}(0)$, we have that $\wh \hf_\ep^*(z)$ is a.s.\ determined by $\hf|_{B_{\ep^{1/2}}(z)}$. 
It is easy to see that $\wh \hf_\ep^*$ a.s.\ admits a continuous modification (see Lemma~\ref{lem-localized-approx} below).
We henceforth assume that $\wh \hf_\ep^*$ is replaced by such a modification. 

As in~\eqref{eqn-lfpp}, we define the localized LFPP metric
\eqb \label{eqn-localized-lfpp}
\wh D_\hf^\ep(z,w) := \inf_{P : z\rta w} \int_0^1 e^{\xi \wh \hf_\ep^*(P(t))} |P'(t)| \,dt ,
\eqe
where the infimum is over all piecewise continuously differentiable paths from $z$ to $w$. 
By the definition of $\wh \hf_\ep^*$, 
\eqb \label{eqn-localized-property}
\text{for any open $U\subset \BB C$, the internal metric $\wh D_\hf^\ep(\cdot,\cdot; U)$ is a.s.\ determined by $\hf|_{B_{\ep^{1/2}}(U)}$} .
\eqe

\begin{lem} \label{lem-localized-approx}
Let $\hf$ be a GFF plus a bounded continuous function.
Then a.s.\ $(z,\ep)\mapsto \wh \hf_\ep^*(z)$ is continuous. 
Furthermore, for each bounded open set $U\subset \BB C$, a.s.\ 
\eqb \label{eqn-localized-approx}
\lim_{\ep\rta 0} \sup_{z\in \ol U} |\hf_\ep^*(z) - \wh \hf_\ep^*(z)|  = 0.
\eqe
In particular, a.s.\
\eqb \label{eqn-localized-lfpp-approx}
\lim_{\ep\rta 0}  \frac{\wh D_\hf^\ep(z,w;U)}{D_\hf(z,w;U)} = 1 , \quad \text{uniformly over all $z,w\in U$ with $z\not=w$}.
\eqe
\end{lem}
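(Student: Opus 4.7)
The plan is to treat $\wh \hf_\ep^*(z) = \langle \hf, g_\ep^z \rangle$ as the pairing of the distribution $\hf$ with the compactly supported smooth test function $g_\ep^z(w) := \psi_\ep(z-w)\, p_{\ep^2/2}(z,w)$. The assignment $(z,\ep) \mapsto g_\ep^z$ is smooth from $\BB C \times (0,\infty)$ into $C_c^\infty(\BB C)$ (using the continuity of $\ep \mapsto \psi_\ep$ built into the definition), so standard facts about distributions give joint continuity (in fact joint smoothness) of $(z,\ep) \mapsto \wh \hf_\ep^*(z)$. This establishes the first assertion of the lemma and provides the modulus of continuity in $(z,\ep)$ that will be used below.

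For \eqref{eqn-localized-approx}, I would write the difference as $\hf_\ep^*(z) - \wh \hf_\ep^*(z) = \langle \hf, \tilde g_\ep^z\rangle$ with $\tilde g_\ep^z(w) := (1-\psi_\ep(z-w))\, p_{\ep^2/2}(z,w)$, which is supported in $\BB C \setminus B_{\ep^{1/2}/2}(z)$. Decomposing $\hf = h + f$ with $h$ a whole-plane GFF (with a fixed choice of additive constant) and $f$ a bounded continuous function, the $f$-contribution is pointwise bounded by $\|f\|_\infty \int_{|w-z| \geq \ep^{1/2}/2} p_{\ep^2/2}(z,w)\,dw$, and this Gaussian tail is of order $e^{-c/\ep}$ uniformly in $z$. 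For the GFF contribution, $\langle h, \tilde g_\ep^z \rangle$ is a centered Gaussian whose variance equals a double integral of the whole-plane Green's function (which has only a logarithmic singularity on the diagonal) against $\tilde g_\ep^z \otimes \tilde g_\ep^z$; combined with the super-exponential $L^1$-decay of $\tilde g_\ep^z$, this makes the variance super-polynomially small in $\ep$, uniformly for $z$ in any fixed compact set.

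To upgrade pointwise smallness to the uniform statement on $\ol U$, I would apply a Gaussian tail bound together with a union bound over a sufficiently fine $\ep$-dependent lattice in $\ol U$, using the joint smoothness from the first part to control oscillations between lattice points, and a Borel--Cantelli argument along $\ep_n = 2^{-n}$ (together with continuity in $\ep$) to pass from the dyadic sequence to all $\ep > 0$. I expect this chaining and uniformization step to be the main technical obstacle, though not a serious one once the super-polynomial pointwise variance bound is in hand.

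The last assertion \eqref{eqn-localized-lfpp-approx} is then immediate. Indeed, \eqref{eqn-localized-approx} implies $e^{\xi(\wh \hf_\ep^* - \hf_\ep^*)(z)} = 1 + o_\ep(1)$ uniformly for $z \in \ol U$, so the integrands in the definitions \eqref{eqn-lfpp} and \eqref{eqn-localized-lfpp} of the two LFPP metrics differ by a factor close to $1$ along any path contained in $U$; taking the infimum over the same class of paths $P \subset U$ from $z$ to $w$ yields the stated ratio convergence, uniformly over pairs $z,w \in U$ with $z \neq w$.
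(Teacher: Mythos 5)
Your proposal is correct in outline but takes a genuinely different route from the paper. The paper exploits the fact that both the heat kernel and the bump function $\psi_\ep$ are radially symmetric about $z$ (this is built into the construction), which lets it rewrite $\hf_\ep^*(z)$ and $\wh\hf_\ep^*(z)$ exactly as one-dimensional integrals of the circle average process $r\mapsto \hf_r(z)$ against kernels supported in $\{r\ge \ep^{1/2}/2\}$ of total mass $e^{-1/(4\ep)}$. The whole probabilistic content is then packaged into a single a.s.\ statement (Lemma~\ref{lem-gff-sup}): there is a random constant $C$ such that $|\hg_r(z)|\le C\max\{\log(1/r),\log r,1\}$ simultaneously for all $z\in B_R(0)$ and all $r>0$. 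Plugging this pathwise bound into the integral representation makes \eqref{eqn-localized-approx} a deterministic estimate, uniform in $z$ for free --- no variance computation, no union bound, no chaining, no Borel--Cantelli at the level of the difference. Continuity of $(z,\ep)\mapsto\wh\hf_\ep^*(z)$ likewise falls out of the same representation via continuity of the circle average process. Your route --- Green's-function variance bound for $\langle h,\tilde g_\ep^z\rangle$, Gaussian tails, a lattice union bound, chaining over $\ol U\times[2^{-n-1},2^{-n}]$, and Borel--Cantelli along dyadic $\ep$ --- does work (the variance is indeed superexponentially small since $\|\tilde g_\ep^z\|_{L^1}= e^{-1/(4\ep)}$ and the covariance kernel has only logarithmic singularity and growth), and it has the advantage of not using radial symmetry of the mollifier. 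But the uniformization step you defer is where essentially all of the work lives in your version: to pass from pointwise to uniform over $z$ and over non-dyadic $\ep$ you need quantitative control of the Gaussian increments $\tilde g_\ep^z-\tilde g_{\ep'}^{z'}$ (e.g.\ for a Dudley or Kolmogorov bound), not just the qualitative joint continuity you invoke. The paper's choice of a radially symmetric mollifier is precisely what lets it sidestep this. Your treatment of the bounded-continuous-function part and of \eqref{eqn-localized-lfpp-approx} matches the paper's and is fine.
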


To prove Lemma~\ref{lem-localized-approx}, we will need the following elementary estimate for the circle average process, whose proof we postpone until after the proof of Lemma~\ref{lem-localized-approx}. 

\begin{lem} \label{lem-gff-sup} 
Let $\hg$ be a whole-plane GFF (with any choice of additive constant) and let $\{\hg_r\}_{r\geq 0}$ be its circle average process. For each $R>0$ and $\zeta > 0$, a.s.\
\eqb \label{eqn-gff-sup}
  \sup_{z\in B_R(0)}  \sup_{r > 0}   \frac{|\hg_r(z)|}{  \max\{  (2+\zeta)  \log(1/r) , (\log r)^{1/2+\zeta} ,  1\} }    < \infty .
\eqe
\end{lem}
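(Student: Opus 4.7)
The plan is to fix the additive constant so that $\hg_1(0)=0$, which makes $t\mapsto \hg_{e^{-t}}(0)$ a standard two-sided Brownian motion, and then split the supremum into the three regimes $r\in(0,1/2]$, $r\in[1/2,2]$, and $r\in[2,\infty)$. The intermediate regime is trivial since $(z,r)\mapsto \hg_r(z)$ is a.s.\ continuous on $B_R(0)\times [1/2,2]$, so the supremum there is an a.s.\ finite random variable divided by a quantity bounded away from $0$.

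For the small-scale regime $r\in(0,1/2]$, I would use the fact that for each $z\in B_R(0)$ and $r\in(0,1]$, the circle average $\hg_r(z)$ is a centered Gaussian with variance $\log(1/r)+O_R(1)$ (this follows from the standard computation of the covariance of the whole-plane GFF circle averages, e.g.\ as in Section 3.1 of \cite{shef-kpz}). By a Gaussian tail bound,
\alb
\BB P\bigl[ |\hg_r(z)| > (2+\zeta/2) \log(1/r) \bigr] \preceq r^{(2+\zeta/2)^2/2 + o_r(1)} .
\ale
Since $(2+\zeta/2)^2/2 > 2$, taking a union bound over a net of $r^{10}$-spaced points in $B_R(0)$ and dyadic values $r=2^{-n}$, the probabilities are summable in $n$. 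A Borel--Cantelli argument then shows that with probability one, for all but finitely many $n$, $|\hg_{2^{-n}}(z)| \leq (2+\zeta/2)\log(2^n)$ on the net. To upgrade from the net to all $(z,r)$ with $r\in(0,1/2]$, I would invoke the Hu--Miller--Peres Hölder continuity estimate for the circle average process (see e.g.\ \cite[Proposition 2.1]{hmp-thick-pts}): for any $\alpha\in(0,1/2)$, a.s.\ $|\hg_r(z)-\hg_{r'}(z')| \preceq (|z-z'|+|r-r'|)^\alpha r^{-\alpha}$ on compact sets, which is negligible compared to $\log(1/r)$ and hence gets absorbed into the factor $(2+\zeta)/(2+\zeta/2)$.

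For the large-scale regime $r\in[2,\infty)$, since $(\hg_{e^t}(0))_{t\geq 0}$ is a standard Brownian motion by our normalization, the law of the iterated logarithm gives a.s.
\alb
\limsup_{t\to\infty} \frac{|\hg_{e^t}(0)|}{\sqrt{2t\log\log t}} = 1 ,
\ale
so in particular $|\hg_{e^t}(0)| = o(t^{1/2+\zeta})$, i.e.\ $|\hg_r(0)| = o((\log r)^{1/2+\zeta})$. For uniformity in $z\in B_R(0)$, I would use that for $r\geq 2R$ the circles $\bdy B_r(z)$ and $\bdy B_r(0)$ are close in Hausdorff distance relative to $r$, so $\hg_r(z)-\hg_r(0)$ is a centered Gaussian with variance bounded uniformly in $r\geq 2R$ and $z\in B_R(0)$ (this can be seen by expanding $\log|z-w|$ in the covariance kernel). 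A standard Gaussian supremum bound plus the Hölder estimate then gives $\sup_{z\in B_R(0)} |\hg_r(z)-\hg_r(0)| = O(1)$ uniformly in $r\geq 2R$, which is more than enough.

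The main technical point I anticipate is choosing the net and the Hölder exponent in the small-scale regime carefully enough that the supremum over $z$ does not eat up the gap between $(2+\zeta/2)\log(1/r)$ and $(2+\zeta)\log(1/r)$; this is the same optimization that appears in the proof that there are no $\alpha$-thick points with $\alpha>2$, so no genuinely new ideas are needed.
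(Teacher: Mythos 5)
Your three-regime decomposition (small $r$, bounded $r$, large $r$) is exactly the structure of the paper's proof: the paper handles $r\in(0,1/2]$ by citing the standard maximum estimate from the proof of \cite[Lemma 3.1]{hmp-thick-pts} (the same net-plus-Borel--Cantelli argument you sketch), the intermediate range by continuity, and $r\geq r_0$ by a separate lemma combining the Borel--TIS inequality on $\{\hg_r(z)-\hg_r(0): z\in B_R(0), r\in[1/2,1]\}$ with scale invariance, Borel--Cantelli over dyadic scales, and the Brownian motion $t\mapsto\hg_{e^t}(0)$. So the route is the same; the problems are in your parameters.

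The concrete error is the net spacing in the small-scale regime. A net of $B_R(0)$ with mesh $r^{10}$ has $\asymp r^{-20}$ points, while your per-point failure probability is only $r^{(2+\zeta/2)^2/2+o_r(1)}=r^{2+\zeta+\zeta^2/8+o_r(1)}$; the union bound then gives $r^{-18+\zeta+\zeta^2/8}$, which diverges as $r\rta 0$ rather than being summable over $r=2^{-n}$. The entropy of the net must stay below the Gaussian exponent: you need mesh $r^{1+\delta}$ with $2+2\delta<(2+\zeta/2)^2/2$, i.e.\ $\delta$ small depending on $\zeta$ (and note that mesh exactly $r$ is not quite enough either, since the Hu--Miller--Peres modulus $(|z-z'|+|r-r'|)^\alpha r^{-\alpha-\epsilon}$ carries an extra $r^{-\epsilon}$, so you need the $\delta$ to absorb that as well). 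This is precisely the optimization you flag at the end, but the value you wrote down breaks the argument. Separately, in the large-$r$ regime, a per-$r$ Borel--TIS bound does not by itself give that $\sup_{z\in B_R(0)}|\hg_r(z)-\hg_r(0)|$ is a.s.\ $O(1)$ \emph{simultaneously} over the continuum $r\in[2,\infty)$; you need a Borel--Cantelli over dyadic scales $r\asymp 2^k$ with a slowly growing threshold (the paper uses $k^{1/2+\zeta/2}$), which is harmless here only because the target denominator $(\log r)^{1/2+\zeta}$ grows. Both issues are fixable with the standard choices, but as written the small-scale union bound fails and the large-scale uniformity is asserted rather than proved.
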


\begin{proof}[Proof of Lemma~\ref{lem-localized-approx}]
We first consider the case when $\hf = \hg$ is a whole-plane GFF normalized so that $\hg_1(0) = 0$. 
The functions $w \mapsto \psi_\ep(z-w)$ and $w\mapsto p_{\ep^2/2} (z,w)$ are each radially symmetric about $z$, i.e., they depend only on $|z-w|$. 
Using the circle average process $\{\hg_r\}_{r > 0}$, we may therefore write in polar coordinates
\eqb
\hg_\ep^*(z) = \frac{2}{ \ep^2} \int_0^\infty  r \hg_r(z) e^{-r^2/\ep^2} \,dr \quad \text{and} \quad
\wh \hg_\ep^*(z) = \frac{2 }{  \ep^2} \int_0^{\ep^{1/2}} r \hg_r(z) \psi_\ep(r) e^{-r^2/\ep^2} \,dr .
\eqe
From this representation and the continuity of the circle average process, we infer that $(z,\ep) \mapsto \wh \hg_\ep^*(z)$ a.s.\ admits a continuous modification. 

Since $\psi_\ep \equiv 1$ on $B_{\ep^{1/2}/2}(z)$ and $\psi_\ep$ takes values in $[0,1]$, 
\eqb \label{eqn-localized-compare-pt}
|\hg_\ep^*(z) - \wh \hg_\ep^*(z)| \leq  \frac{2}{\ep^2} \int_{\ep^{1/2}/2}^\infty r |\hg_r(z)| e^{-r^2/\ep^2} \, dr .
\eqe
By Lemma~\ref{lem-gff-sup} (applied with $\zeta  =1/2$, say), there is a random constant $C = C(U)  > 0$ such that $|\hg_r(z) | \leq C \max\{ \log(1/r) , \log r , 1\}$ for each $z\in U$ and $r  > 0$. 
Plugging this into~\eqref{eqn-localized-compare-pt} shows that a.s.\
\eqb
\sup_{z\in U} |\hg_\ep^*(z) - \wh \hg_\ep^*(z)|
\leq \frac{2 C }{\ep^2} \int_{\ep^{1/2}}^\infty r   \max\{ \log(1/r) , \log r , 1\}     e^{-r^2/\ep^2} \, dr   ,
\eqe
which tends to zero exponentially fast as $\ep\rta 0$. This gives~\eqref{eqn-localized-approx} in the case of a whole-plane GFF with $\hg_1(0) = 0$.

If $f : \BB C\rta \BB R$ is a bounded continuous function, we similarly obtain a.s.\ $\lim_{\ep\rta 0} \sup_{z\in U} |f_\ep^*(z) - \wh f_\ep^*(z)|  = 0$, using the notation~\eqref{eqn-gff-convolve} and~\eqref{eqn-localized-def} with $f$ in place of $h$ or $\hf$. 
This gives~\eqref{eqn-localized-approx} in the case of a whole-plane GFF plus a bounded continuous function. 
The relation~\eqref{eqn-localized-lfpp-approx} is immediate from~\eqref{eqn-localized-lfpp} and the definition of LFPP. 
\end{proof}

To conclude the proof of Lemma~\ref{lem-localized-approx} we still need to prove Lemma~\ref{lem-gff-sup}.
To deal with large values of $r$, we will use the following lemma.

\begin{lem} \label{lem-gff-tail} 
Let $\hg$ be a whole-plane GFF. For each $R>0$ and $\zeta > 0$, a.s.\
\eqb \label{eqn-gff-sup-lim}
\lim_{r\rta \infty} \sup_{z\in B_R(0)} \frac{|\hg_r(z) |}{(\log r)^{1/2+\zeta}} = 0.
\eqe
\end{lem}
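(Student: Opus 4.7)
Plan: Fix the additive constant so that $\hg_1(0)=0$, and decompose $\hg_r(z) = \hg_r(0) + F_r(z)$ where $F_r(z) := \hg_r(z) - \hg_r(0)$. I will control the radial part and the spatial fluctuation separately.

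For the radial part, the process $t \mapsto \hg_{e^t}(0)$ is a standard two-sided Brownian motion (this is a well-known consequence of the normalization $\hg_1(0)=0$ and the covariance of circle averages). The law of the iterated logarithm then gives $|\hg_r(0)| = O\!\big(\sqrt{(\log r)\log\log\log r}\,\big) = o((\log r)^{1/2+\zeta})$ almost surely as $r \to \infty$.

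For the spatial fluctuation, the key claim is that there exist constants $C,c>0$ depending only on $R$ such that
\[
\BB P\!\left(\sup_{z \in B_R(0)} |F_r(z)| > \lambda\right) \leq C e^{-c \lambda^2}, \qquad \forall\, r \geq 2R,\ \lambda \geq 1.
\]
This follows from Fernique's theorem applied to the centered Gaussian field $z \mapsto F_r(z)$ on the compact set $\ol{B_R(0)}$. What makes the constants $C,c$ uniform in $r$ is that, for $r \geq 2R$ and $z,w\in B_R(0)$, both $\op{Var}(F_r(z))$ and the Hölder ratio $\op{Var}(F_r(z)-F_r(w))/|z-w|^{2\alpha}$ (for a fixed $\alpha\in(0,1)$) are bounded uniformly in $r$. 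These uniform bounds come from a direct computation with the covariance of circle averages of the whole-plane GFF (with the normalization $\hg_1(0)=0$). The intuition is that when $r \gg R$ the circles $\bdy B_r(z)$ and $\bdy B_r(0)$ nearly coincide, so their $\hg$-averages are very strongly correlated and $F_r$ is small in law.

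Given the uniform tail bound, apply it at dyadic radii $r_k := 2^k$ (for $k$ large enough that $r_k \geq 2R$) with $\lambda = k^{1/2+\zeta/2}$: the resulting probabilities $Ce^{-c k^{1+\zeta}}$ are summable, so Borel–Cantelli gives $\sup_{z \in B_R(0)} |F_{r_k}(z)| = o(k^{1/2+\zeta})$ a.s. To bridge $r \in [r_k,r_{k+1}]$, apply an analogous Gaussian concentration bound to the joint centered Gaussian field $(r,z) \mapsto F_r(z) - F_{r_k}(z)$ on the compact cylinder $[r_k,r_{k+1}] \times \ol{B_R(0)}$, again combined with Borel–Cantelli across $k$, to obtain $\sup_{r \in [r_k,r_{k+1}]} \sup_{z\in B_R(0)} |F_r(z)-F_{r_k}(z)| = o(k^{1/2+\zeta})$ a.s. Adding the two spatial bounds to the LIL estimate for $\hg_r(0)$ produces \eqref{eqn-gff-sup-lim}. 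The principal technical obstacle is the uniform-in-$r$ covariance estimate feeding Fernique's theorem: the heuristic that $F_r$ becomes small for large $r$ is transparent, but translating it into uniform pointwise and incremental variance bounds requires a careful (elementary) calculation with the circle-average covariance formula.
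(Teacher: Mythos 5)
Your argument is correct, but it takes a genuinely different route from the paper's. The paper never computes a covariance: it applies the Borel--TIS inequality once to the fixed-scale field $\{\hg_r(z)-\hg_r(0): z\in B_R(0),\ r\in[1/2,1]\}$ to get a Gaussian tail bound $c_0e^{-c_1A^2}$ for its supremum, and then invokes the exact scale invariance of the law of $\hg$ modulo additive constant to transfer that identical bound to $\{\hg_r(z)-\hg_r(0): z\in B_{R2^k}(0),\ r\in[2^{k-1},2^k]\}$ for every $k$, with $k$-independent constants; Borel--Cantelli with $A\asymp k^{1/2+\zeta/2}$ then finishes. Note that the scaling step handles the supremum over $r$ within each dyadic block automatically, so no separate bridging field is needed. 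You replace scale invariance by a direct covariance computation for $F_r=\hg_r(\cdot)-\hg_r(0)$ at large $r$; this does work: since the difference of the uniform measures on $\bdy B_r(z)$ and $\bdy B_r(0)$ has total mass zero, one computes $\op{Var}(F_r(z))=O(R/r)$ and $\op{Var}(\hg_r(z)-\hg_r(w))=O(|z-w|/r)$ for $r\geq 2R$, so Dudley's entropy bound plus Borel--TIS (what you call Fernique) gives your uniform tail bound with constants depending only on $R$ --- indeed it shows $F_r\to 0$, which is stronger than needed. The price is precisely the calculation you defer, plus the two-parameter concentration estimate for the bridging step (whose increments in $r$ must also be checked, e.g.\ $\op{Var}(\hg_r(z)-\hg_s(z))=\log(r/s)=O(|r-s|/r_k)$ on $[r_k,r_{k+1}]$); both are elementary but not free, whereas the paper's scaling trick buys the same uniformity with one soft application of Borel--TIS. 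Your treatment of the radial part via the law of the iterated logarithm matches the paper's (which simply cites that $t\mapsto\hg_{e^t}(0)$ is a two-sided standard Brownian motion).
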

\begin{proof} 
The process $\{\hg_r(z) - \hg_r(0) : z\in B_R(0) , r \in [1/2 , 1]\}$ is centered Gaussian with variances bounded above by a constant depending only on $ R$. 
Furthermore, this process a.s.\ admits a continuous modification~\cite[Proposition 3.1]{shef-kpz}, so if we replace it by such a modification then a.s.\ $\sup_{z\in B_R(0)} \sup_{r \in [1/2,1]} |\hg_r(z) - \hg_r(0)| < \infty$.
By the Borel-TIS inequality~\cite{borell-tis1,borell-tis2} (see, e.g.,~\cite[Theorem 2.1.1]{adler-taylor-fields}), we have $\BB E\left[ \sup_{z\in B_R(0)} \sup_{r \in [1/2,1]} |\hg_r(z) - \hg_r(0)| \right] < \infty$ and there are constants $c_0 , c_1 > 0$ depending only on $R$ such that for each $A > 0$,
\eqb \label{eqn-gff-sup0}
\BB P\left[  \sup_{z\in B_R(0)} \sup_{r \in [1/2,1]} |\hg_r(z) - \hg_r(0)|  > A \right] \leq c_0 e^{-c_1 A^2} .
\eqe
Note that we absorbed the $R$-dependent constant $\BB E\left[ \sup_{z\in B_R(0)} \sup_{r \in [1/2,1]} |\hg_r(z) - \hg_r(0)| \right] $ into $c_0$.

By the scale invariance of the law of $\hg$, viewed modulo an additive constant, we infer from~\eqref{eqn-gff-sup0} that for each $k \in \BB N_0$ and $A>0$, 
\eqb \label{eqn-gff-sup-k}
\BB P\left[  \sup_{z\in B_{R 2^k }(0)} \sup_{r \in [2^{k-1}  , 2^k ]} |\hg_r(z) - \hg_r(0)|  > A \right] \leq c_0 e^{-c_1 A^2} .
\eqe
By applying this with $A$ equal to a universal constant times $k^{1/2+\zeta/2}$, say, then using the Borel-Cantelli lemma, we get that a.s.\  
\eqb \label{eqn-gff-sup-as}
 \lim_{k\rta\infty} \sup_{z\in B_{R 2^k }(0)} \sup_{r \in [2^{k-1}  , 2^k ]} \frac{ |\hg_r(z) - \hg_r(0)|}{(\log r)^{1/2+\zeta}}  = 0 .
\eqe
Each $z\in K$ is contained in $B_{R 2^k}(0)$ for each $k\in\BB N$ and each $r \geq 1/2$ is contained in $[2^{k-1}  , 2^k]$ for some $k\in\BB N$.
Hence,~\eqref{eqn-gff-sup-as} implies that a.s.\
\eqb \label{eqn-gff-sup-r}
\lim_{r\rta \infty} \sup_{z\in B_R(0)} \frac{|\hg_r(z) - \hg_r(0)|}{(\log r)^{1/2+\zeta}} = 0.
\eqe
Since $t\mapsto \hg_{e^t}(0)$ is a standard two-sided linear Brownian motion~\cite[Section 3]{shef-kpz}, it follows that a.s.\ $|\hg_r(0)| / (\log r)^{1/2+\zeta} \rta 0$ as $r\rta\infty$. 
Combining this with~\eqref{eqn-gff-sup-r} yields~\eqref{eqn-gff-sup-lim}.
\end{proof}

\begin{proof}[Proof of Lemma~\ref{lem-gff-sup}]
Standard estimates for the maximum of the circle average process (see, e.g., the proof of~\cite[Lemma 3.1]{hmp-thick-pts}) show that a.s.\
\eqb
\sup_{z\in B_R(0)} \sup_{r\in (0,1/2]} \frac{|\hg_r(z)|}{(2+\zeta) \log(1/r)} < \infty .
\eqe
By the continuity of the circle average process, a.s.\ for any $r_0 > 1/2$, $\sup_{z\in B_R(0)} \sup_{r\in [1/2 , r_0]} |\hg_r(z)| < \infty $.  
By Lemma~\ref{lem-gff-tail}, it is a.s.\ the case that for each large enough $r_0 > 0$, 
\eqb
\sup_{z\in B_R(0)}  \sup_{r\geq r_0}   \frac{|\hg_r(z)|}{(\log r)^{1/2+\zeta} }    < \infty  . 
\eqe
Combining these estimates gives~\eqref{eqn-gff-sup}.
\end{proof}

\subsection{Subsequential limits}
\label{sec-lfpp-tight}

In this subsection we explain why the results of~\cite{dddf-lfpp} imply that the laws of the re-scaled LFPP metrics $\frk a_\ep^{-1} D_\hf^\ep$ are tight (this is not entirely immediate since~\cite{dddf-lfpp} considers a slightly different class of fields and only looks at metrics on bounded domains). 
We will in fact obtain a stronger convergence statement which also includes the convergence of internal metrics of $\frk a_\ep^{-1} D_\hf^\ep$ on a certain class of sub-domains of $\BB C$.  

\begin{defn}[Dyadic domain] \label{def-dyadic-domain}
A closed square $S\subset\BB C$ is \emph{dyadic} if $S$ has side length $2^k$ and corners in $2^k\BB Z^2$ for some $k\in\BB Z$. 
We say that $W\subset\BB C$ is a \emph{dyadic domain} if there exists a finite collection of dyadic squares $\mcl S$ such that $W$ is the interior of $\bigcup_{S\in\mcl S}   S$. Note that a dyadic domain is a bounded open set.
\end{defn}

\begin{lem} \label{lem-lfpp-tight}
Let $\hf$ be a whole-plane GFF plus a bounded continuous function. 
\begin{enumerate}[A.]
\item The laws of the metrics $\frk a_\ep^{-1} D_\hf^\ep$ are tight w.r.t.\ the local uniform topology on $\BB C \times \BB C$ and any subsequential limit of these laws is supported on continuous length metrics on $\BB C$.  \label{item-lfpp-tight}
\item Let $\mcl W$ be the (countable) set of all dyadic domains. For any sequence of positive $\ep$'s tending to zero, there is a subsequence $\mcl E$ and a coupling of a continuous length metric $D_\hf$ on $\BB C$ and a length metric $D_{\hf,W}$ on $\ol W$ for each $W\in\mcl W$ which induces the Euclidean topology on $\ol W$ such that the following is true. 
Along $\mcl E$, we have the convergence of joint laws 
\eqb \label{eqn-lfpp-dyadic}
\left( \frk a_\ep^{-1} D_\hf^\ep ,\left\{ \frk a_\ep^{-1} D_\hf^\ep(\cdot,\cdot; \ol W )   \right\}_{W\in\mcl W} \right) 
\rta \left( D_\hf ,\left\{ D_{\hf,W}  \right\}_{W\in\mcl W} \right)  
\eqe
where the first coordinate is given the local uniform topology on $\BB C\times\BB C$ and each element of the collection in the second coordinate is given the uniform topology on $\ol W\times \ol W$. 
Furthermore, for each $W\in\mcl W$ we have the a.s.\ equality of internal metrics $D_{\hf,W}(\cdot,\cdot;W) = D_\hf(\cdot,\cdot;W)$. \label{item-lfpp-dyadic}
\end{enumerate}
\end{lem}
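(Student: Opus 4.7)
The overall strategy is to deduce both parts of the lemma from the tightness result of~\cite{dddf-lfpp} for LFPP on bounded subdomains, combined with a scale-invariance/Prokhorov argument to pass to the whole plane, and a diagonal extraction to incorporate the countable family of dyadic domains. The first step will be to reduce to the case $\hf = \hg$ a whole-plane GFF: if $\hf = \hg + f$ for $f$ a bounded continuous function, then $f*p_{\ep^2/2}$ converges locally uniformly to $f$, so the pre-limit metrics satisfy $D_\hf^\ep = e^{\xi f_\ep^*}\cdot D_\hg^\ep$ in the Weyl-scaling sense of~\eqref{eqn-metric-f}. This is a multiplicative perturbation whose factor is uniformly close to the bounded continuous function $e^{\xi f}$ on compact sets, so tightness and subsequential limits transfer between the two cases.

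For Part~\ref{item-lfpp-tight}, I would use the tightness of $\frk a_\ep^{-1} D_\hg^\ep$ restricted to any fixed Euclidean ball (which is essentially the main output of~\cite{dddf-lfpp}), exhaust the plane by balls $B_{R_n}(0)$, and diagonally extract to obtain tightness in the local uniform topology on $\BB C \times \BB C$. Each $D_\hg^\ep$ is a length metric by construction as an infimum of path integrals, and the length-space property is preserved under local uniform limits of length metrics, so every subsequential limit is a length metric. To show the limit induces the Euclidean topology, I would rule out degeneration: the upper bound on the limit is immediate from local uniform convergence, while a matching lower bound (two Euclidean-separated points remain at positive distance in the limit) follows from the distance-across-annulus tail bounds contained in~\cite{dddf-lfpp}.

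For Part~\ref{item-lfpp-dyadic}, for each fixed $W \in \mcl W$ I would first establish tightness of the internal metric $\frk a_\ep^{-1} D_\hg^\ep(\cdot,\cdot; \ol W)$ on $\ol W \times \ol W$ via a two-sided bound: the ambient metric is a lower bound, while an upper bound comes from applying the DDDF tightness on a slightly larger dyadic region whose internal distance estimates control internal distances across $\ol W$. Since $\mcl W$ is countable, a diagonal extraction produces a single subsequence along which the ambient metric and all internal metrics on closed dyadic domains converge jointly in law, which is~\eqref{eqn-lfpp-dyadic}. The identity $D_{\hf,W}(\cdot,\cdot; W) = D_\hf(\cdot,\cdot; W)$ on the open interior will be checked at the pre-limit level: for $z,w \in W$, any path from $z$ to $w$ contained in $\ol W$ can be approximated by paths strictly inside $W$ by a small inward perturbation, so the internal distances in $\ol W$ and in $W$ coincide up to negligible error; this equality is then preserved through the joint limit.

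The main obstacle I anticipate is establishing non-degeneracy of all the subsequential limits simultaneously (that each induces the Euclidean topology rather than being degenerate on $\BB C$ or on any $\ol W$), which requires the quantitative distance-across-annulus bounds from~\cite{dddf-lfpp} to apply uniformly in $\ep$ and uniformly across all the domains in question. A secondary technical point is organizing the diagonal extraction so that the ambient metric and all countably many internal metrics on closed dyadic domains converge jointly to a single coupling, rather than only separately.
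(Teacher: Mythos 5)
Your overall architecture (reduce to the whole-plane GFF, get tightness on bounded pieces from \cite{dddf-lfpp}, diagonalize over the countable family of dyadic domains) matches the paper's, but the proposal elides two steps that are in fact the main content of the proof. The most serious is the identification $D_{\hf,W}(\cdot,\cdot;W)=D_\hf(\cdot,\cdot;W)$. You propose to check at the pre-limit level that internal distances in $W$ and in $\ol W$ agree and then assert that the equality ``is preserved through the joint limit.'' But the difficulty is not $W$ versus $\ol W$ at finite $\ep$; it is that the operation of taking an internal metric is not continuous with respect to (local) uniform convergence, so neither side of the desired identity is the limit of the corresponding pre-limit quantity in any obvious sense: the left side is the internal metric \emph{of the limit} of $\frk a_\ep^{-1}D^\ep_\hf(\cdot,\cdot;\ol W)$, while the right side is the internal metric \emph{of the limit} of the ambient metrics. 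A priori, near-minimal paths for $D_{\hf,W}$ could spend positive time on $\bdy W$, which is exactly why the paper does \emph{not} claim $\frk a_\ep^{-1}D^\ep_\hf(\cdot,\cdot;\ol W)\rta D_\hf(\cdot,\cdot;\ol W)$. The paper closes this with the deterministic compatibility statement of Lemma~\ref{lem-internal-conv}: one first proves the identity for pairs $u,v$ with $D(u,v)<D(u,\bdy V)$, for which ambient and internal distances coincide both before and after the limit, and then upgrades to all of $V$ by comparing lengths of paths lying at positive distance from $\bdy V$. Some argument of this type is required and is missing from your proposal.

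The second gap concerns Part~A on the non-compact space $\BB C$. The input from \cite{dddf-lfpp} is tightness of \emph{internal} metrics on bounded domains (for a zero-boundary field), not of the restriction of the ambient metric, and the preservation of the length-space property under uniform limits (Lemma~\ref{lem-bbi}) is a statement about compact spaces: on $\BB C$ you must rule out near-geodesics between two fixed points escaping every compact set along the subsequence, uniformly in $\ep$. The paper's Lemma~\ref{lem-square-bdy-dist} --- with probability close to $1$, the $D^\ep_\hf$-diameter of $S_r(0)$ is much smaller than $D^\ep_\hf(S_r(0),\bdy S_{Rr}(0))$ --- is precisely the localization that identifies the ambient metric on $S_r(0)$ with an internal metric on a larger square, yielding both tightness of the ambient metric and the length property of the limit; your proposal should supply an equivalent statement rather than invoking preservation of the length property under local uniform limits. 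Relatedly, transferring the \cite{dddf-lfpp} statement from the zero-boundary GFF to the whole-plane GFF is itself not immediate, since the heat-kernel mollification $h_\ep^*$ is non-local and the harmonic correction is only controlled on the bounded domain; the paper introduces the localized metrics $\wh D^\ep$ of Section~\ref{sec-localized-lfpp} for exactly this purpose. Your route to non-degeneracy via quantitative annulus bounds is a legitimate alternative to the paper's square-annulus argument, but it does not substitute for these localization steps.
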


In the setting of Assertion~\ref{item-lfpp-tight}, we note that the space of continuous functions $\BB C\times \BB C\rta \BB R$, equipped with the local uniform topology, is separable and completely metrizable, which means that we can apply Prokhorov's theorem in this space. 
Assertion~\ref{item-lfpp-dyadic} of Lemma~\ref{lem-lfpp-tight} does \emph{not} give that $D_\hf^\ep(\cdot,\cdot;\ol W) \rta D_\hf(\cdot,\cdot; \ol W)$ in law along $\mcl E$ for each $W\in\mcl W$. 
The reason why we do not prove this statement is to avoid worrying about possible pathologies near $\bdy W$ (see Lemma~\ref{lem-internal-conv}).
We now proceed with the proof of Lemma~\ref{lem-lfpp-tight}. At several places in this section, we will use the following elementary scaling relation for LFPP. 
 
\begin{lem} \label{lem-lfpp-scale}
Let $\hg$ be a whole-plane GFF normalized so that $\hg_1(0)=0$.
Let $r > 0$ and let $\hg^r := \hg(r\cdot) - \hg_r(0)$, so that $\hg^r\eqD \hg$. 
The LFPP metrics defined as in~\eqref{eqn-lfpp} for $\hg$ and $\hg^r$ are related by
\eqb \label{eqn-lfpp-scale}
 D_{\hg^r}^{ \ep / r} \eqD D_\hg^{\ep/r} \quad \text{and} \quad
 D_{\hg^r}^{ \ep / r}( z   , w ) =  r^{-1} e^{-\xi \hg_r(0)} D_\hg^\ep(r z, r w) ,\quad \forall \ep > 0, \quad \forall z,w\in\BB C.
\eqe
\end{lem}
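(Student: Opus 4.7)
The first assertion $D_{\hg^r}^{\ep/r} \eqD D_\hg^{\ep/r}$ is immediate: the LFPP metric is a deterministic functional of the field, and by the scale invariance of the whole-plane GFF modulo additive constant together with the normalization $\hg_1(0) = 0 = \hg^r_1(0)$, we have $\hg^r \eqD \hg$, so applying the same functional to both sides yields the equality in law.

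The substantive content is the pathwise identity. The plan is to first establish the pointwise relation
\eqb \label{eqn-planned-mollifier}
(\hg^r)^*_{\ep/r}(z) = \hg_\ep^*(rz) - \hg_r(0)
\eqe
and then deduce the metric identity by a change of variable in the defining infimum. For~\eqref{eqn-planned-mollifier}, I would expand the mollifier definition~\eqref{eqn-gff-convolve} using $\hg^r(w) = \hg(rw) - \hg_r(0)$, perform the substitution $u = rw$ (which contributes a Jacobian $r^{-2}$), and apply the elementary scaling identity for the heat kernel $p_{\ep^2/(2r^2)}(v) = r^2\, p_{\ep^2/2}(rv)$. The two factors of $r^{\pm 2}$ cancel and one is left with $\int \hg(u)\, p_{\ep^2/2}(rz - u)\,du - \hg_r(0) = \hg_\ep^*(rz) - \hg_r(0)$; the subtraction is valid because the constant $\hg_r(0)$ integrates against $p_{\ep^2/(2r^2)}$ to itself.

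Given~\eqref{eqn-planned-mollifier}, for any piecewise $C^1$ path $P : [0,1] \to \BB C$ from $z$ to $w$, the reparametrized path $\wt P(t) := r P(t)$ runs from $rz$ to $rw$ and satisfies $|\wt P'(t)| = r |P'(t)|$. Substituting into the definition~\eqref{eqn-lfpp} of $D_{\hg^r}^{\ep/r}$ and using~\eqref{eqn-planned-mollifier}, the integrand factors as $e^{-\xi \hg_r(0)} e^{\xi \hg_\ep^*(\wt P(t))} r^{-1} |\wt P'(t)|$. Taking the infimum over $P$ (equivalently, over $\wt P$ from $rz$ to $rw$, since the correspondence is a bijection on piecewise $C^1$ paths) yields the claimed identity $D_{\hg^r}^{\ep/r}(z,w) = r^{-1} e^{-\xi \hg_r(0)} D_\hg^\ep(rz, rw)$.

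There is no real obstacle here; the only minor care needed is to justify the change of variable in the distributional pairing that defines the convolution, which is standard since the heat kernel is Schwartz and $\hg$ is a tempered distribution (and adding a continuous function, if any is absorbed into the setup, does not change the calculation). The entire argument is essentially a one-line deterministic scaling of the LFPP functional.
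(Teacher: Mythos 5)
Your proof is correct and follows essentially the same route as the paper's: establish the mollifier identity $(\hg^r)^*_{\ep/r}(z) = \hg_\ep^*(rz) - \hg_r(0)$ by a change of variables together with the heat-kernel scaling, then transfer it to the metrics via the bijective path reparametrization $P \mapsto rP$. The paper simply states the mollifier identity as a "standard change of variables" where you spell out the Jacobian cancellation, and it leaves the equality in law implicit, but the substance is identical.
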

\begin{proof}
Using the notation~\eqref{eqn-gff-convolve}, we get from a standard change of variables that the convolutions of $\hg^r$ and $\hg$ with the heat kernel satisfy $  \hg_{ \ep / r}^{r,*}(z) = \hg_\ep^*(r z) - \hg_r(0)$ for each $\ep > 0$ and $z\in\BB C$. 
Using the definition~\eqref{eqn-lfpp} of LFPP, we now compute
\alb
e^{-\xi \hg_r(0)} D_\hg^\ep (r z  ,  r w ) 
&= \inf_{P : rz \rta   r w} \int_0^1 e^{\xi (\hg_\ep^*(P(t)) - \hg_r(0))} |P'(t)| \,dt \notag \\
&= \inf_{P : r z\rta r w} \int_0^1 e^{\xi \hg^{r,*}_{ \ep/r}(  P(t) / r) } |P'(t)| \,dt \notag \\
&= r \inf_{\wt P :   z\rta   w} \int_0^1 e^{\xi \hg^{r,*}_{ \ep/r}(  \wt P (t) ) } |\wt P'(t)| \,dt \quad \text{(set $\wt P = P/r$)} \notag \\
&= r D_{\hg^r}^{\ep/r}(z,w)  .
\ale
\end{proof}

To check that our limiting metrics are length metrics, we will need the following standard fact from metric geometry.

\begin{lem} \label{lem-bbi}
Let $X$ be a compact topological space and let $\{D^n\}_{n\in\BB N}$ be a sequence of length metrics on $X$ which converge uniformly to a metric $D$ on $X$.
Then $D$ is a length metric on $X$.
\end{lem}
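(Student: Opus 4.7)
The plan is to use the standard approximate midpoint characterization of length spaces in complete metric spaces: a complete metric space $(X,D)$ is a length space if and only if for every $x,y \in X$ and every $\delta > 0$, there exists $z \in X$ (an ``approximate midpoint'') such that $D(x,z) \leq D(x,y)/2 + \delta$ and $D(y,z) \leq D(x,y)/2 + \delta$. Since $X$ is compact and $D$ is a metric on $X$, the metric space $(X,D)$ is complete (compact metric spaces are complete), so it suffices to verify this approximate midpoint property.

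To construct the approximate midpoint, I would fix $x,y \in X$ and $\delta > 0$, and then use the uniform convergence $D^n \to D$ to choose $n$ large enough that $\sup_{a,b \in X}|D^n(a,b) - D(a,b)| < \delta/3$. Since $D^n$ is a length metric, there exists a curve $P^n : [0,1] \to X$ from $x$ to $y$ whose $D^n$-length is at most $D^n(x,y) + \delta/3$. Parametrizing by $D^n$-arc length and taking $z^n := P^n(1/2)$, the point $z^n$ satisfies
\eqb
D^n(x,z^n) \leq \frac{D^n(x,y)}{2} + \frac{\delta}{3}, \qquad D^n(y,z^n) \leq \frac{D^n(x,y)}{2} + \frac{\delta}{3}.
\eqe
Transferring these inequalities to $D$ via the uniform estimate $|D^n - D| < \delta/3$ gives $D(x,z^n), D(y,z^n) \leq D(x,y)/2 + \delta$, so $z^n$ is an approximate midpoint for $D$.

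Once the approximate midpoint property is established, I would invoke the classical fact (see, e.g., Burago–Burago–Ivanov, \emph{A Course in Metric Geometry}, Theorem 2.4.16) that in a complete metric space, this property is equivalent to being a length space. The only place where care is needed is to confirm that $D$ is genuinely a metric compatible with the ambient topology so that completeness applies; this is given, since $D$ is assumed to be a metric on the compact space $X$ (if one wants to be explicit, uniform convergence of metrics preserves the induced topology, and a metric on a compact topological space making it a metric space is automatically complete in its own metric). I do not expect any real obstacles here — the argument is essentially bookkeeping around the standard midpoint criterion, and the only substantive input is the length property of the $D^n$, which is used exactly once to produce the near-geodesic whose midpoint we take.
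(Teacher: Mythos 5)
Your proof is correct and matches the paper's approach: the paper's ``proof'' is a one-line citation to \cite[Exercise 2.4.19]{bbi-metric-geometry} and \cite[Corollary 2.4.17]{bbi-metric-geometry}, and the intended solution of that exercise is exactly the approximate-midpoint argument you write out (with completeness of $(X,D)$ supplied by compactness, as you note). No gaps.
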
 
\begin{proof}
This is~\cite[Exercise 2.4.19]{bbi-metric-geometry}, which in turn is an easy consequence of~\cite[Corollary 2.4.17]{bbi-metric-geometry}. 
\end{proof}

Let us now record what we get from~\cite{dddf-lfpp}.

\begin{lem} \label{lem-lfpp-tight-square}
Let $S\subset \BB C$ be a closed square and let $\hf$ be a whole-plane GFF plus a bounded continuous function. 
The laws of the internal metrics $\frk a_\ep^{-1} D_\hg^\ep(\cdot,\cdot ;  S)$ for $\ep\in (0,1)$ are tight w.r.t.\ the uniform topology on $S\times S$ and any subsequential limit of these laws is supported on length metrics which induce the Euclidean topology on $S$. 
\end{lem}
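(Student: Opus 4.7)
The plan is to reduce to a canonical situation and then invoke directly the tightness and crossing estimates of \cite{dddf-lfpp}. First, applying Lemma~\ref{lem-lfpp-scale} together with the translation invariance of the whole-plane GFF modulo additive constants, I can reduce to the case $S = [0,1]^2$: the rescaling shifts the value of $\ep$ by a fixed multiplicative constant and multiplies the metric by $r^{-1} e^{-\xi \hg_r(0)}$, which is absorbed into the tightness statement (it only enters through a scalar factor of $\frk a_\ep$ at a different scale). Second, writing $\hf = \hg + f$ where $\hg$ is a whole-plane GFF and $f$ is bounded continuous, the linearity of convolution gives $\hf^*_\ep = \hg^*_\ep + f^*_\ep$ with $\|f^*_\ep\|_\infty \leq \|f\|_\infty$, so
\eqbn
e^{-\xi \|f\|_\infty} D_\hg^\ep(z,w;S) \leq D_\hf^\ep(z,w;S) \leq e^{\xi \|f\|_\infty} D_\hg^\ep(z,w;S).
\eqen
Tightness of $\frk a_\ep^{-1} D_\hf^\ep(\cdot,\cdot;S)$ is therefore equivalent to the same statement for $\hg$, and so I may assume $\hf = \hg$ is a whole-plane GFF.

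Next, I would invoke the quantitative estimates of \cite{dddf-lfpp}. The key ingredients there are uniform-in-$\ep$ RSW/box-crossing estimates for rectangular $\frk a_\ep^{-1} D_\hg^\ep$-distances, giving tight two-sided tails for the internal distance across any fixed rectangle $R \subset \BB C$. Chaining these crossing estimates over a dyadic grid inside $S$ (using both horizontal and vertical crossings to build short connecting paths between any two nearby points) yields an equicontinuity-in-probability estimate for $\frk a_\ep^{-1} D_\hg^\ep(\cdot,\cdot;S)$ on $S \times S$, together with a tight bound on the diameter. Tightness in the uniform topology on $S \times S$ then follows from Arzel\`a--Ascoli applied to the continuous functions $(z,w)\mapsto \frk a_\ep^{-1} D_\hg^\ep(z,w;S)$.

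To identify the properties of subsequential limits, let $\mcl E$ be a subsequence along which the laws converge, and let $D^S$ be a limit. By the Skorokhod representation we may assume the convergence is uniform on $S \times S$ almost surely, so $D^S$ is a continuous pseudo-metric on $S$; the tight lower bounds on crossing distances in \cite{dddf-lfpp} prevent $D^S$ from collapsing to zero on any pair of distinct points, so it is a genuine metric, and continuity together with these lower bounds shows that $D^S$ induces the Euclidean topology on $S$. Finally, each prelimit $\frk a_\ep^{-1} D_\hg^\ep(\cdot,\cdot; S)$ is a length metric on $S$ by construction, since the defining infimum is over piecewise continuously differentiable paths contained in $S$; since uniform limits of length metrics on a compact space are length metrics (Lemma~\ref{lem-bbi}), $D^S$ is a length metric on $S$.

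The main obstacle is bridging from the ambient LFPP distance (which is what \cite{dddf-lfpp} principally controls) to the $S$-internal distance, especially for pairs of points close to $\bdy S$. For the uniform topology on $\ol S \times \ol S$ one must rule out a blow-up of $D_\hg^\ep(\cdot,\cdot;S)$ as one or both endpoints approach the boundary of $S$. This requires invoking the box-crossing estimates of \cite{dddf-lfpp} for rectangles abutting $\bdy S$, so that one can always connect a point near the boundary to the bulk of $S$ by a path of tight internal length; this is the step that genuinely uses the RSW machinery and is not an immediate consequence of the global tightness statement.
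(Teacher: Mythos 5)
Your high-level skeleton (reduce to the unit square by Lemma~\ref{lem-lfpp-scale}, strip off the bounded continuous function by a bi-Lipschitz comparison, appeal to \cite{dddf-lfpp}, and use Lemma~\ref{lem-bbi} to see that the limit is a length metric) matches the paper's. But there is a genuine gap at the step ``so I may assume $\hf=\hg$ is a whole-plane GFF; next, I would invoke the quantitative estimates of \cite{dddf-lfpp}.'' The tightness/crossing estimates of \cite{dddf-lfpp} are proven for a \emph{zero-boundary} GFF on a bounded domain, not for the whole-plane GFF, and your proposal never bridges that difference. The natural idea --- write $\hg = \rng\hg + \frk h$ with $\rng\hg$ a zero-boundary GFF on $(-1,2)^2$ and $\frk h$ harmonic, hence bounded on a neighborhood of $[0,1]^2$, and run the same bi-Lipschitz comparison you used for $f$ --- does \emph{not} go through directly, because $D_\hg^\ep$ is built from the heat-kernel convolution $\hg * p_{\ep^2/2}$, which is a non-local functional of the field: at $z\in[0,1]^2$ it sees the field far outside $(-1,2)^2$, where $\frk h$ is not a controlled continuous function. (Your comparison for $\hg+f$ versus $\hg$ works only because $f$ is \emph{globally} bounded, so $\|f*p_{\ep^2/2}\|_\infty\le\|f\|_\infty$; the harmonic correction is only locally bounded.) This is precisely why the paper introduces the localized metrics $\wh D^\ep$ of Section~\ref{sec-localized-lfpp} and Lemma~\ref{lem-localized-approx}: one first replaces $D_{\hg}^\ep$ and $D_{\rng\hg}^\ep$ by their localized versions (which agree with the originals up to $1+o_\ep(1)$ multiplicative errors, uniformly on compacts), and only then is the bi-Lipschitz comparison via the bounded harmonic part legitimate. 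Without this device, or an equivalent one, your reduction to the whole-plane field does not connect to anything \cite{dddf-lfpp} actually proves.

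A secondary remark: even granting the field transfer, your plan re-derives the tightness of the internal metric on $[0,1]^2$ from RSW/box-crossing estimates by chaining over a dyadic grid, including the delicate behavior near $\bdy S$ which you correctly flag as the hard part. This is essentially re-proving \cite[Theorem 1]{dddf-lfpp}, which already asserts tightness of the \emph{internal} metrics $\lambda_\ep^{-1}D_{\rng\hg}^\ep(\cdot,\cdot;[0,1]^2)$ in the uniform topology on $[0,1]^2\times[0,1]^2$, with subsequential limits that are length metrics inducing the Euclidean topology. The paper simply cites that statement as a black box; the only remaining work is the field transfer above and the observation that the two normalizations $\frk a_\ep$ and $\lambda_\ep$ are comparable, which follows once the bi-Lipschitz equivalence of the localized metrics is in place.
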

\begin{proof} 
We first consider the case when $S =[0,1]^2$ is the Euclidean unit square and $\hf = \hg$ is a whole-plane GFF normalized so that $\hg_1(0) =0$. 
Let $\rng \hg$ be a zero-boundary GFF on $(-1,2)^2$. By the Markov property of the whole-plane GFF, we can couple $\hg$ and $\rng \hg$ in such a way that $\hg-\rng \hg$ is a.s.\ harmonic, hence continuous, on $(-1,2)^2$.

Recall the heat kernel $p_s(z,w) = \frac{1}{2\pi s} e^{-|z-w|/(2s)}$. 
For $z\in [0,1]^2$ and $\ep \in (0,1)$, we define the convolution $\rng \hg_\ep^* = \rng \hg*p_{\ep^2/2}$ as in~\eqref{eqn-gff-convolve}.
For $z,w\in (-1,2)^2$, define $D_{\rng \hg}^\ep(z,w)$ as in~\eqref{eqn-lfpp} with $\rng \hg_\ep^*$ in place of $\hg_\ep^*$. 
It is shown in~\cite[Theorem 1]{dddf-lfpp} (see also~\cite[Section 6.1]{dddf-lfpp}) that there are constants $\{\lambda_\ep\}_{\ep>0}$ such that the internal metrics $\lambda_\ep^{-1} D_{\rng \hg}^\ep\left( \cdot,\cdot;[0,1]^2\right)$ are tight  w.r.t.\ the uniform topology on $[0,1]^2\times [0,1]^2$ and any subsequential limit of these laws is supported on length metrics which induce the Euclidean topology on $[0,1]^2$. 

We now want to compare $D_{\rng \hg}^\ep$ and $D_\hg^\ep$ using the fact that $(\hg-\rng\hg)|_{(-1,2)^2}$ is a continuous function.
However, we cannot do this directly since we only have a uniform bound for $\hg - \rng\hg$ on compact subsets of $(-1,2)^2$ and the convolution~\eqref{eqn-gff-convolve} does not depend locally on the field.
To this end, we define the localized LFPP metrics $\wh D_\hg^\ep$ and $\wh D_{\rng \hg}^\ep$ as in~\eqref{eqn-localized-lfpp} with $\hf = \hg$ and with $\rng \hg$ in place of $\hg$, respectively. 
Then Lemma~\ref{lem-localized-approx} remains true with $ D_{\rng \hg}^\ep$ and $\wh D_{\rng \hg}^\ep$ in place of $D_\hg^\ep$ and $\wh D_\hg^\ep$ and with $U$ any open set satisfying $\ol U\subset (-1,2)^2$, with the same proof (actually, the proof is simpler since one does not need Lemma~\ref{lem-gff-tail}). 
Therefore, a.s.\ $\wh D_{\rng \hg}^\ep(z,w;U)  / D_{\rng \hg}^\ep(z,w;U) \rta 1$ uniformly over all distinct $z,w\in U$ and the conclusion of the preceding paragraph is true with $\wh D_{\rng \hg}^\ep$ in place of $D_{\rng \hg}^\ep$. 

Since $\hg-\rng \hg$ is a.s.\ equal to a continuous function on a neighborhood of $[0,1]^2$, we infer from~\eqref{eqn-localized-property} that a.s.\ the metrics $\wh D_{\rng \hg}^\ep(\cdot,\cdot;[0,1]^2)$ and $\wh D_\hg^\ep(\cdot,\cdot;[0,1]^2)$ are bi-Lipschitz equivalent with (random) $\ep$-independent Lipschitz constants.
By combining this with the conclusion of the preceding paragraph and Lemma~\ref{lem-bbi}, we get that the laws of the internal metrics $\lambda_\ep^{-1} D_\hg^\ep(\cdot,\cdot ;  S)$ for $\ep\in (0,1)$ are tight w.r.t.\ the uniform topology on $[0,1]^2 \times [0,1]^2$ and any subsequential limit of these laws is supported on length metrics which induce the Euclidean topology on $S$. 
In particular, this implies that $\lambda_\ep$ is bounded above and below by $\ep$-independent constants times the median $\wh D_\hg^\ep$-distance between the left and right sides of $[0,1]^2$.
By Lemma~\ref{lem-localized-approx} (for $\hg$), we now get that $\{\frk a_\ep / \lambda_\ep\}_{\ep \in (0,1)}$ is bounded above and below by positive, finite constants and the statement of the lemma holds in the special case when $\hf = \hg$ and $S = [0,1]^2$. 
 
By Lemma~\ref{lem-lfpp-scale} and the scale and translation invariance of the law of $\hg$, modulo additive constant, this implies the statement of the lemma for a general choice of $S$, but still with $\hf = \hg$. 
If $\hg$ is a whole-plane GFF and $f$ is a bounded continuous function, then the metrics $D_{\hg+ f}^\ep$ and $D_{\hg}^\ep$ are bi-Lipschitz equivalent, with Lipschitz constants $e^{\pm \xi \|f\|_\infty}$. Hence the case of a whole-plane GFF implies the case of a whole-plane GFF plus a continuous function. 
\end{proof}

We now upgrade from internal metrics on closed squares to internal metrics on closures of dyadic domains. 

\begin{lem} \label{lem-lfpp-tight-dyadic}
Let $W\subset\BB C$ be a dyadic domain. The laws of the internal metrics $\frk a_\ep^{-1} D_\hf^\ep(\cdot,\cdot ; \ol W)$ for $\ep\in (0,1)$ are tight w.r.t.\ the uniform topology on $\ol W\times \ol W$ and any subsequential limit of these laws is supported on length metrics which induce the Euclidean topology on $\ol W$. 
\end{lem}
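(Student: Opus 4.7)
The plan is to decompose $\ol W$ into dyadic squares and reduce everything to Lemma~\ref{lem-lfpp-tight-square}. Write $\ol W = \bigcup_{j=1}^N S_j$ with each $S_j$ a dyadic square, and fix a single larger dyadic square $S^\star \supset \ol W$. Lemma~\ref{lem-lfpp-tight-square} applied to each $S_j$ and to $S^\star$ gives tightness of the $N+1$ families $\{\frk a_\ep^{-1} D_\hf^\ep(\cdot,\cdot; S_j)\}_{\ep\in(0,1)}$ and $\{\frk a_\ep^{-1} D_\hf^\ep(\cdot,\cdot; S^\star)\}_{\ep\in(0,1)}$ on the respective compact product spaces, with every subsequential limit being a length metric inducing the Euclidean topology. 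By Prokhorov and a diagonal argument, I pass to a common subsequence along which all $N+1$ internal metrics converge jointly in law to limits $D_{S_1},\dots,D_{S_N},D_{S^\star}$, and work along this subsequence for the rest of the argument.

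The first substantive step is an a priori diameter bound. Assuming $W$ is connected (the disconnected case reduces to its components), any $z,w\in\ol W$ can be joined by a chain $S_{j_1},\dots,S_{j_k}$ of at most $N$ squares with $z\in S_{j_1}$, $w\in S_{j_k}$, and $S_{j_l}\cap S_{j_{l+1}}\neq\emptyset$. Picking points $p_l\in S_{j_l}\cap S_{j_{l+1}}$ and concatenating paths gives
\eqbn
\frk a_\ep^{-1} D_\hf^\ep(z,w;\ol W) \;\le\; \sum_{l=1}^{k}\, \sup_{u,v\in S_{j_l}} \frk a_\ep^{-1} D_\hf^\ep(u,v; S_{j_l}),
\eqen
and tightness of the diameters on each $S_j$ yields a uniform-in-$\ep$ tail bound for the diameter of $(\ol W,\frk a_\ep^{-1} D_\hf^\ep(\cdot,\cdot;\ol W))$. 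The second substantive step is equicontinuity in probability. For $z,z'\in\ol W$ with small Euclidean distance, either (i) $z,z'$ lie in a common $S_j$, in which case $D_\hf^\ep(z,z';\ol W)\le D_\hf^\ep(z,z';S_j)$, or (ii) $z\in S_j$ and $z'\in S_{j'}$ with $S_j\cap S_{j'}\neq\emptyset$, in which case I insert a point $p$ on the shared boundary close to both $z$ and $z'$ and use the triangle inequality to bound $D_\hf^\ep(z,z';\ol W)\le D_\hf^\ep(z,p;S_j)+D_\hf^\ep(p,z';S_{j'})$. For Euclidean distances less than the minimum separation between disjoint $S_j$'s, these cases are exhaustive. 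Tightness of the internal metrics on each $S_j$ with limits inducing the Euclidean topology supplies a probabilistic modulus of continuity on each $S_j$, and this transfers through the above bounds to a uniform-in-$\ep$ modulus for $D_\hf^\ep(\cdot,\cdot;\ol W)$ on $\ol W\times\ol W$. Together with the diameter bound, the Arzelà--Ascoli criterion in the space of continuous functions on $\ol W\times\ol W$ then gives tightness.

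Finally, I identify the properties of any subsequential limit $D_W$. Since $\ol W\subset S^\star$, we have $D_\hf^\ep(z,w;\ol W)\ge D_\hf^\ep(z,w;S^\star)$ for all $z,w\in\ol W$; passing to the limit, $D_W(z,w)\ge D_{S^\star}(z,w)>0$ for $z\neq w$, and combined with the upper bound from step one this shows $D_W$ induces the Euclidean topology on $\ol W$. For the length property, each pre-limit $D_\hf^\ep(\cdot,\cdot;\ol W)$ is a length metric on the compact space $\ol W$, so Lemma~\ref{lem-bbi} applied to the uniform convergence on $\ol W$ yields that $D_W$ is a length metric. The main obstacle is step two: converting the tightness of the internal metrics on each $S_j$ (i.e., tightness of laws of random continuous functions on $S_j\times S_j$) into an honest uniform-in-$\ep$ modulus-of-continuity bound suitable for concluding tightness on $\ol W\times\ol W$, especially in the adjacent-squares case (ii), where one must be careful that the auxiliary point $p$ can be chosen with $|z-p|,|z'-p|$ both small uniformly over the geometry of the boundary $S_j\cap S_{j'}$. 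This is a standard but somewhat delicate consequence of the compactness of each $S_j$ and the fact that the subsequential limits of the internal metrics there are continuous.
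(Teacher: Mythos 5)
Your proposal is correct and follows essentially the same route as the paper: cover $\ol W$ by closed squares, apply Lemma~\ref{lem-lfpp-tight-square} to each, obtain a probabilistic modulus of continuity for $\frk a_\ep^{-1} D_\hf^\ep(\cdot,\cdot;\ol W)$ via the same-square/adjacent-square case analysis with an auxiliary point on the shared boundary, and invoke Lemma~\ref{lem-bbi} for the length property. The one place where you genuinely diverge is in showing that a subsequential limit is a true metric rather than a pseudometric: the paper runs an argument with nested squares $S_1\subset S_2$, using that $D_\hf^\ep(S_1, W\setminus S_2;\ol W)=D_\hf^\ep(S_1,\bdy S_2\setminus\bdy W;S_2)$ has a strictly positive limit by Lemma~\ref{lem-lfpp-tight-square}, whereas you use the pointwise domination $D_\hf^\ep(\cdot,\cdot;\ol W)\geq D_\hf^\ep(\cdot,\cdot;S^\star)$ for a single large square $S^\star\supset\ol W$ together with joint subsequential convergence; your version is slightly cleaner and equally valid, since the inequality passes to the coupled limit uniformly and $D_{S^\star}$ is a.s.\ a genuine metric. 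The geometric point you flag as the "main obstacle" (choosing $p\in S_j\cap S_{j'}$ close to both $z$ and $z'$) is the same one the paper dispatches with "geometric considerations"; it follows from a compactness argument applied to each of the finitely many pairs of intersecting squares, so it is not a gap.
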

\begin{proof}
If $W$ is a dyadic domain, then $\ol W$ has finitely many connected components and these connected components are the closures of dyadic domains which lie at positive Euclidean distance from each other. By considering each connected component separately, we can assume without loss of generality that $\ol W$ is connected.

For a connected set $X\subset \BB C$, a collection $\mcl D$ of random metrics on $X$ is tight w.r.t.\ the local uniform topology if and only if for each $\zeta >0$, there exists $\delta > 0$ such that for each $d \in \mcl D$, it holds with probability at least $1-\zeta$ that
\eqb \label{eqn-metric-tight}
d(z,w) \leq \zeta,\quad\forall z,w\in X \quad \text{such that} \quad  |z-w| \leq \delta .
\eqe 
Indeed, this is an easy consequence of the Arz\'ela-Ascoli theorem, the Prokhorov theorem, and the triangle inequality. 

For any closed square $S\subset \ol W$, the restriction of $D_\hf^\ep(\cdot,\cdot;\ol W)$ to $S $ is bounded above by the internal metric of $D_\hf^\ep(\cdot,\cdot;\ol W)$ on $S$, which equals $D_\hf^\ep(\cdot,\cdot;S)$. By Lemma~\ref{lem-lfpp-tight-square} and the above tightness criterion, the laws of the restrictions of $\{\frk a_\ep^{-1} D_\hf^\ep(\cdot,\cdot;\ol W)\}_{\ep \in (0,1)}$ to $S$ are tight. 
Since $W$ is a dyadic domain, we can choose a finite collection $\mcl S$ of closed squares such that $\bigcup_{S\in\mcl S} S = \ol W$. 

By the above tightness criterion applied to each square in $\mcl S$, for each $\zeta >0$, there exists $\delta > 0$ such that for each $\ep \in (0,1)$, it holds with probability at least $1-\zeta$ that
\eqb  \label{eqn-metric-tight'}
\frk a_\ep^{-1} D_\hf^\ep(z,w;\ol W) \leq \zeta, \quad \forall z,w\in \ol W \quad \text{s.t.} \quad  |z-w| \leq \delta \quad \text{and} \quad  \text{$z,w\in S$ for some $S\in \mcl S$}. 
\eqe 
Now assume that~\eqref{eqn-metric-tight'} holds and consider points $z, w \in \ol W$ such that $|z-w| \leq \delta/2$ but $z$ and $w$ do not lie in the same square of $\mcl S$. 
If $\delta$ is sufficiently small (depending only on the collection of squares $\mcl S$), then we can find squares $S,S' \in \mcl S$ such that $z\in S , w\in S'$, and $S\cap S'\not=\emptyset$. Since $S$ and $S'$ are closed squares, geometric considerations show that there is a $u \in S\cap S'$ such that $|z-u| \leq \delta$ and $|w-u| \leq \delta$. By~\eqref{eqn-metric-tight'} and the triangle inequality this implies that $\frk a_\ep^{-1} D_\hf^\ep(z,w ; \ol W) \leq 2\zeta$. Therefore, $\forall \ep\in (0,1)$ it holds with probability at least $1-\zeta$ that 
\eqbn
\frk a_\ep^{-1} D_\hf^\ep(z,w;\ol W) \leq 2 \zeta, \quad \forall z,w\in \ol W \quad \text{such that} \quad  |z-w| \leq \delta /2 .
\eqen
 Since $\zeta $ is arbitrary, the above tightness criterion applied on all of $\ol W$ now shows that the laws of the metrics $\frk a_\ep^{-1} D_\hf^\ep(\cdot,\cdot ; W)$ for $\ep\in (0,1)$ are tight w.r.t.\ the uniform topology on $\ol W\times \ol W$. 

Let $\wt D$ be a subsequential limit of $\frk a_\ep^{-1} D_\hf^\ep(\cdot,\cdot ; W)$ in law w.r.t.\ the local uniform topology. 
A priori $\wt D$ might be a pseudometric, not a metric.
We need to show that $\wt D$ is in fact a length metric and that it induces the Euclidean topology on $\ol W$.
To this end, consider two squares (not necessarily dyadic) $S_1\subset S_2\subset \ol W$ such that $S_1$ lies at positive Euclidean distance from $\bdy S_2\setminus \bdy W$.
For each $\ep > 0$, we have $D_\hf^\ep(S_1, W\setminus S_2 ; \ol W) = D_\hf^\ep(S_1,\bdy S_2 \setminus \bdy W ; S_2)$ and $D_\hf^\ep(S_1, W\setminus S_2 ;\ol W) \rta \wt D(S_1, W\setminus S_2)$ in law.
From this and Lemma~\ref{lem-lfpp-tight-square}, we infer that a.s.\ $\wt D(S_1, W\setminus S_2 ) > 0$. 
By considering an appropriate countable collection of such square annuli whose inner squares $S_1$ cover $\ol W$, we infer that a.s.\ $\wt D(u,v) > 0$ whenever $u,v\in \ol W$ with $u\not=v$. This implies that $\wt D$ is a metric.
Since $\ol W$ is compact, it follows that $\wt D$ induces the Euclidean topology on $\ol W$.
By Lemma~\ref{lem-bbi}, $\wt D$ is a length metric.
\end{proof}

The following lemma will allow us to extract tightness of $\frk a_\ep^{-1} D_\hf^\ep$ from tightness of $\frk a_\ep^{-1} D_\hf^\ep(\cdot,\cdot;S)$ for squares $S \subset\BB C$. 

\begin{lem} \label{lem-square-bdy-dist}
For $r >0$, let $S_r(0)$ be the closed square of side length $r$ centered at zero. 
Let $\hf$ be a whole-plane GFF plus a bounded continuous function.  
For each $p \in (0,1)$ and each $C > 0$, there exists $R = R(p,C)  > 1$ (depending on $p,C$ and the law of $\hf$) such that for each fixed $r>0$, 
\eqb \label{eqn-square-bdy-dist}
\liminf_{\ep\rta 0} \BB P\left[ \sup_{u,v\in S_r(0)} D_\hf^\ep(u,v) < \frac{1}{C} D_\hf^\ep\left(S_r(0) , \bdy S_{R r}(0)\right) \right] \geq p .
\eqe
\end{lem}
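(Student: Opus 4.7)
The plan is a scaling-plus-subadditivity argument built from Lemmas~\ref{lem-lfpp-scale} and~\ref{lem-lfpp-tight-dyadic}.

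\textbf{Reduction.} First reduce to $r = 1$ and $\hf = \hg$ a whole-plane GFF with $\hg_1(0) = 0$. By Lemma~\ref{lem-lfpp-scale} together with the scale invariance of the whole-plane GFF modulo additive constant, the law of the ratio
\[
\sup_{u,v \in S_r(0)} D_\hg^\ep(u,v) \big/ D_\hg^\ep\!\left(S_r(0), \bdy S_{Rr}(0)\right)
\]
is the same for general $r$ as for $r=1$ with $\ep$ replaced by $\ep/r$. Since the liminf in~\eqref{eqn-square-bdy-dist} is over $\ep \to 0$ at fixed $r$, this gives the reduction. Bounded continuous perturbations change $D_\hf^\ep$ by bounded multiplicative factors and can be absorbed into $C$.

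\textbf{Bounding the ratio.} For the numerator, use $\sup_{u,v\in S_1(0)} D_\hg^\ep(u,v) \leq \sup_{u,v\in S_1(0)} D_\hg^\ep(u,v; \ol{S_2(0)})$ and apply Lemma~\ref{lem-lfpp-tight-dyadic} to the dyadic domain $W = S_2(0)$: this yields $A > 0$ (depending on $p$) with $\liminf_{\ep\to 0} \BB P[\sup_{u,v\in S_1(0)} D_\hg^\ep(u,v) \leq A \frk a_\ep] \geq (1+p)/2$. For the denominator, take $R = 2^K$; any path from $S_1(0)$ to $\bdy S_R(0)$ must cross each annular shell $\ol{S_{2^k}(0)}\setminus S_{2^{k-1}}(0)$ for $k=1,\ldots,K$, so by the triangle inequality
\[
D_\hg^\ep(S_1(0), \bdy S_R(0)) \geq \sum_{k=1}^K D_\hg^\ep\!\left(\bdy S_{2^{k-1}}(0), \bdy S_{2^k}(0)\right).
\]
By Lemma~\ref{lem-lfpp-scale} each summand equals in law $2^{k-1} e^{\xi \hg_{2^{k-1}}(0)}$ times a crossing $D_{\hg^{2^{k-1}}}^{\ep/2^{k-1}}(\bdy S_{1/2}(0), \bdy S_1(0))$ with $\hg^{2^{k-1}} \eqD \hg$. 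Lemma~\ref{lem-lfpp-tight-dyadic} applied to the dyadic annulus $S_1(0) \setminus \ol{S_{1/2}(0)}$ together with positivity of subsequential limits shows this crossing is at least $\delta \frk a_{\ep/2^{k-1}}$ with high probability, for some $\delta > 0$.

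\textbf{Main obstacle.} The hardest step is arranging these annular lower bounds so that their sum diverges as $K \to \infty$. Concretely, one must show that $\sum_{k=1}^K 2^{k-1} e^{\xi \hg_{2^{k-1}}(0)} \frk a_{\ep/2^{k-1}}/\frk a_\ep \to \infty$ in probability as $K \to \infty$, uniformly in small $\ep$. Applying Lemma~\ref{lem-lfpp-tight-dyadic} to $W = S_{2^k}(0)$ shows that the full crossing of $S_{2^k}(0)$ normalized by $\frk a_\ep$ is tight with positive subsequential limit; combined with Lemma~\ref{lem-lfpp-scale}, non-triviality of the limit forces this quantity to grow with $k$ (otherwise the subsequential limits on arbitrarily large squares would have uniformly bounded diameter, contradicting positivity of crossing distances at all scales). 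The correlated factors $e^{\xi \hg_{2^{k-1}}(0)}$ are controlled using the Markov property of the GFF, which makes the increments $\hg_{2^k}(0) - \hg_{2^{k-1}}(0)$ independent Gaussians, so a union bound with failure probabilities of order $(1-p)/2^{k+2}$ concludes.
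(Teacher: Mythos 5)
Your reduction to $r=1$ and to the pure whole-plane GFF is fine, and matches the paper. But the core of your argument --- fixing the inner square $S_1(0)$ and growing the outer square $S_R(0)$, then lower-bounding $D_\hg^\ep(S_1(0),\bdy S_R(0))$ by a sum of dyadic annulus crossings that you need to diverge as $K\rta\infty$ --- has a genuine gap exactly at the step you flag as the ``main obstacle.'' The claimed contradiction (``otherwise the subsequential limits on arbitrarily large squares would have uniformly bounded diameter, contradicting positivity of crossing distances at all scales'') is not a contradiction: a metric on $\BB C$ can perfectly well be bounded while still assigning positive distance to every fixed annulus crossing, so nothing available at this stage rules out $\sum_k D_\hg(\bdy S_{2^{k-1}}(0),\bdy S_{2^k}(0))$ staying bounded. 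In the paper, the fact that $D_\hg(K,\bdy B_R(0))\rta\infty$ as $R\rta\infty$ is only established much later (Lemma~\ref{lem-infinite-dist}), and its proof needs the sharp asymptotics $\frk c_r = r^{\xi Q+o_r(1)}$ of Theorem~\ref{thm-metric-scaling}; using it here would be circular, since Lemma~\ref{lem-square-bdy-dist} feeds into Lemma~\ref{lem-lfpp-tight}, on which everything downstream rests. Likewise, the ratios $\frk a_{\ep/2^{k-1}}/\frk a_\ep$ are not yet known to converge or to have the right growth at this point (that is Lemma~\ref{lem-lfpp-constant}, proved afterwards), so the required uniform-in-$\ep$ divergence of your sum is not justified.

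The fix is to reverse the roles of the two squares: shrink the inner square instead of growing the outer one. Apply Lemma~\ref{lem-lfpp-tight-square} on the \emph{fixed} square $S_1(0)$; any subsequential limit of $\frk a_\ep^{-1}D_\hg^\ep(\cdot,\cdot;S_1(0))$ is a metric inducing the Euclidean topology, so $\sup_{u,v\in S_{1/R}(0)}$ of the limit tends to $0$ as $R\rta\infty$ while the crossing from $S_{1/R}(0)$ to $\bdy S_1(0)$ stays bounded below by the (a.s.\ positive) crossing of $S_1(0)\setminus S_{1/2}(0)$. Tightness then gives, for each $p$ and $C$, an $R$ with
$\liminf_{\ep\rta 0}\BB P[\sup_{u,v\in S_{1/R}(0)}D_\hg^\ep(u,v) < C^{-1}D_\hg^\ep(S_{1/R}(0),\bdy S_1(0))]\geq p$.
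Since the event is invariant under multiplying the metric by a constant, Lemma~\ref{lem-lfpp-scale} with scaling factor $Rr$ (and $\hg^{Rr}\eqD\hg$) converts this into the statement for $S_r(0)$ inside $S_{Rr}(0)$ for every $r$; no control of $\frk a_{\ep/r}/\frk a_\ep$ or of the large-scale circle averages is needed. Your treatment of the bounded continuous perturbation (absorbing $e^{\pm\xi\|f\|_\infty}$ into $C$) is essentially right, though note one must also pay a probability cost to put $\|f\|_\infty$ on a deterministic-bound event, as in the paper.
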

\begin{proof}
We first consider the case when $\hf = \hg$ is a whole-plane GFF normalized so that $\hg_1(0) = 0$.
By Lemma~\ref{lem-lfpp-tight-square} applied with $\ol W = S_1(0)$, there exists $R  = R(p , C) > 1$ such 
\eqb \label{eqn-square-bdy-dist0}
\liminf_{\ep\rta 0} \BB P\left[ \sup_{u,v\in S_{1/R}(0)} D_\hg^\ep(u,v) < \frac{1}{ C} D_\hg^\ep\left(S_{1/R}(0) , \bdy S_{1}(0)\right) \right] \geq p .
\eqe
The occurrence of the event in~\eqref{eqn-square-bdy-dist0} is unaffected by re-scaling $D_\hg^\ep$ by a constant factor.
By Lemma~\ref{lem-lfpp-scale} applied with $R r$ in place of $r$, we see that~\eqref{eqn-square-bdy-dist0} implies that for each fixed $r>0$, 
\eqb \label{eqn-square-bdy-dist1}
\liminf_{\ep\rta 0} \BB P\left[ \sup_{u,v\in S_{r}(0)} D_\hg^\ep(u,v) < \frac{1}{ C} D_\hg^\ep\left(S_r(0) , \bdy S_{R r}(0)\right) \right] \geq p .
\eqe

Now suppose that $\hf = \hg + f$ is a whole-plane GFF plus a bounded continuous function. 
If $f$ is a (possibly random) bounded continuous function, then $D_{\hg+f}^\ep$ and $D_\hg^\ep$ are a.s.\ bi-Lipschitz equivalent with Lipschitz constants $e^{-\xi\|f\|_\infty}$ and $e^{\xi \|f\|_\infty}$. Furthermore, since $f$ is a.s.\ bounded exists a deterministic $A > 1$ such that $\BB P\left[  e^{ \xi\|f\|_\infty} \leq A \right] \geq p$.
By~\eqref{eqn-square-bdy-dist1} with $A^2 C$ in place of $C$, we get~\eqref{eqn-square-bdy-dist} but with $1-2(1-p)$ in place of $p$. Since $p$ can be made arbitrarily close to 1, this yields~\eqref{eqn-square-bdy-dist}. 
\end{proof}

The last lemma we need for the proof of Lemma~\ref{lem-lfpp-tight} is the following deterministic compatibility statement for limits of internal metrics, which is used to get the relationship between internal metrics in assertion~\ref{item-lfpp-dyadic} of Lemma~\ref{lem-lfpp-tight}. 

\begin{lem} \label{lem-internal-conv}
Let $V\subset U\subset\BB C$ be open.
Let $\{D^n\}_{n\in\BB N}$ be a sequence of continuous length metrics on $U$ which converges to a continuous length metric $D$ (w.r.t.\ the local uniform topology on $U\times U$). 
Suppose also that $D^n(\cdot,\cdot ; \ol V ) $ converges to a continuous length metric $\wt D$ w.r.t.\ the uniform topology on $\ol V\times \ol V$.
Then $D(\cdot,\cdot;V) = \wt D(\cdot,\cdot ; V)$.
\end{lem}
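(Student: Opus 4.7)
The plan is to establish the equality as two inequalities. The first, $D(\cdot,\cdot;V) \leq \wt D(\cdot,\cdot;V)$ on $V\times V$, follows from a pointwise bound. Since $D^n(\cdot,\cdot;\ol V) \geq D^n$ on $\ol V \times \ol V$, passing to the limit gives $\wt D \geq D$ on $\ol V \times \ol V$. Hence for every path $P$ in $V\subset \ol V$ we have $\op{len}(P;\wt D) \geq \op{len}(P;D)$, and taking infima over paths in $V$ yields the claim.

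The reverse direction $\wt D(\cdot,\cdot;V) \leq D(\cdot,\cdot;V)$ is more delicate, and I would prove it via the following key local fact: for each compact $K \subset V$ there exists $\epsilon > 0$ such that $\wt D(u,v) = D(u,v)$ for all $u,v\in K$ with $|u-v|<\epsilon$. Let $\delta := \op{dist}_{\mathrm{Euc}}(K,\bdy V) > 0$ and $K^+ := \ol{B_{\delta/2}(K)} \subset V \subset U$. Since $D$ is a continuous metric on $U$ inducing the Euclidean topology and $D^n \to D$ uniformly on $K^+\times K^+$, a routine compactness argument yields, for all large $n$, equicontinuity on $K^+$ in both directions: small Euclidean distance forces small $D^n$-distance, and small $D^n$-distance forces small Euclidean distance, with quantitative bounds independent of $n$. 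Hence for $u,v\in K$ with $|u-v| < \epsilon$ (with $\epsilon$ small enough) and $n$ large, $D^n(u,v)$ is small, and any near-$D^n$-geodesic from $u$ to $v$ in $U$ must stay inside $K^+ \subset \ol V$: exiting $K^+$ would require Euclidean displacement at least $\delta/2$ from $u$, hence $D^n$-length bounded below by a fixed positive constant, contradicting the smallness of its length. Consequently $D^n(u,v;\ol V) = D^n(u,v)$, and letting $n \to \infty$ yields $\wt D(u,v) = D(u,v)$.

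To conclude, fix $u,v\in V$ with $D(u,v;V) < \infty$ (the infinite case is immediate from the easy direction). For $\eta>0$, pick a rectifiable path $P : [0,1] \to V$ from $u$ to $v$ with $\op{len}(P;D) \leq D(u,v;V) + \eta$; its image $K := P([0,1])$ is compact in $V$, and the local fact above produces a corresponding $\epsilon > 0$. By uniform continuity of $P$, any partition $T$ of $[0,1]$ admits a refinement $T' \supset T$ of mesh small enough that $|P(t_{i-1}) - P(t_i)| < \epsilon$ for all $i$; then $\sum_{T'} \wt D(P(t_i),P(t_{i-1})) = \sum_{T'} D(P(t_i),P(t_{i-1})) \leq \op{len}(P;D)$, and the triangle inequality for $\wt D$ gives $\sum_T \wt D \leq \sum_{T'} \wt D \leq \op{len}(P;D)$. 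Taking the supremum over $T$ yields $\op{len}(P;\wt D) \leq \op{len}(P;D)$, so $\wt D(u,v;V) \leq \op{len}(P;\wt D) \leq D(u,v;V) + \eta$; letting $\eta \to 0$ finishes the proof. The main obstacle is the local equality step, where uniform-in-$n$ equicontinuity is essential; this crucially relies on the hypothesis that $D$ induces the Euclidean topology, ensuring that $D^n$-distances and Euclidean distances are quantitatively comparable on compact subsets of $U$ uniformly in large $n$.
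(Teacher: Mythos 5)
Your proof is correct and follows essentially the same strategy as the paper: show that $D$ and $\wt D$ agree locally because near-minimizing paths between sufficiently close points cannot escape a compact subset of $V$ (hence the internal metrics on $\ol V$ are not felt), then globalize using the length-metric property via partition refinement. The paper phrases the local step through the criterion $D(u,v) < D(u,\bdy V)$ rather than your uniform-in-$n$ equicontinuity argument on compact sets, but the underlying mechanism is identical.
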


In the setting of Lemma~\ref{lem-internal-conv}, we do not necessarily have $D(\cdot,\cdot;\ol V) = \wt D$. The reason is that it could be, e.g., that paths of near-minimal $\wt D$-length spend a positive fraction of their time in $\bdy V$.

\begin{proof}[Proof of Lemma~\ref{lem-internal-conv}]
Let $u , v \in V$ such that $D(u,v) < D(u,\bdy V)$. Since $D$ is a length metric, $D(u,v) = D(u,v;V) = D(u,v ; \ol V)$. 
Furthermore, for large enough $n\in\BB N$ we have $D^n(u,v) < D^n(u,\bdy V)$ which implies that $D^n(u,v) = D^n(u,v;V) = D^n(u,v;\ol V)$.
Therefore, $D^n(u,v)$ converges to both $D(u,v) = D(u,v; V)$ and $\wt D(u,v)$.
Furthermore, we have $\wt D(u,v) < \wt D(u,v; \bdy V)$ which implies that $\wt D(u,v) = \wt D(u,v;V)$. 
Consequently, $D(u,v; V) = \wt D(u,v;V)$ for each $u,v\in V$ with $D(u,v) < D(u,\bdy V)$.
This implies that the $ D$-length of any path in $V$ which lies at positive Euclidean distance from $\bdy V$ is the same as its $\wt D$-length.
Since $D(\cdot,\cdot;V)$ and $\wt D(\cdot,\cdot;V)$ are length metrics, we conclude that $  D(\cdot,\cdot;V) = \wt D(\cdot,\cdot;V)$. 
\end{proof}

\begin{proof}[Proof of Lemma~\ref{lem-lfpp-tight}] 
For $r >0$, let $S_r(0)$ be the closed square of side length $r$ centered at zero, as in Lemma~\ref{lem-square-bdy-dist}. 
Let $p\in (0,1)$ and let $R = R(p)  > 1$ be as in Lemma~\ref{lem-square-bdy-dist} with $C=2$ and with $(1+p)/2$, say, in place of $p$. Then for each fixed $r > 0$ and each small enough $\ep > 0$, it holds with probability at least $p$ that
\allb \label{eqn-square-metric-agree}
&\sup_{u,v\in S_r(0)} D_\hf^\ep(u,v) \leq \frac12 D_\hf^\ep(S_r(0) , \bdy S_{R r}(0)) \notag \\
&\qquad \text{which implies} \quad
D_\hf^\ep(u,v) = D_\hf^\ep(u,v; S_{R r}(0)) ,\quad \forall u,v\in S_{r}(0) .
\alle
We now apply Lemma~\ref{lem-lfpp-tight-square} with $S = S_{ R r}(0)$ and use that $p$ can be made arbitrarily close to 1 to get that the laws of $\frk a_\ep^{-1} D_\hf^\ep|_{S_r(0)}$ are tight w.r.t.\ the local uniform topology on $S_r(0)$.
Furthermore, any subsequential limit in law of these metrics a.s.\ induces the Euclidean topology on $S_r(0)$.
Since $r$ can be made arbitrarily large, we get that the metrics $\frk a_\ep^{-1} D_\hf^\ep$ are tight w.r.t.\ the local uniform topology on $\BB C \times \BB C$ and any subsequential limit in law is a.s.\ a continuous metric on $\BB C$.

To prove assertion~\ref{item-lfpp-tight}, it remains to check that if $D_\hf$ is a subsequential limit in law of the metrics $\frk a_\ep^{-1} D_\hf^\ep$, then a.s.\ $D_\hf$ is a length metric. To this end, let $p \in (0,1)$ and let $R = R(p)> 1$ be as above. 
By Lemma~\ref{lem-lfpp-tight-square}, if we are given $r>0$ then by possibly passing to a further subsequence we can arrange that along our subsequence, the joint law of $(\frk a_\ep^{-1} D_\hf^\ep , \frk a_\ep^{-1} D_\hf^\ep(\cdot,\cdot; S_{Rr}(0)) )$ converges to a coupling $(D_\hf,\wt D)$ where $\wt D$ is a length metric on $S_{Rr}(0)$. 
By passing to the (subsequential) limit in~\eqref{eqn-square-metric-agree}, we get that with probability at least $p$,
\eqb \label{eqn-square-metric-agree'}
\sup_{u,v\in S_r(0)} D_\hf (u,v) \leq \frac12 D_\hf (S_r(0) , \bdy S_{R r}(0)) 
\quad \text{and} \quad
D_\hf (u,v) = \wt D(u,v) ,\quad \forall u,v\in S_r(0) .
\eqe
By Lemma~\ref{lem-internal-conv}, a.s.\ the internal metrics of $D_\hf$ and $\wt D$ on the interior of $S_{Rr}(0)$ coincide. 
Hence~\eqref{eqn-square-metric-agree} implies that with probability at least $p$, $D_\hf(u,v)$ is equal to the infimum of the $D_\hf$-lengths of all continuous paths from $u$ to $v$ which are contained in the interior of $S_{Rr}(0)$, which (by the first condition in~\eqref{eqn-square-metric-agree}) is equal to the infimum of the $D_\hf$-lengths of all continuous paths from $u$ to $v$.
Since $p$ can be made arbitrarily close to 1 and $r$ can be made arbitrarily large, we get that a.s.\ $D_\hf$ is a length metric. 
 
To get the joint convergence~\eqref{eqn-lfpp-dyadic}, we first apply Lemma~\ref{lem-lfpp-tight-dyadic} and the Prokhorov theorem to get that the joint law of the metrics on the left side of~\eqref{eqn-lfpp-dyadic} is tight. Moreover any subsequential limit of these joint laws is a coupling of a continuous length metric $D_\hf$ on $\BB C$ and a length metric $D_{\hf,W}$ on $\ol W$ for each $W\in\mcl W$ which induces the Euclidean topology on $\ol W$.
We then apply Lemma~\ref{lem-internal-conv} to say that $D_{\hf ,W}(\cdot,\cdot;W) = D_\hf(\cdot,\cdot;W)$ for each $W\in\mcl W$.  
\end{proof}

\subsection{Weyl scaling}
\label{sec-weyl-scaling}

The following lemma will be used to check Axiom~\ref{item-metric-f}.

\begin{lem} \label{lem-weyl-scaling}
Let $\hf$ be a whole-plane GFF plus a bounded continuous function and consider a sequence $\ep_n \rta 0$ along which $ \frk a_{\ep_n}^{-1} D_\hf^{\ep_n}$ converges in law to some metric $ D_\hf$ w.r.t.\ the local uniform topology. 
Suppose we have, using the Skorokhod theorem, coupled so this convergence occurs a.s.
Then, a.s., for every sequence of bounded continuous functions $f^n : \BB C\rta \BB R$ such that $f^n$ converges to a bounded continuous function $f$ uniformly on compact subsets of $\BB C$, we have the local uniform convergence $D_{\hf+f^n}^{\ep_n} \rta e^{\xi f}\cdot D_\hf$, where here $D_{\hf+f^n}^\ep$ is defined as in~\eqref{eqn-lfpp} with $\hf+f^n$ in place of $\hf$ and $e^{\xi f}\cdot D_\hf$ is defined as in~\eqref{eqn-metric-f}. 
\end{lem}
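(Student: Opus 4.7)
The strategy is to exploit the linearity of the heat-kernel convolution, $(\hf + f^n)^*_\ep = \hf^*_\ep + (f^n)^*_\ep$, and reduce to a continuity statement for Weyl scaling under local uniform convergence of length metrics. First, I would verify that $(f^n)^*_{\ep_n} \rta f$ locally uniformly on $\BB C$. This is a standard mollification argument: writing $(f^n)^*_{\ep_n}(z)-f(z) = \int_{\BB C}(f^n(w)-f(z))p_{\ep_n^2/2}(z,w)\,dw$, the near-diagonal contribution on $|w-z|\leq \ep_n^{1/2}$ is controlled by continuity of $f$ combined with the locally uniform convergence $f^n\rta f$, while the tail on $|w-z|>\ep_n^{1/2}$ decays exponentially in $\ep_n^{-1}$, using uniform boundedness of $\{f^n\}\cup\{f\}$.

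Given this, for a compact $K\subset\BB C$ and $\delta>0$, one has $\sup_K |(f^n)^*_{\ep_n} - f| \leq \delta$ for all large $n$, whence for any piecewise $C^1$ path $P$ with image in $K$,
\eqbn
e^{-\xi\delta}\int_0^1 e^{\xi f(P(t))}e^{\xi\hf^*_{\ep_n}(P(t))}|P'(t)|\,dt \leq \int_0^1 e^{\xi(\hf+f^n)^*_{\ep_n}(P(t))}|P'(t)|\,dt \leq e^{\xi\delta}\int_0^1 e^{\xi f(P(t))}e^{\xi\hf^*_{\ep_n}(P(t))}|P'(t)|\,dt.
\eqen
Reparametrizing $P$ by $\frk a_{\ep_n}^{-1}D_\hf^{\ep_n}$-length transforms the left- and right-hand integrands, each divided by $\frk a_{\ep_n}$, into $\int_0^{L_n}e^{\xi f(\wt P(u))}\,du$, where $L_n$ is the $\frk a_{\ep_n}^{-1}D_\hf^{\ep_n}$-length of $\wt P$; this is exactly the $(e^{\xi f}\cdot(\frk a_{\ep_n}^{-1}D_\hf^{\ep_n}))$-length of $\wt P$. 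Using the locally uniform convergence $\frk a_{\ep_n}^{-1}D_\hf^{\ep_n}\rta D_\hf$ (together with Lemma~\ref{lem-lfpp-tight}) to arrange that near-optimal paths for the metrics involved stay in a sufficiently large compact $K$, we deduce
\eqbn
\frk a_{\ep_n}^{-1}D_{\hf+f^n}^{\ep_n}(z,w) = (1+o_n(1))\cdot\bigl(e^{\xi f}\cdot(\frk a_{\ep_n}^{-1}D_\hf^{\ep_n})\bigr)(z,w),
\eqen
locally uniformly in $(z,w)$.

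It then remains to prove the general continuity statement: if $\rho_n$ are continuous length metrics on $\BB C$ converging locally uniformly to a continuous length metric $\rho$, then $e^{\xi f}\cdot\rho_n\rta e^{\xi f}\cdot\rho$ locally uniformly, for any bounded continuous $f$. The two-sided bound $e^{-\xi\|f\|_\infty}\rho_n\leq e^{\xi f}\cdot\rho_n\leq e^{\xi\|f\|_\infty}\rho_n$ gives local equicontinuity and, via Arzel\`a-Ascoli, compactness of sequences of near-optimal paths. For the upper bound at a fixed pair $(z,w)$, pick an $\eta$-near-optimal path $P$ for $e^{\xi f}\cdot\rho$ parametrized by $\rho$-length and approximate $\int_0^{\op{len}(P;\rho)}e^{\xi f(P(s))}\,ds$ by a Riemann sum $\sum_i e^{\xi f(P(s_i))}\rho(P(s_i),P(s_{i-1}))$, using uniform continuity of $f\circ P$; replacing $\rho$ by $\rho_n$ here converges by pointwise convergence of $\rho_n\rta\rho$, yielding $\limsup_n(e^{\xi f}\cdot\rho_n)(z,w)\leq(e^{\xi f}\cdot\rho)(z,w)+O(\eta)$. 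The lower bound is symmetric: extract a locally uniformly convergent subsequential limit of near-optimal paths for $e^{\xi f}\cdot\rho_n$ and apply the same Riemann-sum estimate. The main obstacle is the usual lower semicontinuity of path length under uniform convergence of metrics; this is circumvented here by working with Riemann-sum approximations along fixed curves, for which the relevant pairwise distances do converge, together with the length-space property of $\rho$ to ensure that near-optimal paths can be taken with the required regularity.
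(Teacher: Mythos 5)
Your proposal is correct and follows the same skeleton as the paper's proof: both use the linearity of the heat-kernel convolution and the locally uniform convergence $(f^n)^*_{\ep_n}\rta f$ to reduce the statement to the assertion that Weyl scaling is continuous under local uniform convergence of length metrics, and both recognize that a localization step is needed to confine near-optimal paths to a fixed compact set. The difference is in how the final continuity assertion is handled: the paper quotes \cite[Lemma 7.1]{df-lqg-metric} as a black box, after using Lemma~\ref{lem-square-bdy-dist} to produce for each square $S_r(0)$ a larger square $S_{r'}(0)$ outside of which no near-minimal path for any of the metrics $\frk a_{\ep_n}^{-1}D_\hf^{\ep_n}$, $\frk a_{\ep_n}^{-1}D_{\hf+f^n}^{\ep_n}$, $D_\hf$, or $e^{\xi f}\cdot D_\hf$ can wander (the constant $C>\sup_n\|f^n\|_\infty$ there is chosen precisely to absorb the Weyl factors); you instead prove the continuity statement from scratch via Riemann-sum approximations along fixed curves together with Arzel\`a--Ascoli for near-optimal paths, which is essentially the argument inside the cited lemma and correctly circumvents the lower-semicontinuity of length. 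Two details to make explicit in a write-up: (i) containment of near-minimal paths in a compact set does not follow from tightness (Lemma~\ref{lem-lfpp-tight}) alone --- a priori the limit $D_\hf$ could be a bounded metric on all of $\BB C$ --- and this is exactly what the quantitative separation in Lemma~\ref{lem-square-bdy-dist} supplies; (ii) in your upper bound, passing from the Riemann sum $\sum_i e^{\xi f(P(s_i))}\rho_n(P(s_{i-1}),P(s_i))$ to a bound on $(e^{\xi f}\cdot\rho_n)(z,w)$ requires that near-optimal $\rho_n$-paths between consecutive partition points stay in regions where $f$ oscillates by at most $\delta$, uniformly in $n$ and $i$; this follows from the uniform convergence $\rho_n\rta\rho$ and the fact that $\rho$ induces the Euclidean topology, but it should be stated.
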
 

As a consequence of Lemma~\ref{lem-weyl-scaling}, if $\hf$ is a whole-plane GFF plus a bounded continuous function and $\ep_n \rta 0$ is a sequence along which $ \frk a_{\ep_n}^{-1} D_\hf^{\ep_n} \rta D_\hf$ in law, then whenever $ \hf'$ is another whole-plane GFF plus a bounded continuous function, we have $\frk a_{\ep_n}^{-1} D_{\hf'}^{\ep_n} \rta D_{\hf'}$ in law for some limiting metric $D_{\hf'}$. Furthermore, $(\hf , \hf' , D_\hf , D_{\hf'})$ can be coupled together in such a way that $\hf'- \hf$ is a bounded continuous function and $D_{\hf'} = e^{\xi(\hf' - \hf)} \cdot D_\hf$. 
Consequently, any subsequence along which $ \frk a_{\ep_n}^{-1} D_\hf^{\ep_n}$ converges in law gives us a way to define a metric associated with any whole-plane GFF plus a bounded continuous function.

\begin{proof}[Proof of Lemma~\ref{lem-weyl-scaling}]
Let $f_{\ep_n}^{*,n} = f^n*p_{\ep_n^2/2}$ be defined as in~\eqref{eqn-gff-convolve} with with $f^n$ in place of $h$. 
Then $f_{\ep_n}^{*,n} \rta f$ uniformly on compact subsets of $\BB C$.
By the definition~\eqref{eqn-lfpp} of LFPP, we have $D_{\hf+f^n}^{\ep_n} = e^{\xi f_{\ep_n}^{*,n}} \cdot D_\hf^{\ep_n}$. 

We now want to apply an argument as in the proof of~\cite[Lemma 7.1]{df-lqg-metric} to say that $D_{\hf+f^n}^{\ep_n}  \rta e^{\xi f}\cdot D_\hf$ w.r.t.\ the local uniform topology. That lemma only applies for metrics defined on squares, so we need to localize. We do this by means of Lemma~\ref{lem-square-bdy-dist}. 
By taking a limit as $\ep\rta 0$ in the estimate of Lemma~\ref{lem-square-bdy-dist}, then sending $p \rta 1$, we find that a.s.\ for each $r > 0$ and each $C > 1$, there exists $r'  = r'(r,C)> 0$ (random) such that
\eqb \label{eqn-square-bdy-weyl}
 \sup_{u,v\in S_r(0)} D_\hf (u,v) \leq \frac{1}{2C} D_\hf (S_r(0) , \bdy S_{r'}(0)) . 
\eqe
Furthermore, the uniform convergence $\frk a_{\ep_n}^{-1} D_\hf^{\ep_n} \rta D_\hf$, we get that~\eqref{eqn-square-bdy-weyl} is a.s.\ true with $\frk a_{\ep_n}^{-1} D_\hf^{\ep_n}$ in place of $D_\hf$ for large enough $n\in\BB N$, but with $C$ instead of $2C$. 
This implies that each path of near-minimal $D_\hf$-length between two points of $S_r(0)$ is contained in $S_{r'}(0)$, and the same is true with $\frk a_{\ep_n}^{-1} D_\hf^{\ep_n}$ in place of $D_\hf$ for large enough $n\in\BB N$.
If we choose $C > \sup_{n\in\BB N} \|f^n\|_\infty$, then from~\eqref{eqn-square-bdy-weyl} we deduce that each path of near-minimal $e^{\xi f} \cdot D_\hf$-length between two points of $S_r(0)$ is contained in $S_{r'}(0)$, and the same is true with $\frk a_{\ep_n}^{-1} D_{\hf + f^n}^{\ep_n}$ in place of $D_\hf$ for large enough $n\in\BB N$.
With these conditions in hand, the lemma now follows from the same proof as in~\cite[Lemma 7.1]{df-lqg-metric}. 
\end{proof}

\subsection{Tightness across scales}
\label{sec-lfpp-coord}

In this section we check that subsequential limits of LFPP satisfy Axiom~\ref{item-metric-coord}. 
For the statement, we note that we can take a subsequential limit of the joint laws of $(\hf ,\frk a_\ep^{-1} D_\hf^\ep)$ due to Lemma~\ref{lem-lfpp-tight} and the Prokhorov theorem.

\begin{lem} \label{lem-lfpp-coord}
Let $\hg$ be a whole-plane GFF normalized so that $\hg_1(0) = 0$. 
Let $(\hg , D_\hg)$ be any subsequential limit of the laws of the field/metric pairs $(\hg , \frk a_\ep^{-1} D_\hg^\ep)$.
There are deterministic constants $\{\frk c_r\}_{r\geq 0}$, depending on the law of $D_\hg$, such that the laws of the metrics $\{\frk c_r^{-1} e^{-\xi \hg_r(0)} D_\hg(r\cdot,r\cdot)\}_{r>0}$ are tight w.r.t.\ the local uniform topology. Furthermore, the closure of this set of laws w.r.t.\ the Prokhorov topology for probability measures on continuous functions $\BB C\times \BB C \rta [0,\infty)$ is contained in the set of laws on continuous metrics on $\BB C$. Finally, there exists  
$\Lambda > 1$ such that for each $\delta \in (0,1)$, 
\eqb \label{eqn-scaling-constant'}
\Lambda^{-1} \delta^\Lambda \leq \frac{\frk c_{\delta r}}{\frk c_r} \leq \Lambda \delta^{-\Lambda} ,\quad\forall r  > 0.
\eqe
\end{lem}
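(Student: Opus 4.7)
The plan is to exhibit, for each $r > 0$, the rescaled metric $\frk c_r^{-1} e^{-\xi \hg_r(0)} D_\hg(r\cdot, r\cdot)$ as equal in law to a subsequential limit of the already-tight family $\{\frk a_\ep^{-1} D_\hg^\ep\}_{\ep > 0}$, for a suitable choice of the deterministic constant $\frk c_r$. The two tightness assertions will then be immediate from Lemma~\ref{lem-lfpp-tight}, and the polynomial bound~\eqref{eqn-scaling-constant'} will follow from polynomial asymptotics for the LFPP normalization constants $\frk a_\ep$.

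First, fix a sequence $\ep_n \rta 0$ along which $(\hg, \frk a_{\ep_n}^{-1} D_\hg^{\ep_n}) \rta (\hg, D_\hg)$ jointly in law, and by Skorokhod representation assume the convergence is almost sure. For fixed $r > 0$, set $\hg^r := \hg(r\cdot) - \hg_r(0)$, so that $\hg^r \eqD \hg$. The LFPP scaling identity of Lemma~\ref{lem-lfpp-scale} rearranges as
\[
\frk a_{\ep_n}^{-1} e^{-\xi \hg_r(0)} D_\hg^{\ep_n}(r\cdot, r\cdot) \;=\; \frac{r\, \frk a_{\ep_n/r}}{\frk a_{\ep_n}} \cdot \frk a_{\ep_n/r}^{-1} D_{\hg^r}^{\ep_n/r}.
\]
The left-hand side converges almost surely to $e^{-\xi \hg_r(0)} D_\hg(r\cdot, r\cdot)$. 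The metric factor on the right has the same law as $\frk a_{\ep_n/r}^{-1} D_\hg^{\ep_n/r}$ (since $\hg^r \eqD \hg$), so by Lemma~\ref{lem-lfpp-tight} its laws form a tight family whose subsequential limits are continuous length metrics. Passing to a further subsequence $\{\ep_{n_k}\}$ (depending on $r$), we arrange the joint convergence in law $(\hg^r, \frk a_{\ep_{n_k}/r}^{-1} D_{\hg^r}^{\ep_{n_k}/r}) \rta (\hg^r, D_{\hg^r}^{(r)})$ to some continuous length metric $D_{\hg^r}^{(r)}$.

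Since both $e^{-\xi \hg_r(0)} D_\hg(rz, rw)$ and $D_{\hg^r}^{(r)}(z, w)$ are almost surely finite and strictly positive at any fixed pair of distinct points $z \neq w$, the deterministic scalar $r\frk a_{\ep_{n_k}/r}/\frk a_{\ep_{n_k}}$ must remain bounded above and bounded away from zero along the subsequence; otherwise the product limit would be identically $0$ or $\infty$ somewhere, contradicting what we have for the left side. Extracting one last subsequence, define $\frk c_r := \lim_k r\frk a_{\ep_{n_k}/r}/\frk a_{\ep_{n_k}} \in (0,\infty)$. Uniqueness of distributional limits then yields
\[
\frk c_r^{-1} e^{-\xi \hg_r(0)} D_\hg(r\cdot, r\cdot) \;\eqD\; D_{\hg^r}^{(r)},
\]
so the law on the left lies in the Prokhorov closure of $\{\text{law}(\frk a_\ep^{-1} D_\hg^\ep) : \ep > 0\}$. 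The two tightness assertions of the lemma follow at once from Lemma~\ref{lem-lfpp-tight} applied to this closure.

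For the bound~\eqref{eqn-scaling-constant'}, we compute $\frk c_{\delta r}/\frk c_r = \delta \cdot \lim_k \frk a_{\ep_{n_k}/(\delta r)}/\frk a_{\ep_{n_k}/r}$; setting $t = \ep_{n_k}/r \rta 0$, this reduces to a polynomial-in-$1/\delta$ upper and lower bound on $\frk a_{t/\delta}/\frk a_t$, uniform in small $t$. Such bounds follow from the asymptotic $\frk a_t = t^{1 - \xi Q + o(1)}$ as $t \rta 0$ established in~\cite[Theorem 1.5]{dg-lqg-dim}, yielding $\frk c_{\delta r}/\frk c_r = \delta^{\xi Q + o_\delta(1)}$ (in fact recovering the sharper scaling of Theorem~\ref{thm-metric-scaling}). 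The main conceptual step, rather than a real obstacle, is the identification in distribution of the rescaled $D_\hg$-metric with an LFPP subsequential limit: this relies crucially on the joint convergence with $\hg$ from Lemma~\ref{lem-lfpp-tight} together with Lemma~\ref{lem-lfpp-scale}, and once it is in hand both tightness and the polynomial bound are essentially formal.
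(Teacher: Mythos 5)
Your overall strategy --- rewriting the scaling identity of Lemma~\ref{lem-lfpp-scale} so that the rescaled limit $e^{-\xi \hg_r(0)} D_\hg(r\cdot,r\cdot)$ is identified, up to a deterministic factor, with a subsequential limit of the tight family $\{\frk a_\ep^{-1}D_\hg^\ep\}$ --- is exactly the paper's, and your tightness argument (including the observation that the scalar $r\frk a_{\ep_n/r}/\frk a_{\ep_n}$ must stay bounded above and away from zero because both sides of the identity have non-degenerate limits) is correct. The gap is in the final step, the uniform-in-$r$ bound \eqref{eqn-scaling-constant'}, and it has two parts.

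First, you define $\frk c_r$ only as a limit along an $r$-dependent sub-subsequence of $\mcl E$. The paper's Lemma~\ref{lem-lfpp-constant} does genuinely more: it proves that $\lim_{\mcl E\ni\ep\rta 0} r\frk a_{\ep/r}/\frk a_\ep$ \emph{exists along all of $\mcl E$}, by showing that every sub-subsequential limit of $\frk a_{\ep/r}/\frk a_\ep$ equals the median crossing distance $m_r$ of the limiting rescaled metric (via Portmanteau). This is not a cosmetic point. In your construction, $\frk c_{\delta r}$ and $\frk c_r$ are limits along \emph{different} sub-subsequences, so the ratio $\frk c_{\delta r}/\frk c_r$ is not a limit of the single quantity $\delta\,\frk a_{\ep/(\delta r)}/\frk a_{\ep/r}$; the best one can say is that it is bounded by $\delta\cdot\bigl(\limsup_\ep \frk a_{\ep/(\delta r)}/\frk a_\ep\bigr)/\bigl(\liminf_\ep \frk a_{\ep/r}/\frk a_\ep\bigr)$, and controlling the spread between $\limsup$ and $\liminf$ of $\frk a_{\ep/r}/\frk a_\ep$ introduces an $r$-dependent factor (of order $r^{O(1)}$), which destroys the ``$\forall r>0$'' uniformity demanded by \eqref{eqn-scaling-constant'}. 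Second, the input you cite is too weak: the pointwise asymptotic $\frk a_t=t^{1-\xi Q+o_t(1)}$ does not imply a bound on $\frk a_{t/\delta}/\frk a_t$ that is uniform in $t$ (a subpolynomial correction $e^{\pm\sqrt{\log(1/t)}}$ is consistent with that asymptotic but makes the ratio blow up as $t\rta 0$ for fixed $\delta$). What is needed, and what the paper uses, is the \emph{uniform} two-sided estimate of \cite[Theorem 1, Equation (1.3)]{dddf-lfpp} for the normalizing constants, combined with the existence of the limit defining $\frk c_r$. With those two ingredients restored, your argument closes.
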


We first produce the scaling constants $\frk c_r$ appearing in Axiom~\ref{item-metric-coord}. 

\begin{lem} \label{lem-lfpp-constant}
Consider a sequence $\mcl E\subset (0,1)$ converging to zero along which $ \frk a_\ep^{-1} D_\hg^\ep$ converges in law to a limiting metric $ D_\hg$. 
For each $r > 0$, the limit 
\eqb \label{eqn-scaling-const-lim}
\frk c_r := \lim_{\mcl E\ni\ep \rta 0} \frac{r \frk a_{\ep /r}  }{ \frk a_{\ep } }
\eqe
exists and satisfies the relation~\eqref{eqn-scaling-constant'} for some choice of $\Lambda>1$ depending only on $\mcl E$ and $\gamma$. 
\end{lem}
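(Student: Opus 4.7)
The plan is to identify $c_\ep^r := r\frk a_{\ep/r}/\frk a_\ep$ with the median of a continuous functional of the pair $(\hg, \frk a_\ep^{-1} D_\hg^\ep)$, pass to the limit along $\mcl E$, and then bootstrap the scaling bound from a simple arithmetic identity.

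The first step is to apply Lemma~\ref{lem-lfpp-scale} at the level of internal metrics on the unit square. Setting $\hg^r(z) := \hg(rz) - \hg_r(0)$, the same computation as in Lemma~\ref{lem-lfpp-scale} but restricted to paths in $[0,1]^2$ yields the pathwise identity
\[
D_{\hg^r}^{\ep/r}(z,w;[0,1]^2) = r^{-1} e^{-\xi \hg_r(0)} D_\hg^\ep(rz, rw; [0,r]^2).
\]
Since $\hg^r \eqD \hg$, the median of the left-right crossing distance across $[0,1]^2$ on the LHS equals $\frk a_{\ep/r}$, giving after rearrangement
\[
c_\ep^r = \op{med}\Bigl(e^{-\xi \hg_r(0)} \cdot \frk a_\ep^{-1} D_\hg^\ep(\mathrm{left}_r, \mathrm{right}_r; [0,r]^2)\Bigr),
\]
where $\mathrm{left}_r,\mathrm{right}_r$ are the left and right sides of $[0,r]^2$. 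Along $\mcl E$, for dyadic $r$ Lemma~\ref{lem-lfpp-tight}(\ref{item-lfpp-dyadic}) yields the joint convergence in law of $\hg$ with the internal metric $\frk a_\ep^{-1} D_\hg^\ep(\cdot,\cdot;[0,r]^2)$, so the argument of the median converges in law to $Y := e^{-\xi\hg_r(0)} D_\hg(\mathrm{left}_r, \mathrm{right}_r; [0,r]^2)$; the medians then converge to $\frk c_r := \op{med}(Y)$, provided the distribution of $Y$ is continuous at its median. Non-dyadic $r$ are handled by sandwiching $[0,r]^2$ between dyadic domains $W_n^\pm$ of mesh $2^{-n}$ and using the length-metric property of $D_\hg$ (together with monotonicity of internal metrics in the domain) to equate the limiting internal metrics.

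The scaling bound then follows from the arithmetic identity $c_\ep^{\delta r} = c_\ep^r \cdot c_{\ep/r}^\delta$, valid directly from the definition. Combined with uniform polynomial bounds $\Lambda^{-1}\delta^\Lambda \leq c_\eta^\delta \leq \Lambda\delta^{-\Lambda}$ for all $\eta > 0$ and $\delta \in (0,1)$ --- which reduce via $c_\eta^\delta = \delta\frk a_{\eta/\delta}/\frk a_\eta$ to polynomial bounds on the ratios of the LFPP normalizations $\frk a_\ep$ available from the tightness estimates in~\cite{dddf-lfpp} --- this gives the same polynomial control on $c_\ep^{\delta r}/c_\ep^r$ uniformly in $\ep$. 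Passing to the limit along $\mcl E$ yields~\eqref{eqn-scaling-constant'}.

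The main obstacle is the passage of the median through the limit in the second step: one needs convergence of the internal metric on $[0,r]^2$ (immediate for dyadic $r$ by Lemma~\ref{lem-lfpp-tight}(\ref{item-lfpp-dyadic}), requiring a sandwich argument for general $r$), together with continuity of the limiting distribution of $Y$ at its median. The latter can be verified via a Cameron--Martin argument: adding a small bump function $f$ supported in $[0,r]^2$ to $\hg$ rescales the relevant crossing distance by the non-trivial factor $e^{\xi f}$ while changing the law of $\hg$ by an equivalent measure, which excludes a fixed atom in the distribution of $Y$ at its median.
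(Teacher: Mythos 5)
Your proposal is correct and follows essentially the same route as the paper: both rest on the scaling identity of Lemma~\ref{lem-lfpp-scale} together with $\hg^r \eqD \hg$, identify $\lim_{\mcl E\ni\ep\rta 0} r\frk a_{\ep/r}/\frk a_\ep$ as the median of the limiting rescaled crossing distance of the unit square, and import the polynomial bounds~\eqref{eqn-scaling-constant'} from~\cite{dddf-lfpp}. The only real difference is in execution: the paper writes $\frk a_{\ep/r}^{-1}D_\hg^{\ep/r} \eqD (\frk a_\ep/\frk a_{\ep/r})\,\frk a_\ep^{-1}D_{\hg^r}^{\ep/r}$ and plays tightness of the left side against convergence of the second factor to make the scalar ratio precompact before pinning down its limit points via medians, whereas you pass the median directly through the convergence of internal metrics; your Cameron--Martin argument for the absence of an atom at the median makes explicit a point the paper leaves implicit in its appeal to Portmanteau.
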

\begin{proof}
Let $\hg^r := \hg(r\cdot) - \hg_r(0)$ be as in Lemma~\ref{lem-lfpp-scale}, so that $\hg^r \eqD \hg$. By our choice of subsequence $\mcl E$ and Lemma~\ref{lem-lfpp-scale},  
\eqb \label{eqn-scaled-conv}
\frk a_\ep^{-1}  D_{\hg^r}^{ \ep / r} = r^{-1} e^{-\xi \hg_r(0)} \frk a_\ep^{-1} D_\hg^\ep(r\cdot,r\cdot)  \xrta{\mcl E\ni\ep\rta0} r^{-1} e^{-\xi \hg_r(0)} D_\hg (r \cdot, r\cdot) 
\eqe
in law w.r.t.\ the local uniform topology on $\BB C\times \BB C$.
Let $m_r$ be the median distance between the left and right boundaries of $[0,1]^2$ w.r.t.\ the metric on the right side of~\eqref{eqn-scaled-conv}.
Since $\hg^r\eqD \hg$, 
\eqb \label{eqn-scaled-law}
\underbrace{\frk a_{\ep/r}^{-1}  D_\hg^{ \ep / r} }_{\text{tight}}
\eqD \frk a_{\ep/r}^{-1} D_{\hg^r}^{\ep/r} 
= \frac{\frk a_\ep}{  \frk a_{\ep/r} } \underbrace{ \frk a_\ep^{-1} D_{\hg^r}^{ \ep / r}  }_{\substack{\text{convergent} \\ \text{by~\eqref{eqn-scaled-conv}}}} .
\eqe
If we consider a subsequence $\mcl E'$ of $\mcl E$ along which the joint law of $\frk a_{\ep/r}^{-1}  D_\hg^{ \ep / r}$ and $\frk a_\ep^{-1} D_{\hg^r}^{ \ep / r}$ converges, then~\eqref{eqn-scaled-law} shows that along this subsequence, $\frk a_{\ep/r} / \frk a_\ep $ converges to some number $s_r(\mcl E')  > 0$ (we know the limit is strictly positive since the limits of $\frk a_{\ep/r}^{-1}  D_\hg^{ \ep / r} $ and $\frk a_\ep^{-1} D_{\hg^r}^{ \ep / r} $ are metrics).
By the definitions of $\frk a_\ep$ and of $m_r$ and Portmanteau's lemma, the median distance between the left and right boundaries of $[0,1]^2$ w.r.t.\ the metric on the left (resp.\ right) side of~\eqref{eqn-scaled-law} is 1 (resp.\ $m_r /  s_r(\mcl E')$). Hence $s_r(\mcl E') =  m_r$, i.e., the limit does not depend on the choice of subsequence $\mcl E'\subset\mcl E$.
This shows the convergence of $\frk a_{\ep/r} / \frk a_\ep$ along the subsequence $\mcl E$, which in turn implies the existence of the limit~\eqref{eqn-scaling-const-lim}. 
The bounds~\eqref{eqn-scaling-constant'} (in fact, substantially stronger bounds) are immediate from~\cite[Theorem 1, Equation (1.3)]{dddf-lfpp} and the fact the ratio of our $\frk a_\ep$ and the scaling factor $\lambda_\ep$ from~\cite{dddf-lfpp} is bounded above and below by deterministic, $\ep$-independent constants (see the proof of Lemma~\ref{lem-lfpp-tight-square}).
\end{proof}

\begin{proof}[Proof of Lemma~\ref{lem-lfpp-coord}]
Define $\frk c_r$ for $r > 0$ as in Lemma~\ref{lem-lfpp-constant}. Let $\hg^r := \hg(r\cdot) - \hg_r(0)$, as in Lemma~\ref{lem-lfpp-scale}, so that $\hg^r\eqD \hg$ and the metrics $D_{\hg^r}^{\ep/r}$ and $D_\hg^\ep$ are related as in~\eqref{eqn-lfpp-scale}. 
We know from Lemma~\ref{lem-lfpp-tight} that the laws of the metrics $\{\frk a_\ep^{-1} D_\hg^\ep\}_{ 0 <\ep < 1 }$ are tight, and every element of the closure of this set of laws is supported on continuous metrics on $\BB C$. It follows that the same is true for the laws of the metrics $\{\frk a_{\ep/r}^{-1} D_{\hg^r}^{\ep/r} \}_{0<\ep < r}$. By combining this with~\eqref{eqn-lfpp-scale}, we get that the laws of the metrics
\eqb \label{eqn-scaled-lfpp-tight}
e^{-\xi \hg_r(0)} \left(\frac{r \frk a_{\ep/r}}{\frk a_\ep} \right)^{-1} \frk a_{\ep }^{-1} D_\hg^{\ep } (r \cdot  ,  r \cdot )  
=    \frk a_{\ep/r}^{-1} D_{\hg^r}^{\ep/r} ,\quad \forall r > 0,\quad\forall \ep \in (0,r) 
\eqe
are tight and every element of the closure of this set of laws w.r.t.\ the Prokhorov topology is supported on continuous metrics on $\BB C$. 

Now consider a subsequence $\mcl E \subset (0,1)$ along which $(\hg, \frk a_{\ep}^{-1} D_\hg^{\ep}) \rta (\hg,D_\hg)$ in law.
By the definition~\eqref{eqn-scaling-const-lim} of $\frk c_r$,  
\eqbn
e^{-\xi \hg_r(0)} \left(\frac{r \frk a_{\ep/r}}{\frk a_\ep} \right)^{-1} \frk a_{\ep}^{-1} D_\hg^{\ep} (r \cdot  ,  r \cdot )         \rta e^{-\xi \hg_r(0)} \frk c_r^{-1} D_\hg(r\cdot,r\cdot) ,\quad\text{in law along $\mcl E$}. 
\eqen 
Therefore, the metrics $e^{-\xi \hg_r(0)} \frk c_r^{-1} D_\hg(r\cdot,r\cdot)$ for $r>0$ are all subsequential limits as $\ep\rta 0$ of the family of random metrics~\eqref{eqn-scaled-lfpp-tight}. It follows that the laws of the metrics $e^{-\xi \hg_r(0)} \frk c_r^{-1} D_\hg(r\cdot,r\cdot)$ are tight and every element of the closure of this set of laws is supported on continuous metrics on $\BB C$.  
\end{proof}

\subsection{Locality}
\label{sec-lfpp-local}

In this section, we will prove a variant of Axiom~\ref{item-metric-local} for subsequential limits of LFPP, restricted to the case of a whole-plane GFF (locality for a whole-plane GFF plus a continuous function will be checked in Section~\ref{sec-lfpp-msrble}). 
At this point, we have not yet established that such subsequential limits can be realized as measurable functions of the field, so we will actually check a somewhat different condition. In what follows, if $K\subset\BB C$ is closed we define the $\sigma$-algebra generated by $\hg|_K$ to be $\bigcap_{\delta > 0} \hg|_{B_\delta(K)}$. 
With this definition it makes sense to condition on $\hg|_K$. 
The following definitions first appeared in~\cite{local-metrics}. 
 
\begin{defn}[Local metric] \label{def-local-metric}
Let $U\subset \BB C$ be a connected open set and let $(\hg,D)$ be a coupling of a GFF on $U$ and a random continuous length metric on $U$.  We say that $D$ is a \emph{local metric} for $\hg$ if for any open set $V\subset U$, the internal metric $D(\cdot,\cdot;  V)$ is conditionally independent from the pair $(\hg  , D(\cdot,\cdot; U\setminus \ol V))$ given $\hg|_{\ol V}$.
\end{defn}

Definition~\ref{def-local-metric} is formulated in a slightly different way than~\cite[Definition 1.2]{local-metrics}; the equivalence of the definitions is proven in~\cite[Lemma 2.3]{local-metrics}. The following is~\cite[Definition 1.5]{local-metrics}.

\begin{defn}[Additive local metric] \label{def-additive-local}
Let $U\subset \BB C$ be a connected open set and let $(\hg,D )$ be a coupling of a GFF on $U$ and a random continuous length metric on $U$ which is local for $\hg$.
For $\xi \in\BB R$, we say that $D$ is \emph{$\xi$-additive} for $\hg$ if for each $z\in U$ and each $r> 0$ such that $B_r(z) \subset U$, the metric $ e^{-\xi \hg_r(z)} D $ is local for $\hg - \hg_r(z)$. 
\end{defn}

\begin{lem} \label{lem-lfpp-local}
Let $\hg$ be a whole-plane GFF.
Let $(\hg,D_\hg)$ be any subsequential limit of the laws of the pairs $(\hg,\frk a_\ep^{-1} D_\hg^\ep)$.
Then $D_\hg$ is a $\xi$-additive local metric for $\hg$. 
That is, suppose $z\in\BB C$ and $r>0$ and that $\hg$ is normalized so that the circle average $\hg_r(z)$ is zero. Also let $V\subset \BB C$ be an open set. Then the internal metric $D_\hg(\cdot,\cdot ; V)$ is conditionally independent from the pair $\left(\hg  , D_\hg(\cdot,\cdot;\BB C\setminus \ol V)\right)$ given $\hg|_{\ol V}$.
\end{lem}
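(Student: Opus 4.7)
I aim to prove the stronger statement that for every open $V\subset\BB C$, the internal metric $D_\hg(\cdot,\cdot;V)$ is measurable with respect to $\sigma(\hg|_{\ol V}) := \bigcap_{\delta>0} \sigma(\hg|_{B_\delta(\ol V)})$; this immediately yields locality, since conditioning on $\hg|_{\ol V}$ then makes $D_\hg(\cdot,\cdot;V)$ deterministic and hence conditionally independent of anything else. The $\xi$-additivity will then be extracted from Weyl scaling. The key tool is the localized LFPP metric $\wh D_\hg^\ep$ of Section~\ref{sec-localized-lfpp}, which by Lemma~\ref{lem-localized-approx} has the same subsequential limits as $D_\hg^\ep$, but by~\eqref{eqn-localized-property} depends strictly locally on $\hg$.

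First I would handle the case when $V = W$ is a dyadic domain. Starting from the subsequence $\{\ep_n\}$ along which $(\hg,\frk a_{\ep_n}^{-1}D_\hg^{\ep_n}) \to (\hg,D_\hg)$ in law, I would apply Lemma~\ref{lem-lfpp-tight}(B) to extract a further subsequence along which $\bigl(\hg, \frk a_\ep^{-1} D_\hg^\ep, \frk a_\ep^{-1} D_\hg^\ep(\cdot,\cdot;\ol W)\bigr)$ converges in law to some coupling $(\hg,D_\hg,D_{\hg,W})$ with $D_{\hg,W}(\cdot,\cdot;W) = D_\hg(\cdot,\cdot;W)$, and invoke Skorokhod to make this convergence almost sure. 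By Lemma~\ref{lem-localized-approx}, the ratio $\wh D_\hg^\ep(\cdot,\cdot;\ol W)/D_\hg^\ep(\cdot,\cdot;\ol W)$ tends to $1$ uniformly, so $\frk a_\ep^{-1} \wh D_\hg^\ep(\cdot,\cdot;\ol W) \to D_{\hg,W}$ a.s.\ as well. Since each $\wh D_\hg^\ep(\cdot,\cdot;\ol W)$ is measurable with respect to $\sigma(\hg|_{B_{\ep^{1/2}}(\ol W)})$, the limit $D_{\hg,W}$ — and in particular $D_\hg(\cdot,\cdot;W) = D_{\hg,W}(\cdot,\cdot;W)$ — is measurable with respect to $\bigcap_{\delta>0}\sigma(\hg|_{B_\delta(\ol W)}) = \sigma(\hg|_{\ol W})$.

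For a general open $V$, exhaust $V$ by dyadic domains $W_k$ with $\ol{W_k}\subset W_{k+1}\subset V$ and $\bigcup_k W_k = V$. A compactness argument using that $D_\hg$ is a continuous length metric gives $D_\hg(z,w;W_k) \downarrow D_\hg(z,w;V)$ pointwise for $z,w\in V$: any near-minimal $D_\hg$-path from $z$ to $w$ in $V$ is compact and so lies in some $W_k$ for $k$ large. Since $\sigma(\hg|_{\ol{W_k}}) \subset \sigma(\hg|_{\ol V})$ for every $k$ and measurability is preserved under pointwise limits, $D_\hg(\cdot,\cdot;V)$ is $\sigma(\hg|_{\ol V})$-measurable, which establishes locality.

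For $\xi$-additivity, the identity $D_{\hg+c}^\ep = e^{\xi c} D_\hg^\ep$ at the LFPP level, combined with Lemma~\ref{lem-weyl-scaling} applied to the constant function $f=c$, passes to the subsequential limit: writing $D_\hg(\cdot,\cdot;V) = G(\hg)$ for the $\sigma(\hg|_{\ol V})$-measurable function $G$ just constructed, we have $G(\hg+c) = e^{\xi c} G(\hg)$ for every constant $c$. Taking $c = -\hg_r(z)$ yields
\[
e^{-\xi \hg_r(z)} D_\hg(\cdot,\cdot;V) = G\bigl(\hg - \hg_r(z)\bigr),
\]
which depends on $\hg - \hg_r(z)$ only through $(\hg - \hg_r(z))|_{\ol V}$, i.e., is $\sigma((\hg-\hg_r(z))|_{\ol V})$-measurable. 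This gives the locality of $e^{-\xi \hg_r(z)} D_\hg$ for $\hg - \hg_r(z)$. The main technical obstacle will be ensuring the various subsequential limits line up correctly at the start of the argument: after extracting the further subsequence from Lemma~\ref{lem-lfpp-tight}(B), one must verify that the localized approximation $\wh D_\hg^\ep(\cdot,\cdot;\ol W)$ also converges a.s.\ to $D_{\hg,W}$, which is precisely where Lemma~\ref{lem-localized-approx} is essential.
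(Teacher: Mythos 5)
There is a genuine gap at the heart of your argument: the step from ``each $\wh D_\hg^\ep(\cdot,\cdot;\ol W)$ is $\sigma(\hg|_{B_{\ep^{1/2}}(\ol W)})$-measurable'' to ``the limit $D_{\hg,W}$ is $\sigma(\hg|_{\ol W})$-measurable'' is not valid, because measurability is not preserved under convergence in law, and the Skorokhod device does not repair this. After Skorokhod, you obtain triples $(\hg_n, \cdot, Y_n)\to(\hg_\infty,\cdot,Y_\infty)$ a.s.\ with $Y_n = F_n(\hg_n)$ for measurable (but highly discontinuous) functions $F_n$; since the $\hg_n$ are merely convergent to $\hg_\infty$ and not equal to it, one cannot conclude that $Y_\infty$ is a function of $\hg_\infty$. (Compare the standard counterexample: $X$ uniform on $[0,1]$ and $Y_n = \BB 1\{\lfloor 2^n X\rfloor \text{ odd}\}$, each a deterministic function of $X$, yet $(X,Y_n)$ converges in law to $(X,B)$ with $B$ independent of $X$.) This is exactly Difficulty~2 identified in the discussion preceding the paper's proof, and it is why the paper does \emph{not} prove measurability directly at this stage: measurability of $D_\hg$ in $\hg$ is only obtained afterwards, in Lemma~\ref{lem-lfpp-msrble}, by invoking Theorem~\ref{thm-bilip-msrble} from~\cite{local-metrics} --- and that theorem takes the present lemma (locality plus $\xi$-additivity) as a hypothesis. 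Your proposal therefore inverts the logical order and, as written, is circular relative to the paper's architecture.

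The paper's actual route avoids the issue by reducing conditional independence to genuine independence, which \emph{is} preserved under convergence in law. Using the Markov property (Lemma~\ref{lem-whole-plane-markov}) one writes $\hg|_{\BB C\setminus\ol V} = \frk\hg + \rng\hg$ with $\rng\hg$ a zero-boundary GFF independent of $\hg|_V$, subtracts the harmonic part via a bump function $\phi$ so that $\wh D_{\hg-\phi\frk\hg}^\ep(\cdot,\cdot;\ol W')$ is determined by $\rng\hg$ alone, passes the resulting exact independence of $(\hg|_V,\wh D_\hg^\ep(\cdot,\cdot;\ol W))$ and $(\rng\hg,\wh D_{\hg-\phi\frk\hg}^\ep(\cdot,\cdot;\ol W'))$ to the limit, and then restores the harmonic part by Weyl scaling. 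Your use of the localized metric $\wh D_\hg^\ep$, of Lemma~\ref{lem-lfpp-tight}(B) with dyadic exhaustion, and of Weyl scaling for the $\xi$-additivity are all the right ingredients, but you need the Markov-property decomposition to convert the problem into one about unconditional independence before taking the limit.
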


There are two main difficulties in the proof of Lemma~\ref{lem-lfpp-local}.
\begin{enumerate}
\item The mollified GFF $\hg_\ep^*(z)$ of~\eqref{eqn-gff-convolve} does not exactly depend locally on $\hg$ (since the heat kernel $p_{\ep^2/2}(z,\cdot)$ does not have compact support), so the $D_\hg^\ep$-lengths of paths are not locally determined by $\hg$. \label{item-issue-discrete}
\item Conditional independence does not in general behave nicely under taking limits in law. \label{item-issue-cond}
\end{enumerate}
Difficulty~\ref{item-issue-discrete} will be resolved by means of the localization results for LFPP in Section~\ref{sec-localized-lfpp}. 
To resolve Difficulty~\ref{item-issue-cond}, we will use the Markov property of the GFF (see Lemma~\ref{lem-whole-plane-markov}) and Weyl scaling (Lemma~\ref{lem-weyl-scaling}) in order to reduce to working with metrics which are actually independent, not just conditionally independent. 
The use of the Markov property is the reason why we restrict to a whole-plane GFF, not a whole-plane GFF plus a bounded continuous function, in Lemma~\ref{lem-lfpp-local}.
 
For the proof of Lemma~\ref{lem-lfpp-local} we will need the following version of the Markov property of the whole-plane GFF, which is proven in~\cite[Lemma 2.2]{gms-harmonic}. We note that the statement of this Markov property is slightly more complicated than in the case of the zero-boundary GFF due to the need to fix the additive constant for $\hg$.

\begin{lem}[\!\!\cite{gms-harmonic}] \label{lem-whole-plane-markov}
Let $z\in\BB C$ and $r>0$ and let $\hg$ be a whole-plane GFF with the additive constant chosen so that $\hg_r(z) = 0$. 
For each open set $V\subset\BB C$ which is non-polar (i.e., Brownian motion started in $V$ a.s.\ hits $\bdy V$ in finite time), we have the decomposition
\eqb
\hg  =   \frk \hg + \rng \hg
\eqe
where $\frk \hg$ is a random distribution which is harmonic on $V$ and is determined by $\hg|_{\BB C\setminus V}$ and $\rng \hg$ is independent from $\frk \hg$ and has the law of a zero-boundary GFF on $V$ minus its average over $\bdy B_r(z) \cap V$. If $V$ is disjoint from $\bdy B_r(z)$, then $\rng \hg$ is a zero-boundary GFF and is independent from $\hg|_{\BB C\setminus V}$. 
\end{lem}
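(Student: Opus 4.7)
The plan is to apply the standard orthogonal decomposition of the Cameron--Martin space of the whole-plane GFF modulo additive constants, and then carefully keep track of how the global normalization $\hg_r(z)=0$ is partitioned between the two resulting independent pieces.

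First I would establish the decomposition modulo constants. The non-polarity of $V$ ensures that the Dirichlet problem on $V$ is well-posed. Equipping $H^1(\BB C)/\BB R$ with the Dirichlet inner product yields a Hilbert space with the orthogonal splitting $H^1_0(V)\oplus H_{\op{harm}}(V)$, where $H^1_0(V)$ consists of $H^1$-functions supported in $\ol V$ (which is not quotiented by constants) and $H_{\op{harm}}(V)$ consists of classes modulo constants of $H^1$-functions harmonic in $V$. Projecting $\hg$ onto these two summands produces independent Gaussian distributions $\mathring h_0$ and $\frk h_0$ with $\hg\equiv \mathring h_0+\frk h_0\pmod{\BB R}$, where $\mathring h_0$ has the law of a zero-boundary GFF on $V$ as a genuine distribution, and $\frk h_0$ is a random distribution modulo constants, harmonic on $V$.

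Next I would fix the additive constants. Let $m$ denote the average of $\mathring h_0$ over $\bdy B_r(z)\cap V$ in the sense $\frac{1}{2\pi r}\int_{\bdy B_r(z)\cap V}\mathring h_0\,d\sigma$, with $m=0$ when the intersection is empty, and set $\rng\hg:=\mathring h_0-m$ and $\frk\hg:=\hg-\rng\hg$. When $\rng\hg$ is extended by zero outside $V$, its circle average over $\bdy B_r(z)$ equals $(\mathring h_0)_r(z)-m=0$ by the defining choice of $m$, so $\frk\hg_r(z)=\hg_r(z)-\rng\hg_r(z)=0$, which pins down the additive constant of $\frk\hg$. A short computation using $\hg=\mathring h_0+\frk h_0+K$ with $K$ chosen to enforce $\hg_r(z)=0$ shows that all the $\mathring h_0$-dependence in $K$ is exactly cancelled by the $m$ subtraction, so that $\frk\hg$ is a measurable function of $\frk h_0$ alone. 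Since $\rng\hg$ is supported on $\ol V$, we have $\frk\hg|_{\BB C\setminus V}=\hg|_{\BB C\setminus V}$, and $\frk\hg$ equals a representative of $\frk h_0\bmod\BB R$ and is therefore harmonic on $V$; hence $\frk\hg|_V$ is the unique harmonic extension of the trace of $\hg$ on $\bdy V$, which identifies $\frk\hg$ as a deterministic function of $\hg|_{\BB C\setminus V}$. Independence of $\rng\hg$ and $\frk\hg$ then follows from the independence of $\mathring h_0$ and $\frk h_0$ produced in the first step. In the special case where $V\cap\bdy B_r(z)=\emptyset$ we have $m=0$, so $\rng\hg=\mathring h_0$ is genuinely a zero-boundary GFF, and it is independent of $\hg|_{\BB C\setminus V}$ because the latter is a function of $\frk h_0$ only.

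The main obstacle is the bookkeeping in the second step: when $\bdy B_r(z)$ crosses $\bdy V$, the normalization $\hg_r(z)=0$ entangles the constants of the two pieces, and the subtraction $\mathring h_0-m$ must be chosen so precisely that it removes exactly the portion of $\hg$'s additive constant that depends on $\mathring h_0$, leaving $\frk\hg$ measurable with respect to $\frk h_0$ (and hence with respect to $\hg|_{\BB C\setminus V}$). The remaining input needed is the solvability and uniqueness of the Dirichlet problem on $V$ in the distributional sense, which is exactly what non-polarity provides.
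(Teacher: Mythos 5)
The paper does not prove this lemma at all: it is quoted from \cite{gms-harmonic} (see the sentence preceding its statement), and your overall strategy --- orthogonal decomposition of the Dirichlet space modulo constants into $H_0^1(V)$ plus the harmonic-in-$V$ part, followed by repartitioning the additive constant forced by $\hg_r(z)=0$ --- is the standard argument and presumably the same as in the cited reference. So the approach is right; the issue is that your constant bookkeeping, which you yourself identify as the crux, is internally inconsistent exactly in the delicate case where $\bdy B_r(z)$ meets both $V$ and its complement.

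Write $\hg \equiv \rng h_0 + \frk h_0 \pmod{\BB R}$ and $m := \frac{1}{2\pi r}\int_{\bdy B_r(z)\cap V}\rng h_0\,d\sigma$ as you do. Your claim (a), that the circle average of $\rng\hg := \rng h_0 - m$ over $\bdy B_r(z)$ equals $(\rng h_0)_r(z)-m=0$, so that all $\rng h_0$-dependence of the additive constant cancels and $\frk\hg$ is a function of $\frk h_0$ alone, requires $m$ to be subtracted \emph{globally}; but then $\rng\hg \equiv -m$ on $\BB C\setminus V$, so your claim (b), ``$\rng\hg$ is supported on $\ol V$, hence $\frk\hg|_{\BB C\setminus V}=\hg|_{\BB C\setminus V}$,'' is false (one gets $\frk\hg|_{\BB C\setminus V}=\hg|_{\BB C\setminus V}+m$). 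If instead you subtract $m$ only on $V$, then (b) holds but the circle average of the zero-extension of $\rng\hg$ is $(\rng h_0)_r(z)-m\,|\bdy B_r(z)\cap V|/(2\pi r)\neq 0$, and $\frk\hg=\frk h_0-(\frk h_0)_r(z)-m\,\BB 1_{\BB C\setminus V}$ still depends on $\rng h_0$, so the independence of $\frk\hg$ and $\rng\hg$ is no longer justified (and in fact fails in general). The repair is to commit to the global subtraction, which is also how the lemma's phrase ``minus its average over $\bdy B_r(z)\cap V$'' must be read: set $\rng\hg:=\rng h_0-m$ on all of $\BB C$ and $\frk\hg:=\frk h_0-(\frk h_0)_r(z)$; the identity $\hg=\frk\hg+\rng\hg$ follows from $0=\hg_r(z)=(\rng h_0)_r(z)+(\frk h_0+K)_r(z)$, and independence is immediate since each piece is a function of one summand only. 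The measurability of $\frk\hg$ with respect to $\sigma(\hg|_{\BB C\setminus V})$ must then come from the Markov property modulo constants (the harmonic part, viewed modulo constants, is determined by $\hg|_{\BB C\setminus V}$, and $\frk\hg$ is its canonical representative with vanishing average over $\bdy B_r(z)$), \emph{not} from ``$\frk\hg|_{\BB C\setminus V}=\hg|_{\BB C\setminus V}$ plus uniqueness of harmonic extension'': besides the constant discrepancy $m$, a distribution that is harmonic in $V$ is not determined by its restriction to $\BB C\setminus V$ (extend a bounded harmonic function on $V$ by zero), and the GFF has no pointwise trace on $\bdy V$, so this measurability is precisely the nontrivial input that needs the standard Hilbert-space/conditional-expectation argument or a citation. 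Your final step (the case $V\cap\bdy B_r(z)=\emptyset$) is fine once the above is fixed.
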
 

 The following lemma will allow us to apply Lemma~\ref{lem-whole-plane-markov} to study $\hg|_{\BB C\setminus \ol V}$. 

\begin{lem} \label{lem-inside-circle}
It suffices to prove Lemma~\ref{lem-lfpp-local} in the case when $B_r(z) \subset V$.
\end{lem}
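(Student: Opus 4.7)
The plan is to reduce general $(z, r, V)$ to the case $B_r(z) \subset V$ by re-normalizing the whole-plane GFF at a point inside $V$. Assuming $V \neq \emptyset$ (the empty case being vacuous), I choose $z' \in V$ and $r' > 0$ with $B_{r'}(z') \subset V$, set $c := \hg_{r'}(z')$ (which is a.s.\ finite under the normalization $\hg_r(z) = 0$), and let $\wt\hg := \hg - c$, so that $\wt\hg_{r'}(z') = 0$. Because $c$ is a (random) constant function, it is unchanged by convolution with the heat kernel, and hence $D_{\wt\hg}^\ep = e^{-\xi c} D_\hg^\ep$ pathwise. Consequently $(\wt\hg, \frk a_\ep^{-1} D_{\wt\hg}^\ep)$ converges in law to $(\wt\hg, D_{\wt\hg}) := (\hg - c,\, e^{-\xi c} D_\hg)$ along the same subsequence, and applying the special case of Lemma~\ref{lem-lfpp-local} to $(z', r', V)$ yields
\begin{equation*}
D_{\wt\hg}(\cdot,\cdot; V) \;\perp\; \bigl(\wt\hg,\, D_{\wt\hg}(\cdot,\cdot; \BB C \setminus \ol V)\bigr) \,\Big|\, \wt\hg|_{\ol V}.
\end{equation*}

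To transfer this to $(\hg, D_\hg)$ conditioned on $\hg|_{\ol V}$, I rely on three observations: (i) the normalization $\hg_r(z) = 0$ forces $c = -\wt\hg_r(z)$, so $c$ is measurable with respect to $\sigma(\wt\hg)$, which is a sub-$\sigma$-algebra of the $\sigma$-algebra generated by $(\wt\hg, D_{\wt\hg}(\cdot,\cdot; \BB C \setminus \ol V))$; (ii) since $\bdy B_{r'}(z') \subset \ol V$, the circle average $c = \hg_{r'}(z')$ is $\sigma(\hg|_{\ol V})$-measurable; and (iii) $\wt\hg|_{\ol V} = \hg|_{\ol V} - c$, so $\sigma(\hg|_{\ol V}) = \sigma(\wt\hg|_{\ol V}) \vee \sigma(c)$. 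By the standard fact that $X \perp Y \mid \mcl G$ and $\mcl H \subset \sigma(Y)$ imply $X \perp Y \mid \mcl G \vee \mcl H$, observation (i) lets me enlarge the conditioning in the displayed conditional independence to $\sigma(\wt\hg|_{\ol V}) \vee \sigma(c) = \sigma(\hg|_{\ol V})$. Finally, the Weyl scaling identities $D_\hg(\cdot,\cdot; V) = e^{\xi c} D_{\wt\hg}(\cdot,\cdot; V)$, $D_\hg(\cdot,\cdot; \BB C \setminus \ol V) = e^{\xi c} D_{\wt\hg}(\cdot,\cdot; \BB C \setminus \ol V)$, and $\hg = \wt\hg + c$ are deterministic transformations given $\hg|_{\ol V}$ (since $c \in \sigma(\hg|_{\ol V})$), hence preserve conditional independence and yield the desired statement.

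The one subtlety to check is that the subsequential convergence $(\wt\hg, \frk a_\ep^{-1} D_{\wt\hg}^\ep) \to (\wt\hg, e^{-\xi c} D_\hg)$ is valid and that the limit genuinely provides a subsequential limit metric of the form to which the special case of Lemma~\ref{lem-lfpp-local} can be applied. This works precisely because $c$ is a (random) constant function, so the identity $D_{\wt\hg}^\ep = e^{-\xi c} D_\hg^\ep$ holds exactly at every $\ep$ without any boundedness hypothesis on $c$, and the identity passes to the subsequential limit as a continuous transformation of the joint law.
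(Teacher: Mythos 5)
Your proof is correct and follows essentially the same route as the paper's: re-normalize the field at a circle contained in $V$, observe that the additive constant relating the two normalizations is measurable both with respect to $\hg|_{\ol V}$ and with respect to the re-normalized field (hence with respect to the pair one is conditioning against), and transfer the conditional independence via exact Weyl scaling by a constant. The only difference is the direction of the reduction (you start from the generally normalized field and construct the specially normalized one, whereas the paper does the reverse), which is immaterial; your explicit statement of the $\sigma$-algebra enlargement fact is a point the paper uses implicitly.
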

\begin{proof}
Assume that we have proven Lemma~\ref{lem-lfpp-local} in the case when $B_r(z) \subset V$. 
Fix $z_0 \in\BB C$ and $r_0 > 0$ such that $B_{r_0}(z_0) \subset V$ and assume that $\hg$ is normalized so that $\hg_{r_0}(z_0) = 0$.
By assumption, $D_\hg(\cdot,\cdot ; V)$ is conditionally independent from the pair $\left(\hg  , D_\hg(\cdot,\cdot;\BB C\setminus \ol V)\right)$ given $\hg|_{\ol V}$.

Now let $z \in \BB C$ and $r > 0$ and define $\wt \hg := \hg - \hg_r(z)$, so that $\wt \hg$ is a whole-plane GFF normalized so that $\wt \hg_r(z) = 0$. 
Lemma~\ref{lem-weyl-scaling} implies that $D_{\wt \hg}^\ep \rta e^{-\xi \hg_r(z)} D_\hg =: D_{\wt \hg}$ in law along the same subsequence for which $D_\hg^\ep \rta D_\hg$ in law, so $D_{\wt \hg}$ is unambiguously defined. We need to show that the conclusion of the first paragraph remains true with $(\wt \hg ,D_{\wt \hg})$ in place of $(\hg,D_\hg)$. 

The key fact which allows us to show this is that $\wt \hg_{r_0}(z_0) = - \hg_r(z)$. 
Since $B_{r_0}(z_0) \subset V$, this means that $\hg_r(z) \in \sigma\left( \wt \hg|_{\ol V} \right)$. 
In particular, $\hg|_{\ol V} = \wt \hg|_{\ol V} + \hg_r(z)$ is determined by $\wt \hg|_{\ol V}$. 
Therefore, our assumption implies that $D_\hg(\cdot,\cdot ; V)$ is conditionally independent from the pair $\left(\hg  , D_\hg(\cdot,\cdot;\BB C\setminus \ol V)\right)$ given $\wt \hg|_{\ol V}$ (instead of just $\hg|_{\ol V}$).

We have $D_{\wt \hg}(\cdot,\cdot ; V) = e^{-\xi \hg_r(z)} D_\hg(\cdot,\cdot ; V)$, so $D_{\wt \hg}(\cdot,\cdot ; V)$ is determined by $\wt \hg|_{\ol V}$ and $D_\hg(\cdot,\cdot ; V)$. 
Similarly, $D_{\wt \hg}(\cdot,\cdot;\BB C\setminus \ol V)$ is determined by $\wt \hg|_{\ol V}$ and $D_\hg(\cdot,\cdot ; \BB C\setminus \ol V)$. 
Obviously, $\hg$ and $\wt \hg$ determine the same information. 
Therefore, $D_{\wt \hg}(\cdot,\cdot ; V)$ is conditionally independent from the pair $\left(\wt \hg  , D_{\wt \hg}(\cdot,\cdot;\BB C\setminus \ol V)\right)$ given $\wt \hg|_{\ol V}$, as required.
\end{proof}

\begin{figure}[ht!]
\begin{center}
\includegraphics[scale=1]{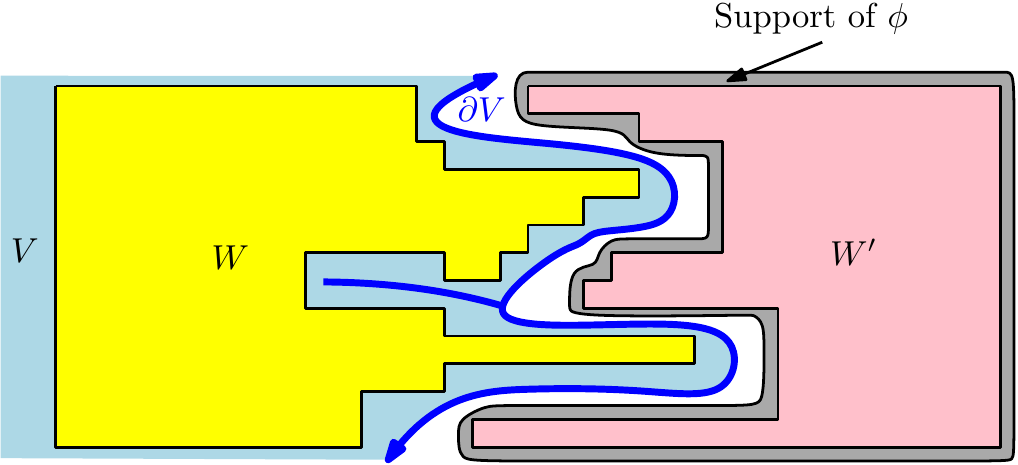} 
\caption{\label{fig-locality} Illustration of the sets used in the proof of Lemma~\ref{lem-lfpp-local}. The set $\phi^{-1}(1)$ is not shown; it contains the closure of the pink set $W'$ and is contained in the grey set $\op{supp}\phi$. 
}
\end{center}
\end{figure}

\begin{proof}[Proof of Lemma~\ref{lem-lfpp-local}] 
\noindent\textit{Step 1: reductions.}
By Lemma~\ref{lem-localized-approx}, for any sequence of $\ep$'s tending to zero along which $(\hg, \frk a_\ep^{-1} D_\hg^\ep) \rta (\hg, D_\hg)$ in law, we also have $(\hg, \frk a_\ep^{-1} \wh D_\hg^\ep) \rta (\hg , D_\hg)$ in law. This allows us to work with $\wh D_\hg^\ep$ instead of $D_\hg^\ep$ throughout the proof. The reason why we want to do this is the locality property~\eqref{eqn-localized-property} of $\wh D_\hg^\ep$. 

The statement of the lemma is vacuous if $\ol V = \BB C$, so we can assume without loss of generality that $\ol V\not=\BB C$, which implies that $\BB C\setminus \ol V$ is non-polar. 
By Lemma~\ref{lem-inside-circle}, we can also assume without loss of generality that $B_r(z) \subset V$. 
These assumptions together with Lemma~\ref{lem-whole-plane-markov} applied with $\BB C\setminus \ol V$ in place of $V$ allows us to write 
\eqb
\hg|_{\BB C\setminus \ol V} = \frk \hg + \rng \hg 
\eqe
where $\frk \hg$ is a random harmonic function on $\BB C\setminus \ol V$ which is determined by $\hg|_{\BB C\setminus \ol V}$ and $\rng \hg$ is a zero-boundary GFF in $\BB C\setminus \ol V$ which is independent from $\hg|_{\BB C\setminus \ol V}$. 
\medskip

\noindent\textit{Step 2: independence for LFPP.}
We want to apply the convergence of internal metrics given in Lemma~\ref{lem-lfpp-tight}, so we fix dyadic domains (Definition~\ref{def-dyadic-domain}) $W,W'$ with $\ol W\subset V$ and $\ol W' \subset \BB C\setminus \ol V$ (we will eventually let $W$ and $W'$ increase to all of $V$ and $\BB C\setminus \ol V$, respectively). 
Let $\phi$ be a deterministic, smooth, compactly supported bump function which is identically equal to 1 on a neighborhood of $\ol W'$ and which vanishes outside of a compact subset of $\BB C\setminus \ol V$. 
See Figure~\ref{fig-locality} for an illustration of these objects.

The restrictions of the fields $\hg - \phi \frk \hg $ and $\rng \hg $ to the set $\phi^{-1}(1)\supset \ol W'$ are identical. 
By the locality property~\eqref{eqn-localized-property} of $\wh D_\hg^\ep$, if $\ep > 0$ is small enough that $B_\ep(W') \subset \phi^{-1}(1)$, then the $\ep$-LFPP metric for $\hg-\phi\frk \hg $ satisfies 
\eqb \label{eqn-internal-metric-truncate}
  \wh D_{\hg-\phi\frk \hg }^\ep (\cdot,\cdot ; \ol W')  \in \sigma\left(\rng \hg \right) .
\eqe
Similarly, for small enough $\ep > 0$ the metric $\wh D_\hg^\ep(\cdot,\cdot ; \ol W)$ is a.s.\ determined by $\hg|_V$. 
Since $\hg |_V$ and $\rng \hg $ are independent, we obtain
\eqb \label{eqn-internal-metric-ind}
\left( \hg|_V , \frk a_\ep^{-1} D_\hg^\ep(\cdot,\cdot; \ol W) \right) \quad \text{and} \quad
\left( \rng\hg , \frk a_\ep^{-1} D_{\hg - \phi\frk \hg}^\ep(\cdot,\cdot ; \ol W')\right) \quad \text{are independent} .
\eqe
\medskip

\noindent\textit{Step 3: passing to the limit.}
We now want to pass the independence~\eqref{eqn-internal-metric-ind} through to the (subsequential) scaling limit. 
To this end, consider a sequence $\mcl E$ of positive $\ep$'s tending to zero along which $(\hg , \frk a_{\ep}^{-1} \wh D_\hg^{\ep}) \rta (\hg,D_\hg)$ in law. 
By possibly passing to a further deterministic subsequence, we can arrange that in fact $(\hg ,   \frk \hg , \frk a_{\ep}^{-1} \wh D_\hg^{\ep}) \rta (\hg  , \frk \hg , D_\hg)  $ in law along $\mcl E$, where here the second coordinate is given the local uniform topology on $\BB C\setminus \ol V$. 
By the analog of Lemma~\ref{lem-weyl-scaling} with $\wh D_\cdot^\ep$ in place of $D_\cdot^\ep$ (which is proven in an identical manner), if we set $D_{\hg-\phi\frk \hg} = e^{-\xi \phi \frk \hg} \cdot D_\hg$, then along this same subsequence we have the convergence of joint laws
\eqb \label{eqn-truncated-metric-conv}
\left(\hg ,   \frk \hg , \frk a_{\ep}^{-1} \wh D_\hg^{\ep} ,  \frk a_{\ep}^{-1} \wh D_{\hg - \phi\frk \hg}^{\ep} \right)
 \rta \left( \hg  , \frk \hg , D_\hg , D_{\hg - \phi\frk \hg} \right)  .
\eqe

By assertion~\ref{item-lfpp-dyadic} of Lemma~\ref{lem-lfpp-tight}, applied once to each of $\hg$ and  $\hg-\phi \frk \hg$, by possibly replacing $\mcl E$ with a further deterministic subsequence we can find a coupling $(\hg , D_\hg ,   D_{\hg, W} , D_{\hg - \phi\frk \hg , W'})$ of $(\hg , D_\hg)$ with length metrics on $\ol W$ and $\ol W'$, respectively, which induce the Euclidean topology and which satisfy
\eqb \label{eqn-internal-identify}
D_{\hg,W}(\cdot,\cdot;W) = D_\hg(\cdot,\cdot;W) \quad \text{and} \quad
D_{\hg-\phi\frk \hg ,W'}(\cdot,\cdot;W') = D_{\hg-\phi\frk \hg}(\cdot,\cdot;W')  
\eqe
such that the following is true. Along $\mcl E$, we have the convergence of joint laws
\allb \label{eqn-internal-joint-law-conv}
&\left(\hg ,   \frk \hg , \frk a_{\ep}^{-1} \wh D_\hg^{\ep} ,  \frk a_{\ep}^{-1} \wh D_{\hg - \phi\frk \hg}^{\ep} , \frk a_{\ep}^{-1} \wh D_\hg^{\ep}(\cdot,\cdot; \ol W) , \frk a_{\ep}^{-1} \wh D_{\hg-\phi\frk \hg}^{\ep}(\cdot,\cdot; \ol W') \right) \notag\\
& \qquad \quad \rta 
\left(  \hg, , \frk \hg , D_\hg , D_{\hg - \phi\frk \hg} , D_{\hg, W} , D_{  \hg-\phi\frk \hg   , W'} \right)
\alle
where the last two coordinates are given the uniform topology on $\ol W\times \ol W$ and on $\ol W'\times \ol W'$, respectively. 
Since independence is preserved under convergence in law, we obtain from~\eqref{eqn-internal-metric-ind} and~\eqref{eqn-internal-joint-law-conv} that $(\hg|_V , D_{\hg,W} )$ and $ (\rng \hg , D_{\hg - \phi \frk \hg , W'})$ are independent. By~\eqref{eqn-internal-identify}, this means that
\eqb \label{eqn-limit-metric-ind}
(\hg|_V , D_\hg(\cdot,\cdot;W) ) \quad \text{and} \quad (\rng \hg , D_{\hg - \phi \frk \hg}(\cdot,\cdot;W') ) \quad \text{are independent}. 
\eqe 
\medskip

\noindent\textit{Step 4: adding back in the harmonic part.} 
By~\eqref{eqn-limit-metric-ind}, $D_\hg(\cdot,\cdot;W)$ is conditionally independent from $ (\rng \hg , D_{\hg - \phi \frk \hg}(\cdot,\cdot;W') )$ given $\hg|_V$. 
We now argue that $(\hg , D_\hg(\cdot,\cdot;W'))$ is a  measurable function of $ (\rng \hg , D_{\hg - \phi \frk \hg}(\cdot,\cdot;W') )$ and $\hg|_V$, so that  $D_\hg(\cdot,\cdot;W)$ is conditionally independent from $(\hg , D_\hg(\cdot,\cdot;W'))$ given $\hg|_V$.
Indeed, by Lemma~\ref{lem-weyl-scaling}, a.s.\ $D_\hg(\cdot,\cdot ; W') = (e^{\xi \phi \frk \hg } \cdot D_{\hg - \phi\frk \hg})(\cdot,\cdot ; W')$. 
Hence $D_\hg(\cdot,\cdot;W')$ is a measurable function of $\frk \hg \in \sigma(\hg|_{\ol V})$ and $D_{\hg - \phi\frk \hg}(\cdot,\cdot;W')$.
Since $\hg|_{\BB C\setminus \ol V} =  \rng \hg + \frk \hg$, we get that $\hg$ is a measurable function of $\rng \hg $ and $ \hg|_{\ol V} $. 
It therefore follows that $D_\hg(\cdot,\cdot;W )$ is conditionally independent from $(\hg ,  D_\hg(\cdot,\cdot;W'))$ given $\hg|_{\ol V}$. 
Letting $W$ increase to $V$ and $W'$ increase to $\BB C\setminus \ol V$ now concludes the proof.
\end{proof}

\subsection{Measurability}
\label{sec-lfpp-msrble}

We have not yet established that subsequential limits of LFPP can be realized as measurable functions of the corresponding field.
We will accomplish this in this subsection using a result from~\cite{local-metrics}. 

\begin{lem} \label{lem-lfpp-msrble}
Let $\hg$ be a whole-plane GFF normalized so that $\hg_1(0) =0$ and let $(\hg,D_\hg)$ be any subsequential limit of the laws of the pairs $(\hg,\frk a_\ep^{-1} D_\hg^\ep)$.
Then $D_\hg$ is a.s.\ determined by $\hg$. 
In particular, $\frk a_\ep^{-1} D_\hg^\ep \rta D_\hg$ in probability along the given subsequence.
\end{lem}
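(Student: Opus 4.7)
The strategy is to apply \cite[Corollary 1.8]{local-metrics}, which gives a clean sufficient condition for a random continuous metric coupled to the whole-plane GFF to be a measurable function of the field, and then deduce the in-probability convergence from Lemma~\ref{lem-in-prob}.

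First, I would pass to a subsequence $\mcl E$ along which we have joint convergence $(\hg, \frk a_\ep^{-1} D_\hg^\ep) \to (\hg, D_\hg)$ in law, as guaranteed by Lemma~\ref{lem-lfpp-tight} together with Prokhorov's theorem. By the Skorokhod representation theorem, I may realize this convergence almost surely on a common probability space, without altering marginal laws. The limiting pair $(\hg, D_\hg)$ is a coupling of a whole-plane GFF with a random continuous length metric on $\BB C$.

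Next, I would verify the hypotheses of \cite[Corollary 1.8]{local-metrics}. The key input is that $D_\hg$ is a $\xi$-additive local metric for $\hg$, which is precisely the content of Lemma~\ref{lem-lfpp-local}. The corollary also requires a mild tightness/non-degeneracy condition to the effect that the rescaled metrics $\frk c_r^{-1} e^{-\xi \hg_r(0)} D_\hg(r \cdot, r \cdot)$ form a tight family whose subsequential limits are honest continuous metrics, together with polynomial control on $\frk c_{\delta r}/\frk c_r$ — exactly Axiom~\ref{item-metric-coord}, which was established for the subsequential limit in Lemma~\ref{lem-lfpp-coord} (with the scaling constants produced in Lemma~\ref{lem-lfpp-constant} and the bound $\Lambda^{-1}\delta^\Lambda \leq \frk c_{\delta r}/\frk c_r \leq \Lambda \delta^{-\Lambda}$ coming from the LFPP distance estimates of \cite{dddf-lfpp}). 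With these two ingredients in place, \cite[Corollary 1.8]{local-metrics} directly yields that $D_\hg$ is a.s.\ determined by $\hg$.

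Finally, I would invoke Lemma~\ref{lem-in-prob}, applied with $X = \hg$ (valued in $\mcl D'(\BB C)$), $Y^n = \frk a_{\ep_n}^{-1} D_\hg^{\ep_n}$, and $Y = D_\hg$ (all viewed inside the completely metrizable space of continuous functions $\BB C \times \BB C \to [0,\infty)$ with the local uniform topology, as noted after the statement of Lemma~\ref{lem-in-prob}). Since $Y$ is a.s.\ a measurable function of $X$ by the previous step, Lemma~\ref{lem-in-prob} upgrades the convergence in law along $\mcl E$ to convergence in probability.

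The only genuine obstacle is checking that the hypotheses of \cite[Corollary 1.8]{local-metrics} really are implied by what we have proved so far; the rest is assembly. Concretely, that corollary is phrased for metrics satisfying a list of properties close to our axioms for a weak LQG metric, and one must verify that being a $\xi$-additive local metric (Lemma~\ref{lem-lfpp-local}) combined with the scaling tightness of Lemma~\ref{lem-lfpp-coord} fits the exact hypotheses; this is a matter of matching definitions rather than any new probabilistic input.
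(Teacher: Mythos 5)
Your proposal is correct and follows essentially the same route as the paper: verify the hypotheses of \cite[Corollary 1.8]{local-metrics} using the $\xi$-additive locality from Lemma~\ref{lem-lfpp-local} and the tightness across scales from Lemma~\ref{lem-lfpp-coord}, then apply Lemma~\ref{lem-in-prob}. The only point you gloss over is that the second hypothesis of that corollary is not literally Axiom~\ref{item-metric-coord} but the specific bound~\eqref{eqn-bilip} comparing the diameter of an annulus to the distance across it for two conditionally i.i.d.\ samples; as you anticipate, this follows from tightness across scales (plus translation invariance) in a couple of lines.
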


The following theorem is a special case of~\cite[Corollary 1.8]{local-metrics}. 

\begin{thm}[\!\!\cite{local-metrics}] \label{thm-bilip-msrble}
There is a universal constant $p\in (0,1)$ such that the following is true. Let $\xi\in\BB R$, let $\hg$ be a whole-plane GFF normalized so that $\hg_1(0) = 0$, and let $(\hg,D )$ be a coupling of $\hg$ with a random continuous length metric satisfying the following properties.
\begin{enumerate}
\item $D$ is a $\xi$-additive local metric for $\hg$ (Definition~\ref{def-additive-local}).
\item Condition on $\hg$ and let $D$ and $\wt D$ be conditionally i.i.d.\ samples from the conditional law of $D$ given $\hg$.
There is a deterministic constant $C> 0$ such that 
 \eqb \label{eqn-bilip}
\BB P\left[  \sup_{u,v \in \bdy B_r(z)} \wt D\left(u,v; B_{2r}(z) \setminus \ol{B_{r/2}(z)} \right) \leq C D(\bdy B_{r/2}(z) , \bdy B_r(z) ) \right] \geq p    ,\quad \forall z\in \BB C ,\quad \forall r > 0   .
\eqe 
\end{enumerate}
Then $D$ is a.s.\ determined by $\hg$. 
\end{thm}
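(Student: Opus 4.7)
The plan is to apply Theorem~\ref{thm-bilip-msrble} to the subsequential limit $D_\hg$, and then invoke Lemma~\ref{lem-in-prob} to upgrade convergence in law to convergence in probability. The first hypothesis of Theorem~\ref{thm-bilip-msrble}---that $D_\hg$ is a $\xi$-additive local metric for $\hg$---is precisely the statement of Lemma~\ref{lem-lfpp-local}. All the remaining work lies in verifying the bi-Lipschitz estimate~\eqref{eqn-bilip} for two conditionally i.i.d.\ samples $D,\wt D$ of the conditional law of $D_\hg$ given $\hg$.

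The key simplification is that the inequality in~\eqref{eqn-bilip} is of the form $X(\wt D) \leq C \cdot Y(D)$, with $X$ depending only on $\wt D$ and $Y$ only on $D$, each having the same marginal law as $D_\hg$. Consequently, by a union bound, to get probability at least $p$ it is enough to find $M_1,M_2 \in (0,\infty)$, independent of $z$ and $r$, such that
\eqbn
\BB P\!\left[ \sup_{u,v \in \bdy B_r(z)} D_\hg\!\left(u,v; B_{2r}(z) \setminus \ol{B_{r/2}(z)}\right) \leq M_1 \frk c_r e^{\xi \hg_r(z)} \right] \geq \tfrac{1+p}{2}
\eqen
and
\eqbn
\BB P\!\left[ D_\hg\!\left(\bdy B_{r/2}(z), \bdy B_r(z)\right) \geq M_2 \frk c_r e^{\xi \hg_r(z)} \right] \geq \tfrac{1+p}{2}.
\eqen
Then $C=M_1/M_2$ works in~\eqref{eqn-bilip}, and no use of the conditional independence between $D$ and $\wt D$ is needed.

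To produce $M_1,M_2$, I would argue as follows. By translation invariance of the whole-plane GFF modulo additive constant and of LFPP (which passes to subsequential limits), the law of $\frk c_r^{-1} e^{-\xi \hg_r(z)} D_\hg(z+r\cdot,z+r\cdot)$ does not depend on $z$. Lemma~\ref{lem-lfpp-coord} then tells us that, as $r$ varies, these laws form a tight family whose closure w.r.t.\ the Prokhorov topology is contained in the set of laws of continuous metrics on $\BB C$. For any such continuous metric, the diameter of the closed annulus $\ol{\BB A_{1/2,2}(0)}$ is finite and the distance between the disjoint compact sets $\bdy B_{1/2}(0)$ and $\bdy B_1(0)$ is strictly positive. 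Combining tightness with the Portmanteau lemma produces $M_1, M_2$ as required uniformly in $z,r$, after rescaling back. With both hypotheses of Theorem~\ref{thm-bilip-msrble} verified, $D_\hg$ is a.s.\ determined by $\hg$; Lemma~\ref{lem-in-prob} applied to $X=\hg$, $Y^n = \frk a_{\ep_n}^{-1} D_\hg^{\ep_n}$, and $Y=D_\hg$ then promotes the convergence in law along the subsequence to convergence in probability.

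The main obstacle is ensuring the uniformity of~\eqref{eqn-bilip} over \emph{all} $z\in\BB C$ and $r>0$, rather than just at a single scale. This is what necessitates the translation/scaling reduction above; fortunately, Axiom~\ref{item-metric-coord} (via Lemma~\ref{lem-lfpp-coord}) is already designed to deliver exactly this kind of scale-uniform tightness, and the union-bound observation means that the marginal tail bounds coming from tightness alone are sufficient---no finer information about the joint law of $(D,\wt D)$ is required.
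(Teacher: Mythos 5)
Your proposal does not prove the assigned statement. Theorem~\ref{thm-bilip-msrble} is the general measurability criterion imported from~\cite{local-metrics}: for \emph{any} coupling of the whole-plane GFF with a continuous length metric $D$ which is $\xi$-additive and local and satisfies~\eqref{eqn-bilip} for two conditionally i.i.d.\ samples, the metric $D$ is a.s.\ determined by the field. Your argument instead \emph{invokes} this theorem (``the plan is to apply Theorem~\ref{thm-bilip-msrble} to the subsequential limit'') and then verifies its two hypotheses for the particular metric arising as a subsequential limit of LFPP, finishing with Lemma~\ref{lem-in-prob}. That is precisely the content of Lemma~\ref{lem-lfpp-msrble}, and indeed your verification (translation invariance plus the scale-uniform tightness from Lemma~\ref{lem-lfpp-coord} to get the two marginal bounds, a union bound so that only the marginal laws of the two conditionally i.i.d.\ samples are needed) mirrors the paper's proof of that lemma. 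But as a proof of Theorem~\ref{thm-bilip-msrble} itself it is circular: you assume exactly the conclusion you were asked to establish, for an arbitrary coupling rather than for the LFPP limit.

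What a proof of Theorem~\ref{thm-bilip-msrble} actually requires is the argument of~\cite[Corollary 1.8]{local-metrics}, which the present paper deliberately treats as a black box (it offers only the citation, no proof). Roughly, one must take two conditionally i.i.d.\ samples $D , \wt D$ given the field, use the hypothesis~\eqref{eqn-bilip} together with $\xi$-additive locality and an iteration/near-independence argument across disjoint concentric annuli (in the spirit of Lemma~\ref{lem-annulus-iterate}) to show that $D$ and $\wt D$ are a.s.\ bi-Lipschitz equivalent, and then upgrade this, via a tail-triviality/optimal-constant argument across scales, to the a.s.\ equality $D = \wt D$, which is equivalent to $D$ being a.s.\ determined by the field. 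None of these steps appears in your proposal; nothing in it constrains the joint behavior of the two samples beyond their marginals, which cannot by itself yield measurability.
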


\begin{proof}[Proof of Lemma~\ref{lem-lfpp-msrble}]
Let $p \in (0,1)$ be as in Theorem~\ref{thm-bilip-msrble}.
Lemma~\ref{lem-lfpp-local} implies that $D_\hg$ is a $\xi$-additive local metric for $\hg$. 
Lemma~\ref{lem-lfpp-coord} along with the translation invariance of the law of $\hg$, modulo additive constant, implies that there exists $C > 0$ (depending only on the choice of subsequence) such that for each $z\in\BB C$ and each $r >0$,
\eqbn
\BB P\left[ D(\bdy B_{r/2}(z) , \bdy B_r(z) ) \geq C^{-1/2} \frk c_r e^{\xi \hg_r(z)} \right] \geq \frac{1-p}{2} \quad\text{and} 
\eqen
\eqbn
  \BB P\left[ \sup_{u,v \in \bdy B_r(z)} D_\hg\left(u,v; B_{2r}(z) \setminus \ol{B_{r/2}(z)} \right) \leq C^{1/2} \frk c_r e^{\xi \hg_r(z)}  \right] \geq \frac{1-p}{2} .
\eqen
This implies that~\eqref{eqn-bilip} holds for two conditionally independent samples from the conditional law of $D_\hg$ given $\hg$. 
Hence the criteria of Theorem~\ref{thm-bilip-msrble} are satisfied, so $D_\hg$ is a.s.\ determined by $\hg$.
The last statement follows from Lemma~\ref{lem-in-prob}.
\end{proof}

\begin{proof}[Proof of Theorem~\ref{thm-lfpp-axioms}]
\noindent\textit{Step 1: Defining a $D_\hf$ for a whole-plane GFF plus a bounded continuous function.}
Let $\hg$ be a whole-plane GFF normalized so that $\hg_1 (0) = 0$. 
Lemma~\ref{lem-lfpp-tight} implies that for any sequence of $\ep$'s tending to zero, there is a subsequence $\ep_n\rta 0$ along which $(\hg ,D_{\hg}^{\ep_n}) \rta (\hg,D_{\hg})$ in law.
By Lemma~\ref{lem-lfpp-msrble}, $D_{\hg}$ is a.s.\ determined by $\hg$ and $D_{\hg}^{\ep_n} \rta D_{\hg}$ in probability.
Hence every deterministic subsequence of the $\ep_n$'s admits a further deterministic subsequence $\ep_{n_k}$ along which $D_{\hg}^{\ep_{n_k}} \rta D_{\hg}$ a.s.
By Lemma~\ref{lem-weyl-scaling}, it is a.s.\ the case that for every bounded continuous function $f : \BB C\rta \BB R$ simultaneously, we have $D_{\hg+f}^{\ep_{n_k}} \rta e^{\xi f}\cdot D_{\hg}$. 
We define $D_{\hg +f} := e^{\xi f}\cdot D_{\hg}$. 
Then $D_{\hg+f}$ is a.s.\ determined by $\hg+f$ and $D_{\hg+f}^{\ep_n} $ converges in probability to $D_{\hg+f}$.

This gives us a measurable function $\hf \mapsto D_\hf$ from distributions to continuous metrics on $\BB C$ which is a.s.\ defined whenever $\hf$ is a whole-plane GFF plus a bounded continuous function: in particular, $D_\hf$ is the a.s.\ limit of $D_\hf^{\ep_{n_k}}$. 
With this definition of $D$, Axiom~\ref{item-metric-length} holds with $\hf$ constrained to be a whole-plane GFF plus a bounded continuous function since we know that the limiting metric in the setting of Lemma~\ref{lem-lfpp-tight} is a length metric.
By the preceding paragraph, Axiom~\ref{item-metric-f} holds for this definition of $D$ and with $f$ constrained to be bounded. 
It is immediate from the definition of LFPP that also Axiom~\ref{item-metric-translate} holds.
By Lemma~\ref{lem-lfpp-coord}, also Axiom~\ref{item-metric-coord} holds.
\medskip

\noindent\textit{Step 2: locality for a whole-plane GFF plus a bounded continuous function.}
Axiom~\ref{item-metric-local} in the case of a whole-plane GFF is immediate from Lemma~\ref{lem-lfpp-local} now that we know that $D_{\hg}$ is a.s.\ determined by $ \hg$.
We now prove Axiom~\ref{item-metric-local} in the case when $\hf$ is a whole-plane GFF plus a bounded continuous function.
Indeed, let $V\subset \BB C$ be open and let $O \subset O' \subset V$ be open and bounded with $\ol O \subset O'$ and $\ol O' \subset V$.
Let $u,v\in O$ be deterministic. We will show that  
\eqb \label{eqn-close-pts-msrble}
D_\hf(u,v) \BB 1_{\left\{ D_\hf(u,v)  <  D_\hf(u,\bdy O') \right\} } \in \sigma\left( \hf|_V \right) .
\eqe
Since $(u,v) \mapsto D_\hf(u,v)$ is a.s.\ continuous,~\eqref{eqn-close-pts-msrble} implies that in fact $\hf|_V$ a.s.\ determines the random function $O\ni (u,v) \mapsto D_\hf(u,v) \BB 1_{\left\{ D_\hf(u,v) <  D_\hf(u,\bdy O') \right\} }$. 
Since $\ol O$ is a compact subset of $O'$, $O$ can be covered by finitely many sets of the form $\{v\in O : D_\hf(u,v)  <  D_\hf(u,\bdy O')\}$ for points $u\in O$.
By the definition of the internal metric $D_\hf(\cdot,\cdot;O)$, this shows that $\hf|_V$ a.s.\ determines $D_\hf(\cdot,\cdot;O)$. Letting $O$ increase to all of $V$ then shows that $\hf|_V$ a.s.\ determines $D_\hf(\cdot,\cdot;V)$. 

To prove~\eqref{eqn-close-pts-msrble}, note that if we define the localized LFPP metric $\wh D_\hf^{\ep_{n}}$ as in~\eqref{eqn-localized-lfpp}, then by Lemma~\ref{lem-localized-approx} we have $\frk a_{\ep_n}^{-1} \wh D_\hf^{\ep_n}(u,v) \rta D_\hf(u,v)$ and $\frk a_{\ep_n}^{-1} \wh D_\hf^{\ep_n}(u,\bdy O') \rta D_\hf(u,\bdy O')$ in probability.
Therefore, 
\eqb \label{eqn-close-pts-conv}
\frk a_{\ep_n}^{-1} \wh D_\hf^{\ep_n} (u,v) \BB 1_{\left\{ \wh D_\hf^{\ep_n}(u,v) <  \wh D_\hf^{\ep_n}(u,\bdy O') \right\} } 
\rta  D_\hf(u,v) \BB 1_{\left\{ D_\hf(u,v) < D_\hf(u,\bdy O') \right\} }  ,
\quad \text{in probability}. 
\eqe
By~\eqref{eqn-localized-property} and since $\ol O' \subset V$, the random variable on the left side of~\eqref{eqn-close-pts-conv} is a.s.\ determined by $\hf|_V$ for large enough $n\in\BB N$.
Thus~\eqref{eqn-close-pts-msrble} holds.  
\medskip

\noindent\textit{Step 3: extending to unbounded continuous function.}
We will now extend the definition of $D$ to the case of a whole-plane GFF plus an unbounded continuous function and check that the axioms remain true. 
To this end, let $\hg$ be a whole-plane GFF and let $f$ be a possibly random unbounded continuous function.
If $V \subset \BB C$ is open and bounded and $\phi$ is a smooth compactly supported bump function which is identically equal to 1 on $V$, then $\phi f$ is bounded so we can define the metric $D_{\hg+f}^V := D_{\hg+\phi f}(\cdot,\cdot;V)$. 
By Axiom~\ref{item-metric-local} in the case of a whole-plane GFF plus a bounded continuous function, this metric is a.s.\ determined by $(\hg+\phi f)|_V = (\hg+f)|_V$, in a manner which does not depend on $\phi$.
We now define the $D_{\hg+f}$-length of any continuous path $P$ in $\BB C$ to be the $D_{\hg+f}^V$-length of $P$, where $V\subset\BB C$ is a bounded open set which contains $P$
The definition does not depend on the choice of $V$. 
We define $D_{\hg+f}(z,w)$ for $z,w\in\BB C$ to be the infimum of the $D_{\hg+f}$-lengths of continuous paths from $z$ to $w$. 
Then $D_{\hg+f}$ is a length metric on $\BB C$ which is a.s.\ determined by $D_{\hg+f}$ and which satisfies $D_{\hg+f}(\cdot,\cdot;V) = D_{\hg+f}^V$ for each bounded open set $V\subset\BB C$. 

With the above definition, it is immediate from the case of a whole-plane GFF plus a bounded continuous function that the axioms in the definition of a weak $\gamma$-LQG metric are satisfied to the mapping $\hf\mapsto D_{\hf}$, which is a.s.\ defined whenever $\hf$ is a whole-plane GFF plus a continuous function.  
\end{proof}

\section{Proofs of quantitative properties of weak LQG metrics}
\label{sec-a-priori-estimates}

In this section we will prove the estimates stated in Section~\ref{sec-main-results}. 
Actually, in many cases we will prove a priori stronger estimates which are required to be uniform across different Euclidean scales.
With what we know now, these estimates are not implied by the estimates stated in Section~\ref{sec-main-results} since we are working with a weak $\gamma$-LQG metric so we have tightness across scales instead of exact scale invariance.
However, \emph{a posteriori}, once it is proven that a weak $\gamma$-LQG metric satisfies the coordinate change formula~\eqref{eqn-lqg-metric-coord} (which will be done in~\cite{gm-uniqueness}, building on the results in the present paper), the estimates in this section are equivalent to the estimates in Section~\ref{sec-main-results}. 
Throughout this section, $D$ denotes a weak LQG metric and $h$ denotes a whole-plane GFF normalized so that $h_1(0) =0$. 

\subsection{Estimate for the distance between sets}
\label{sec-perc-estimate}

The goal of this subsection is to prove the following more precise version of Theorem~\ref{thm-two-set-dist0} which is required to be uniform across scales.  
For the statement, we recall the scaling constants $\frk c_r$ for $r>0$ from Axiom~\ref{item-metric-coord}.

\begin{prop} \label{prop-two-set-dist}
Let $U \subset \BB C$ be an open set (possibly all of $\BB C$) and let $K_1,K_2\subset U$ be connected, disjoint compact sets which are not singletons. 
For each $\BB r  >0$, it holds with superpolynomially high probability as $A\rta \infty$, at a rate which is uniform in the choice of $\BB r$, that 
\eqb \label{eqn-two-set-dist}
 A^{-1}\frk c_{\BB r} e^{\xi h_{\BB r}(0)} \leq D_h(\BB r K_1,\BB r K_2 ; \BB r U) \leq A \frk c_{\BB r} e^{\xi h_{\BB r}(0)} .  
\eqe
\end{prop}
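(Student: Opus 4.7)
My plan is to prove the two bounds separately, using Axiom~\ref{item-metric-coord} (tightness across scales) to reduce the uniformity in $\BB r$ to a statement about a tight family of rescaled metrics. Specifically, by Axiom~\ref{item-metric-coord} the family $\{\frk c_{\BB r}^{-1} e^{-\xi h_{\BB r}(0)} D_h(\BB r\cdot,\BB r\cdot)\}_{\BB r > 0}$ is tight and its closure is supported on continuous metrics on $\BB C$, so it suffices to prove the desired tail bound uniformly over all $\wt D$ in this closure. Translation invariance (Axiom~\ref{item-metric-translate}) and the scale-invariance of the law of $h$ modulo an additive constant then let me normalize to $h_{\BB r}(0) = 0$ throughout the main arguments.

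For the upper bound, I would fix a smooth path $\pi \subset U$ connecting $K_1$ to $K_2$ and a thin tubular neighborhood $V$ of $\pi$ with $\ol V \subset U$. The $D_h$-diameter of $V$ dominates $D_h(K_1,K_2;U)$; by Axiom~\ref{item-metric-coord} applied to a finite cover of $V$ by small balls together with a union bound, this diameter is at most $A\frk c_{\BB r} e^{\xi h_{\BB r}(0)}$ with probability polynomially close to $1$ as $A\to\infty$. To boost this to superpolynomial probability, the plan is to run many nearly independent trials: build disjoint tubular candidates $V_1,\dots,V_N$, each joining $K_1$ to $K_2$ inside $U$, and use the Markov property of the whole-plane GFF (together with Axiom~\ref{item-metric-local} and standard regularity of the circle-average process, as in Lemma~\ref{lem-gff-sup}) to argue that the rescaled diameters of the $V_i$ are approximately independent. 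Taking the minimum over $N$ nearly independent attempts then produces the required superpolynomial tail.

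For the lower bound, I would fix a Jordan curve $\gamma \subset U$ separating $K_1$ from $K_2$, so that any path from $K_1$ to $K_2$ in $U$ must cross $\gamma$. Covering $\gamma$ by finitely many small Euclidean balls $B_\rho(z_j)$, any such path passes through at least one of the surrounding annuli $\BB A_{\rho/2,\rho}(z_j)$, so its $D_h$-length is at least the minimum crossing distance among these annuli. Axiom~\ref{item-metric-coord} supplies a polynomial-probability lower bound of order $\frk c_\rho e^{\xi h_\rho(z_j)}$ for each single crossing. To amplify this to a superpolynomial bound in $A$, I would insert, inside each $B_\rho(z_j)$, a long sequence of disjoint nested dyadic annuli at scales between a small threshold and $\rho$, and use the Markov property on these disjoint annular regions to make the individual short-crossing events approximately independent across scales. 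The joint probability that every nested crossing is simultaneously short then decays geometrically in the number of scales, yielding superpolynomial decay in $A$.

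The main obstacle is the lower bound --- specifically, promoting the single-scale polynomial bound from Axiom~\ref{item-metric-coord} to a multi-scale superpolynomial one. This requires a careful multi-scale construction together with a controlled application of the Markov property of the whole-plane GFF on disjoint nested annuli, with the harmonic-extension error handled via circle-average regularity. The upper bound is similar in spirit but technically simpler: one only needs a single short path, so the Markov property applied to disjoint macroscopic candidate tubes suffices, without the finer multi-scale setup required for the lower bound.
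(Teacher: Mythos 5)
Your general instinct --- force the path through annuli, then amplify a single-scale tightness bound to a superpolynomial one via near-independence of the field on disjoint annuli --- is the right shape, and it is essentially what the paper does via Lemma~\ref{lem-annulus-iterate}. But both of your amplification schemes have genuine gaps. For the lower bound, nesting annuli inside fixed balls $B_\rho(z_j)$ covering a separating curve does not work topologically: a path from $K_1$ to $K_2$ is only forced to cross the \emph{outermost} annulus of each stack, and can simply avoid the deeper nested annuli, so the event ``every nested crossing is simultaneously short'' is irrelevant to the failure event. What you actually need (and what the paper does) is to place a stack of $\asymp \log\ep^{-1}$ concentric annuli at \emph{every} point of a fine grid covering $\BB r U$, use the independence-across-scales lemma to get that with probability $1-O_\ep(\ep^M)$ each grid point admits at least one ``good'' annulus at some scale $r\in[\ep^2\BB r,\ep\BB r]$, and then observe that any path between the macroscopically separated sets must fully cross one of these good small annuli near whatever point of the grid it passes. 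For the upper bound, your scheme is weaker still: with a bounded number $N$ of disjoint macroscopic tubes you get at best the $N$-th power of the single-tube failure probability, which is not superpolynomial; you cannot let $N\to\infty$ because thin tubes have \emph{large} $D_h$-diameter (this is exactly Proposition~\ref{prop-line-path}); and your claimed polynomial-rate starting point does not follow from Axiom~\ref{item-metric-coord}, which is only a tightness statement with no rate. The paper proves the upper bound with the same multi-scale construction as the lower bound, stringing together the circles $\bdy B_r(w)$ of good annuli into a path from $\BB r K_1$ to $\BB r K_2$; it is not the technically simpler half.

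There is also a quantitative balancing act you have not addressed, without which even the correct construction only yields polynomial decay. Running the scheme at scale $\ep$ gives probability $1-O_\ep(\ep^M)$, but the resulting two-sided bounds on $D_h(\BB r K_1,\BB r K_2;\BB r U)$ are distorted relative to $\frk c_{\BB r}e^{\xi h_{\BB r}(0)}$ by (i) the ratio $\frk c_r/\frk c_{\BB r}$ for $r\in[\ep^2\BB r,\ep\BB r]$, controlled by~\eqref{eqn-scaling-constant} up to $\ep^{O(1)}$, and (ii) the sup of $|h_r(w)-h_{\BB r}(0)|$ over the $\ep^{-O(1)}$ grid points, which a Gaussian union bound controls only up to $\ep^{-O(\sqrt M)}$ when the target failure probability is $\ep^M$. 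The proof closes because the distortion exponent grows like $\sqrt M$ while the probability exponent grows like $M$: taking $\ep=A^{-b/\sqrt M}$ makes the distortion $A^{\pm 1}$ and the failure probability $O(A^{-b\sqrt M})$, and letting $M\to\infty$ gives superpolynomial decay. Without this interplay the ``geometric decay in the number of scales'' you invoke delivers only a fixed power of $A$.
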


\begin{figure}[ht!]
\begin{center}
\includegraphics[scale=1]{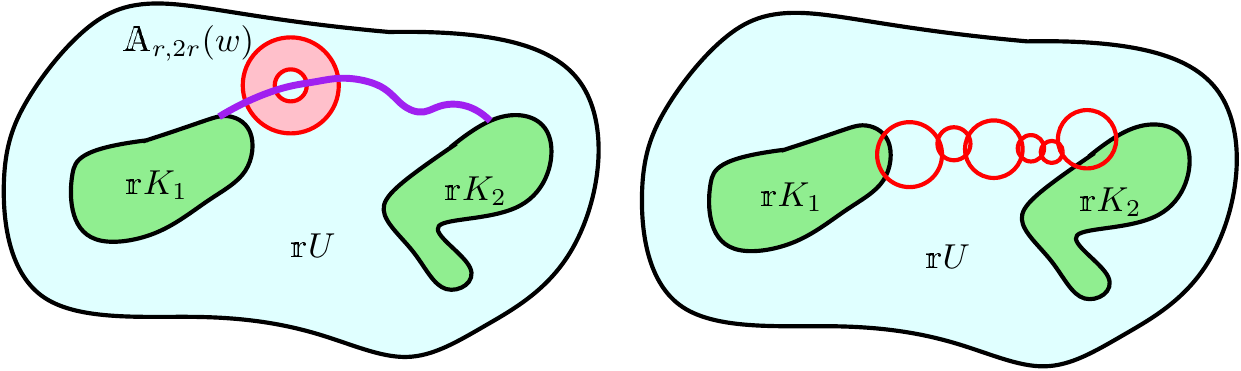} 
\caption{\label{fig-two-set-dist} \textbf{Left:} To prove the lower bound in Proposition~\ref{prop-two-set-dist}, we cover $\BB r U$ by balls $B_{r/2}(w)$ such that the $D_h$-distance across the annulus $\BB A_{r,2r}(w)$ is bounded below. Each path from $\BB r K_1$ to $\BB r (K_2\cup \bdy U)$ must cross at least one of these annuli (one such path is shown in purple). 
\textbf{Right:} To prove the upper bound in Proposition~\ref{prop-two-set-dist}, we cover $\BB r U$ by balls $B_{r/2}(w)$ for which the $D_h$-diameter of the circle $\bdy B_r(w)$ is bounded above, then string together a path of such circles from $K_1$ to $K_2$. 
}
\end{center}
\end{figure}
 
We now explain the idea of the proof of Proposition~\ref{prop-two-set-dist}; see Figure~\ref{fig-two-set-dist} for an illustration. 
Using Axiom~\ref{item-metric-coord} and a general ``local independence" lemma for the GFF (see Lemma~\ref{lem-annulus-iterate} below), we can, with extremely high probability, cover $\BB r U$ by small Euclidean balls $B_{r/2}(w)$ such that $r\in [\ep^2 \BB r , \ep \BB r]$ and the $D_h$-distance across the annulus $\BB A_{r,2r}(w)$ is bounded below by a constant times $\frk c_r e^{\xi h_r(w)}$. 
Any path from $\BB r K_1$ to $\BB r K_2$ must cross at least one of these annuli. 
This leads to a lower bound for $D_h(\BB rK_1 , \BB r K_2 ; \BB r U)$ in terms of 
\eqb \label{eqn-two-set-dist-terms}
\inf_{r \in [\ep^2 \BB r , \ep\BB r]} \frk c_r \quad \text{and} \quad \inf_{r \in [\ep^2\BB r , \ep\BB r]} \inf_{w\in \BB r U} e^{\xi h_r(w)} .
\eqe
The first infimum in~\eqref{eqn-two-set-dist-terms} can be bounded below by a positive power of $\ep$ times $\frk c_{\BB r}$ by~\eqref{eqn-scaling-constant}. 
By being a little more careful about how we choose the balls $B_{r/2}(w)$, the second term in~\eqref{eqn-two-set-dist-terms} can be reduced to an infimum over finitely many values of $r$ and $w$, which can then be bounded below by a positive power of $\ep$ times $e^{\xi h_{\BB r}(0)}$ using the Gaussian tail bound and a union bound (see Lemma~\ref{lem-circle-avg-tail}). Choosing $\ep$ to be an appropriate power of $A$ then concludes the proof.

The upper bound in~\eqref{eqn-two-set-dist} is proven similarly, but in this case we instead cover $U$ by balls $B_{r/2}(w)$ for which the $D_h$-diameter of the circle $\bdy B_r(w)$ is bounded above by a constant times $\frk c_r e^{\xi h_r(w)}$, then ``string together" a collection of such circles to get a path from $\BB r K_1$ to $\BB r K_2$ whose $D_h$-length is bounded above. The hypothesis that $K_1$ and $K_2$ are connected and are not singletons allows us to force some of the circles in this path to intersect $K_1$ and $K_2$.

We now explain how to cover $U$ by Euclidean balls with the desired properties. 
For $C>1$, $z\in\BB C$, and $r>0$, let $E_r(z;C)$ be the event that 
\eqb \label{eqn-annulus-event}
 \sup_{u,v\in \bdy B_r(z)} D_h\left(u,v; \BB A_{r/2,2r}(z) \right) \leq C \frk c_r e^{\xi h_r(0)}   
 \quad \text{and} \quad
  D_h\left( \bdy B_r(z) , \bdy B_{2r}(z)\right) \geq  C^{-1} \frk c_r e^{\xi h_r(0)} .
\eqe

\begin{lem} \label{lem-good-annulus-all}
For each $\nu > 0$ and each $M > 0$, there exists $C  = C(\nu,M) >1$ such that for each $\BB r > 0$, it holds with probability at least $1-O_\ep(\ep^M)$ as $\ep\rta 0$, at a rate which is uniform in $\BB r$, that the following is true.
For each $z\in B_{\BB r \ep^{-M}}(0) $, there exists $w\in B_{\BB r \ep^{-M}}(0) \cap \left(\frac{\ep^{1+\nu}  \BB r}{4} \BB Z^2 \right)  $ and $r\in [\ep^{1+\nu} \BB r , \ep \BB r] \cap \{2^{-k} \BB r\}_{k\in\BB N}$ such that $E_r(w;C)$ occurs and $z\in B_{ \BB r \ep^{1+\nu}/2}(w)$.  
\end{lem}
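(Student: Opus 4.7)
The plan is a standard multi-scale argument: combine a uniform single-scale tail bound for $E_r(z;C)$ with the near-independence-across-scales lemma for the GFF (Lemma~\ref{lem-annulus-iterate}) to show that at each lattice point at least one of many dyadic scales gives a good annulus event, then take a union bound over the lattice. First I would establish that $q(C) := \sup_{z\in\BB C,\, r > 0} \BB P[E_r(z;C)^c] \to 0$ as $C \to \infty$. This is a direct consequence of Axiom~\ref{item-metric-coord} together with Axiom~\ref{item-metric-translate} and the translation invariance of the whole-plane GFF modulo additive constant: these imply that the laws of the rescaled metrics $\frk c_r^{-1} e^{-\xi h_r(z)} D_h(z+r\,\cdot\,,\, z+r\,\cdot\,)$ form a tight family as $r,z$ vary, with every subsequential limit a continuous metric. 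Since $E_r(z;C)$ is precisely a bounded event in this rescaled metric (an upper bound on a diameter and a lower bound on a crossing distance of a fixed annulus), tightness yields the uniform bound.

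Next, for each lattice point $w \in B_{\BB r \ep^{-M}}(0) \cap \left(\tfrac{\ep^{1+\nu}\BB r}{4}\BB Z^2\right)$, I would consider the dyadic scales $r_k = 2^{-k}\BB r \in [\ep^{1+\nu}\BB r, \ep\BB r]$ and retain only every $J$-th one for a large fixed integer $J = J(\nu)$, chosen so that the annuli $\BB A_{r_k/2,\, 2r_k}(w)$ are geometrically disjoint. This leaves $N \asymp \nu J^{-1} \log_2(1/\ep)$ scales. Invoking Lemma~\ref{lem-annulus-iterate} together with Axiom~\ref{item-metric-local} (to localize $E_r(w;C)$ to $h$ near $\BB A_{r/2,2r}(w)$) and Axiom~\ref{item-metric-f} (to absorb the harmonic part from the Markov decomposition of $h$), I would obtain
\[
\BB P\!\left[\bigcap_k E_{r_k}(w;C)^c\right] \leq C_0\, q(C)^N \leq C_0\, \ep^{c(\nu)\log(1/q(C))}.
\]
A union bound over the $O(\ep^{-2M-2-2\nu})$ lattice points then gives a total failure probability of order $\ep^{c(\nu)\log(1/q(C))-2M-2-2\nu}$. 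Choosing $C = C(\nu,M)$ large enough that $c(\nu)\log(1/q(C)) \geq 3M+2+2\nu$ bounds this by $O(\ep^M)$. Coverage is automatic since the lattice mesh $\ep^{1+\nu}\BB r/4$ is less than $\ep^{1+\nu}\BB r/(2\sqrt{2})$, so every $z$ in the big ball lies within Euclidean distance $\ep^{1+\nu}\BB r/2$ of some lattice $w$.

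The main obstacle is the near-independence step. Although Axiom~\ref{item-metric-local} makes $D_h(\cdot,\cdot;\BB A_{r/2,2r}(w))$ depend only on $h$ near $\BB A_{r/2,2r}(w)$, the event $E_r(w;C)$ also involves a circle average of $h$, which is a nonlocal functional. Managing this requires the GFF Markov decomposition on each annulus together with Weyl scaling (Axiom~\ref{item-metric-f}) to quotient out the harmonic piece, so that after this reduction the residual events depend only on independent zero-boundary GFFs at well-separated scales. This is precisely the content I plan to import via Lemma~\ref{lem-annulus-iterate}, treated here as a black box.
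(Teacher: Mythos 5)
Your proposal is correct and follows essentially the same route as the paper: a uniform single-scale bound $\BB P[E_r(z;C)]\geq p$ from Axioms~\ref{item-metric-translate} and~\ref{item-metric-coord}, the observation (via Axioms~\ref{item-metric-local} and~\ref{item-metric-f}) that $E_r(w;C)$ is measurable with respect to the field modulo additive constant on the annulus so that Lemma~\ref{lem-annulus-iterate} applies across the logarithmically many dyadic scales in $[\ep^{1+\nu}\BB r,\ep\BB r]$, and finally a union bound over the $O_\ep(\ep^{-2M-2-2\nu})$ lattice points with $C$ chosen large enough to beat the entropy. The only cosmetic difference is your thinning to every $J$-th scale, which Lemma~\ref{lem-annulus-iterate} does not actually require.
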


We will prove Lemma~\ref{lem-good-annulus-all} using the following result from~\cite{local-metrics}, which in turn follows from the near-independence of the GFF across disjoint concentric annuli. See in particular~\cite[Lemma 3.1]{local-metrics}. 

\begin{lem} \label{lem-annulus-iterate}
Fix $0 < s_1<s_2 < 1$. Let $\{r_k\}_{k\in\BB N}$ be a decreasing sequence of positive numbers such that $r_{k+1} / r_k \leq s_1$ for each $k\in\BB N$ and let $\{E_{r_k} \}_{k\in\BB N}$ be events such that $E_{r_k} \in \sigma\left( (h-h_{r_k}(0)) |_{\BB A_{s_1 r_k , s_2 r_k}(0)  } \right)$ for each $k\in\BB N$. 
For $K\in\BB N$, let $N(K)$ be the number of $k\in [1,K]_{\BB Z}$ for which $E_{r_k}$ occurs.  
\item For each $a > 0$ and each $b\in (0,1)$, there exists $p = p(a,b,s_1,s_2) \in (0,1)$ and $c = c(a,b,s_1,s_2) > 0$ such that if \label{item-annulus-iterate-high}
\eqb \label{eqn-annulus-iterate-prob}
\BB P\left[ E_{r_k}  \right] \geq p , \quad \forall k\in\BB N  ,
\eqe 
then 
\eqb \label{eqn-annulus-iterate}
\BB P\left[ N(K)  < b K\right] \leq c e^{-a K} ,\quad\forall K \in \BB N. 
\eqe  
\end{lem}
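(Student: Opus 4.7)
The plan is to combine a near-independence argument for the GFF across disjoint concentric annuli with a Chernoff-type concentration bound. First, I would check that the annuli $\BB A_{s_1 r_k, s_2 r_k}(0)$ are pairwise disjoint: since $r_{k+1}/r_k \leq s_1$ and $s_2 < 1$, one has $s_2 r_{k+1} \leq s_1 s_2 r_k < s_1 r_k$, so the annulus at scale $r_{k+1}$ sits inside $B_{s_1 r_k}(0)$ and is separated from the annulus at scale $r_k$ by a definite (multiplicative) gap. Consequently, the events $E_{r_k}$ are supported on disjoint regions of the plane.

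Next, I would introduce the filtration $\mcl F_k := \sigma\!\left((h - h_{r_k}(0))|_{\BB C \setminus B_{s_1 r_k}(0)}\right)$, which is increasing in $k$ (because $r_k$ is decreasing), and note that $E_{r_k} \in \mcl F_k$ by hypothesis. The heart of the proof is to show that for $p$ close enough to $1$ one has
\[
\BB P\left[ E_{r_{k+1}} \,\big|\, \mcl F_k \right] \;\geq\; p'
\]
for some $p' = p'(p, s_1, s_2) \in (b, 1)$ with $p' \to 1$ as $p \to 1$. To prove this I would apply the Markov decomposition $h = \mathring h^{(k)} + \mathfrak h^{(k)}$ on $B_{s_1 r_k}(0)$, where $\mathring h^{(k)}$ is a zero-boundary GFF (independent of $\mcl F_k$) and $\mathfrak h^{(k)}$ is the harmonic extension of the outside boundary data. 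Because $E_{r_{k+1}}$ depends on $h$ restricted to $\BB A_{s_1 r_{k+1}, s_2 r_{k+1}}(0)$ \emph{only modulo the additive constant $h_{r_{k+1}}(0)$}, one can absorb the value of $\mathfrak h^{(k)}$ at $0$ into the circle-average subtraction; the remaining nonconstant part of $\mathfrak h^{(k)}$ on the small inner annulus is controlled by standard harmonic function estimates (the oscillation over $B_{s_2 r_{k+1}}(0)$ is bounded by a multiple of $r_{k+1}/r_k \leq s_1$ times a Gaussian quantity with tails that can be made arbitrarily small). A quantitative absolute continuity argument in the Cameron–Martin direction, combined with a union bound over a high-probability ``good'' event for the oscillation, then shows that the conditional probability of $E_{r_{k+1}}$ differs from the unconditional one by at most a small quantity $\delta(p)$ with $\delta(p) \to 0$ as $p \to 1$, giving the desired $p'$.

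Once this conditional lower bound is in hand, the remainder is a routine concentration step. The random variable $N(K) = \sum_{k=1}^K \BB 1_{E_{r_k}}$ stochastically dominates a $\mathrm{Binomial}(K, p')$ random variable (using the conditional lower bound iteratively and the standard stochastic-domination lemma for adapted sums of indicators). The Chernoff bound then yields
\[
\BB P\left[ N(K) < bK \right] \;\leq\; \BB P\left[ \mathrm{Binomial}(K, p') < bK \right] \;\leq\; c \, e^{-a K}
\]
for constants $a = a(b, p', s_1, s_2) > 0$ and $c = c(b, p', s_1, s_2) > 0$, provided $b < p'$; and we can guarantee $b < p'$ by choosing $p$ sufficiently close to $1$ in terms of $b, s_1, s_2$.

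The main obstacle will be the quantitative conditional-probability estimate in the second paragraph: one has to show that conditioning on the ``outside'' field perturbs the law of the relevant annular field (after centering by its circle average) by an amount that is small \emph{uniformly in the scale $k$}, and this is where both the ratio gap $s_1 < 1$ and the additive-constant invariance of the events $E_{r_k}$ are essential. I expect that this step is precisely what is proved in \cite[Lemma~3.1]{local-metrics}, and my plan would be to invoke or adapt that lemma rather than re-derive the GFF absolute continuity estimates from scratch.
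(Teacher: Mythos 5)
Your bottom-line plan coincides with what the paper actually does: the paper contains no proof of this lemma at all, but imports it verbatim as a result of \cite{local-metrics} (``See in particular [local-metrics, Lemma 3.1]''), so invoking that lemma is precisely the intended argument. Your observations that the annuli are pairwise disjoint with a multiplicative buffer (outer radius $s_2 r_{k+1}\leq s_1 s_2 r_k < s_1 r_k$) and that the final step is a Chernoff/large-deviation bound are both correct and consistent with how the cited lemma is proved.

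One caveat about the self-contained sketch, in case you ever try to carry it out rather than cite it: the pivotal claim that $\BB P\left[E_{r_{k+1}}\mid \mcl F_k\right]\geq p'$ \emph{almost surely}, with $p'$ close to $1$, is not attainable, and with it the stochastic domination of $N(K)$ by a $\mathrm{Binomial}(K,p')$ breaks down. Conditionally on the field outside $B_{s_1 r_k}(0)$, the field inside is a zero-boundary GFF plus a harmonic function $\frk h^{(k)}$; the oscillation of $\frk h^{(k)}-\frk h^{(k)}(0)$ on the inner annulus is \emph{typically} small but is a Gaussian-type random variable with unbounded support, so on a positive-probability set of realizations of $\mcl F_k$ the conditional probability of $E_{r_{k+1}}$ can be pushed arbitrarily close to $0$ (e.g.\ if $E_{r_{k+1}}$ is an event about the oscillation of the centered field). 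The way around this, and the reason the conclusion requires $p$ close to $1$ depending on $a$ and $b$, is to work with \emph{moment} bounds on the Radon--Nikodym derivative of the conditional law of the centered annular field rather than with an almost sure bound: by H\"older, the conditional probability of $E_{r_{k+1}}^c$ is at most a random variable with bounded moments times $\BB P[E_{r_{k+1}}^c]^{\alpha}$ for some $\alpha\in(0,1)$ depending on $s_1,s_2$. Iterating this over a fixed subset $S\subset[1,K]_{\BB Z}$ bounds $\BB P\bigl[\bigcap_{k\in S}E_{r_k}^c\bigr]$ by $\left(C(1-p)^{\alpha}\right)^{|S|}$, and a union bound over the at most $2^K$ subsets of size $\lceil(1-b)K\rceil$ then gives~\eqref{eqn-annulus-iterate} once $p$ is close enough to $1$. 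This is the mechanism in \cite{local-metrics}, and it is genuinely different from binomial stochastic domination.
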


\begin{proof}[Proof of Lemma~\ref{lem-good-annulus-all}]
By Axioms~\ref{item-metric-translate} and~\ref{item-metric-coord} (also see \eqref{eq:tight}), for each $p \in (0,1)$ there exists $C >1$ such that for every $z\in\BB C$ and $r>0$, $\BB P\left[ E_r(z;C) \right] \geq p$. 
By the locality of $D_h$ and Axiom~\ref{item-metric-f}, the event $E_r(z;C)$ is determined by $(h-h_{3r}(z))|_{\BB A_{r/2,2r}(z)}$. 
We can therefore apply Lemma~\ref{lem-annulus-iterate} to a logarithmic (in $\ep$) number of values of $r\in [ \ep^{1+\nu} \BB r , \ep  \BB r] \cap \{2^{-k} \BB r\}_{k\in\BB N}$ to find that for any choice of $\nu > 1$ and $\wt M> 0$, there is a large enough $C = C(\nu,\wt M)>1$ such that the following is true. For each $z\in \BB C$ it holds with probability at least $1 - O_\ep(\ep^{\wt M})$ that $E_r(z;C)$ occurs for at least one value of $r\in [\ep^{1+\nu} \BB r , \ep \BB r] \cap \{2^{-k} \BB r\}_{k\in\BB N}$.
We now conclude the proof by choosing $\wt M $ to be sufficiently large, in a manner depending only on $\nu , M$, and taking a union bound over all $z\in B_{\BB r \ep^{-M}}(0) \cap \left(\frac{\ep^{1+\nu}  \BB r}{4} \BB Z^2 \right)$. 
\end{proof}

The occurrence of the event $E_r(z;C)$ allows us to bound distances in terms of circle averages and the scaling coefficients $\frk c_r$. The $\frk c_r$'s can be bounded using~\eqref{eqn-scaling-constant}. To bound the circle averages, we will need the following lemma. 

\begin{lem} \label{lem-circle-avg-tail}
For each $\nu > 0$, each $q >  2+2\nu $, each $R >0$, and each $\BB r > 0$, it holds with probability $1 - O_\ep\left( \ep^{\frac{q^2}{ 2(1+\sqrt \nu)^2} - 2 -2\nu} \right) $, at a rate depending only on $q$ and $R$ (not on $\BB r$) that 
\eqb
  \sup\left\{ |h_r(w) - h_{\BB r}(0)| : w\in B_{R\BB r}(0) \cap \left(\frac{\ep^{1+\nu} \BB r}{4} \BB Z^2 \right) ,\, r\in [\ep^{1+\nu} \BB r , \ep \BB r] \right\} \leq q \log\ep^{-1}  .
\eqe 
\end{lem}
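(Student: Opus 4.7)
The plan is to exploit the fact that the circle average process $r \mapsto h_r(w)$ of the whole-plane GFF is, after a logarithmic time change, a Brownian motion (with slight corrections depending on $|w|$). By scale covariance of the whole-plane GFF modulo additive constant --- specifically, using that $h(\BB r \cdot) - h_{\BB r}(0)$ is a whole-plane GFF normalized so that its circle average at $1$ is $0$, together with the relation $h_{\BB r s}(\BB r w) - h_{\BB r}(0) \eqD h_s(w)$ --- I first reduce to the case $\BB r = 1$. In that case we have $h_1(0) = 0$ and it suffices to bound $\sup|h_r(w)|$ over the stated ranges, with a rate depending only on $q$ and $R$.

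For each \emph{fixed} grid point $w \in B_R(0) \cap \frac{\ep^{1+\nu}}{4}\BB Z^2$, I will split
\[
h_r(w) = h_\ep(w) + \bigl( h_r(w) - h_\ep(w) \bigr),
\]
and control the two terms separately. The random variable $h_\ep(w)$ is centered Gaussian with variance $\log \ep^{-1} + O_R(1)$ (from the standard formula for the variance of the whole-plane GFF circle average), so by the Gaussian tail bound
\[
\BB P\bigl[ |h_\ep(w)| > q' \log \ep^{-1} \bigr] \leq 2\exp\!\left( -\frac{(q'\log\ep^{-1})^2}{2(\log\ep^{-1} + O_R(1))} \right) = O_\ep\!\left( \ep^{(q')^2/2 - o_\ep(1)} \right) .
\]
For the second term, the process $t \mapsto h_{e^{-t}}(w) - h_\ep(w)$ on $t \in [\log\ep^{-1}, (1+\nu)\log\ep^{-1}]$ has the law of a Brownian motion (or, more precisely, a Gaussian process with variance bounded by $t - \log\ep^{-1}$, as follows from the Markov property of the whole-plane GFF applied to concentric annuli around $w$). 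Consequently, by the reflection principle,
\[
\BB P\!\left[ \sup_{r \in [\ep^{1+\nu}, \ep]} |h_r(w) - h_\ep(w)| > q'' \sqrt{\nu}\, \log \ep^{-1} \right] \leq O_\ep\!\left( \ep^{(q'')^2/2 - o_\ep(1)} \right) .
\]

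Combining these estimates, if both bounds hold then $|h_r(w)| \leq (q' + q''\sqrt{\nu})\log\ep^{-1}$ for every $r \in [\ep^{1+\nu}, \ep]$. Optimizing by taking $q' = q'' = q/(1+\sqrt{\nu})$ so that $q' + q''\sqrt{\nu} = q$, the per-$w$ failure probability is $O_\ep(\ep^{q^2/(2(1+\sqrt{\nu})^2) - o_\ep(1)})$. The grid has cardinality $O_\ep(R^2 \ep^{-2-2\nu})$, so a union bound yields failure probability $O_\ep(\ep^{q^2/(2(1+\sqrt{\nu})^2) - 2 - 2\nu - o_\ep(1)})$, which absorbs into $O_\ep(\ep^{q^2/(2(1+\sqrt{\nu})^2) - 2 - 2\nu})$ (this is where the somewhat suboptimal $(1+\sqrt{\nu})^2$ appears; it is an artifact of splitting the $q$ budget additively into a ``scale $\ep$'' part and an ``oscillation over $\nu$ logarithmic units'' part, rather than treating the full interval by a single Gaussian tail bound).

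The main obstacle will be making sure that the Gaussian variance computations are valid uniformly over $w \in B_R(0)$ and $r \in [\ep^{1+\nu}, \ep]$; in particular, one must verify that the $O_R(1)$ correction in the variance of $h_\ep(w)$ really is uniform in $w$ in this range, and that the Brownian motion description of the radial part is valid starting from scale $\ep$ rather than from scale $1$. These are both standard consequences of the covariance structure of the whole-plane GFF and its Markov property across concentric annuli, but they should be written down carefully.
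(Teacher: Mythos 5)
Your proposal is correct and follows essentially the same argument as the paper: split the deviation into a fixed-scale Gaussian tail bound for $h_{\ep\BB r}(w)-h_{\BB r}(0)$ (variance $\log\ep^{-1}+O(1)$) plus a Brownian-motion sup bound for the radial oscillation over $[\ep^{1+\nu}\BB r,\ep\BB r]$, equalize the two exponents (your choice $q'=q''=q/(1+\sqrt\nu)$ is exactly the paper's $s=q\sqrt\nu/(1+\sqrt\nu)$ in different notation), and union bound over the $O_\ep(\ep^{-2-2\nu})$ grid points. The only cosmetic difference is that you first rescale to $\BB r=1$, whereas the paper works directly at scale $\BB r$ using the scale invariance of the circle-average covariances; both are fine.
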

\begin{proof}
Fix $s\in (0,q)$ to be chosen momentarily.
For each $w\in B_{R\BB r}(0)$, the random variable $t\mapsto h_{e^{-t} \ep \BB r}(w) - h_{\ep \BB r}(w)$ is a standard linear Brownian motion~\cite[Section 3]{shef-kpz}.
We can therefore apply the Gaussian tail bound to find that 
\eqb \label{eqn-gaussian-tail-inc}
\BB P\left[\sup_{r\in  [\ep^{1+\nu} \BB r , \ep  \BB r]} |h_r(w) - h_{\ep\BB r}(w)| \leq s \log\ep^{-1} \right] \geq 1 - O_\ep\left( \ep^{ s^2/(2\nu)} \right) .
\eqe
The random variables $h_{\ep \BB r}(w) - h_{\BB r}(0)$ for $w\in B_{R\BB r}(0)$ are centered Gaussian with variance $\log\ep^{-1} + O_\ep(1)$. 
Applying the Gaussian tail bound again therefore gives
\eqb \label{eqn-gaussian-tail-start}
\BB P\left[  |  h_{\ep\BB r}(w)  - h_{\BB r}(0)| \leq (q-s) \log\ep^{-1} \right] \geq 1 - O_\ep\left( \ep^{ (q-s)^2 /2 } \right) .
\eqe
Combining~\eqref{eqn-gaussian-tail-inc} and~\eqref{eqn-gaussian-tail-start} applied with $s = q\sqrt{\nu}/(1+\sqrt \nu)$ shows that for $w\in B_{R\BB r}(0)$, 
\eqb \label{eqn-gaussian-tail-combine}
\BB P\left[\sup_{r\in  [\ep^{1+\nu} \BB r , \ep  \BB r]} |h_r(w)   - h_{\BB r}(0)| \leq q \log\ep^{-1} \right] \geq 1 - O_\ep\left( \ep^{\frac{q^2}{2(1+\sqrt \nu)^2}} \right) .
\eqe 
We now conclude by means of a union bound over $O_\ep(\ep^{-2-2\nu})$ values of $w\in B_{R\BB r}(0) \cap \left(\frac{\ep^{1+\nu} \BB r}{4} \BB Z^2 \right)$.
\end{proof}

\begin{proof}[Proof of Proposition~\ref{prop-two-set-dist}]
Throughout the proof, all $O(\cdot)$ and $o(\cdot)$ errors are required to be uniform in the choice of $\BB r$. We also impose the requirement that $U$ is bounded --- we will explain at the very end of the proof how to get rid of this requirement.

Set $\nu = 1$, say, and fix a large $M  > 1$, which we will eventually send to $\infty$. 
Let $C  = C(1,M) > 1$ be chosen as in Lemma~\ref{lem-good-annulus-all} and for $\ep \in (0,1)$ and $\BB r > 0$, let $F_{\BB r}^\ep$ be the event of Lemma~\ref{lem-good-annulus-all} for this choice of $\nu,M,C$, so that $\BB P[F_{\BB r}^\ep] = 1 - O_\ep(\ep^M)$. We will eventually take $\ep = A^{-b/\sqrt M}$ for a small constant $b>0$, so $\ep^M$ will be a large negative power of $A$ (i.e., the power goes to $\infty$ as $M\rta \infty$) but $\ep^{\sqrt M}$ will be a fixed negative power of $A$ (which does not go to $\infty$ when $M\rta\infty$). 

By Lemma~\ref{lem-circle-avg-tail} (applied with $\nu = 1$ and $q = 2\sqrt 2\sqrt{4+M}$), it holds with probability $1-O_\ep(\ep^M)$ that
\eqb \label{eqn-use-circle-avg-tail}
\sup\left\{ |h_r(w) - h_{\BB r}(0)| : w\in B_{ \BB r}(\BB r U) \cap \left(\frac{\ep^2 \BB r}{4} \BB Z^2 \right) ,\, r\in [\ep^2 \BB r , \ep \BB r] \right\} 
\leq 2\sqrt 2\sqrt{4+M} \log\ep^{-1}  .
\eqe
Henceforth assume that $F_{\BB r}^\ep$ occurs and~\eqref{eqn-use-circle-avg-tail} holds, which happens with probability $1-O_\ep(\ep^M)$. 
We will now prove lower and upper bounds for $D_h\left( \BB r K_1 , \BB r K_2  ; \BB r U \right) $ in terms of $\ep$.
\medskip

\noindent\textit{Step 1: lower bound.}
By the definition of $F_{\BB r}^\ep$, if $\ep $ is sufficiently small, depending on $K_1,K_2,U$, then each path from $\BB r K_1$ to $\BB r (K_2 \cup \bdy U)$ must cross from $\bdy B_r(w)$ to $\bdy B_{2r}(w)$ for some $w \in B_{\ep\BB r}(\BB r U) \cap \left(\frac{\ep^2 \BB r}{4} \BB Z^2 \right)$ and $r\in [\ep^2 \BB r , \ep \BB r] \cap \{2^{-k}\BB r\}_{k\in\BB N}$ for which $E_r(w;C)$ occurs.
Therefore, 
\allb \label{eqn-two-set-dist-lower}
D_h\left( \BB r K_1 , \BB r K_2 \right) 
&\geq \inf\left\{ C^{-1} \frk c_r e^{\xi h_r(w)} : w \in B_{\ep\BB r}(\BB r U) \cap \left(\frac{\ep^2  \BB r}{4} \BB Z^2 \right) ,\: r\in [\ep^2 \BB r , \ep \BB r] \right\} \quad \text{(by~\eqref{eqn-annulus-event})} \notag \\
&\geq C^{-1} \ep^{\xi  2\sqrt 2\sqrt{4+M} } e^{\xi h_{\BB r}(0)} \inf\left\{\frk c_r : r\in  [\ep^2 \BB r , \ep \BB r] \right\} \quad \text{(by~\eqref{eqn-use-circle-avg-tail})} \notag \\
&\geq \Lambda^{-1} \ep^{\xi 2\sqrt 2\sqrt{4+M} + 2 \Lambda   + o_\ep(1)  } \frk c_{\BB r} e^{\xi h_{\BB r}(0)} \quad \text{(by~\eqref{eqn-scaling-constant})} . 
\alle
\medskip

\noindent\textit{Step 2: upper bound.}
It is easily seen from the definition of $F_{\BB r}^\ep$ (see Lemma~\ref{lem-connected} below) that if $\ep$ is sufficiently small (depending only on $K_1, K_2,$ and $U$) then the union of the circles $\bdy B_r(w)$ for $w \in B_{\ep\BB r}(\BB r U) \cap \left(\frac{\ep^2  \BB r}{4} \BB Z^2 \right)$ and $r\in [\ep^2 \BB r , \ep \BB r] \cap \{2^{-k}\BB r\}_{k\in\BB N}$ such that $E_r(w;C)$ occurs contains a path from $\BB r K_1$ to $\BB r K_2$ which is contained in $\BB r U$.  
The total number of such circles is at most $\ep^{-4- o_\ep(1)}$, so by the triangle inequality,
\allb \label{eqn-two-set-dist-upper}
D_h\left( \BB r K_1 , \BB r K_2 ; \BB r  U  \right) 
&\leq \ep^{-4 - o_\ep(1)} \sup\left\{ C  \frk c_r e^{\xi h_r(w)} : w \in B_{\ep\BB r}(\BB r U) \cap \left(\frac{\ep^{2}  \BB r}{4} \BB Z^2 \right) ,\: r\in [\ep^{2} \BB r , \ep \BB r] \right\} \quad \text{(by~\eqref{eqn-annulus-event})} \notag \\
&\leq  \ep^{-4 - \xi 2\sqrt 2\sqrt{4+M}- o_\ep(1)} e^{\xi h_{\BB r}(0)} \sup\left\{\frk c_r : r\in  [\ep^2 \BB r , \ep \BB r] \right\} \quad \text{(by~\eqref{eqn-use-circle-avg-tail})} \notag \\
&\leq \Lambda \ep^{-4 - \xi 2\sqrt 2\sqrt{4+M}    - 2\Lambda -    o_\ep(1)} \frk c_{\BB r} e^{\xi h_{\BB r}(0)} \quad \text{(by~\eqref{eqn-scaling-constant})} . 
\alle
\medskip

\noindent\textit{Step 3: choosing $\ep$.}
The bounds~\eqref{eqn-two-set-dist-lower} and~\eqref{eqn-two-set-dist-upper} hold with probability $1-O_\ep(\ep^M)$. 
Given $A > 0$, we now choose $\ep  =  A^{-b/\sqrt M}$, where $b > 0$ is a small constant (depending only on $\xi,\Lambda$) chosen so that the right side of~\eqref{eqn-two-set-dist-lower} is at least $A^{-1} \frk c_{\BB r} e^{\xi h_{\BB r}(0)}$ and the right side of~\eqref{eqn-two-set-dist-upper} is at most $A \frk c_{\BB r} e^{\xi h_{\BB r}(0)}$. 
Then~\eqref{eqn-two-set-dist-lower} and~\eqref{eqn-two-set-dist-upper} imply that 
\eqb \label{eqn-two-set-dist-end}
\BB P\left[ D_h(\BB r K_1,\BB r K_2) \geq A^{-1} \frk c_{\BB r} e^{\xi h_{\BB r}(0)} , \:  D_h(\BB r K_1,\BB r K_2 ; \BB r U)  \leq \frk c_{\BB r} e^{\xi h_{\BB r}(0)} \right] \geq 1 - O_A(A^{-b \sqrt M}) .
\eqe
If $U'$ is a possibly unbounded open subset of $\BB C$ with $U \subset U'$, then $D_h(\BB r K_1, \BB r K_2) \leq D_h(\BB r K_1,\BB r K_2 ; \BB r U') \leq D_h(K_1,K_2 ; \BB r U)$. 
Since $M$ can be made arbitrarily large, we now obtain~\eqref{eqn-two-set-dist} (with $U$ possibly unbounded) from~\eqref{eqn-two-set-dist-end}. 
\end{proof}

The following lemma was used in the proof of the upper bound of Proposition~\ref{prop-two-set-dist}. 

\begin{lem} \label{lem-connected}
Assume that we are in the setting of Proposition~\ref{prop-two-set-dist}, with $U$ bounded. 
Define the event $F_{\BB r}^\ep$ as in the proof of Proposition~\ref{prop-two-set-dist}.
For small enough $\ep >0$ (depending on $ K_1,K_2 , U$), on $F_{\BB r}^\ep$, the union of the circles $\bdy B_r(w)$ for $w \in B_{\ep\BB r}(\BB r U) \cap \left(\frac{\ep^2  \BB r}{4} \BB Z^2 \right)$ and $r\in [\ep^2 \BB r , \ep \BB r] \cap \{2^{-k}\BB r\}_{k\in\BB N}$ such that $E_r(w;C)$ occurs contains a path from $\BB r K_1$ to $\BB r K_2$ which is contained in $\BB r U$.  
\end{lem}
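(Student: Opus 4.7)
We may assume $\BB r K_1$ and $\BB r K_2$ lie in the same connected component of $\BB r U$, since otherwise $D_h(\BB r K_1, \BB r K_2; \BB r U) = \infty$ and the upper bound in Proposition~\ref{prop-two-set-dist} is trivial, making the lemma vacuous. Fix a continuous path $\gamma : [0,1] \to \BB r U$ from $\BB r K_1$ to $\BB r K_2$, and let $\delta := \min\{\op{dist}(\gamma([0,1]), \bdy(\BB r U)), \op{diam}(\BB r K_1), \op{diam}(\BB r K_2)\} > 0$. Take $\ep$ small enough that $\ep \BB r < \delta/100$, so that every good pair $(w, r)$ with $w$ within Euclidean distance $3\ep\BB r$ of $\gamma$ automatically satisfies $B_{2r}(w) \subset \BB r U$.

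Next, I strengthen the event $F_{\BB r}^\ep$ from Lemma~\ref{lem-good-annulus-all} to additionally enforce the following density property: for every grid point $w \in \frac{\ep^2 \BB r}{4} \BB Z^2$ in a neighborhood of $\gamma$, the set $R(w) := \{r \in [\ep^2 \BB r, \ep\BB r] \cap \{2^{-k}\BB r\}_{k \in \BB N} : E_r(w; C) \text{ holds}\}$ contains at least a $2/3$-fraction of the $\log_2 \ep^{-1} + O(1)$ admissible dyadic scales. This follows from Lemma~\ref{lem-annulus-iterate} applied at each grid point with $b = 2/3$ (once $p$, and hence $C$, is chosen large enough), together with a union bound over the polynomially many grid points; the total failure probability remains $O_\ep(\ep^M)$, so this strengthened event can be absorbed into $F_{\BB r}^\ep$.

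Now discretize $\gamma$ by points $z_0 \in \BB r K_1, z_1, \dots, z_N \in \BB r K_2$ with $|z_i - z_{i+1}| \leq \frac{\ep^2 \BB r}{10}$, and let $w_i$ denote the closest grid point to $z_i$, so that $|w_i - w_{i+1}| \leq \ep^2 \BB r$. Since $R(w_i)$ and $R(w_{i+1})$ each occupy at least a $2/3$-fraction of the admissible scales, their intersection is nonempty; pick a common $r_i^* \in R(w_i) \cap R(w_{i+1})$. The same-radius circles $\bdy B_{r_i^*}(w_i)$ and $\bdy B_{r_i^*}(w_{i+1})$ have centers at distance $\leq r_i^*$ and hence intersect. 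When consecutive links carry unequal radii $r_{i-1}^* \neq r_i^*$, the resulting concentric circles at $w_i$ are nested and disjoint, and I bridge them using auxiliary grid points $\tilde w_i$ placed at distance in the intersection interval $[|r_{i-1}^* - r_i^*|, r_{i-1}^* + r_i^*]$ from $w_i$ with appropriately chosen good scales, exploiting the density property at $\tilde w_i$. For the endpoint circles, $\BB r K_1$ is connected with diameter $\geq \delta > 2 r_0^*$ and meets $B_{r_0^*}(w_0) \ni z_0$, so by connectedness $\BB r K_1 \cap \bdy B_{r_0^*}(w_0) \neq \emptyset$; the analogous argument applies at $\BB r K_2$. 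Concatenating the chain and its bridges yields a continuous path in the union of good circles from $\BB r K_1$ to $\BB r K_2$ contained in $\BB r U$. The main technical obstacle is the bridge construction between nested concentric good circles, which relies on the scale-density strengthening of $F_{\BB r}^\ep$ combined with the observation that at each fixed distance $\rho \in [\ep^2\BB r, 2\ep\BB r]$ from a given $w_i$ there are $O(1)$ grid points $\tilde w_i$ at distance $\rho + O(\ep^2\BB r)$ from $w_i$, and among those at least one has the relevant pair of scales in $R(\tilde w_i)$ by a further application of the density estimate.
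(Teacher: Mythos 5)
There is a genuine gap, and it sits exactly where you flag "the main technical obstacle": the bridge between two concentric good circles $\bdy B_{r_{i-1}^*}(w_i)$ and $\bdy B_{r_i^*}(w_i)$ of different radii. Your scale-density property says that at each fixed grid point a large fraction of the dyadic scales are good; it does \emph{not} say that, given a prescribed radius (or a prescribed pair of radii), some grid point in a prescribed annulus carries that radius as a good scale. But that is what the bridge needs: a circle $\bdy B_\rho(\tilde w)$ meets both $\bdy B_{r_1}(w_i)$ and $\bdy B_{r_2}(w_i)$ only if $\rho \geq (r_2-r_1)/2$ and $|\tilde w - w_i|$ lies in an interval of width comparable to $r_1$, and for such $\tilde w$ the set of admissible dyadic $\rho$ can consist of only one or two scales. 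Knowing that $5/6$ of all $\asymp\log\ep^{-1}$ scales are good at $\tilde w$ gives no control over whether those particular one or two scales are good, and quantifying over the grid points $\tilde w$ in the annulus requires an independence statement across \emph{non-concentric} overlapping annuli that Lemma~\ref{lem-annulus-iterate} does not provide. A secondary problem: the density claim itself does not follow from Lemma~\ref{lem-annulus-iterate} applied to all consecutive dyadic scales, since the annuli $\BB A_{r/2,2r}(w)$ for $r$ and $r/2$ overlap, violating the hypothesis $r_{k+1}/r_k\leq s_1$; one must pass to residue classes of scales (fixable, but not done). Finally, the lemma is a deterministic claim on the event $F_{\BB r}^\ep$ already fixed in the proof of Proposition~\ref{prop-two-set-dist}; strengthening that event means you are proving a different statement, and the proposition's proof would have to be rewritten around the new event.

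The paper avoids all of this with a purely deterministic, topological argument that uses only the covering property built into $F_{\BB r}^\ep$: since the good balls $B_r(w)$ cover $\BB r U$, their union contains a path from $\BB r K_1$ to $\BB r K_2$; one extracts a \emph{minimal} subcollection $\mcl B$ of good balls whose union still contains such a path, shows $\bigcup_{B\in\mcl B} B$ is connected, and then shows by a partition argument (if $\bigcup_{B\in\mcl B}\bdy B$ split into two separated pieces, minimality would force the corresponding unions of balls to be disjoint, contradicting connectedness) that $\bigcup_{B\in\mcl B}\bdy B$ is itself connected. The endpoint circles meet $\BB r K_1$ and $\BB r K_2$ because these sets are connected with diameter exceeding $2\ep\BB r$. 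No extra probabilistic input, no control of which scales are good where, and no bridging of concentric circles is needed. I would recommend abandoning the chain-of-circles construction in favor of this minimal-subcover argument.
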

\begin{proof}   
Throughout the proof we assume that $F_{\BB r}^\ep$ occurs. 
By the definition of $F_{\BB r}^\ep$ and since $U$ is connected, if $\ep$ is chosen so be sufficiently small then the union of the balls $B_r(w)$ for $w,r$ as in the lemma statement contains a path from $\BB r K_1$ to $K_2$ which is contained in $U$.  
Let $\mcl B$ be a sub-collection of these balls which is minimal in the sense that $ \bigcup_{B\in\mcl B} B$ contains a path from $\BB r K_1$ to $\BB r K_2$ in $\BB r U$ and no proper sub-collection of the balls in $\mcl B$ has this property. 
Choose a path $P$ from $\BB r K_1$ to $\BB r K_2$ in $(\BB r U) \cap \bigcup_{B\in\mcl B} B$.
 
We first observe that $\bigcup_{B\in\mcl B} B$ is connected. Indeed, if this set had two proper disjoint open subsets, then each would have to intersect $ P $ (by minimality) which would contradict the connectedness of $ P $. 
Furthermore, by minimality, no ball in $\mcl B$ is properly contained in another ball in $\mcl B$. 

We claim that $\bigcup_{B\in\mcl B} \bdy B$ is connected. Indeed, if this were not the case then we could partition $\mcl B = \mcl B_1\sqcup \mcl B_2$ such that $\mcl B_1$ and $\mcl B_2$ are non-empty and $\bigcup_{B\in\mcl B_1} \bdy B$ and $\bigcup_{B\in\mcl B_2} \bdy B$ are disjoint.  
By the minimality of $\mcl B$, it cannot be the case that any ball in $\mcl B_2$ is contained in $ \bigcup_{B\in\mcl B_1}  B$.
Furthermore, since $\bigcup_{B\in\mcl B_1} \bdy B$ and $\bigcup_{B\in\mcl B_2} \bdy B$ are disjoint, it cannot be the case that any ball in $\mcl B_2$ intersects both $\bigcup_{B\in\mcl B_1} B$ and  $\BB C\setminus \bigcup_{B\in\mcl B_1}  B$ (otherwise, such a ball would have to intersect the boundary of some ball in $\mcl B_1$). 
Therefore, $\bigcup_{B\in\mcl B_1}  B$ and $\bigcup_{B\in\mcl B_2} \bdy B$ are disjoint. 
Since no element of $\mcl B_1$ can be contained in $\bigcup_{B\in\mcl B_2}  B$, we get that $\bigcup_{B\in\mcl B_1}  B$ and $\bigcup_{B\in\mcl B_2}  B$ are disjoint.
This contradicts the connectedness of $\bigcup_{B\in\mcl B} B$, and therefore gives our claim. 

Since $P$ is a path from $\BB r K_1$ to $\BB r K_2$ and each of $\BB r K_1$ and $\BB r K_2$ is connected and not a single point, if $\ep < \frac12 (\op{diam}(K_1) \wedge \op{diam}(K_2) )$, then the boundaries of the balls in $\mcl B$ which contain the starting and endpoint points of $P$ must intersect $K_1$ and $K_2$, respectively. 
Hence for such an $\ep$, $\bigcup_{B\in\mcl B} \bdy B$ contains a path from $\BB r K_1$ to $\BB r K_2$, as required. 
\end{proof}

\subsection{Asymptotics of the scaling constants}
\label{sec-metric-scaling}
 
The goal of this section is to prove Theorem~\ref{thm-metric-scaling}. 
We will accomplish this by comparing $D_h$-distances to a variant of the Liouville first passage percolation (LFPP) which we now define.

For $\ep \in (0,1)$ and $U \subset\BB C$, we view $U\cap (\ep \BB Z^2)$ as a graph with adjacency defined by
\eqb\label{eq:graph}
\text{$z,w\in U\cap (\ep\BB Z^2)$ are connected by an edge if and only if $|z-w| \in \{\ep ,\sqrt 2\ep \}$}.  
\eqe
Note that this differs from the standard nearest-neighbor graph structure in that we also include the diagonal edges. 
We define the \emph{discretized $\ep$-LFPP metric with parameter $\xi$} on $U$ by
\eqb \label{eqn-lfpp-approx}
  \wt D_h^\ep(z,w ; U) 
:= \min_{\pi : z\rta w } \sum_{j=0}^{|\pi|}  e^{\xi h_\ep(\pi(j))} ,\quad\forall z,w\in U\cap (\ep \BB Z^2) ,
\eqe
where the minimum is over all paths $\pi : [0,|\pi|]_{\BB Z} \rta U\cap (\ep \BB Z^2)$ from $z$ to $w$ in $U\cap (\ep \BB Z^2)$ (the tilde is to distinguish this from the variant of LFPP defined in~\eqref{eqn-lfpp}).  

Recall that $\BB S = (0,1)^2$ denotes the open Euclidean unit square. 
Below, we will show, using Proposition~\ref{prop-two-set-dist} and a union bound over a polynomial number of $\delta\BB r \times \delta\BB r$ squares contained in $\BB r \BB S$, that with high probability,
\eqb \label{eqn-lfpp-relation}
\frk c_{ \BB r} =  \delta^{o_\delta(1)} \frk c_{\delta \BB r} \times \left( \text{$\wt D_h^{\delta\BB r}$ distance between two sides of $\BB r \BB S$} \right) .
\eqe
The reason why discretized LFPP comes up in this estimate is the circle average term $e^{\xi h_{\BB r}(0)}$ in Proposition~\ref{prop-two-set-dist}.
We know that the $\wt D_h^{\delta\BB r}$ distance across the square $\BB r\BB S$ is of order $\delta^{-\xi Q + o_\delta(1)}$, uniformly in $\BB r$, by the results of~\cite{dg-lqg-dim} (see Lemma~\ref{lem-lfpp-dist} just below). 
Hence~\eqref{eqn-lfpp-relation} leads to $\frk c_{\delta \BB r} =  \delta^{\xi Q + o_\delta(1)} \frk c_{ \BB r}$, as required. 

For a square $S\subset \BB C$, we write $\bdy_{\op{L}}^\ep S$ and $\bdy_{\op{R}}^\ep S$ for the set of leftmost (resp.\ rightmost) vertices of $S\cap (\ep\BB Z^2)$. 

\begin{lem} \label{lem-lfpp-dist}
Fix $\zeta \in (0,1)$. For $\BB r>0$, it holds with probability tending to 1 as $\delta \rta 0$, uniformly in the choice of $\BB r$, that 
\eqb
\wt D_h^{\delta\BB r}\left( \bdy_{\op{L}}^{\delta\BB r} (\BB r \BB S) , \bdy_{\op{R}}^{\delta\BB r} (\BB r \BB S) ; \BB r\BB S  \right) 
\in \left[ \delta^{- \xi Q  +\zeta}  e^{\xi h_{\BB r}(0)}  , \delta^{-\xi Q -\zeta}  e^{\xi h_{\BB r}(0)} \right] .
\eqe
\end{lem}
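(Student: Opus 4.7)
The plan is to reduce to the case $\BB r = 1$ by the scale invariance of the whole-plane GFF, and then invoke the known LFPP distance exponent from \cite[Theorem 1.5]{dg-lqg-dim}.

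\textbf{Scaling reduction.} I would set $\wt h(z) := h(\BB r z) - h_{\BB r}(0)$. Since $h$ is normalized so that $h_1(0) = 0$, one has $\wt h_1(0) = 0$, and by the scale invariance of the whole-plane GFF (modulo additive constant) $\wt h \eqD h$. Circle averages transform via $\wt h_\delta(z) = h_{\delta \BB r}(\BB r z) - h_{\BB r}(0)$. The map $\pi \mapsto \BB r^{-1}\pi$ is a bijection between lattice paths in $\BB r\BB S \cap (\delta\BB r\, \BB Z^2)$ and lattice paths in $\BB S \cap (\delta\BB Z^2)$, which preserves the edge structure~\eqref{eq:graph} and sends $\bdy_{\op L}^{\delta\BB r}(\BB r\BB S)$ and $\bdy_{\op R}^{\delta\BB r}(\BB r\BB S)$ to $\bdy_{\op L}^\delta \BB S$ and $\bdy_{\op R}^\delta \BB S$, respectively. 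Substituting into the definition~\eqref{eqn-lfpp-approx} and factoring out $e^{\xi h_{\BB r}(0)}$ yields the exact identity
\[
\wt D_h^{\delta\BB r}\bigl(\bdy_{\op L}^{\delta\BB r}(\BB r\BB S),\bdy_{\op R}^{\delta\BB r}(\BB r\BB S);\BB r\BB S\bigr) = e^{\xi h_{\BB r}(0)}\, \wt D_{\wt h}^\delta\bigl(\bdy_{\op L}^\delta\BB S,\bdy_{\op R}^\delta\BB S;\BB S\bigr).
\]
Since $\wt h \eqD h$, this reduces the lemma to the case $\BB r = 1$, with the event in question having a law that is literally independent of $\BB r$.

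\textbf{Case $\BB r = 1$.} I would then invoke \cite[Theorem 1.5]{dg-lqg-dim}, which is precisely a statement about the left-right crossing distance of $\BB S$ for the discretized LFPP~\eqref{eqn-lfpp-approx}. As the footnote following Theorem~\ref{thm-metric-scaling} indicates, the authors chose this particular variant of LFPP precisely so that~\cite[Theorem 1.5]{dg-lqg-dim} applies as a black box, giving $\wt D_h^\delta\bigl(\bdy_{\op L}^\delta\BB S,\bdy_{\op R}^\delta\BB S;\BB S\bigr) = \delta^{-\xi Q + o_\delta(1)}$ with probability tending to $1$ as $\delta\rta 0$. Combined with the identity from the previous paragraph, this yields the two-sided bound claimed in the lemma.

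\textbf{Main obstacle.} Since the heavy lifting (the existence of the exponent $\xi Q$) is outsourced to~\cite{dg-lqg-dim}, essentially the only thing to verify is the scaling identity in the reduction step. The only potential subtlety is bookkeeping at the boundary: one must confirm that $\BB r^{-1}$ preserves the distinguished left/right boundary vertex sets in the lattice, which is immediate because the square $\BB r\BB S$ and the mesh $\delta\BB r\BB Z^2$ rescale in lockstep. Once this identity is in hand, the conclusion is an immediate restatement of the cited LFPP exponent result.
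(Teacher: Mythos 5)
Your scaling reduction is correct and is exactly the first step of the paper's proof: the identity $\wt D_{h(\BB r\cdot)-h_{\BB r}(0)}^{\delta}(\cdot,\cdot;\BB S)=e^{-\xi h_{\BB r}(0)}\wt D_h^{\delta\BB r}(\cdot,\cdot;\BB r\BB S)$ together with $h(\BB r\cdot)-h_{\BB r}(0)\eqD h$ makes the law of the normalized crossing distance independent of $\BB r$, which handles the uniformity.

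The gap is in the second step. You assert that \cite[Theorem 1.5]{dg-lqg-dim} ``is precisely a statement about the left-right crossing distance of $\BB S$ for the discretized LFPP~\eqref{eqn-lfpp-approx}'' and can be quoted verbatim. It is not: that theorem concerns \emph{continuum} LFPP, i.e.\ the metric $\inf_P\int_0^1 e^{\xi h_\delta(P(t))}|P'(t)|\,dt$ built from the circle-average process, and it gives the exponent $\delta^{1-\xi Q+o_\delta(1)}$ (note the extra $1$ in the exponent coming from the $|P'(t)|\,dt$ normalization). To get the statement for the lattice sum $\sum_j e^{\xi h_{\delta}(\pi(j))}$ one must (i) pass from the circle-average field to the white-noise approximation via the uniform comparison in \cite[Lemma 3.7]{dg-lqg-dim}, (ii) compare continuum LFPP for the white-noise field with its discretization via the argument of \cite[Proposition 3.16]{dg-lqg-dim}, and then pass back, and (iii) account for the missing factor of $\delta$ in~\eqref{eqn-lfpp-approx} relative to the discretized metric of \cite[Equation (3.32)]{dg-lqg-dim}, which is what converts $\delta^{1-\xi Q}$ into the claimed $\delta^{-\xi Q}$. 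None of these steps is deep, but they are not a restatement of the cited theorem, and your write-up does not acknowledge that the quantity in the black box differs from the quantity in the lemma both in its definition (integral along continuous paths versus sum over lattice vertices) and in its normalization. You should spell out this chain of comparisons, or at least cite the specific intermediate results that carry it out.
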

\begin{proof}
We first reduce to the case when $\BB r = 1$. Indeed, by the scale and translation invariance of the law of $h$, modulo additive constant, we have $h(\BB r\cdot) - h_{\BB r}(0) \eqD h$. Moreover, from the definition~\eqref{eqn-lfpp-approx} it is easily seen that
\eqb
\wt D_{h(\BB r\cdot) - h_{\BB r}(0)}^\delta\left(\cdot,\cdot ;   \BB S \right) = e^{-\xi h_{\BB r}(0)} \wt D_h^{\delta\BB r} \left( \cdot,\cdot ;\BB r\BB S \right) .
\eqe
Hence $e^{-\xi h_{\BB r}(0)} \wt D_h^{\delta\BB r} \left( \cdot,\cdot ;\BB r\BB S \right) \eqD \wt D_h^\delta(\cdot,\cdot;\BB S)$, so we only need to prove the lemma when $\BB r =1$, i.e., we need to show that with probability tending to 1 as $\delta \rta 0$, we have
\eqb \label{eqn-lfpp-dist-show}
\wt D_h^{\delta }\left( \bdy_{\op{L}}^{\delta } \BB S , \bdy_{\op{R}}^{\delta } \BB S ; \BB S  \right) = \delta^{-\xi Q + o_\delta(1) } .
\eqe

This follows from the LFPP distance exponent computation in~\cite{dg-lqg-dim}. 
To be more precise, \cite[Theorem 1.5]{dg-lqg-dim} shows that for continuum LFPP defined using the circle average process of the GFF, as in~\eqref{eqn-lfpp}, the $\delta$-LFPP distance between the left and right boundaries of $\BB S$ is of order $\delta^{1 - \xi Q + o_\delta(1)}$ with probability tending to 1 as $\delta \rta 0$. 
Combining this with~\cite[Lemma 3.7]{dg-lqg-dim} shows that the same is true for continuum LFPP defined using the white-noise approximation $\{\wh h_\delta\}_{\delta > 0}$, as defined in~\cite[Equation (3.1)]{dg-lqg-dim}, in place of the circle average process. 
The same argument as in the proof of~\cite[Proposition 3.16]{dg-lqg-dim} then shows that~\eqref{eqn-lfpp-dist-show} holds if we replace the circle average by the white-noise approximation in the definition of $\wt D_h^\delta$ (here we note that the definition of discretized LFPP in~\cite[Equation (3.32)]{dg-lqg-dim} has an extra factor of $\delta$ as compared to~\eqref{eqn-lfpp-approx}, which is why we get $\delta^{-\xi Q + o_\delta(1)}$ instead of $\delta^{1-\xi Q+ o_\delta(1)}$). 
The desired formula~\eqref{eqn-lfpp-dist-show} now follows by combining this with the uniform comparison of $h_\delta$ and $\wh h_\delta$ from~\cite[Lemma 3.7]{dg-lqg-dim}. 
\end{proof}

For the proof of Theorem~\ref{thm-metric-scaling} (and at several later places in this section) we will use the following terminology.

\begin{defn}[Distance around an annulus] \label{def-around-annulus}
For a set $A\subset\BB C$ with the topology of a an annulus, we define the \emph{$D_h$-distance around $A$} to be the infimum of the $D_h$-lengths of the paths in $A$ which disconnect the inner and outer boundaries of $A$. 
\end{defn}  
  
\begin{proof}[Proof of Theorem~\ref{thm-metric-scaling}] 
\noindent\textit{Step 1: estimates for $D_h$.}
For $z\in \ep\BB Z^2$, we write $S_z^\ep$ for the square of side length $\ep$ centered at $z$ and $B_\ep(S_z^\ep)$ for the $\ep$-neighborhood of this square. 
Fix $\zeta\in (0,1)$. By Proposition~\ref{prop-two-set-dist} and a union bound over all $z \in (\BB r  \BB S ) \cap (\delta \BB r \BB Z^2)$, it holds with superpolynomially high probability as $\delta \rta 0$ that (in the terminology of Definition~\ref{def-around-annulus}) 
\eqb  \label{eqn-square-around}
\left(\text{$D_h$-distance around $B_{\delta \BB r}(S_z^{\delta\BB r}) \setminus S_z^{\delta\BB r}$} \right) 
\leq \delta^{-\zeta} \frk c_{\delta \BB r} e^{\xi h_{\delta\BB r}(z)}  ,\quad
\forall z \in (\BB r  \BB S ) \cap (\delta \BB r \BB Z^2) .
\eqe
Similarly, it holds with superpolynomially high probability as $\delta\rta 0$ that
\eqb  \label{eqn-square-across}
D_h\left( S_z^{\delta\BB r} , \bdy B_{\delta\BB r}(S_z^{\delta\BB r}) \right)
\geq \delta^{ \zeta} \frk c_{\delta \BB r} e^{\xi h_{\delta\BB r}(z)}  ,\quad
\forall z \in (\BB r  \BB S ) \cap (\delta \BB r \BB Z^2) .
\eqe
Henceforth assume that~\eqref{eqn-square-around} and~\eqref{eqn-square-across} both hold. 
\medskip 

\noindent\textit{Step 2: lower bound for $\frk c_{\delta \BB r} / \frk c_{\BB r}  $.}
Let $\pi : [0,|\pi|]_{\BB Z} \rta  (\BB r \BB S) \cap (\delta\BB r \BB Z^2)$ be a path in $(\BB r \BB S) \cap (\delta\BB r \BB Z^2)$ (with the graph structure defined by \eqref{eq:graph}) from $\bdy_{\op{L}}^{\delta\BB r} (\BB r \BB S)$ to $\bdy_{\op{R}}^{\delta\BB r} (\BB r \BB S)$ for which the sum in~\eqref{eqn-lfpp-approx} equals $\wt D_h^{\delta\BB r}\left( \bdy_{\op{L}}^{\delta\BB r} (\BB r \BB S) , \bdy_{\op{R}}^{\delta\BB r} (\BB r \BB S)  ; \BB r \BB S \right)$.  
For each $j  \in [0,|\pi|]_{\BB Z}$, let $P_j$ be a path in $B_{\delta \BB r}(S_{\pi(j)}^{\delta\BB r}) \setminus S_{\pi(j)}^{\delta\BB r}$ which disconnects the inner and outer boundaries of  $B_{\delta \BB r}(S_{\pi(j)}^{\delta\BB r}) \setminus S_{\pi(j)}^{\delta\BB r}$ and whose $D_h$-length is at most $2 \delta^{-\zeta} \frk c_{\delta \BB r} e^{\xi h_{\delta\BB r}(z)}$. Such a path exists by~\eqref{eqn-square-around}. 

We have $P_j \cap P_{j-1} \not=\emptyset$ for each $j\in [0,|\pi|]_{\BB Z}$, so the union of the $P_j$'s is connected and contains a path between the left and right boundaries of $\BB r\BB S$. 
Therefore, the triangle inequality implies that
\allb \label{eqn-lfpp-lower}
D_h\left( \BB r \bdy_{\op{L}} \BB S , \BB r \bdy_{\op{R}} \BB S \right) 
\leq \sum_{j=0}^{|\pi|} \left(\text{$D_h$-length of $P_j$}\right)
&\leq  2\delta^{-\zeta} \frk c_{\delta \BB r}  \sum_{j=0}^{|\pi|} e^{\xi h_{\delta \BB r}(0)}  \notag \\
&= 2\delta^{-\zeta}  \frk c_{\delta \BB r}  \wt D_h^{\delta\BB r}\left( \bdy_{\op{L}}^{\delta\BB r} (\BB r \BB S) , \bdy_{\op{R}}^{\delta\BB r} (\BB r \BB S) ; \BB r\BB S  \right) .
\alle
By Axiom~\ref{item-metric-coord}, the left side of~\eqref{eqn-lfpp-lower} is at least $\delta^{\zeta} \frk c_{\BB r} e^{\xi h_{\BB r}(0)}$ with probability tending to 1 as $\delta \rta 0$, uniformly in $\BB r$. 
By Lemma~\ref{lem-lfpp-dist}, the right side of~\eqref{eqn-lfpp-lower} is at most $\delta^{-\xi Q - 2\zeta  }  \frk c_{\delta \BB r} e^{\xi h_{\BB r}(0)}$ with probability tending to 1 as $\delta \rta 0$, uniformly in $\BB r$. 
Combining these relations and sending $\zeta \rta 0$ shows that $\frk c_{\BB r} \leq \delta^{-\xi Q - o_\delta(1)} \frk c_{\delta \BB r}$, as desired. 
\medskip

\noindent\textit{Step 3: upper bound for $\frk c_{\delta \BB r} / \frk c_{\BB r} $.}
Let $P : [0,|P|] \rta \BB S$ be a path between the left and right boundaries of $\BB r \BB S$ with $D_h$-length at most $2D_h\left( \BB r \bdy_{\op{L}} \BB S , \BB r \bdy_{\op{R}} \BB S ; \BB r\BB S \right) $. 
We will use $P$ to construct a path in $(\BB r \BB S) \cap (\delta\BB r \BB Z^2)$ from $\bdy_{\op{L}}^{\delta\BB r} (\BB r \BB S)$ to $\bdy_{\op{R}}^{\delta\BB r} (\BB r \BB S)$ for which the sum in~\eqref{eqn-lfpp-approx} can be bounded above.

To this end, let $\tau_0 = 0$ and let $z_0 \in (\BB r \BB S) \cap (\delta\BB r \BB Z^2)$ be chosen so that $P(0) \in S_{z_0}^{\delta\BB r}$. 
Inductively, suppose $j\in \BB N$, a time $\tau_{j-1} \in [0,|P|]$, and a point $z_{j-1} \in (\BB r \BB S) \cap (\delta\BB r \BB Z^2)$ have been defined in such a way that $P(\tau_{j-1}) \in S_{z_{j-1}}^{\delta\BB r}$.
Let $\tau_j$ be the first time after $\tau_{j-1}$ at which $P$ exits $B_{\delta\BB r}(S_{z_{j-1}}^{\delta\BB r})$, if such a time exists, and otherwise set $\tau_j = |P|$. 
Let $z_j \in  (\BB r \BB S) \cap (\delta\BB r \BB Z^2)$ be chosen so that $P(\tau_j) \in S_{z_j}^{\delta\BB r}$.
Let $J$ be the smallest $j\in\BB N$ for which $\tau_j = |P|$, and note that $P(|P|) \in S_{z_j}^{\delta\BB r}$. 

Successive squares $S_{z_{j-1}}^{\delta \BB r}$ and $S_{z_j}^{\delta\BB r}$ necessarily share a vertex. Hence $z_{j-1}$ and $z_j$ lie at $(\BB r \BB S) \cap (\delta\BB r \BB Z^2)$-graph distance 1 from one another, so $\pi(j) := z_j$ for $j\in [0,J]_{\BB Z}$ is a path from $\bdy_{\op{L}}^{\delta\BB r} (\BB r \BB S)$ to $\bdy_{\op{R}}^{\delta\BB r} (\BB r \BB S)$ in $(\BB r \BB S) \cap (\delta\BB r \BB Z^2)$. 
 
We will now bound $\sum_{j=0}^J  e^{\xi  h_{\delta\BB r}(\pi(j))}$.  
For each $j\in [1,J]_{\BB Z}$, the path $P$ crosses between the inner and outer boundaries of $B_{\delta\BB r}(S_{z_{j-1}}^{\delta\BB r}) \setminus S_{z_{j-1}}^{\delta\BB r}$ between time $\tau_{j-1}$ and time $\tau_j$.  
By~\eqref{eqn-square-across}, for each $j\in [1,J]_{\BB Z}$, 
\allb \label{eqn-square-pair-dist}
D_h\left(P(\tau_{j-1}) , P(\tau_{j}) \right) 
 \geq \delta^\zeta \frk c_{\delta\BB r} e^{\xi h_{\delta\BB r}(\pi( j))} .
\alle 
Using~\eqref{eqn-square-pair-dist} and the definition of $P$, we therefore have 
\allb \label{eqn-lfpp-upper}
\sum_{j=0}^J  e^{\xi  h_{\delta\BB r}(\pi(j))}
&\leq \delta^{-\zeta} \frk c_{\delta\BB r}^{-1}     \sum_{j=0}^J D_h\left(P(\tau_{j-1}) , P(\tau_{j}) \right)  \notag\\
&\leq \delta^{-\zeta}\frk c_{\delta\BB r}^{-1} D_h\left( \BB r \bdy_{\op{L}} \BB S , \BB r \bdy_{\op{R}} \BB S \right)  .
\alle
By Axiom~\ref{item-metric-coord}, the right side of~\eqref{eqn-lfpp-upper} is at most $\delta^{-2\zeta} \frk c_{\delta \BB r}^{-1} \frk c_{\BB r} e^{\xi h_{\BB r}(0)}$
with probability tending to 1 as $\delta \rta 0$, uniformly in $\BB r$. 
By Lemma~\ref{lem-lfpp-dist}, the left side of~\eqref{eqn-lfpp-lower} is at least $\delta^{-\xi Q  - \zeta}   e^{\xi h_{\BB r}(0)}$ with probability tending to 1 as $\delta \rta 0$, uniformly in $\BB r$. 
Combining these relations and sending $\zeta \rta 0$ shows that $\frk c_{\delta \BB r}^{-1} \frk c_{\BB r} \geq \delta^{-\xi Q - o_\delta(1)} $.
\end{proof} 

Theorem~\ref{thm-metric-scaling} has the following useful corollary.
 
\begin{lem} \label{lem-infinite-dist}
Let $h$ be a whole-plane GFF normalized so that $h_1(0) = 0$. 
Almost surely, for every compact set $K\subset\BB C$ we have $\lim_{r \rta\infty} D_h(K,\bdy B_r(0)) = \infty$. 
In particular, every closed, $D_h$-bounded subset of $\BB C$ is compact. 
\end{lem}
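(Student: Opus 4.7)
The plan is to reduce the statement to showing that distances across dyadic annuli centered at the origin grow to infinity, then apply Proposition~\ref{prop-two-set-dist} together with the scaling asymptotics of Theorem~\ref{thm-metric-scaling} on each annulus and conclude via Borel--Cantelli.

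First, any compact $K\subset\BB C$ is contained in $B_{2^{k_0}}(0)$ for some $k_0$, and for $r > 2^{k_0+n}$ any continuous path from $K$ to $\bdy B_r(0)$ must cross each of the disjoint annuli $\BB A_{2^{k_0+j},2^{k_0+j+1}}(0)$ for $j=0,\dots,n-1$. Therefore, since $D_h$ is a length metric inducing the Euclidean topology,
\[
D_h\left(K,\bdy B_r(0)\right) \;\geq\; \max_{k_0 \leq k < \log_2 r} D_h\left(\bdy B_{2^k}(0),\bdy B_{2^{k+1}}(0)\right),
\]
so it suffices to prove that $D_h(\bdy B_{2^k}(0),\bdy B_{2^{k+1}}(0)) \to \infty$ almost surely as $k\to\infty$.

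Next I would apply Proposition~\ref{prop-two-set-dist} with $\BB r = 2^k$, $K_1 = \bdy \BB D$, $K_2 = \bdy B_2(0)$, and $U = \BB C$. The proposition provides that for each fixed $p>0$ there is $C_p>0$ such that, uniformly in $k$,
\[
\BB P\left[ D_h\left(\bdy B_{2^k}(0),\bdy B_{2^{k+1}}(0)\right) \;\geq\; A^{-1}\frk c_{2^k}\, e^{\xi h_{2^k}(0)} \right] \;\geq\; 1 - C_p A^{-p}, \qquad \forall A \geq 1.
\]
Take $A = A_k := 2^{k\alpha}$ for some small $\alpha\in (0,\xi Q)$ and choose $p$ so that $\alpha p > 1$. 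Borel--Cantelli then yields, almost surely, that for all sufficiently large $k$,
\[
D_h\left(\bdy B_{2^k}(0),\bdy B_{2^{k+1}}(0)\right) \;\geq\; 2^{-k\alpha}\, \frk c_{2^k}\, e^{\xi h_{2^k}(0)}.
\]
By Theorem~\ref{thm-metric-scaling}, $\frk c_{2^k} = 2^{k\xi Q + o(k)}$; by Lemma~\ref{lem-gff-sup} (or the fact that $t \mapsto h_{e^t}(0)$ is a standard Brownian motion, so $h_{2^k}(0) = O(k^{1/2+o(1)})$ a.s.), we have $\xi h_{2^k}(0) = o(k)$ almost surely. Choosing $\alpha < \xi Q$, we conclude that almost surely $D_h(\bdy B_{2^k}(0),\bdy B_{2^{k+1}}(0)) \to \infty$, which completes the first part.

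Finally, for the ``in particular'' statement, let $F\subset\BB C$ be closed and $D_h$-bounded. Fix any $x_0\in F$ and $R>0$ with $F\subset B_R^{D_h}(x_0)$. Applying the first part to the compact set $K=\{x_0\}$, choose $r$ with $D_h(x_0,\bdy B_r(0)) > R$. Since $D_h$ induces the Euclidean topology, any $D_h$-continuous path is Euclidean-continuous; so if some $y\in F$ were to lie outside $B_r(0)$, every path from $x_0$ to $y$ would have to cross $\bdy B_r(0)$ and would therefore have $D_h$-length greater than $R$, contradicting $D_h(x_0,y)\leq R$ and the length-space property (Axiom~\ref{item-metric-length}). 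Thus $F\subset B_r(0)$, so $F$ is closed and Euclidean-bounded, hence compact. The main (mild) obstacle in this scheme is extracting an almost sure statement from the polynomial-in-$A$ tail of Proposition~\ref{prop-two-set-dist}, which is handled cleanly by the Borel--Cantelli argument above.
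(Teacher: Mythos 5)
Your proof is correct, and its probabilistic core differs from the paper's. Both arguments share the same skeleton: reduce to showing that the crossing distances $D_h(\bdy B_{2^k}(0),\bdy B_{2^{k+1}}(0))$ blow up, and then exploit that $\frk c_{2^k} e^{\xi h_{2^k}(0)} = 2^{k\xi Q + o(k)} \to\infty$ via Theorem~\ref{thm-metric-scaling} and the Brownian-motion description of $r\mapsto h_r(0)$. Where you diverge is in how you guarantee that enough annuli have crossing distance comparable to $\frk c_{2^k}e^{\xi h_{2^k}(0)}$: you invoke the quantitative tail bound of Proposition~\ref{prop-two-set-dist} with $A=2^{k\alpha}$ and run Borel--Cantelli, obtaining the lower bound for \emph{all} sufficiently large $k$. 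The paper instead uses only Axiom~\ref{item-metric-coord} to get a constant-probability ($\geq 1/2$) lower bound on each annulus, and then upgrades to ``infinitely many $k$'' via locality (Axiom~\ref{item-metric-local}) and the triviality of the tail $\sigma$-algebra $\sigma\bigl(\bigcap_{r>0} h|_{\BB C\setminus B_r(0)}\bigr)$; since the distance is a length metric, infinitely many good scales already suffice. Your route buys a stronger (eventually-all-$k$) conclusion and avoids the 0--1 law, at the cost of leaning on the heavier machinery of Proposition~\ref{prop-two-set-dist}; the paper's route is softer, needing only the axioms plus tail triviality for the probabilistic step. Both ingredients you use are established before this lemma in the paper, so there is no circularity, and your handling of the ``in particular'' statement is essentially the same as the paper's.
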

\begin{proof}
By tightness across scales (Axiom~\ref{item-metric-coord}), there exists $a > 0$ such that for each $r > 0$, $\BB P\left[ D_h(B_r(0) , B_{2r}(0))  \geq a \frk c_r e^{\xi h_r(0)} \right] \geq 1/2$.
By the locality of $D_h$ (Axiom~\ref{item-metric-local}) and since $\sigma\left(\bigcap_{r > 0} h|_{\BB C\setminus B_r(0)} \right)$ is trivial, a.s.\ there are infinitely many $k \in \BB N$ for which $D_h(B_{2^k}(0) , B_{2^{k+1}}(0))  \geq a \frk c_{2^k} e^{\xi h_{2^k}(0)} $. By Theorem~\ref{thm-metric-scaling}, $\frk c_r = r^{\xi Q + o_r(1)}$. Since $t\mapsto h_{e^t}(0)$ is a standard linear Brownian motion~\cite[Section 3.1]{shef-kpz}, we get that a.s.\ $\lim_{r\rta\infty} \frk c_r e^{\xi h_r(0)} = \infty$. 
Hence a.s.\ $\limsup_{k\rta\infty} D_h(B_{2^k}(0) , B_{2^{k+1}}(0)) =\infty$. 
Since $D_h$ is a length metric, for any $r\geq 2^{k+1}$ and any compact set $K\subset B_{2^k}(0)$, we have $D_h(K , \bdy B_r(0)) \geq D_h(B_{2^k}(0) , B_{2^{k+1}}(0))$. 
We thus obtain the first assertion of the lemma. 
The first assertion (applied with $K$ equal to a single point, say) implies that any $D_h$-bounded subset of $\BB C$ must be contained in a Euclidean-bounded subset of $\BB C$, which must be compact since $D_h$ induces the Euclidean topology on $\BB C$. 
\end{proof}

\subsection{Moment bound for diameters}
\label{sec-diam-moment}

In this section we will prove the following more quantitative version of the moment bound from Theorem~\ref{thm-moment}, which is required to be uniform across scales.

\begin{prop} \label{prop-diam-moment} 
Let $U\subset \BB C$ be open and let $K\subset U$ be a compact connected set with more than one point. 
For each $p\in (-\infty, 4d_\gamma/\gamma^2 )$, there exists $C_p > 0$ which depends on $U$ and $K$ but not on $\BB r$ such that for each $\BB r > 0$, 
\eqb \label{eqn-diam-moment}
\BB E\left[ \left( \frk c_{\BB r}^{-1} e^{-\xi h_{\BB r}(0)}  \sup_{z,w\in \BB r K} D_h(z,w ; \BB r U ) \right)^p \right]  \leq C_p  .
\eqe
\end{prop}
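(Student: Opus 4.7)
The plan is to treat negative and positive moments separately, using Proposition~\ref{prop-two-set-dist} and Theorem~\ref{thm-metric-scaling} as the main inputs.

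For the negative moments ($p < 0$), since $K$ is connected with more than one point, I would extract two disjoint compact connected sub-continua $K_1, K_2 \subset K$, each with more than one point. By the definition of the supremum,
\eqbn
\sup_{z, w \in \BB r K} D_h(z, w; \BB r U) \geq D_h(\BB r K_1, \BB r K_2; \BB r U) ,
\eqen
so the lower bound in Proposition~\ref{prop-two-set-dist} gives that the left-hand side, normalized by $\frk c_{\BB r} e^{\xi h_{\BB r}(0)}$, is at least $A^{-1}$ with probability $1 - O(A^{-M})$ for every $M$, uniformly in $\BB r$. Integrating this uniform superpolynomial tail bound yields uniformly bounded moments of all negative orders.

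For the positive moments ($0 < p < 4 d_\gamma / \gamma^2$), I would cover $\BB r K$ by $N \lesssim \delta^{-2}$ Euclidean balls $B_{\delta \BB r}(\BB r z_i)$ at an intermediate scale $\delta \BB r$ with $z_i$ in a $\delta$-net of $K$, and use the connectedness of $K$ (as in Lemma~\ref{lem-connected}) to chain any two points of $\BB r K$ by a sequence of consecutively overlapping such balls. The triangle inequality along the chain gives
\eqbn
\sup_{z,w \in \BB r K} D_h(z, w; \BB r U) \leq \sum_{i=1}^N \sup_{u, v \in B_{\delta \BB r}(\BB r z_i)} D_h(u, v; B_{2 \delta \BB r}(\BB r z_i)) .
\eqen
By Proposition~\ref{prop-two-set-dist} (applied inside each ball to pairs of disjoint sub-continua) together with Axiom~\ref{item-metric-coord}, each summand is bounded by $\frk c_{\delta \BB r} e^{\xi h_{\delta \BB r}(\BB r z_i)} A_i$ where $A_i$ has superpolynomial tails uniformly in $i, \delta, \BB r$. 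Normalizing by $\frk c_{\BB r} e^{\xi h_{\BB r}(0)}$, using Theorem~\ref{thm-metric-scaling} to write $\frk c_{\delta \BB r}/\frk c_{\BB r} = \delta^{\xi Q + o_\delta(1)}$ uniformly in $\BB r$, and applying the distributional identity $h_{\delta \BB r}(\BB r z_i) - h_{\BB r}(0) \eqD h_\delta(z_i)$ reduces the task to an estimate of the form
\eqbn
\BB E\left[ \left( \delta^{\xi Q + o_\delta(1)} \sum_{i=1}^N e^{\xi h_\delta(z_i)} A_i \right)^p \right] \leq C_p .
\eqen

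The main obstacle is controlling the $p$-th moment of this sum up to the sharp threshold $p < 4 d_\gamma / \gamma^2$. A direct Minkowski estimate is too wasteful; the right approach is to treat $\delta^{\xi^2/2 + 2} \sum_i e^{\xi h_\delta(z_i)}$ as a Riemann-sum approximation to an appropriate Gaussian multiplicative chaos measure, and to combine the classical GMC moment bound for the $\gamma$-LQG area measure $\mu_h$ (which has moments exactly up to $4/\gamma^2$) with the exponent $\xi Q$ from Theorem~\ref{thm-metric-scaling}. The threshold $4 d_\gamma / \gamma^2 = d_\gamma \cdot (4/\gamma^2)$ then reflects the heuristic $\op{diameter}^{d_\gamma} \sim \op{area}$ for LQG metric quantities. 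Uniformity of the constant $C_p$ in $\BB r$ is preserved throughout, since Proposition~\ref{prop-two-set-dist}, Theorem~\ref{thm-metric-scaling}, and the scaling identity for the GFF circle averages are all uniform in $\BB r$.
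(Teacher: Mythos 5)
Your treatment of the negative moments is correct and matches the paper: the lower bound of Proposition~\ref{prop-two-set-dist} applied to two disjoint sub-continua of $K$ gives a uniform superpolynomial lower tail, which integrates to all negative moments.

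The positive-moment argument has a genuine gap at its central step. Proposition~\ref{prop-two-set-dist} bounds the distance \emph{between two disjoint compact sets}; it does not bound the diameter $\sup_{u,v\in B_{\delta\BB r}(\BB r z_i)} D_h(u,v; B_{2\delta\BB r}(\BB r z_i))$, which is a supremum over all pairs of points and is sensitive to thick points of the field inside the ball. Your claim that each such summand equals $\frk c_{\delta\BB r} e^{\xi h_{\delta\BB r}(\BB r z_i)} A_i$ with $A_i$ having superpolynomial tails cannot be right: if normalized ball diameters had superpolynomial tails, the proposition would hold for all $p>0$, contradicting the very threshold $4d_\gamma/\gamma^2$ you are trying to prove. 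In other words, your one-scale decomposition reduces the diameter of $\BB r K$ to diameters of balls at scale $\delta\BB r$ — the same quantity you started with — and the reduction must be iterated over \emph{all} scales. This is exactly what the paper does: it uses Proposition~\ref{prop-two-set-dist} to build crossings of every dyadic rectangle at every scale $n\geq N_C$, strings them together in a chaining argument (the $\#$-shaped sets $X_S$), and the tail exponent is then dictated by the probability that the maximum of $|h_{2^{-n}\BB r}(w)-h_{\BB r}(0)|$ over all scales $n$ and grid points $w$ exceeds $\log(C2^{qn})$ for some $n$ (Lemma~\ref{lem-circle-avg-all}). Optimizing over which scale is worst gives the exponent $q+\sqrt{q^2-4}\rta Q+\sqrt{Q^2-4}=4/\gamma$, and dividing by $\xi=\gamma/d_\gamma$ yields $4d_\gamma/\gamma^2$. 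Your fallback — comparing $\delta^{2+\xi^2/2}\sum_i e^{\xi h_\delta(z_i)}$ to a GMC measure — also does not close the gap: that Riemann sum approximates the GMC built from $e^{\xi h}$, whose positive moment threshold is $4/\xi^2=4d_\gamma^2/\gamma^2$, not $4d_\gamma/\gamma^2$; the identity $4d_\gamma/\gamma^2 = d_\gamma\cdot(4/\gamma^2)$ is a correct heuristic for the answer but is not produced by the comparison you propose. Separately, the reduction of the general $(K,U)$ statement to a model case is done in the paper via internal diameters of closed squares (Proposition~\ref{prop-internal-moment}), whose boundary behavior requires a little care; your ball-based covering would need a similar statement allowing the supremum to reach the boundary of each covering piece.
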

  
We will deduce Proposition~\ref{prop-diam-moment} from the following variant, which allows us to bound internal $D_h$-distances all the way up to the boundary of a square. Recall that $\BB S := (0,1)^2$. 

\begin{prop} \label{prop-internal-moment}  
For each $p\in (-\infty, 4d_\gamma/\gamma^2 )$, there is a constant $C_p  > 0$ such that for each $\BB r >0$,
\eqb \label{eqn-internal-moment}
\BB E\left[\left( \frk c_{\BB r}^{-1} e^{-\xi h_{\BB r}(0)} \sup_{z,w\in \BB r \BB S} D_h\left(z,w ; \BB r\BB S\right) \right)^p \right] \leq C_p. 
\eqe
\end{prop}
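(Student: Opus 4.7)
The proof naturally splits into bounds on negative and positive moments.

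\textit{Negative moments} ($p < 0$). Fix two disjoint compact connected subsets $K_1, K_2 \subset \BB S$, each containing more than one point (for example, two small closed disks). The trivial lower bound $\sup_{z,w \in \BB r \BB S} D_h(z, w; \BB r \BB S) \geq D_h(\BB r K_1, \BB r K_2; \BB r \BB S)$ together with Proposition~\ref{prop-two-set-dist} applied to $K_1, K_2, U = \BB S$ gives a superpolynomially fast lower-tail decay for $\frk c_{\BB r}^{-1} e^{-\xi h_{\BB r}(0)} D_h(\BB r K_1, \BB r K_2; \BB r \BB S)$, uniformly in $\BB r$. Integrating yields the uniform bound in~\eqref{eqn-internal-moment} for every $p < 0$.

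\textit{Positive moments} ($0 < p < 4d_\gamma/\gamma^2$). This is the main content. The plan is a multi-scale argument combining Proposition~\ref{prop-two-set-dist}, the scaling asymptotic $\frk c_r = r^{\xi Q + o_r(1)}$ from Theorem~\ref{thm-metric-scaling}, and Gaussian moment estimates for circle averages. For each $k \geq 0$, set $r_k = 2^{-k} \BB r$ and subdivide $\BB r \BB S$ into a dyadic grid of sub-squares of side $r_k$ centered at points $z_{k,i}$. Applying Proposition~\ref{prop-two-set-dist} at scale $r_k$ to the inner and outer boundaries of each sub-square's surrounding annulus, and taking a union bound over the $O(4^k)$ sub-squares, one obtains that the $D_h$-distance $\sigma_{k,i}$ around the annulus of the $(k,i)$ sub-square is bounded above by $A \frk c_{r_k} e^{\xi h_{r_k}(z_{k,i})}$ for all $(k,i)$ simultaneously, with probability $1 - O_A(A^{-M})$ for any $M > 0$.

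Next, any two points in $\BB r \BB S$ can be connected by a path built hierarchically from around-annulus loops at successive dyadic scales, giving an upper bound on $M_{\BB r}$ as a sum over scales of around-annulus contributions. Combining Theorem~\ref{thm-metric-scaling}'s $\frk c_{r_k}/\frk c_{\BB r} = 2^{-k\xi Q + o(k)}$ (uniformly in $\BB r$) with the Gaussian moment $\BB E\bigl[\exp\bigl(p\xi(h_{r_k}(z_{k,i}) - h_{\BB r}(0))\bigr)\bigr] = 2^{kp^2\xi^2/2 + o(k)}$, one extracts a tail bound $\BB P[\wt M_{\BB r} > t] \leq C_\epsilon t^{-(4d_\gamma/\gamma^2 - \epsilon)}$ for any $\epsilon > 0$, uniformly in $\BB r$. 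The threshold $p = 4d_\gamma/\gamma^2$ corresponds exactly to the critical balance $p\xi Q - p^2\xi^2/2 = 2$: a direct computation gives $(4d_\gamma/\gamma^2)\xi Q = 8/\gamma^2 + 2$ and $(4d_\gamma/\gamma^2)^2\xi^2/2 = 8/\gamma^2$, so the difference is $2$, which is the Euclidean dimension appearing in the union bound across a sub-square grid.

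\textit{Main obstacle.} Achieving the sharp threshold $p < 4d_\gamma/\gamma^2$ is the key difficulty. A naive ``max-over-all-sub-squares at each scale'' union bound, controlling $\sup_i \sigma_{k,i}$ by the worst sub-square at scale $r_k$, overestimates the contribution and produces only a weaker threshold (indeed, for small $p$ such a naive estimate already gives a divergent geometric series in $k$). Recovering the correct threshold requires exploiting the \emph{sparsity} of ``bad'' sub-squares at each scale --- both those whose around-annulus distance substantially exceeds the typical value $\frk c_{r_k} e^{\xi h_{r_k}(z_{k,i})}$ (controlled by Proposition~\ref{prop-two-set-dist}) and those where $h_{r_k}(z_{k,i}) - h_{\BB r}(0)$ is much larger than $\sqrt{k \log 2}$ (controlled by the Gaussian tail). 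The argument must show that paths can be rerouted around such bad sub-squares, a percolation-style step that, together with careful bookkeeping across all dyadic scales, is the technical heart of the proof.
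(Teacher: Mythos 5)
Your treatment of the negative moments is correct and is exactly what the paper does (one line from the lower bound of Proposition~\ref{prop-two-set-dist}). For the positive moments you have assembled the right ingredients, and you have even located the correct source of the threshold: the larger root of $p\xi Q - p^2\xi^2/2 = 2$ is $\xi^{-1}(Q+\sqrt{Q^2-4}) = 4/(\gamma\xi) = 4d_\gamma/\gamma^2$. The problem is that the step you yourself label the ``technical heart'' is precisely the step you have not carried out, and the plan you propose for it --- exhibiting sparsity of bad sub-squares and rerouting paths around them, percolation-style --- is not the mechanism that is needed. No rerouting occurs in the proof, and pursuing one would be substantially harder than what actually works.

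What the paper does instead is run the entire multi-scale construction on a single good event. First, the coarse scales are discarded: only scales $2^{-n}\BB r$ with $n \geq N_C := \lfloor \log_2 C^\zeta\rfloor$ enter the hierarchical construction, and the at most $C^{2\zeta}$ squares of side $2^{-N_C}\BB r$ are joined at the end by the triangle inequality, costing only a factor $C^{\zeta}$ that vanishes as $\zeta\rta 0$. Second --- and this is where the exponent comes from --- Lemma~\ref{lem-circle-avg-all} provides, for any fixed $q\in(2,Q)$, an event of probability $1 - C^{-q-\sqrt{q^2-4}+o_C(1)}$ on which \emph{every} circle average $|h_{2^{-n}\BB r}(w)-h_{\BB r}(0)|$ over the relevant lattice points is at most $\log(C2^{qn})$ simultaneously for all $n$. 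On this event the crossing length of every rectangle at scale $n$ is at most $C^{\xi}2^{-(Q-q-\zeta)\xi n+o(n)}\frk c_{\BB r}e^{\xi h_{\BB r}(0)}$ (after inserting $\frk c_{2^{-n}\BB r}=2^{-\xi Qn+o(n)}\frk c_{\BB r}$ from Theorem~\ref{thm-metric-scaling}), so the worst-case bound at each scale \emph{does} sum geometrically, because the allowed deviation grows linearly with slope $q<Q$. Your objection that the naive estimate diverges for small $p$ applies to the moment-method version of the union bound, not to this event-level version. The tail exponent is then read off from the probability of the good event via the optimization $\sup_{\alpha>0}\left[2\alpha-(q\alpha+1)^2/(2\alpha)\right]=-(q+\sqrt{q^2-4})$ together with $Q+\sqrt{Q^2-4}=4/\gamma$ as $q\rta Q$. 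One further point specific to this proposition: since the supremum runs over all of $\BB r\BB S$, including points arbitrarily close to $\bdy(\BB r\BB S)$, around-annulus loops centered near the boundary exit the square and cannot be used to bound the internal metric $D_h(\cdot,\cdot;\BB r\BB S)$; the paper instead crosses $2^{-n}\BB r\times 2^{-n-1}\BB r$ rectangles contained in $\BB r\BB S$ between their shorter sides and strings them into $\#$-shaped configurations inside each dyadic square, which is why this statement is proved separately from Proposition~\ref{prop-diam-moment}.
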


\begin{proof}[Proof of Proposition~\ref{prop-diam-moment}, assuming Proposition~\ref{prop-internal-moment}]
For $p < 0$, the bound~\eqref{eqn-diam-moment} follows from the lower bound of Proposition~\ref{prop-two-set-dist}. 
Now assume $p\in (0,4d_\gamma/\gamma^2)$. 
We can cover $K$ by finitely many Euclidean squares $S_1,\dots,S_n$ which are contained in $U$, chosen in a manner depending only on $K$ and $U$. 
For $k=1,\dots,n$, let $u_k$ be the bottom left corner of $S_k$ and let $\rho_k$ be its side length. 
Proposition~\ref{prop-internal-moment} together with Axiom~\ref{item-metric-translate} shows that there is a constant $\wt C_p > 0$ depending only on $p$ such that for each $k=1,\dots,n$, 
\eqb \label{eqn-diam-moment0}
\BB E\left[\left( \frk c_{\BB r \rho_k }^{-1} e^{-\xi h_{\BB r \rho_k}(\BB r u_k)} \sup_{z,w\in \BB r S_k} D_h\left(z,w ; \BB r S_k \right) \right)^p \right] \leq \wt C_p. 
\eqe
We apply the Gaussian tail bound to bound each of the Gaussian random variables $h_{\BB r \rho_k}(\BB r u_k) - h_{\BB r }(0)$ (which have constant order variance) and Theorem~\ref{thm-metric-scaling} to compare $ \frk c_{\BB r \rho_k }$ to $\frk c_{\BB r}$ up to a constant-order multiplicative error.
This allows us to deduce~\eqref{eqn-diam-moment} from~\eqref{eqn-diam-moment0}.
\end{proof}

To prove Proposition~\ref{prop-internal-moment}, we first use the upper bound in Proposition~\ref{prop-two-set-dist} and a union bound to build paths between the two shorter sides of each $2^{-n}\BB r \times 2^{-n-1}\BB r$ or $2^{-n-1}\BB r\times 2^{-n}\BB r$ rectangle with corners in $2^{-n-1}\BB r\BB Z^2$ which is contained in $\BB S$. 
We then string together such paths at all scales (in the manner illustrated in Figure~\ref{fig-diam}) to get a bound for the internal $D_h$-diameter of $\BB r\BB S$. 
The following lemma is needed to control the circle average terms which appear when we apply Proposition~\ref{prop-two-set-dist}.

\begin{lem} \label{lem-circle-avg-all}
Fix $R > 0$ and $q >2$. 
For $C>1$ and $\BB r > 0$, it holds with probability $1 - C^{-q-\sqrt{q^2-4} + o_C(1)}$ as $C\rta\infty$, at a rate which is uniform in $\BB r$, that 
\eqb \label{eqn-circle-avg-all}
  \sup\left\{ |h_{2^{-n}\BB r}(w) - h_{\BB r}(0)|  :  w\in B_{R\BB r}(0) \cap \left(2^{-n-1} \BB r \BB Z^2 \right)  \right\} \leq   \log(C 2^{q n})  ,\quad\forall n \in \BB N_0 .
\eqe
\end{lem}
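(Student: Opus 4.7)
The plan is to reduce to the case $\BB r = 1$ by scale invariance and then apply Gaussian tail bounds plus a union bound, keeping careful track of the rate in $C$ via a one-dimensional optimization.

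First I would rescale. Setting $\wt h := h(\BB r\,\cdot) - h_{\BB r}(0)$, which has the law of a whole-plane GFF normalized so that $\wt h_1(0) = 0$, we have $h_{2^{-n}\BB r}(w) - h_{\BB r}(0) = \wt h_{2^{-n}}(w/\BB r)$. The change of variables $w = \BB r u$ transforms the supremum in \eqref{eqn-circle-avg-all} into $\sup |\wt h_{2^{-n}}(u)|$ over $u \in B_R(0) \cap (2^{-n-1}\BB Z^2)$ and $n \in \BB N_0$, making every subsequent estimate automatically uniform in $\BB r$.

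For each $u \in B_R(0)$ and $n \in \BB N_0$, the circle average $\wt h_{2^{-n}}(u)$ is a centered Gaussian with variance $n\log 2 + O_R(1)$ (see, e.g., \cite[Section 3]{shef-kpz}). By the Gaussian tail bound,
\begin{equation*}
\BB P\!\left[|\wt h_{2^{-n}}(u)| > \log(C 2^{qn})\right] \leq 2\exp\!\left(-\frac{(\log C + q n \log 2)^2}{2 n \log 2 + O_R(1)}\right).
\end{equation*}
Since $\#\bigl(B_R(0) \cap (2^{-n-1}\BB Z^2)\bigr) \leq C_R\, 2^{2n}$, a union bound over $u$ followed by a sum over $n \in \BB N_0$ controls the probability that \eqref{eqn-circle-avg-all} fails.

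The core of the argument is then to evaluate this sum as $C\to\infty$. Writing $L = \log C$ and $x = n\log 2$, the per-scale log-probability is, to leading order, $-f(x)$ with
\begin{equation*}
f(x) = \frac{(L+qx)^2}{2x} - 2x = \frac{L^2}{2x} + qL + \Bigl(\frac{q^2}{2}-2\Bigr)x.
\end{equation*}
Since $q > 2$, the coefficient $q^2/2 - 2$ is strictly positive, so $f$ has a unique minimum on $(0,\infty)$ at $x^\ast = L/\sqrt{q^2-4}$ with value $f(x^\ast) = L(q + \sqrt{q^2-4})$. Thus the dominant scale is $n^\ast \asymp L/(\sqrt{q^2-4}\log 2)$, and the full sum is bounded by $\exp\!\bigl(-L(q+\sqrt{q^2-4}) + o_C(L)\bigr) = C^{-q-\sqrt{q^2-4}+o_C(1)}$, matching the claim.

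The only mild obstacle is verifying that the small-$n$ and large-$n$ contributions do not spoil the rate. For $n$ much smaller than $n^\ast$ the Gaussian exponent $\exp(-\Theta((\log C)^2/n))$ is superpolynomial in $C$ and easily dominates the $2^{2n}$ union-bound factor; for $n$ much larger than $n^\ast$ the term $(q^2/2-2)x$ produces geometric decay in $n$, so the sum over $n$ is comparable to its maximal term. The $O_R(1)$ error in the variance is harmless because $n^\ast \to \infty$ with $C$, so it only contributes to the $o_C(1)$ correction in the exponent.
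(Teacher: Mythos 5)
Your proof is correct and follows essentially the same route as the paper: a Gaussian tail bound for each circle average, a union bound over the lattice points and over scales $n$, and the optimization of the exponent $\frac{(L+qx)^2}{2x}-2x$ in the scale variable, whose minimum value $L(q+\sqrt{q^2-4})$ is exactly the paper's maximization of $2\alpha-\frac{(q\alpha+1)^2}{2\alpha}$ at $\alpha=(q^2-4)^{-1/2}$. The only difference is organizational — you sum directly over $n$, while the paper groups the scales into blocks $2^n\in[C^{\alpha_{k-1}},C^{\alpha_k}]$ and invokes its Lemma~\ref{lem-circle-avg-tail} on each block — and your treatment of the small-$n$, large-$n$, and variance-error issues is adequate.
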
 

When we apply Lemma~\ref{lem-circle-avg-all}, we will take $q$ to be a little bit less than $Q = 2/\gamma+\gamma/2$. The fact that $Q + \sqrt{Q^2-4} = 4/\gamma$ is the reason why $\gamma$ (instead of just $\xi$) appears in our moment bounds. 

\begin{proof}[Proof of Lemma~\ref{lem-circle-avg-all}]
To lighten notation, define the event
\eqb
E_{\BB r}^n := \left\{ \sup\left\{ |h_{2^{-n}\BB r}(w) - h_{\BB r}(0)|  :  w\in B_{R\BB r}(0) \cap \left(2^{-n-1} \BB r \BB Z^2 \right)  \right\} \leq   \log(C 2^{q n}) \right\} .
\eqe
We want a lower bound for the probability that $E_{\BB r}^n$ occurs for every $n\in\BB N_0$ simultaneously. 

Fix $\zeta >0$ (which we will eventually send to $0$) and a partition $\zeta = \alpha_0 < \cdots < \alpha_N = 1/\zeta$ of $[\zeta ,1/\zeta]$ with $\max_{k=1,\dots,N} (\alpha_k - \alpha_{k-1}) \leq\zeta$. 
We will separately bound the probability of $E_{\BB r}^n$ for $2^n \in [C^{\alpha_{k-1} } , C^{\alpha_k}]$ for $k=1,\dots,N$, for $2^n \geq C^{1/\zeta}$, and for $2^n \leq C^\zeta$. 
 
By Lemma~\ref{lem-circle-avg-tail} applied with $\ep = 2^{-n}$, $\nu = 0$, and $ q + 1/\alpha_k $ in place of $q$, we find that for each $k=1,\dots,N$ and each $n\in\BB N_0$ with $2^n \in [C^{\alpha_{k-1} } , C^{\alpha_k}]$,
\allb
 \BB P\left[ (E_{\BB r}^n)^c \right]  
 &\leq \BB P\left[ \sup\left\{ |h_{2^{-n}\BB r}(w) - h_{\BB r}(0)|  :  w\in B_{R\BB r}(0) \cap \left(2^{-n-1} \BB r \BB Z^2 \right)  \right\}   >  \left(q + \frac{1}{\alpha_k}\right) \log( 2^n) \right] \notag \\
& \leq  2^{-n\left( \tfrac{(q+1/\alpha_k)^2}{2} -2    \right)} 
 \leq C^{- \alpha_{k-1} \left(  \tfrac{(q + 1/\alpha_k)^2}{2}  - 2 \right) }
 \leq C^{2\alpha_k -  \tfrac{(q \alpha_k + 1)^2}{2\alpha_k }    + o_\zeta(1)  }  
\alle 
with the rate of the $o_\zeta(1)$ depending only on $q$. Note that in the last inequality, we have done some trivial algebraic manipulations then used that $\alpha_k - \alpha_{k-1} \leq \zeta$ (which is what produces the $o_\zeta(1)$). 
By a union bound over logarithmically many (in $C$) values of $n\in\BB N_0$ with $2^n \in [C^{\alpha_{k-1} } , C^{\alpha_k}]$, we get
\allb \label{eqn-circle-avg-max-med}
 \BB P\left[ E_{\BB r}^n  ,\: \text{$\forall n\in\BB N_0$ with $C^{\alpha_{k-1}} \leq 2^n \leq C^{\alpha_k}$} \right] 
\geq 1 - C^{2\alpha_k - \tfrac{(q \alpha_k + 1)^2}{2\alpha_k }   + o_\zeta(1)  +o_C(1)  }  .
\alle
 
For $n\in\BB N_0$ with $2^n \geq C^{1/\zeta}$, Lemma~\ref{lem-circle-avg-tail} applied with $\ep = 2^{-n}$, $\nu = 0$, and $q+\zeta$ in place of $q$ gives 
\eqbn
\BB P\left[ (E_{\BB r}^n)^c \right] \leq 2^{-n\left( (q+ \zeta)^2/2 -2    \right)} .
\eqen
Summing this estimate over all such $n$ shows that
\eqb \label{eqn-circle-avg-max-large}
\BB P\left[ E_{\BB r}^n ,\: \text{$\forall n\in\BB N$ with $2^n \geq C^{1/\zeta}$} \right] \geq 1 -  C^{- \frac{(q+\zeta)^2 - 4}{2\zeta}   + o_C(1)}  . 
\eqe

Finally, if $n\in\BB N_0$ and $2^n\leq C^\zeta$, then the Gaussian tail bound and a union bound, applied as in the proof of Lemma~\ref{lem-circle-avg-tail}, shows that $\BB P[(E_{\BB r}^n)^c ]\leq C^{2\zeta - (q\zeta + 1)^2/(2\zeta) + o_C(1)}$ (in fact, if $2^n$ is of constant order, this probability will decay superpolynomially in $C$ due to the Gaussian tail bound).
By a union bound over a logarithmic number (in $C$) of such values of $n$ we get  
\eqb \label{eqn-circle-avg-max-small}
\BB P\left[  E_{\BB r}^n ,\: \text{$\forall n\in\BB N$ with $2^n \leq C^\zeta$} \right] 
\geq 1 - C^{2\zeta - \tfrac{(q\zeta + 1)^2}{2\zeta} + o_C(1)} .
\eqe

The quantity $2\alpha - (q\alpha +1)^2/(2\alpha) $ is maximized over all $\alpha >0$ when $\alpha =  (q^2-4)^{-1/2}$, in which case it equals $ -( q + \sqrt{q^2-4} )$. 
Consequently, by combining the estimates~\eqref{eqn-circle-avg-max-med}, \eqref{eqn-circle-avg-max-large}, and~\eqref{eqn-circle-avg-max-small}, we get that if $\zeta$ is chosen sufficiently small relative to $q$, then
\eqb
\BB P\left[ E_{\BB r}^n ,\: \forall n\in\BB N_0 \right] \geq  1 - C^{- q - \sqrt{q^2-4}  + o_\zeta(1) + o_C(1)} .
\eqe
Sending $\zeta \rta 0$ now concludes the proof.
\end{proof}

\begin{figure}[ht!]
\begin{center}
\includegraphics[scale=1]{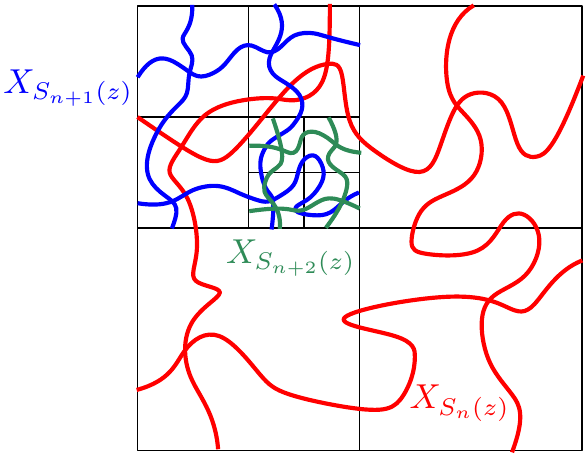} 
\caption{\label{fig-diam} Three of the sets $X_{S_n(z)}$ for dyadic squares containing $z$ used in the proof of Proposition~\ref{prop-internal-moment}. As $n\rta \infty$, the $D_h$-diameter of $S_n(z)$ shrinks to zero (by the continuity of $(z,w) \mapsto D_h(z,w)$), so the distance from $z$ to $X_{S_N(z)}$ is bounded above by the sum over all $n\geq N$ of the $D_h$-lengths of the four paths which comprise $X_{S_n(z)}$. 
}
\end{center}
\end{figure}

\begin{proof}[Proof of Proposition~\ref{prop-internal-moment}]
For $p < 0$, the bound~\eqref{eqn-internal-moment} follows from the lower bound of Proposition~\ref{prop-two-set-dist}. 
We will bound the positive moments up to order $4d_\gamma/\gamma^2$.  
\medskip

\noindent\textit{Step 1: constructing short paths across rectangles.}
Fix $q\in (2,Q)$ which we will eventually send to $Q$.
By Lemma~\ref{lem-circle-avg-all} it holds with probability $1 -  C^{-q-\sqrt{q^2-4} + o_C(1)} $ that 
\eqb \label{eqn-use-circle-avg-all}
 \sup\left\{ |h_{2^{-n}\BB r}(w) - h_{\BB r}(0)|  :  w\in \BB r\BB S \cap \left(2^{-n-1} \BB r \BB Z^2 \right)  \right\} \leq   \log(C 2^{q n})  ,\quad\forall n \in \BB N_0 .
\eqe  

Now fix $\zeta\in (0,Q-q)$, which we will eventually send to zero.
For $n\in\BB N_0$, let $\mcl R_{\BB r}^n$ be the set of open $2^{-n} \BB r \times 2^{-n-1}\BB r$ or $2^{-n-1}\BB r \times 2^{-n }\BB r$ rectangles $R\subset\BB r\BB S$ with corners in $2^{-n-1}\BB r\BB Z^2$. 
For $R\in\mcl R_{\BB r}^n$ let $w_R$ be the bottom-left corner of $R$. 

Let 
\eqb
N_C := \lfloor \log_2 C^\zeta \rfloor .
\eqe 
By the upper bound of Proposition~\ref{prop-two-set-dist} (applied with $2^{-n}\BB r$ in place of $\BB r$ and with $A = 2^{\zeta\xi n}$), Axiom~\ref{item-metric-translate}, and a union bound over all $R\in \mcl R_{\BB r}^n$ and all $n\geq  N_C$, we get that except on an event of probability decaying faster than any negative power of $C$ (the rate of decay depends on $\zeta$), the following is true. 
For each $n\geq N_C$ and each $R\in\mcl R_{\BB r}^n$, the distance between the two shorter sides of $  R$ w.r.t.\ the internal metric $D_h(\cdot,\cdot ; R)$ is at most $2^{\zeta \xi n} \frk c_{2^{-n} \BB r} e^{\xi h_{2^{-n}\BB r}(w_R)}$. 

Combining this with~\eqref{eqn-use-circle-avg-tail} shows that with probability $1 -  C^{-q-\sqrt{q^2-4} + o_C(1)} $, it holds for each $n\geq N_C$ and each $R\in\mcl R_{\BB r}^n$ that there is a path $P_R$ in $R$ between the two shorter sides of $R$ with $D_h$-length at most $ C^\xi 2^{(q+\zeta) \xi n} \frk c_{2^{-n}\BB r} e^{\xi h_{\BB r}(0)} $. 
By applying Theorem~\ref{thm-metric-scaling} to bound $\frk c_{2^{-n}\BB r}$, we get that in fact 
\eqb
\left(\text{$D_h$-length of $P_R$}\right)\leq C^\xi 2^{-(Q-q- \zeta ) \xi n + o_n(n)}  \frk c_{ \BB r} e^{\xi h_{\BB r}(0)} .
\eqe
Henceforth assume that such paths $P_R$ exist. We will establish an upper bound for the $D_h$-diameter of $\BB r\BB S$. 
\medskip

\noindent\textit{Step 2: stringing together paths in rectangles.}
For each square $S\subset \BB r \BB S$ with side length $2^{-n} \BB r $ and corners in $2^{-n}\BB r\BB S$, there are exactly four rectangles in $\mcl R_{\BB r}^n$ which are contained in $S$. 
If $n \geq N_C$, let $X_S$ be the $\#$-shaped region which is the union of the paths $P_R$ for these four rectangles, as illustrated in Figure~\ref{fig-diam}. 
If $S'$ is one of the four dyadic children of $S$, then $X_S\cap X_{S'}\not=\emptyset$.
Since the four paths which comprise $X_S$ have $D_h$-length at most $  C^\xi 2^{-(Q-q- \zeta ) \xi n + o_n(n)} e^{\xi h_{\BB r}(0)} \frk c_{ \BB r} e^{\xi h_{\BB r}(0)}$, this means that each point of $X_S$ can be joined to $X_{S'}$ by a path in $S$ of $D_h$-length at most $  C^\xi 2^{-(Q-q- \zeta ) \xi n + o_n(n)} \frk c_{ \BB r} e^{\xi h_{\BB r}(0)}  $. 

Since the metric $D_h$ is a continuous function on $\BB C\times\BB C$, if $z\in \BB r \BB S$ and we let $S_n(z)$ for $n\in\BB N_0$ be the square of side length $2^{-n} \BB r$ with corners in $2^{-n}\BB r\BB Z^2$ which contains $z$, so that $S_0(z) = \BB S$, then the $D_h$-diameter of $S_n(z)$ tends to zero as $n\rta\infty$. 
Consequently, 
\eqbn
\sup_{w \in S_{N_C}(z)} D_h\left( z , w  ; \BB r \BB S \right) 
\leq C^\xi \frk c_{ \BB r} e^{\xi h_{\BB r}(0)}    \sum_{n=N_C}^\infty 2^{-(Q-q - \zeta) \xi n + o_n(n)}
\leq O_C(C^\xi) \frk c_{ \BB r} e^{\xi h_{\BB r}(0)}  .
\eqen
Since this holds for every $z\in \BB r \BB S$, we get that with probability at least $1 -  C^{-q-\sqrt{q^2-4} o_C(1) } $, for each $n\geq N_C$, each $2^{-n}\BB r\times 2^{-n}\BB r$ square $S\subset \BB r\BB S$ with corners in $2^{-n}\BB r \BB Z^2$ has $D_h(\cdot,\cdot; \BB r \BB S)$-diameter at most $O_C(C^\xi)\frk c_{ \BB r} e^{\xi h_{\BB r}(0)} $. 
\medskip

\noindent\textit{Step 3: conclusion.}
Since $2^{ N_C} \leq C^\zeta$, we can use the triangle inequality to get that if the event at the end of the preceding step occurs, then the $D_h(\cdot,\cdot;\BB r \BB S)$-diameter of $\BB r \BB S$ is at most $O_C(C^{\xi + \zeta}) \frk c_{ \BB r} e^{\xi h_{\BB r}(0)} $. 
Setting $\wt C := C^{\xi + \zeta}$, then sending $\zeta \rta 0$, shows that
\eqbn
\BB P\left[ \frk c_{ \BB r}^{-1} e^{-\xi h_{\BB r}(0)}  \sup_{z,w\in\BB r \BB S} D_h(z,w; \BB r \BB S) > \wt C \right] \leq \wt C^{-\xi^{-1} (  q + \sqrt{q^2-4}  ) + o_{\wt C}(1)} .
\eqen
By sending $q\rta Q$ and noting that $Q + \sqrt{Q^2-4} =4/\gamma$, we get
\eqbn
\BB P\left[ \frk c_{ \BB r}^{-1} e^{-\xi h_{\BB r}(0)}  \sup_{z,w\in\BB r \BB S} D_h(z,w; \BB r \BB S) > \wt C \right] \leq \wt C^{-\frac{4}{\gamma\xi} + o_{\wt C}(1)}  = \wt C^{-\frac{4d_\gamma}{\gamma^2} + o_{\wt C}(1)} .
\eqen
For $p \in (0, 4d_\gamma/\gamma^2)$, we can multiply this last estimate by $\wt C^{p-1}$ and integrate to get the desired $p$th moment bound~\eqref{eqn-internal-moment}.  
\end{proof}

\subsection{Pointwise distance bounds}
\label{sec-ptwise-dist}

In this subsection we will prove the following more quantitative versions of Theorems~\ref{thm-pt-to-circle-moment} and~\ref{thm-pt-to-pt-moment}, which are required to be uniform across scales. Recall that $h$ is a whole-plane GFF normalized so that $h_1(0) =0$. 

\begin{prop}[Distance from a point to a circle] \label{prop-pt-to-circle-moment}
Let $\alpha \in \BB R$ and let $h^\alpha := h - \alpha\log|\cdot|$. 
If $\alpha \in (-\infty ,Q)$, then for each $p\in (-\infty , \frac{2d_\gamma}{\gamma} (Q-\alpha))$, there exists $C_p > 0$ such that for each $\BB r > 0$,
\eqb\label{eqn-pt-to-circle-moment} 
\BB E\left[ \left( \frk c_{\BB r}^{-1} \BB r^{ \alpha\xi} e^{- \xi h_{\BB r}(0)}  D_{h^\alpha}\left(  0 , \bdy B_{\BB r}(0)  \right)  \right)^p \right] \leq C_p .
\eqe     
If $\alpha >Q$, then a.s.\ $D_{h^\alpha}(0,z) = \infty$ for every $z\in\BB C\setminus \{0\}$. 
\end{prop}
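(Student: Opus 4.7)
The plan is to bound $D_{h^\alpha}(0,\bdy B_{\BB r}(0))$ by a sum over the dyadic annuli $A_n := \BB A_{2^{-n-1}\BB r, 2^{-n}\BB r}(0)$, $n\in\BB N_0$, where each annulus contributes a near-minimizing crossing plus a loop-type connector linking it to the next scale. Unlike the full annulus diameter underlying Proposition~\ref{prop-diam-moment}, both crossings (via Proposition~\ref{prop-two-set-dist}) and distances around annuli (via events like $E_r(z;C)$ from~\eqref{eqn-annulus-event}, together with Lemma~\ref{lem-good-annulus-all}) have superpolynomially small tails after normalization, so admit finite moments of every order uniformly in $n$. This turns out to be essential for reaching the sharp threshold $2d_\gamma(Q-\alpha)/\gamma$, which exceeds the diameter threshold $4d_\gamma/\gamma^2$ in the regime $\alpha<\gamma/2$.

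For positive moments, I will concatenate near-optimal crossings of the $A_n$, joined along $\bdy B_{2^{-n-1}\BB r}(0)$ by a loop in $A_{n+1}$ of length comparable to the distance around $A_{n+1}$ (Definition~\ref{def-around-annulus}). Weyl scaling (Axiom~\ref{item-metric-f}), the fact that $|z|^{-\alpha\xi}\asymp(2^{-n}\BB r)^{-\alpha\xi}$ on $A_n$, and the scaling of $\frk c$ from Theorem~\ref{thm-metric-scaling} give
\[
Y_n := \frac{\text{$n$-th contribution}}{\frk c_{\BB r}\,\BB r^{-\alpha\xi}\,e^{\xi h_{\BB r}(0)}} \leq C\cdot 2^{-n\xi(Q-\alpha)+o(n)}\,\tilde X_n\, e^{\xi\Delta_n},
\]
where $\tilde X_n$ has all positive moments uniformly in $n$ and $\Delta_n := h_{2^{-n}\BB r}(0)-h_{\BB r}(0)\sim N(0,n\log 2)$ is a Brownian increment of the circle-average process. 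The crucial step is to split $\Delta_n = \Delta_n' + \Delta_n''$ with $\Delta_n'' := h_{2^{-n+k}\BB r}(0)-h_{\BB r}(0)$ for a fixed $k$ chosen so that, via the decomposition of the whole-plane GFF into a Brownian radial part and an independent lateral part with zero circle averages, $\Delta_n''$ is independent of $\tilde X_n$ while $\Delta_n'$ has constant variance $k\log 2$. Then
\[
\BB E[\tilde X_n^p\,e^{p\xi\Delta_n}] = \BB E[\tilde X_n^p\,e^{p\xi\Delta_n'}]\cdot\BB E[e^{p\xi\Delta_n''}] \leq C_p\cdot 2^{np^2\xi^2/2}
\]
by H\"older on the first factor (possible for all $p$ since $\tilde X_n$ has every moment, avoiding the $4d_\gamma/\gamma^2$ restriction that would arise if one used Proposition~\ref{prop-diam-moment}). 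Hence $\BB E[Y_n^p]\lesssim 2^{n(-p\xi(Q-\alpha)+p^2\xi^2/2)+o(n)}$, which is summable precisely when $p<2(Q-\alpha)/\xi=2d_\gamma(Q-\alpha)/\gamma$. Minkowski for $p\geq 1$ (resp.\ subadditivity of $x\mapsto x^p$ for $p\in(0,1)$) then delivers~\eqref{eqn-pt-to-circle-moment}.

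For negative moments, the bound $D_{h^\alpha}(0,\bdy B_{\BB r}(0))\geq \bigl(\inf_{z\in\BB A_{\BB r/2,\BB r}(0)}|z|^{-\alpha\xi}\bigr) D_h(\bdy B_{\BB r/2}(0),\bdy B_{\BB r}(0))$ combined with Proposition~\ref{prop-two-set-dist} gives finite moments of every negative order. For $\alpha>Q$, I will show that $\sum_n D_{h^\alpha}(\bdy B_{2^{-n-1}\BB r}(0),\bdy B_{2^{-n}\BB r}(0);A_n)=\infty$ a.s.; since any continuous path from $0$ to $z\neq 0$ must cross every $A_n$, this forces $D_{h^\alpha}(0,z)=\infty$. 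The normalized $n$-th crossing has typical size $2^{n\xi(\alpha-Q)+o(n)}e^{\xi\Delta_n}$, which tends to $\infty$ a.s.\ since $\alpha-Q>0$ beats the $O(\sqrt{n\log\log n})$ Brownian fluctuations; near-independence of the events ``$n$-th crossing $>1$'' across widely separated scales (via Lemma~\ref{lem-annulus-iterate}) combined with Borel--Cantelli then forces infinitely many such events. The main obstacle is the Markov-type decomposition $\Delta_n=\Delta_n'+\Delta_n''$: establishing independence of $\Delta_n''$ from $\tilde X_n$ requires an explicit radial-lateral splitting of the whole-plane GFF (since $\tilde X_n$ is measurable with respect to $h$ in a thickening of $A_n$), without which H\"older alone yields only the suboptimal exponent $p<4(Q-\alpha)/(\xi(2+\gamma(Q-\alpha)))$.
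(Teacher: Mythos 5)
Your proposal is correct and reaches the sharp threshold, but by a genuinely different route from the paper. The paper first proves the stronger pathwise statement of Proposition~\ref{prop-dist-to-int}: with superpolynomially high probability in $C$, the normalized distance $\frk c_{\BB r}^{-1}\BB r^{\alpha\xi}e^{-\xi h_{\BB r}(0)}D_{h^\alpha}(0,\bdy B_{\BB r}(0))$ is sandwiched between $C^{\mp 1}\int_0^\infty e^{\xi B_t-\xi(Q-\alpha)t\mp\psi(t)}\,dt$ with $B_t=h_{\BB r e^{-t}}(0)-h_{\BB r}(0)$ the radial Brownian motion, and the moment bound is then read off from the exact Dufresne density~\eqref{eqn-bm-density} of the exponential functional $\int_0^\infty e^{\wt B_s-cs}\,ds$, whose upper tail is polynomial with exponent $2c=2(Q-\alpha)/\xi$. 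You instead estimate $\BB E[Y_n^p]$ annulus by annulus and sum via Minkowski/subadditivity --- exactly the scheme the paper uses for Proposition~\ref{prop-diam-moment-alpha} --- except that you replace the per-annulus diameter (whose moments are capped at $4d_\gamma/\gamma^2$ by Proposition~\ref{prop-diam-moment}) by crossing and around-the-annulus quantities with superpolynomial tails from Proposition~\ref{prop-two-set-dist}; that substitution is precisely what removes the cap and yields the full range $p<2(Q-\alpha)/\xi$. The two methods give the same exponent because the Dufresne tail exponent $2c$ is the critical $p$ at which $\BB E[e^{pB_t-pct}]$ ceases to be integrable in $t$, which is what your Gaussian moment-generating-function computation detects. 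What the paper's route buys is the two-sided pathwise comparison, which is reused elsewhere (for Proposition~\ref{prop-dist-to-int-pt} and for the optimality of the H\"older exponents in Lemma~\ref{lem-no-holder}); your route is more self-contained, requiring no exact density. Two small caveats. First, make the concatenation precise as in the paper's proof of Proposition~\ref{prop-dist-to-int}: the crossing at scale $n$ should span two consecutive annuli so that the crossings at scales $n$ and $n+1$ both meet the separating loop at scale $n+1$; a single-annulus crossing "joined along $\bdy B_{2^{-n-1}\BB r}(0)$" to a loop does not obviously connect, since a loop disconnecting the annulus need not touch that circle. Second, your closing claim that the radial--lateral splitting is indispensable is not right: since $\tilde X_n$ has moments of every order uniformly in $n$, H\"older applied directly to $\tilde X_n^p e^{p\xi\Delta_n}$ with the Gaussian factor raised to a power $q'$ arbitrarily close to $1$ gives $\BB E[Y_n^p]\preceq 2^{n(-p\xi(Q-\alpha)+p^2 q'\xi^2/2)+o(n)}$ and hence the full range; the independence decomposition (which the paper does use, in Lemma~\ref{lem-ep-diam} and Proposition~\ref{prop-diam-moment-alpha}) is a convenience here, not a necessity. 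Your negative-moment and $\alpha>Q$ arguments are sound and essentially match the paper's.
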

 
\begin{prop}[Distance between two points] \label{prop-pt-to-pt-moment}
Let $\alpha , \beta \in \BB R$, let $z,w\in \BB C$ be distinct, and let $h^{\alpha,\beta} := h - \alpha \log|\cdot - z| - \beta \log|\cdot - w|$. 
Set $\BB r := |z-w|/2$. 
If $\alpha,\beta \in (-\infty,Q)$, then for each $p\in \left(-\infty ,  \frac{2d_\gamma}{\gamma} (Q- \max\{\alpha,\beta\} ) \right)$, there exists $C_p > 0$ such that for each choice of $z,w$ as above,
\eqb\label{eqn-pt-to-pt-moment}
\BB E\left[ \left( \frk c_{\BB r}^{-1} \BB r^{ \alpha\xi} e^{- \xi h_{\BB r}(z)}  D_{h^\alpha}\left(  z,w ; B_{8\BB r}(z)  \right)  \right)^p \right] \leq C_p .
\eqe   
If either $\alpha > Q$ or $\beta > Q$, then a.s.\ $D_{h^{\alpha,\beta}}(z,w) = \infty$.
\end{prop}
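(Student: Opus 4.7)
The plan is to reduce Proposition~\ref{prop-pt-to-pt-moment} to the one-point version Proposition~\ref{prop-pt-to-circle-moment} via triangle-inequality decomposition combined with Weyl scaling (Axiom~\ref{item-metric-f}). The infinite-distance claim is immediate: when $\alpha > Q$, on any ball $B_\rho(z) \subset B_{\BB r}(z)$ the function $-\beta\log|\cdot - w|$ is continuous with oscillation bounded uniformly in $\BB r$, so Axiom~\ref{item-metric-f} makes $D_{h^{\alpha,\beta}}(z, \bdy B_\rho(z))$ finite iff $D_{h^\alpha}(z, \bdy B_\rho(z))$ is; the second half of Proposition~\ref{prop-pt-to-circle-moment} then forces $D_{h^{\alpha,\beta}}(z, z') = \infty$ for every $z' \neq z$. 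The case $\beta > Q$ is symmetric.

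For the moment bound in the regime $\alpha, \beta < Q$, the negative moments follow from Proposition~\ref{prop-two-set-dist} applied to small compact neighborhoods of $z$ and $w$ after stripping the log singularities on each neighborhood via Weyl scaling. For the positive moments, I would use Axiom~\ref{item-metric-translate} to set $z = 0$ and decompose via the triangle inequality, $D_{h^{\alpha,\beta}}(0, w; B_{8\BB r}(0)) \leq A_1 + A_2 + A_3$, where $A_1 := D_{h^{\alpha,\beta}}(0, \bdy B_{\BB r/2}(0); B_{\BB r/2}(0))$, $A_3 := D_{h^{\alpha,\beta}}(w, \bdy B_{\BB r/2}(w); B_{\BB r/2}(w))$, and $A_2$ bounds the $D_{h^{\alpha,\beta}}$-distance between the two boundary circles inside the region $B_{8\BB r}(0) \setminus (B_{\BB r/4}(0) \cup B_{\BB r/4}(w))$.

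On $B_{\BB r/2}(0)$, the function $-\beta\log|\cdot - w|$ equals the constant $-\beta\log|w|$ plus a function whose sup norm is bounded independently of $\BB r$, so by Axiom~\ref{item-metric-f}, $A_1 \asymp |w|^{-\beta\xi} D_{h^\alpha}(0, \bdy B_{\BB r/2}(0); B_{\BB r/2}(0))$ up to a bounded multiplicative constant. Combining Proposition~\ref{prop-pt-to-circle-moment} with Theorem~\ref{thm-metric-scaling} (to compare $\frk c_{\BB r/2}$ and $\frk c_{\BB r}$) and the Gaussian tail of $h_{\BB r/2}(0)-h_{\BB r}(0)$ then yields the $p$-th moment bound for $A_1$ for every $p < \tfrac{2 d_\gamma}{\gamma}(Q-\alpha)$. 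A symmetric argument near $w$ (using Axiom~\ref{item-metric-translate} to shift the origin and Gaussian control on $h_{\BB r}(w)-h_{\BB r}(0)$) gives the corresponding bound on $A_3$ with $p < \tfrac{2d_\gamma}{\gamma}(Q-\beta)$. For $A_2$, both log terms are bounded continuous on the punctured ball with $\BB r$-uniform oscillation, so Axiom~\ref{item-metric-f} reduces the estimate to a corresponding quantity for $D_h$ itself, which can be controlled by combining Proposition~\ref{prop-two-set-dist} (moments of all orders for the set-to-set piece) with Proposition~\ref{prop-diam-moment} (diameter of each boundary circle). Minkowski's inequality then combines the three pieces to give the stated bound with exponent $\tfrac{2d_\gamma}{\gamma}(Q - \max\{\alpha,\beta\})$.

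The main technical obstacle is the regime $\max\{\alpha, \beta\} < \gamma/2$, in which the target exponent $\tfrac{2d_\gamma}{\gamma}(Q-\max\{\alpha,\beta\})$ strictly exceeds the diameter exponent $4d_\gamma/\gamma^2$ from Proposition~\ref{prop-diam-moment}, so the crude diameter bound for $A_2$ is too weak. Reaching the full exponent in that range requires a sharper treatment of $A_2$ --- for instance, a multi-scale concatenation of point-to-circle distances (each bounded via Proposition~\ref{prop-pt-to-circle-moment}) combined with Gaussian concentration of circle averages along an explicit chained path, reflecting the fact that $D_{h^{\alpha,\beta}}(z,w)$ between generic endpoints avoids the thick-point contributions that dominate the diameter tail. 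The bookkeeping of the $|w|^{-\beta\xi}$ and $|w|^{-\alpha\xi}$ prefactors from the two endpoints (which must combine with the $\BB r^{\alpha\xi}$ normalization in \eqref{eqn-pt-to-pt-moment}) is the only other subtlety, but requires no new probabilistic input.
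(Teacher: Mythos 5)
Your route is genuinely different from the paper's. The paper does not prove the moment bound by a triangle-inequality decomposition at the level of moments; it first establishes a two-sided, superpolynomially-high-probability pathwise estimate (Proposition~\ref{prop-dist-to-int-pt}) expressing $D_{h^{\alpha,\beta}}(z,w;B_{8\BB r}(z))$ in terms of the exponential functionals $\int_0^\infty e^{\xi h_{\BB r e^{-t}}(z) - \xi(Q-\alpha)t \pm \psi(t)}\,dt$ and the analogous integral at $w$, and then reads off the sharp tail exponent $2(Q-\alpha)/\xi = \tfrac{2d_\gamma}{\gamma}(Q-\alpha)$ from the exact Dufresne density~\eqref{eqn-bm-density} for such functionals. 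Your plan of splicing together two applications of Proposition~\ref{prop-pt-to-circle-moment} inherits the same sharp exponent (that proposition is itself proved from the density), and your treatment of the negative moments and of the cases $\alpha>Q$ or $\beta>Q$ agrees with the paper's.

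The gap is exactly where you locate it --- the gluing of the three pieces --- but your proposed repair is not the right one. The obstruction is not a thick-point phenomenon requiring a new multi-scale chaining argument; it is simply that circle \emph{diameters} are the wrong gluing objects. Replace them by circuits: by Proposition~\ref{prop-two-set-dist} (cf.\ Definition~\ref{def-around-annulus} and the final clause of Proposition~\ref{prop-dist-to-int}), the distance \emph{around} a macroscopic annulus such as $\BB A_{\BB r/4,\BB r/2}(z)$ has superpolynomial upper tails uniformly in $\BB r$, hence all positive moments, whereas the diameter of $\bdy B_{\BB r/2}(z)$ only has moments up to $4d_\gamma/\gamma^2$. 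A circuit in that annulus separating $z$ from $\bdy B_{\BB r/2}(z)$ necessarily meets both the radial path realizing $A_1$ and any path realizing $A_2$, so concatenating ($A_1$-path, circuit at $z$, $A_2$-path, circuit at $w$, $A_3$-path) produces a genuine path from $z$ to $w$ in $B_{8\BB r}(z)$ with every gluing cost having all moments; this is precisely the role of the ``distance around the annulus'' clause in Proposition~\ref{prop-dist-to-int}, which the paper uses for the same purpose (the path $P_{z,2}$) in the proof of Proposition~\ref{prop-dist-to-int-pt}. With that substitution your argument closes. Do track the prefactors explicitly: the factor $\BB r^{-\beta\xi}$ contributed by $-\beta\log|\cdot-w|$ near $z$ and the factor $\BB r^{-\alpha\xi}$ contributed by $-\alpha\log|\cdot-z|$ near $w$ must be reconciled with the single normalization $\BB r^{\alpha\xi}$ appearing in~\eqref{eqn-pt-to-pt-moment}, which is a nontrivial piece of bookkeeping rather than an afterthought.
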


Propositions~\ref{prop-pt-to-circle-moment} and~\ref{prop-pt-to-pt-moment} are immediate consequences of the following sharper distance estimates and a calculation for the standard linear Brownian motion $t\mapsto h_{\BB r e^{-t}}(0) - h_{\BB r}(0)$.

\begin{prop}  \label{prop-dist-to-int} 
Assume that we are in the setting of Proposition~\ref{prop-pt-to-circle-moment}. 
If $\alpha \in (-\infty ,Q)$, then there is a deterministic function $\psi : [0,\infty) \rta [0,\infty)$ which is bounded in every neighborhood of 0 and satisfies $\lim_{t\rta\infty} \psi(t)/t = 0$, depending only on $\alpha$ and the choice of metric $D$,\footnote{At this point we do not know that the weak LQG metric $D : h\mapsto D_h$ is unique (it will be proven that this metric is unique up to a deterministic multiplicative constant in~\cite{gm-uniqueness}). When we say that something is allowed to depend on the choice of $D$, we mean that it is allowed to depend on which particular weak LQG metric we are looking at.}  such that the following is true. 
For each $\BB r > 0$,  
it holds with superpolynomially high probability as $C \rta \infty$, at a rate which is uniform in the choice of $\BB r$, that
\eqb \label{eqn-dist-to-int}
C^{-1} \frac{\frk c_{\BB r} }{ \BB r^{ \alpha\xi} } \int_0^\infty e^{\xi h_{\BB r e^{-t}}(0)  - \xi (Q-\alpha) t  -  \psi(t)  } \, dt 
\leq D_{h^\alpha}\left(  0 , \bdy B_{\BB r}(0)  \right) 
\leq C  \frac{\frk c_{\BB r} }{ \BB r^{ \alpha\xi} } \int_0^\infty e^{\xi h_{\BB r e^{-t}}(0)  - \xi (Q-\alpha) t +  \psi(t)  } \, dt  
\eqe  
and the $D_{h^\alpha}$-distance around the annulus $B_{\BB r}(0)\setminus B_{\BB r/e}(0)$ (Definition~\ref{def-around-annulus}) is at most the right side of~\eqref{eqn-dist-to-int}.
If $\alpha >Q$, then a.s.\ $D_{h^\alpha}(0,z) = \infty$ for every $z\in\BB C\setminus \{0\}$.  
\end{prop}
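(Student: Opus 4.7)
I will decompose the punctured disk $B_{\BB r}(0) \setminus \{0\}$ into dyadic annuli $A_n := B_{\BB r e^{-n}}(0) \setminus \ol{B_{\BB r e^{-n-1}}(0)}$ for $n \in \BB N_0$ and control the $D_{h^\alpha}$-contribution of each $A_n$ in terms of the circle average $h_{\BB r e^{-n}}(0)$ and the scaling constant $\frk c_{\BB r e^{-n}}$. By Weyl scaling (Axiom~\ref{item-metric-f}), $D_{h^\alpha} = |\cdot|^{-\xi\alpha} \cdot D_h$ on $\BB C \setminus \{0\}$, so the $D_h$- and $D_{h^\alpha}$-lengths of a path in $A_n$ agree up to a factor in $(\BB r e^{-n})^{-\xi\alpha} \cdot [e^{-|\xi\alpha|}, e^{|\xi\alpha|}]$. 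Combined with Theorem~\ref{thm-metric-scaling}, which gives $\frk c_{\BB r e^{-n}} = \frk c_{\BB r} e^{-\xi Q n + o(n)}$ with a \emph{deterministic} $o(n)$, the typical $D_{h^\alpha}$-cost of traversing $A_n$ is of order $\frk c_{\BB r} \BB r^{-\xi\alpha} e^{\xi h_{\BB r e^{-n}}(0) - \xi(Q - \alpha) n - \psi_0(n)}$ for a deterministic sub-linear $\psi_0$.

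Next I apply Proposition~\ref{prop-two-set-dist} (together with the annulus-iteration mechanism of Lemma~\ref{lem-annulus-iterate} already used in Lemma~\ref{lem-good-annulus-all}) at each scale $\BB r e^{-n}$ with parameter $A_n := C n^3$. Since the tails in Proposition~\ref{prop-two-set-dist} decay superpolynomially uniformly in $\BB r$, the sum $\sum_n A_n^{-M'}$ is of order $C^{-M'+o(1)}$ for any large $M'$, so outside an event of probability superpolynomially small in $C$ (uniformly in $\BB r$) the following hold for every $n$: the $D_h$-distance across $A_n$ from $\bdy B_{\BB r e^{-n}}(0)$ to $\bdy B_{\BB r e^{-n-1}}(0)$ lies in $[A_n^{-1}, A_n] \cdot \frk c_{\BB r e^{-n}} e^{\xi h_{\BB r e^{-n}}(0)}$; $A_n$ contains a loop separating $0$ from $\infty$ of $D_h$-length at most $A_n \frk c_{\BB r e^{-n}} e^{\xi h_{\BB r e^{-n}}(0)}$; and the $D_h$-diameter of $\bdy B_{\BB r e^{-n}}(0)$ in a comparable annular neighborhood is at most $A_n \frk c_{\BB r e^{-n}} e^{\xi h_{\BB r e^{-n}}(0)}$. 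The multiplicative error $A_n = C n^3$ contributes $\log C$ (absorbed into the global $C^{\pm 1}$ in~\eqref{eqn-dist-to-int}) plus $3 \log n = o(n)$, absorbed into $\psi$; a further $O(\sqrt{\log n})$ contribution from Gaussian-tail bounds on the fluctuations of the Brownian motion $t \mapsto h_{\BB r e^{-t}}(0) - h_{\BB r}(0)$~\cite[Section~3]{shef-kpz} over unit-length $t$-intervals allows replacement of the $n$-sum by the $t$-integral in~\eqref{eqn-dist-to-int}.

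For the upper bound I concatenate across-annulus paths $P_n \subset A_n$: successive endpoints on $\bdy B_{\BB r e^{-n}}(0)$ are joined by arcs on that circle whose $D_h$-length is controlled by the diameter estimate above, so the combined $D_{h^\alpha}$-length from $\bdy B_{\BB r e^{-N}}(0)$ out to $\bdy B_{\BB r}(0)$ is at most a constant multiple of the $n = 0, \ldots, N-1$ partial sum. Passing $N \to \infty$ (using continuity of $D_{h^\alpha}$ on $\BB C \setminus \{0\}$ together with the summability of the $D_{h^\alpha}$-lengths on the good event) yields a path from $0$ to $\bdy B_{\BB r}(0)$ whose $D_{h^\alpha}$-length is at most the right-hand side of~\eqref{eqn-dist-to-int}; applied to just the $n = 0$ loop, this also bounds the $D_{h^\alpha}$-distance around $B_{\BB r}(0) \setminus B_{\BB r/e}(0)$. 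For the lower bound, any continuous path from $0$ to $\bdy B_{\BB r}(0)$ must cross each $A_n$, and its $A_n$-portion has $D_{h^\alpha}$-length at least a constant multiple of $(\BB r e^{-n})^{-\xi\alpha} A_n^{-1} \frk c_{\BB r e^{-n}} e^{\xi h_{\BB r e^{-n}}(0)}$; summing and converting to an integral gives the left inequality of~\eqref{eqn-dist-to-int}.

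For $\alpha > Q$, the annulus-crossing lower bound above holds without restriction on $\alpha$, and $t \mapsto h_{\BB r e^{-t}}(0) - h_{\BB r}(0)$ being a standard two-sided Brownian motion gives $h_{\BB r e^{-n}}(0) = o(n)$ almost surely; the deterministic factor $e^{\xi(\alpha - Q) n}$ then dominates, so the lower-bound sum diverges along any subsequence on which the good crossing estimate holds. Lemma~\ref{lem-annulus-iterate}, applied to well-separated dyadic scales, forces the good event to occur at infinitely many $n$ almost surely, so $D_{h^\alpha}(0, \bdy B_{\BB r}(0)) = \infty$; monotonicity $D_{h^\alpha}(0, z) \geq D_{h^\alpha}(0, \bdy B_{|z|/2}(0))$ then yields $D_{h^\alpha}(0, z) = \infty$ for every $z \neq 0$. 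The main obstacle I expect is engineering a single \emph{deterministic} $\psi$ satisfying $\psi(t)/t \to 0$ while keeping the failure probability superpolynomially small in $C$ uniformly in $\BB r$: the choice $A_n = C n^3$ balances the need for $\sum_n A_n^{-M'}$ to be summable at arbitrarily large $M'$ against the need for $\log A_n$ to stay sub-linear, and one must verify that the sum-to-integral passage (in particular, the treatment of within-scale Brownian fluctuations) does not introduce any residual random correction into $\psi$.
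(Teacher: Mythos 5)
Your proposal follows essentially the same route as the paper's proof: decompose into the annuli $B_{\BB r e^{-n}}(0)\setminus B_{\BB r e^{-n-1}}(0)$, apply Proposition~\ref{prop-two-set-dist} at each scale (crossing lower bound plus a separating loop for the upper bound, which also gives the distance-around statement from the $n=0$ loop), handle the log singularity by Weyl scaling, use Theorem~\ref{thm-metric-scaling} for $\frk c_{\BB r e^{-n}}$, convert the sum to an integral via Brownian fluctuation bounds, and get $\alpha>Q$ from divergence of the lower-bound sum plus Borel--Cantelli. The one genuine difference is bookkeeping: you use a single error budget $A_n=Cn^3$ at scale $n$ (absorbing $3\log n$ into $\psi$), whereas the paper uses the factor $C$ for $k\le C^{1/\zeta}$ and $k^\zeta$ for $k>C^{1/\zeta}$ and then needs a separate step (its Step 3, the only place $\alpha<Q$ enters quantitatively) to show the small-scale tail is dominated by the head. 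Your version sidesteps that step, at the price of needing the sum-to-integral fluctuation estimate at \emph{all} scales rather than only for $k\le C^{1/\zeta}$.

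That is where your one quantitative slip sits: an allowance of $O(\sqrt{\log n})$ for $\sup_{t\in[n,n+1]}|h_{\BB r e^{-t}}(0)-h_{\BB r e^{-n}}(0)|$ gives per-scale failure probability $e^{-O(\log n)}=n^{-O(1)}$, whose sum over $n$ is a constant, not superpolynomially small in $C$ (and $e^{-c\sqrt{\log n}}$ alone is not even summable). You need a threshold like $\frac{1}{\xi}\log C+\sqrt{n}$, whose tail $e^{-(\log C)^2/(2\xi^2)}e^{-n/2}$ sums to something superpolynomially small in $C$ while the $\sqrt n$ term remains $o(n)$ and hence goes into $\psi$. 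With that correction the argument closes; you should also make sure the convergence of the concatenated path to $0$ (your appeal to summability on the good event) is stated as the point where $\alpha<Q$ is used, since with $A_n=Cn^3$ nothing else in your upper bound requires it.
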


\begin{prop}  \label{prop-dist-to-int-pt}
Assume that we are in the setting of Proposition~\ref{prop-pt-to-pt-moment}. 
If $\alpha,\beta \in (-\infty,Q)$, then there is a deterministic function $\psi : [0,\infty) \rta [0,\infty)$ which is bounded in every neighborhood of 0 and satisfies $\lim_{t\rta\infty} \psi(t)/t = 0$, depending only on $\alpha$ and the choice of metric $D$, such that the following is true. With superpolynomially high probability as $C\rta\infty$, at a rate which is uniform in the choice of $z$ and $w$,  
\eqb \label{eqn-dist-to-int-pt-lower}
  D_{h^{\alpha,\beta}}\left( z , w   \right)  \geq C^{-1}  \frac{\frk c_{\BB r} }{ \BB r^{ \alpha\xi} }   \int_0^\infty \left( e^{\xi h_{\BB r e^{-t}}(z)  - \xi (Q-\alpha) t  - \psi(t)}   + e^{\xi h_{\BB r e^{-t}}(w)  - \xi (Q-\beta) t  - \psi(t)}  \right) \,dt    
\eqe
and
\eqb \label{eqn-dist-to-int-pt-upper}
  D_{h^{\alpha,\beta}}\left( z , w  ; B_{8\BB r }(z)   \right)  \leq C \frac{\frk c_{\BB r} }{ \BB r^{ \alpha\xi} }   \int_0^\infty \left( e^{\xi h_{\BB r e^{-t}}(z)  - \xi (Q-\alpha) t   + \psi(t)  }  + e^{\xi h_{\BB r e^{-t}}(w)  - \xi (Q-\beta) t  + \psi(t) }  \right) \,dt      .
\eqe
If either $\alpha > Q$ or $\beta > Q$, then a.s.\ $D_{h^{\alpha,\beta}}(z,w) = \infty$.
\end{prop}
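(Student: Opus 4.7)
\medskip

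\noindent\textit{Proof proposal.}
The strategy is to reduce to the single-singularity case (Proposition~\ref{prop-dist-to-int}) by localizing around each of $z$ and $w$. Near $z$, we may write $h^{\alpha,\beta} = \wt h^\alpha + f_z$ where $\wt h^\alpha := h - \alpha\log|\cdot - z|$ and $f_z(u) := -\beta \log|u-w|$ is a continuous function on $B_{2\BB r}(z)$ which is bounded on this ball by a constant times $|\beta|$ (uniformly in $z,w$, since $|u-w|$ ranges in $[\BB r,3\BB r]$ for $u\in B_{2\BB r}(z)$, and after dividing by $\BB r$ we can absorb the $\log\BB r$ into a rescaled version of the field). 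By Axiom~\ref{item-metric-f} (Weyl scaling), distances with respect to $D_{h^{\alpha,\beta}}$ and $D_{\wt h^\alpha}$ inside $B_{2\BB r}(z)$ are comparable up to a factor $e^{\pm\xi\|f_z\|_\infty}$, which is bounded by an $\BB r$-independent deterministic constant. The analogous statement holds near $w$. Applying Proposition~\ref{prop-dist-to-int}, translated by $z$ and $w$ respectively (invoking Axiom~\ref{item-metric-translate}), to the fields $\wt h^\alpha$ and $\wt h^\beta := h - \beta \log|\cdot - w|$ on the balls $B_{\BB r}(z)$ and $B_{\BB r}(w)$ produces the two integrals in \eqref{eqn-dist-to-int-pt-lower} and \eqref{eqn-dist-to-int-pt-upper}, with appropriate (still deterministic, subpolynomial) error functions $\psi$.

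For the lower bound \eqref{eqn-dist-to-int-pt-lower}, note that any $D_{h^{\alpha,\beta}}$-continuous path from $z$ to $w$ must pass through $\bdy B_{\BB r}(z)$ and through $\bdy B_{\BB r}(w)$, so
\begin{equation*}
D_{h^{\alpha,\beta}}(z,w) \geq D_{h^{\alpha,\beta}}(z,\bdy B_{\BB r}(z)) + D_{h^{\alpha,\beta}}(w,\bdy B_{\BB r}(w)).
\end{equation*}
Each of these two terms is bounded below by the corresponding integral (up to an absolute constant from Weyl scaling) by applying Proposition~\ref{prop-dist-to-int} to the appropriate localized field; taking a union over the two events and redefining $C$ gives \eqref{eqn-dist-to-int-pt-lower} with superpolynomially high probability.

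For the upper bound \eqref{eqn-dist-to-int-pt-upper}, I would concatenate three pieces of path, all contained in $B_{8\BB r}(z)$:
\begin{enumerate}
    \item A path from $z$ to some point on $\bdy B_{\BB r}(z)$, whose $D_{h^{\alpha,\beta}}$-length is bounded by the upper half of \eqref{eqn-dist-to-int} applied to $\wt h^\alpha$ (times the Weyl-scaling constant).
    \item A path from $\bdy B_{\BB r}(z)$ to $\bdy B_{\BB r}(w)$. Both circles are contained in the closed annulus $\ol{B_{4\BB r}(z)}\setminus B_{\BB r/2}(z)$ (since $|z-w|=2\BB r$), and the boundedness of $f_z$ on this annulus together with Proposition~\ref{prop-two-set-dist} (applied at scale $\BB r$ around $z$) gives an upper bound of order $\frk c_{\BB r}\,\BB r^{-\alpha\xi} e^{\xi h_{\BB r}(z)}$ with superpolynomially high probability. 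This term is absorbed into the $t=0$ contribution of the integrals.
    \item A path from $\bdy B_{\BB r}(w)$ to $w$, bounded analogously to Step 1 using $\wt h^\beta$ and Axiom~\ref{item-metric-translate}.
\end{enumerate}
To splice Steps 1 and 2 (and Steps 2 and 3) at a common point on $\bdy B_{\BB r}(z)$ (resp.\ $\bdy B_{\BB r}(w)$), I would use the last clause of Proposition~\ref{prop-dist-to-int}: the $D_{h^\alpha}$-distance \emph{around} the annulus $B_{\BB r}(z)\setminus B_{\BB r/e}(z)$ is controlled by the same integral, which lets us route the path through any prescribed point of $\bdy B_{\BB r}(z)$ at the cost of a constant factor.

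For the claim that $D_{h^{\alpha,\beta}}(z,w) = \infty$ when $\alpha > Q$ (the case $\beta>Q$ being symmetric), I would again localize: on $B_{\BB r}(z)$, $h^{\alpha,\beta}$ differs from $\wt h^\alpha$ by the bounded continuous function $f_z$, so by Weyl scaling, $D_{h^{\alpha,\beta}}(z,u) = \infty$ for all $u\in B_{\BB r}(z)\setminus\{z\}$ iff the same holds for $D_{\wt h^\alpha}$, which is the content of Proposition~\ref{prop-dist-to-int}. The main obstacle I anticipate is Step 2 of the upper-bound construction: making sure that an endpoint on $\bdy B_{\BB r}(z)$ furnished by Proposition~\ref{prop-dist-to-int} can actually be joined to an endpoint on $\bdy B_{\BB r}(w)$ furnished by the same proposition, while keeping the whole path in $B_{8\BB r}(z)$ and keeping the error factors deterministic and uniform in $(z,w)$; this is handled by the ``distance around the annulus'' clause of Proposition~\ref{prop-dist-to-int} combined with Proposition~\ref{prop-two-set-dist}, but requires care in bookkeeping the various $\psi(t)$ terms to confirm that $\lim_{t\to\infty}\psi(t)/t=0$ is preserved after all combinations.
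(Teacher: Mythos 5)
Your strategy coincides with the paper's: the lower bound (disjoint sub-paths crossing $\bdy B_{\cdot}(z)$ and $\bdy B_{\cdot}(w)$, each controlled by Proposition~\ref{prop-dist-to-int}), the treatment of $\alpha>Q$ or $\beta>Q$ by localization and Weyl scaling, and an upper bound built by concatenating paths from Proposition~\ref{prop-dist-to-int} spliced via its ``distance around the annulus'' clause are all exactly the paper's moves.

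The one concrete point to repair is the splicing in your Step 2. The separating loop furnished by Proposition~\ref{prop-dist-to-int} lies in the open annulus $B_{\BB r}(z)\setminus B_{\BB r/e}(z)$, while your middle path runs from $\bdy B_{\BB r}(z)$ to $\bdy B_{\BB r}(w)$; since $B_{\BB r}(w)$ meets $B_{\BB r}(z)$ only at the tangent point, that path can stay entirely outside $B_{\BB r/e}(z)$ and so is not forced to intersect the loop, and the union of your three pieces need not be connected. The fix is to run the middle path between smaller circles, e.g.\ $\bdy B_{\BB r/4}(z)$ and $\bdy B_{\BB r/4}(w)$ (still Proposition~\ref{prop-two-set-dist} at scale $\BB r$): such a path must fully cross both annuli $B_{\BB r}(z)\setminus B_{\BB r/e}(z)$ and $B_{\BB r}(w)\setminus B_{\BB r/e}(w)$ and hence meets both separating loops, as do the two radial paths from $z$ and $w$. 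The paper sidesteps the middle path entirely by applying Proposition~\ref{prop-dist-to-int} with $8\BB r$ in place of $\BB r$: since $|z-w|=2\BB r<8\BB r/e$, the single loop around $B_{8\BB r}(z)\setminus B_{8\BB r/e}(z)$ encircles both $z$ and $w$ and links the two radial paths directly; the cost is that the resulting integral starts at $-\log 8$, which is corrected by a Gaussian comparison of $h_{\BB r e^{-t}}$ over $t\in[-\log 8,0]$ with its values over $t\in[0,\log 2]$ --- the same $O(1)$-scale bookkeeping you perform by absorbing the middle path into the $t=0$ portion of the integral. With the radii adjusted as above, your argument goes through.
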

 
\begin{remark} \label{remark-sublinear-error}
It will be shown in~\cite{gm-uniqueness} that every weak LQG metric is a strong LQG metric, so in particular it satisfies Axiom~\ref{item-metric-coord} with $\frk c_r = r^{\xi Q}$.
Once this is established, our proof shows that Propositions~\ref{prop-dist-to-int} and~\ref{prop-dist-to-int-pt} hold with $\psi(t) = 0$. 
\end{remark}

\begin{proof}[Proof of Proposition~\ref{prop-pt-to-circle-moment}, assuming Proposition~\ref{prop-dist-to-int}]
For $t\geq 0$, let $B_t := h_{\BB r e^{-t}}(0) - h_{\BB r}(0)$. 
Then $B$ is a standard linear Brownian motion~\cite[Section 3.1]{shef-kpz}. By Proposition~\ref{prop-dist-to-int}, for each $\zeta \in (0,1)$, it holds with superpolynomially high probability as $C\rta\infty$, uniformly over the choice of $\BB r$, that
\eqb \label{eqn-use-dist-to-int}
C^{-\zeta} \int_0^\infty e^{\xi B_t - (Q-\alpha) \xi t  - \zeta t } \,dt \leq 
\frk c_{\BB r}^{-1} \BB r^{ \alpha\xi} e^{- \xi h_{\BB r}(0)}  D_{h^\alpha}\left(  0 , \bdy B_{\BB r}(0)  \right)
\leq C^\zeta \int_0^\infty e^{\xi B_t - (Q-\alpha) \xi t + \zeta t } \,dt .
\eqe

To prove the proposition, we will use an exact formula for the laws of the integrals appearing in~\eqref{eqn-use-dist-to-int}. 
To write down such a formula, let $\wt B_s := \xi B_{s/\xi^2}$. Then $\wt B$ is a standard linear Brownian motion and $B_t = \xi^{-1} \wt B_{\xi^2 t}$.
Making the change of variables $t = s/\xi^2$ gives
\eqb \label{eqn-bm-coord-change}
\int_0^\infty e^{\xi B_t - (Q-\alpha) \xi t + \zeta t} \,dt = \frac{1}{\xi^2} \int_0^\infty e^{  \wt B_s - (Q-\alpha) s/\xi + \zeta s / \xi^2 } \,ds .
\eqe
It is shown in~\cite{dufresne-perpetuity} (see also~\cite[Example 3.3]{urbanik-functionals} with $c = (Q-\alpha)/\xi - \zeta/\xi^2$) that  
\eqb \label{eqn-bm-density}
\BB P\left[ \int_0^\infty e^{  \wt B_s - (Q-\alpha) s/\xi + \zeta s / \xi^2} \,ds \in \, dx \right] 
= b x^{-2(Q-\alpha)/\xi + 2\zeta /\xi^2  - 1} e^{-2/x} ,\quad\forall x \geq 0  ,
\eqe
where $b$ is a normalizing constant depending only on $Q,\alpha,\xi$. 
Combining the upper bound in~\eqref{eqn-use-dist-to-int} with~\eqref{eqn-bm-coord-change} and the upper tail asymptotics of the density \eqref{eqn-bm-density}, then sending $\zeta \rta 0$, shows that
\eqb\label{eqn-p2b}
\BB P\left[ \frk c_{\BB r}^{-1} \BB r^{ \alpha\xi} e^{- \xi h_{\BB r}(0)}  D_{h^\alpha}\left(  0 , \bdy B_{\BB r}(0)  \right) > C \right] 
\leq C^{-2(Q-\alpha)/\xi - o_C(1)} ,
\eqe
uniformly in $\BB r$. 
Recall that $\xi=\gamma/d_\gamma$. Multiplying both sides of~\eqref{eqn-p2b} by $p C^{p-1}$ and integrating gives the desired bound for positive moments from~\eqref{eqn-pt-to-circle-moment}. We similarly obtain the desired bound for negative moments using the lower bound in~\eqref{eqn-use-dist-to-int} and the exponential lower tail of the density~\eqref{eqn-bm-density}. 
\end{proof}

\begin{proof}[Proof of Proposition~\ref{prop-pt-to-pt-moment}, assuming Proposition~\ref{prop-dist-to-int-pt}]
The bound for positive moments in~\eqref{eqn-pt-to-pt-moment} is obtained in essentially the same way as the analogous bound in Proposition~\ref{prop-pt-to-circle-moment}. 
We apply the upper bound in Proposition~\ref{prop-dist-to-int-pt} and use the exact formula~\eqref{eqn-bm-density} to bound the integral of each of the two summands appearing on the right side of~\eqref{eqn-dist-to-int-pt-upper}, then multiply the resulting tail estimate by $p C^{p-1}$ and integrate. We use that $h_{\BB r}(z) - h_{\BB r}(w)$ is Gaussian with constant-order variance to get an estimate which depends only on $h_{\BB r}(z)$, not $h_{\BB r}(w)$. 
The bound for negative moments in~\eqref{eqn-pt-to-pt-moment} can similarly be extracted from the lower bound in Proposition~\ref{prop-dist-to-int-pt}, or can be deduced from Proposition~\ref{prop-pt-to-circle-moment} and the fact that a path from $z$ to $w$ must cross $\bdy B_{\BB r}(z)$. 
\end{proof}

It remains only to prove Propositions~\ref{prop-dist-to-int} and~\ref{prop-dist-to-int-pt}. 
We will prove Proposition~\ref{prop-dist-to-int} by applying Proposition~\ref{prop-two-set-dist} to bound the distances across and around concentric annuli surrounding 0 with dyadic radii, then summing over all of these annuli (see Figure~\ref{fig-dist-to-int} for an illustration). We will then deduce Proposition~\ref{prop-dist-to-int-pt} from Proposition~\ref{prop-dist-to-int} by considering two overlapping Euclidean disks centered at $z$ and $w$, respectively. For this purpose the statement concerning the $D_h$-distance around $B_{\BB r}(0) \setminus B_{\BB r/e}(0)$ is essential to link up paths in these two disks.

\begin{figure}[ht!]
\begin{center}
\includegraphics[scale=1]{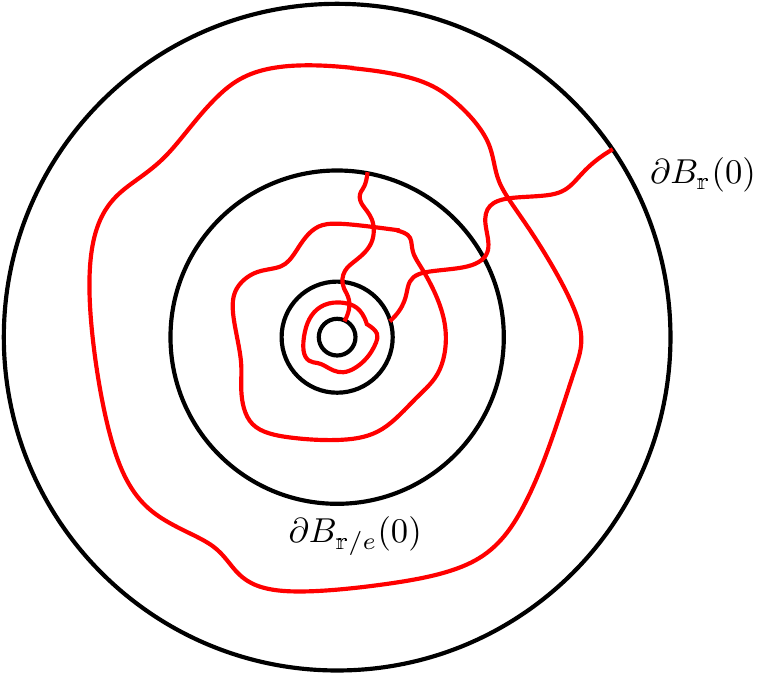} 
\caption{\label{fig-dist-to-int} To prove Proposition~\ref{prop-dist-to-int}, we use Proposition~\ref{prop-two-set-dist} to show that with high probability, the following bounds hold simultaneously for each $k\in\BB N_0$: a lower bound for the $D_h$-distance across the annulus $B_{\BB r e^{-k}}(0)\setminus B_{\BB r e^{-k-1}}(0)$; an upper bound for the $D_h$-distance around this annulus; and a lower bound for the $D_h$-distance across the larger annulus $B_{\BB r e^{-k}}(0)\setminus B_{\BB r e^{-k-2}}(0)$. 
Summing the lower bounds for the distances across these annuli leads to the lower bound in~\eqref{eqn-dist-to-int}. 
The paths involved in the upper bounds are shown in red in the figure. Concatenating all of these paths gives a path from 0 to $\bdy B_{\BB r}(0)$, which leads to the upper bound in~\eqref{eqn-dist-to-int}. 
}
\end{center}
\end{figure}

\begin{proof}[Proof of Proposition~\ref{prop-dist-to-int}] 
See Figure~\ref{fig-two-set-dist} for an illustration. The proof is divided into four steps.
\begin{enumerate}
\item We apply Proposition~\ref{prop-two-set-dist} in the annuli $\BB A_{\BB r e^{-k-1} , \BB r e^{-k}}$ for $k\in\BB N_0$ to prove upper and lower bounds for $D_h(0,\bdy B_{\BB r}(0))$ in terms of sums over such annuli.
\item Using Brownian motion estimates, we convert from sums over annuli to integrals of quantities of the form $ e^{\xi h_{\BB r e^{-t}}(z)  - \xi (Q-\alpha) t   + o_t(t) }$.
\item We show that the contribution of the small error terms in our estimates coming from sums/integrals at superpolynomially small scales is negligible.
\item We put the above pieces together to conclude the proof. 
\end{enumerate}
\medskip

\noindent\textit{Step 1: applying Proposition~\ref{prop-two-set-dist} at exponential scales.} 
We will apply Proposition~\ref{prop-two-set-dist} and take a union bound over exponential scales.
In this step we allow any value of $\alpha\in\BB R$.  

Fix a small parameter $\zeta \in (0,1)$, which we will eventually send to zero. 
By Proposition~\ref{prop-two-set-dist} and Axiom~\ref{item-metric-f} (to deal with the addition of $-\alpha\log|\cdot|$) and a union bound over all $k\in [0,C^{1/\zeta}]_{\BB Z}$, we find that with superpolynomially high probability as $C\rta\infty$, the following is true for each $k \in [0 ,  C^{1/\zeta}]_{\BB Z}$. 
\begin{enumerate}
\item The $D_{h^\alpha}$-distance from $\bdy B_{\BB r e^{-k-1}}(0)$ to $\bdy B_{ \BB r e^{-k} }(0)$ is at least $C^{-1} \frk c_{ \BB r e^{-k} } \BB r^{-\xi\alpha} \exp\left( \xi h_{\BB r e^{-k} }(0)  + \xi \alpha k   \right)  $. \label{item-annulus-perc-lower-k}
\item There is a path from $\bdy B_{ \BB r e^{-k-2} }(0)$ to $\bdy B_{ \BB r e^{-k} }(0)$ which has $D_{h^\alpha}$-length at most\\ 
$C \frk c_{ \BB r e^{-k} } \BB r^{-\xi\alpha} \exp\left(\xi h_{\BB r e^{-k} }(0)  + \xi \alpha k \right)  $. Moreover, there is also a path in $B_{ \BB r e^{-k} }(0) \setminus \ol{B_{ \BB r e^{-k-1} }(0) }$ which disconnects $\bdy  B_{ \BB r e^{-k-1} }(0)$ from $\bdy B_{ \BB r e^{-k} }(0)$ and which has $D_{h^\alpha}$-length at most\\
 $C \frk c_{ \BB r e^{-k} } \BB r^{-\xi\alpha} \exp\left(\xi h_{\BB r e^{-k} }(0)  + \xi \alpha k \right)  $. \label{item-annulus-perc-upper-k}
\end{enumerate}
To deal with the scales for which $k \geq C^{ 1/\zeta}$, we apply Proposition~\ref{prop-two-set-dist} with $ k^\zeta$ in place of $C$ and take a union bound over all such values of $k$ to find that superpolynomially high probability as $C\rta\infty$,  the above two conditions hold for each $k \in [0 , C^{1/\zeta}]_{\BB Z}$, and furthermore the following condition holds for each integer $k\geq C^{ 1/\zeta}$. 
\begin{enumerate}[2$'$.]
\setcounter{enumi}{1}
\item There is a path from $\bdy B_{\BB r e^{-k-2}}(0)$ to $\bdy B_{\BB r e^{-k} }(0)$ which has $D_{h^\alpha}$-length at most\\
 $k^\zeta\frk c_{\BB r e^{-k}} \BB r^{-\xi\alpha} \exp\left( \xi h_{ \BB r e^{-k} }(0)  + \xi \alpha k \right)$. 
Moreover, there is also a path in $B_{\BB r e^{-k}}(0) \setminus \ol{B_{\BB r e^{-k-1}}(0) }$ which disconnects $\bdy  B_{\BB r e^{-k-1}}(0)$ from $\bdy B_{\BB r e^{-k}}(0)$ and which has $D_{h^\alpha}$-length at most\\
 $k^\zeta\frk c_{\BB r e^{-k}} \BB r^{-\xi\alpha} \exp\left( \xi h_{ \BB r e^{-k} }(0)  + \xi \alpha k \right)$.  \label{item-annulus-perc-small}
\end{enumerate}

\newcommand{\refsmall}{\hyperref[item-annulus-perc-small]{$2'$}}
 
Henceforth assume that conditions~\ref{item-annulus-perc-lower-k} and~\ref{item-annulus-perc-upper-k} hold for each $k \in [0,C^{1/\zeta}]_{\BB Z}$ and condition~\refsmall\, holds for each integer $k\geq C^{1/\zeta}$, which happens with superpolynomially high probability as $C\rta\infty$.

Any path from $0$ to $\bdy B_{\BB r}(0)$ must cross each of the annuli $B_{ \BB r e^{-k }}(0) \setminus B_{ \BB r e^{-k -1}}(0)$ for $k  \in [0 , C^{1/\zeta} ]_{\BB Z}$.  
Furthermore, the union of $\{0\}$ and the paths from conditions~\ref{item-annulus-perc-upper-k} and~\refsmall\, for all $k \in \BB N_0$ contains a path from $0$ to $\bdy B_{\BB r}(0)$. 
By Theorem~\ref{thm-metric-scaling}, there is a deterministic function $\phi : [0,\infty) \rta [0,\infty)$ with $\phi(k) = o_k(k)$,  depending only on the choice of metric $D$, such that
\eqb \label{eqn-scaling-const-error}
e^{-\xi Q k - \phi(k)} \frk c_{\BB r} \leq \frk c_{\BB r e^{-k}} \leq e^{-\xi Q k + \phi(k)} \frk c_{\BB r} ,\quad\forall \BB r  >0 .
\eqe  
Summing the bounds from conditions~\ref{item-annulus-perc-lower-k} and~\ref{item-annulus-perc-upper-k} over all $k  \in [0, C^{1/\zeta}]_{\BB Z}$ and the bounds from condition~\refsmall\, over all integers $k\geq C^{1/\zeta}$ and plugging in~\eqref{eqn-scaling-const-error} shows that with superpolynomially high probability as $C\rta\infty$,
\allb \label{eqn-dist-to-sum-annulus}
&C^{-1} \frac{\frk c_{\BB r} }{ \BB r^{ \alpha\xi} } \sum_{k=0}^{\lfloor C^{1/\zeta}\rfloor} e^{\xi h_{\BB r e^{-k}}(0)  - \xi (Q-\alpha) k  - \phi(k) }   \leq   
D_{h^\alpha}\left(   0 , \bdy B_{\BB r}(0) \right) \notag\\ 
&\qquad \leq C  \frac{\frk c_{\BB r} }{ \BB r^{ \alpha\xi} }  \sum_{k=0}^{\lfloor C^{1/\zeta}\rfloor} e^{\xi h_{ \BB r e^{-k}}(0)  - \xi (Q-\alpha) k + \phi(k)  } 
+ \frac{\frk c_{\BB r} }{ \BB r^{ \alpha\xi} } \sum_{k=\lfloor C^{1/\zeta}\rfloor  +1}^\infty k^\zeta e^{\xi h_{ \BB r e^{-k}}(0)  - \xi (Q-\alpha) k + \phi(k)  } .
\alle
Furthermore, by condition~\ref{item-annulus-perc-upper-k}  for $k = 0$ the $D_{h^\alpha}$-distance around $B_{\BB r}(0) \setminus B_{\BB r/e}(0)$ is at most the right side of~\eqref{eqn-dist-to-sum-annulus}. 
\medskip

\noindent\textit{Step 2: from summation to integration.}
We now want to convert from sums to integrals in~\eqref{eqn-dist-to-sum-annulus}. 
Since $t\mapsto h_{\BB r e^{-t} }(0) - h_{\BB r}(0)$ is a standard linear Brownian motion~\cite[Section 3.1]{shef-kpz}, the Gaussian tail bound and the union bound show that with superpolynomially high probability as $C\rta\infty$, 
\eqb  \label{eqn-sum-int-bm}
\sup_{t\in [k ,k + 1]} |h_{\BB r e^{-t}}(0) - h_{\BB r e^{-k}}(0) | \leq \frac{1}{  \xi} \log C ,\quad \forall k \in \left[ 0 , C^{1/\zeta} \right]_{\BB Z} .
\eqe
Let $\psi(t) := \phi(\lfloor t \rfloor)$, where $\phi$ is as in~\eqref{eqn-scaling-const-error}. Then $\psi(t) = o_t(t)$ and if~\eqref{eqn-sum-int-bm} holds, then for each $k\in [0, C^{1/\zeta}]_{\BB Z}$, 
\allb \label{eqn-dist-to-int-pre-sum}
e^{\xi h_{ \BB r e^{-k}}(0)  - \xi (Q-\alpha) k  -\phi(k) } 
&\geq C^{-1} \int_k^{k+1} e^{\xi h_{\BB r e^{-t}}(0)  - \xi (Q-\alpha) t -\psi(t) } \, dt 
\quad \text{and} \notag\\
e^{\xi h_{ \BB r e^{-k}}(0)  - \xi (Q-\alpha) k  +\phi(k) } 
&\leq C \int_k^{k+1} e^{\xi h_{\BB r e^{-t}}(0)  - \xi (Q-\alpha) t  + \psi(t) } \, dt .
\alle 
By summing~\eqref{eqn-dist-to-int-pre-sum} over all $k\in [0, C^{1/\zeta} ]_{\BB Z}$, we obtain 
\allb \label{eqn-dist-to-int-bm}
 \sum_{k=0}^{\lfloor C^{1/\zeta}\rfloor } e^{\xi h_{ \BB r e^{-k}}(0)  - \xi (Q-\alpha) k  - \phi(k) } 
 &\geq C^{-1} \int_0^{\lfloor C^{1/\zeta}\rfloor  + 1 } e^{\xi h_{\BB r e^{-t}}(0)  - \xi (Q-\alpha) t  - \psi(t) } \, dt     \quad \text{and} \quad \notag\\
 \sum_{k=0}^{\lfloor C^{1/\zeta}\rfloor } e^{\xi h_{ \BB r e^{-k}}(0)  - \xi (Q-\alpha) k  + \phi(k) }  
&\leq C \int_0^{\lfloor C^{1/\zeta}\rfloor  +1 } e^{\xi h_{ \BB re^{-t} }(0)  - \xi (Q-\alpha) t  +  \psi(t) } \, dt  .
\alle 
\medskip

\noindent\textit{Step 3: bounding the sum of the small scales.}
To deduce our desired bounds from~\eqref{eqn-dist-to-sum-annulus} and~\eqref{eqn-dist-to-int-bm}, we now need an upper bound for $\int_{\lfloor C^{1/\zeta} \rfloor}^\infty e^{\xi h_{\BB r e^{-t} }(0)  - \xi (Q-\alpha) t + \psi(t) } \, dt $ and an upper bound for the second sum on the right side of~\eqref{eqn-dist-to-sum-annulus}. 
This is the only step where we need to assume that $\alpha  <Q$. 

Since $t\mapsto h_{ \BB r e^{-t}}(0) - h_{\BB r}(0)$ is a standard linear Brownian motion and for $q\in (0,1]$, $x\mapsto x^q$ is concave, hence subadditive, if $q \in (0,1]$ is chosen small enough that $\xi q (Q-\alpha) - \xi^2 q^2/2  > 0$, then
\alb
\BB E\left[ \left( \int_{\lfloor C^{1/\zeta}\rfloor}^\infty e^{\xi h_{ \BB r e^{-t}}(0)  - \xi (Q-\alpha) t + \psi(t)  } \,dt  \right)^q \right]
&\preceq e^{q h_{\BB r}(0)} \int_{\lfloor C^{1/\zeta}\rfloor}^\infty \exp\left(  - \left(  \xi q  (Q-\alpha) -  \frac{\xi^2 q^2}{2}  \right) t  + o_t(t) \right) \,dt  \\
&\preceq  e^{q h_{\BB r}(0)} \exp\left( - \frac12 \left(  \xi q (Q-\alpha) -  \frac{\xi^2 q^2 }{2}  \right) C^{1/\zeta} \right) ,
\ale 
where here the $o_t(t)$ and the implicit constants in $\preceq$ do not depend on $C$ or $\BB r$.
Therefore, the Chebyshev inequality shows that 
\eqb
\BB P\left[\int_{\lfloor C^{1/\zeta}\rfloor}^\infty e^{\xi h_{ \BB r e^{-t}}(0)  - \xi (Q-\alpha) t  + \psi(t)  } \,dt  > e^{ \xi h_{\BB r}(0) - C^{1/(2\zeta)}} \right]
\eqe
decays faster than any negative power of $C$. On the other hand, it is easily seen from the Gaussian tail bound that 
\eqb
\BB P\left[   \int_0^{\lfloor C^{1/\zeta}\rfloor} e^{\xi h_{\BB r e^{-t}}(0)  - \xi (Q-\alpha) t  + \psi(t)  } \, dt   < e^{\xi h_{\BB r}(0) -C^{1/(2\zeta)}}  \right]
\eqe
decays faster than any negative power of $C$. 
Hence with superpolynomially high probability as $C\rta\infty$,
\eqb \label{eqn-dist-to-int-lower-error}
 \int_0^\infty e^{\xi h_{\BB r e^{-t}}(0)  - \xi (Q-\alpha) t   + \psi(t)  } \, dt  \leq 2 \int_0^{\lfloor C^{1/\zeta}\rfloor} e^{\xi h_{\BB r e^{-t}}(0)  - \xi (Q-\alpha) t  +  \psi(t)  } \, dt .
\eqe
Similarly, we get that with superpolynomially high probability as $C\rta\infty$,  
\eqb \label{eqn-dist-to-int-upper-error}
\sum_{k= \lfloor C^{1/\zeta} \rfloor +1}^\infty  k^\zeta e^{\xi h_{\BB r e^{-k}}(0)  - \xi (Q-\alpha) k  - \phi(k)   } \leq    \int_0^\infty e^{\xi h_{ \BB r e^{-t}}(0)  - \xi (Q-\alpha) t  -  \psi(t)  } \, dt .
\eqe
\medskip

\noindent\textit{Step 4: conclusion.}
By applying \eqref{eqn-dist-to-int-bm}, \eqref{eqn-dist-to-int-lower-error}, and~\eqref{eqn-dist-to-int-upper-error} to bound the left and right sides of~\eqref{eqn-dist-to-sum-annulus}, we get that if $\alpha < Q$, then with superpolynomially high probability, the bounds~\eqref{eqn-dist-to-int} as well as the bound stated just below~\eqref{eqn-dist-to-int} (here we use the sentence just below~\eqref{eqn-dist-to-sum-annulus}) all hold with $2C^2$, say, in place of $C$. Since we are claiming that these bounds hold with superpolynomially high probability as $C\rta\infty$, this is sufficient. 

Finally, we consider the case when $\alpha  > Q$. 
Since $h_{\BB r e^{-t}}(0) - h_{\BB r}(0)$ evolves as a standard linear Brownian motion, for each $\beta \in (0, \alpha - Q)$ it is a.s.\ the case that the summand $e^{\xi h_{\BB r e^{-k}}(0) - \xi(Q-\alpha) k - \phi(k)}$ in the lower bound in~\eqref{eqn-dist-to-sum-annulus} is bounded below by $e^{\beta k}$ for large enough $k$. (How large is random). 
Since~\eqref{eqn-dist-to-sum-annulus} holds with superpolynomially high probability as $C\rta\infty$, the Borel-Cantelli lemma combined with the preceding sentence shows that a.s.\ for large enough (random) $C > 1$, we have $D_{h^\alpha}\left(   0 , \bdy B_{\BB r}(0) \right) \geq C^{-1} e^{\beta \lfloor C^{1/\zeta} \rfloor}$, which tends to $\infty$ as $C\rta\infty$. 
This shows that a.s.\ $D_{h^\alpha}(0,\bdy B_{\BB r}(0)) = \infty$. 
Since this holds a.s.\ for each rational $\BB r > 0$, it follows that a.s.\ $D_{h^\alpha}(0,z) = \infty$ for every $z \in \BB C\setminus\{0\}$. 
\end{proof}

\begin{proof}[Proof of Proposition~\ref{prop-dist-to-int-pt}]
We first observe that by Axiom~\ref{item-metric-translate}, Proposition~\ref{prop-dist-to-int} still holds with 0 replaced by any $z\in\BB C$, with the rate of convergence as $C\rta\infty$ uniform in $z$ and $\BB r$. 
 Applying the lower bound of Proposition~\ref{prop-dist-to-int} with each of $z$ and $w$ in place of $0$ immediately gives~\eqref{eqn-dist-to-int-pt-lower} since any path from $z$ to $w$ must contain disjoint sub-paths from $z$ to $\bdy B_{\BB r/2}(z)$ and from $w$ to $B_{\BB r/2}(w)$. 
Moreover, by comparing the local behavior of $D_{h^{\alpha,\beta}}$ near $z$ and near $w$ to $D_{h^\alpha}$ and $D_{h^\beta}$, respectively, we get that  
a.s.\ $D_{h^{\alpha,\beta}}(z,w) = \infty$ if either $\alpha >Q$ or $\beta >Q$.  

It remains to prove~\eqref{eqn-dist-to-int-pt-upper}. Assume $\alpha < Q$. We first apply Proposition~\ref{prop-dist-to-int} with $8\BB r$ in place of $\BB r$ to find that with superpolynomially high probability as $C\rta\infty$, there is a path $P_{z,1}$ from $z$ to $\bdy B_{8\BB r}(z)$ and a path $P_{z,2}$ in $B_{\BB r}(z)\setminus B_{8\BB r/e}(z)$ which disconnects $\bdy B_{8\BB r / e}(z)$ from $\bdy B_{8\BB r}(z)$ which each have $D_h$-length at most 
\eqbn
\int_{-\log 8}^\infty   e^{\xi h_{\BB r e^{-t}}(z)  - \xi (Q-\alpha) t  + \psi(t) }   \,dt ;
\eqen
and the same is true with $w$ in place of $z$. Since $w \in B_{8\BB r/e}(z)$, the union of the paths $P_{z,1} , P_{z,2},$ and $ P_{w,1}$ contains a path from $z$ to $w$ in $B_{8\BB r}(z)$. This gives~\eqref{eqn-dist-to-int-pt-upper} but with $-\log 8$ instead of $0$ in the lower bound of integration for the integral on the right. 

To get the estimate with the desired lower bound of integration, we use that $t\mapsto h_{\BB r e^{-t}}(z) - h_{\BB r}(z)$ is a standard two-sided linear Brownian motion. In particular, two applications of the Gaussian tail bound show that with superpolynomially high probability as $C\rta\infty$, 
\eqbn 
\sup_{t \in [-\log 8 , 0]}  h_{\BB r e^{-t}}(z)   
\leq   \inf_{t \in [0 , \log 2]}   h_{\BB r e^{-t}}(z)    +   \log C .
\eqen
Therefore, with superpolynomially high probability as $C\rta\infty$, 
\alb
\int_{-\log 8}^\infty   e^{\xi h_{\BB r e^{-t}}(z)  - \xi (Q-\alpha) t  + \psi(t) }   \,dt 
&\leq \int_0^\infty   e^{\xi h_{\BB r e^{-t}}(z)  - \xi (Q-\alpha) t  +  \psi(t) } \,dt \notag \\ 
&\qquad + C^\xi \int_0^{\log 2}   e^{\xi h_{\BB r e^{-t}}(z)  - \xi (Q-\alpha) t  + \psi(t) }  \,dt .
\ale
Combining this with the analogous estimate with $w$ in place of $z$ and the aforementioned analog of~\eqref{eqn-dist-to-int-pt-upper} with $-\log 8$ instead of 0 in the lower bound of integration gives~\eqref{eqn-dist-to-int-pt-upper}.
\end{proof}

Although it is not needed for the proofs of Propositions~\ref{prop-dist-to-int} and~\ref{prop-dist-to-int-pt}, we record the following generalization of Proposition~\ref{prop-diam-moment} which tells us in particular that $D_{h^\alpha}$ induces the Euclidean topology on $\BB C$ when $Q>2$ and $\alpha <Q$ (which is a stronger statement than just that $D_{h^\alpha}(0,z) < \infty$ for every $z\in\BB C$).

\begin{prop} \label{prop-diam-moment-alpha}
Let $h$, $\alpha$, $h^\alpha$, and $D_{h^\alpha}$ be as in Proposition~\ref{prop-dist-to-int}. 
If $Q = 2/\gamma + \gamma/2 > 2$ and $\alpha \in (-\infty, Q)$, then for each $ -\infty < p < \min\{ \frac{4 d_\gamma}{\gamma^2} , \frac{2 d_\gamma }{\gamma}(Q-\alpha)  \}$, there exists $C_{\alpha,p} >  0$ such that for each $\BB r > 0$, 
\eqb \label{eqn-diam-moment-alpha}
\BB E\left[\left( e^{-\xi h_{\BB r}(0)}  \frk c_{\BB r}^{-1} \BB r^{ \alpha\xi} \sup_{z,w\in B_{\BB r}(0) } D_{h^\alpha}(z,w) \right)^p \right] \leq C_{\alpha,p} .
\eqe
In particular, a.s.\ $D_{h^\alpha}$ induces the Euclidean topology on $\BB C$. 
\end{prop}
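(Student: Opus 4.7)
The negative moments are straightforward: pick any two fixed disjoint connected compact sets $K_1, K_2 \subset B_{\BB r}(0) \setminus \{0\}$; then $\sup_{z,w \in B_{\BB r}(0)} D_{h^\alpha}(z,w) \geq D_{h^\alpha}(K_1, K_2)$, and since $-\alpha\log|\cdot|$ is bounded on a neighborhood of $K_1 \cup K_2$, Axiom~\ref{item-metric-f} combined with Proposition~\ref{prop-two-set-dist} gives the required lower bound in terms of $\frk c_{\BB r} e^{\xi h_{\BB r}(0)}$.

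For the positive moments, I plan to decompose dyadically around the log singularity. Define $A_k := \BB A_{2^{-k-1}\BB r,\, 2^{-k}\BB r}(0)$ and the slightly enlarged $A_k' := \BB A_{2^{-k-2}\BB r,\, 2^{-k+1}\BB r}(0)$ for $k \geq 0$. For any $z \in B_{\BB r}(0)\setminus\{0\}$, letting $k(z)$ be the index with $z \in A_{k(z)}$, the triangle inequality applied to a path going from $0$ via $\bdy B_{2^{-k(z)}\BB r}(0)$ gives
\[
D_{h^\alpha}(0,z) \leq W_{k(z)} + Y_{k(z)},
\]
where $W_k := D_{h^\alpha}(0, \bdy B_{2^{-k}\BB r}(0))$ and $Y_k := \sup_{y_1,y_2 \in A_k} D_{h^\alpha}(y_1, y_2; A_k')$. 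Since $\sup_{z,w \in B_{\BB r}(0)} D_{h^\alpha}(z,w) \leq 2\sum_{k\geq 0}(W_k + Y_k)$, it will suffice to bound the $L^p$-norm of each summand, after dividing by $\frk c_{\BB r} \BB r^{-\xi\alpha} e^{\xi h_{\BB r}(0)}$, and to sum the resulting series. On $A_k'$ the function $-\alpha\log|\cdot|$ has oscillation $O_\alpha(1)$ and value $-\alpha\log\BB r + \alpha k\log 2 + O_\alpha(1)$, so Axiom~\ref{item-metric-f} lets me pull out a factor $C_\alpha \BB r^{-\xi\alpha} 2^{\xi\alpha k}$ and reduce $Y_k$ to a $D_h$-diameter handled by Proposition~\ref{prop-diam-moment} at scale $2^{-k}\BB r$ (valid for $p < 4d_\gamma/\gamma^2$); for $W_k$ I apply Proposition~\ref{prop-pt-to-circle-moment} directly at that scale (valid for $p < (2d_\gamma/\gamma)(Q-\alpha)$).

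The hard part will be a Hölder decoupling. By Theorem~\ref{thm-metric-scaling}, $\frk c_{2^{-k}\BB r}/\frk c_{\BB r} = 2^{-\xi Q k + o(k)}$, so each rescaled summand factors as $2^{-\xi(Q-\alpha) k + o(k)} \cdot e^{\xi B_k} \cdot X_k$, where $B_k := h_{2^{-k}\BB r}(0) - h_{\BB r}(0)$ is Gaussian with variance $k\log 2$ and $X_k$ is a scale-invariant random variable whose $L^q$-norm is bounded uniformly in $k$ for $q$ in the range provided by whichever of Propositions~\ref{prop-diam-moment} or~\ref{prop-pt-to-circle-moment} is in play. Hölder's inequality with conjugate exponents $s, s' > 1$ gives
\[
\BB E[W_k^p] \leq 2^{\left(-\xi p(Q-\alpha) + \tfrac{1}{2} p^2 \xi^2 s\right) k + o(k)} \cdot \BB E[X_k^{p s'}]^{1/s'}.
\]
At $s = 1$ the exponent of $2$ equals $-\xi p[(Q-\alpha) - \tfrac{1}{2}p\xi]$, which is strictly negative precisely when $p < 2(Q-\alpha)/\xi = (2d_\gamma/\gamma)(Q-\alpha)$---our hypothesis. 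By continuity I can pick $s > 1$ slightly to keep the exponent negative while simultaneously forcing $ps'$ into the allowed moment range, so $X_k \in L^{ps'}$ uniformly in $k$. The same trick works for $Y_k$ with the threshold $4d_\gamma/\gamma^2$, yielding $\BB E[W_k^p], \BB E[Y_k^p] \leq C \cdot 2^{-\eta k}$ for some $\eta > 0$. Minkowski's inequality (for $p \geq 1$) or subadditivity of $x \mapsto x^p$ (for $p \in (0,1]$) then sums the geometric series.

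Finally, for the topology statement, Axiom~\ref{item-metric-f} with the continuous function $-\alpha\log|\cdot|$ makes $D_{h^\alpha}$ locally bi-Lipschitz equivalent to $D_h$ on $\BB C\setminus\{0\}$, so the two metrics induce the same topology there. For continuity at $0$, applying the moment bound just proved at scale $\BB r$ gives $\BB E[\sup_{z \in B_{\BB r}(0)} D_{h^\alpha}(0,z)^p] \preceq \BB r^{p\xi(Q-\alpha) + o(1)} \to 0$ as $\BB r \to 0$ since $\alpha < Q$; Borel--Cantelli along a dyadic subsequence plus monotonicity forces this sup to tend to $0$ almost surely, giving continuity of the identity map at $0$, while $D_{h^\alpha}(0,\bdy B_c(0)) > 0$ for each fixed $c > 0$ (as the minimum of a positive continuous function on a compact set) yields continuity of its inverse.
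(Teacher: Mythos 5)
Your overall architecture is the paper's: a dyadic decomposition around the singularity, a moment bound for each scale obtained by separating the circle-average factor $e^{\xi B_k}$ from a rescaled, scale-invariant quantity $X_k$, and a geometric summation via subadditivity or Minkowski. (The paper uses only the annulus diameters, bounding $\sup_{z,w\in B_{\BB r}(0)}D_{h^\alpha}(z,w)$ by $\sum_k \operatorname{diam} A_k$ and invoking Proposition~\ref{prop-diam-moment} alone; your extra point-to-circle terms $W_k$ are harmless.) The negative-moment argument and the topology argument at the end are fine, modulo a missing $-p^2\xi^2/2$ in the exponent of $\BB r$ which does not affect the conclusion for small $p$.

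However, the ``H\"older decoupling'' step has a genuine quantitative gap that loses roughly half of the claimed moment range. You need simultaneously $-\xi p(Q-\alpha)+\tfrac12 p^2\xi^2 s<0$, i.e.\ $s<2(Q-\alpha)/(p\xi)$, and $ps'<p_{\max}$, i.e.\ $s>p_{\max}/(p_{\max}-p)$; note that taking ``$s>1$ slightly'' sends the conjugate exponent $s'$ to $\infty$, not to $1$, so it pushes $ps'$ \emph{out of} the allowed range. For the $W_k$ terms, where $p_{\max}=2(Q-\alpha)/\xi$, the two constraints are compatible only when $p<(Q-\alpha)/\xi$, half of the threshold $\frac{2d_\gamma}{\gamma}(Q-\alpha)$; a similar failure occurs for the $Y_k$ terms as $p$ approaches either threshold. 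The fix is to drop H\"older entirely and use exact independence: $X_k$ is a function of the recentered field $(h-h_{2^{-k}\BB r}(0))|_{B_{2^{-k+1}\BB r}(0)}$, which is independent of the increment $B_k=h_{2^{-k}\BB r}(0)-h_{\BB r}(0)$ (radial/lateral decomposition of the GFF plus independent increments of the radial Brownian motion). Then $\BB E[e^{p\xi B_k}X_k^p]=\BB E[e^{p\xi B_k}]\,\BB E[X_k^p]$, the Gaussian factor contributes exactly $2^{p^2\xi^2 k/2}$, and the summand decays like $2^{-(\xi p(Q-\alpha)-p^2\xi^2/2)k+o(k)}$, which is summable on the full claimed range $p<\min\{4d_\gamma/\gamma^2,\frac{2d_\gamma}{\gamma}(Q-\alpha)\}$. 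This factorization is precisely what the paper does in~\eqref{eqn-annulus-moment}.
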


We note that the range of moments $ -\infty < p < \min\{ \frac{4 d_\gamma}{\gamma^2} , \frac{2 d_\gamma }{\gamma}(Q-\alpha)  \}$ for the $D_{h^\alpha}$-diameter of $\BB D$ appearing in Proposition~\ref{prop-diam-moment-alpha} is the same as the range of moments for the $\mu_{h^\alpha}$-mass of $\BB D$, but scaled by $d_\gamma$; see, e.g.,~\cite[Lemma A.3]{ghs-dist-exponent}. This is natural from the perspective that $d_\gamma$ is the scaling exponent relating $\gamma$-LQG distances and areas. 

\begin{proof}[Proof of Proposition~\ref{prop-diam-moment-alpha}]
On $B_{\BB r}(0) \setminus B_{\BB r/2}(0)$,  we have that $-\alpha\log|\cdot|$ is bounded above and below by $-\alpha\log\BB r$ times constants depending only on $\alpha$. Therefore, the existence of negative moments is immediate from Axiom~\ref{item-metric-f} and Proposition~\ref{prop-diam-moment} applied with $U = \BB D\setminus B_{1/2}(0)$. 

To get the desired positive moments, for $k\in \BB N_0$ let $A_k $ be the annulus $B_{\BB r e^{-k }}(0) \setminus B_{\BB r e^{-k-1}}(0)$.
The random variable $h_{\BB r e^{-k}}(0) - h_{\BB r}(0)$ is Gaussian with variance $k$, so for $p > 0$, 
\eqb \label{eqn-exp-circle-moment}
\BB E\left[ e^{ p \xi  \left( h_{\BB r e^{-k}} (0) - h_{\BB r}(0) \right) }  \right] = e^{p^2\xi^2 k/2 } ,\quad\forall p > 0 .
\eqe 
By Proposition~\ref{prop-diam-moment} (applied with $K = A_0$, $U = \BB C$, and $\BB r e^{-k}$ in place of $\BB r$),  
\eqb \label{eqn-scaled-diam-moment}
\BB E\left[ \left(  \frk c_{\BB r e^{-k}}^{-1} e^{-\xi h_{\BB r e^{-k}}(0)} e^{-\alpha \xi k} \BB r^{\alpha k} \sup_{z,w\in A_k}   D_{h^\alpha}(z,w)       \right)^p \right] \preceq 1 ,\quad \forall p   < \frac{4d_\gamma}{\gamma^2} .
\eqe
By~\eqref{eqn-exp-circle-moment} and~\eqref{eqn-scaled-diam-moment} and since $(h - h_{\BB r e^{-k}}(0))|_{A_k}$ is independent from $h_{\BB r e^{-k}}(0) - h_{\BB r}(0)$, we find that for $p\in (0,4d_\gamma/\gamma^2)$,
\allb \label{eqn-annulus-moment}
&\BB E\left[ \left( e^{-\xi h_{\BB r}(0)}  \frk c_{\BB r}^{-1} \BB r^{ \alpha\xi} \sup_{z,w\in A_k } D_{h^\alpha}(z,w) \right)^p \right] \notag\\ 
&\qquad= \left(\frac{\frk c_{\BB r e^{-k}}}{\frk c_{\BB r}} \right)^p e^{ p \alpha \xi k} 
\BB E\left[ e^{ p \xi  \left( h_{\BB r e^{-k}} (0) - h_{\BB r}(0) \right) }  \right] 
\BB E\left[ \left(  \frk c_{\BB r e^{-k}}^{-1} e^{-\xi h_{\BB r e^{-k}}(0)} e^{-\alpha \xi k} \BB r^{\alpha k} \sup_{z,w\in A_k}   D_{h^\alpha}(z,w)       \right)^p \right]  \notag\\ 
&\qquad\leq \exp\left(    - \left( \xi p (Q-\alpha) - \frac{p^2 \xi^2}{2}   \right) k  + o_k(k)  \right)  ,
\alle  
at a rate depending only on $\alpha, p$. Note that in the last line we used Theorem~\ref{thm-metric-scaling} to bound $\frk c_{\BB r e^{-k}} / \frk c_{\BB r}$. 

The quantity inside the exponential on the right side of~\eqref{eqn-annulus-moment} is negative provided $p < \min\{ \frac{4 d_\gamma}{ \gamma^2} , \frac{2 d_\gamma}{\gamma}(Q-\alpha)  \}$ (recall that $\xi = \gamma/d_\gamma$). 
For $0 < p  < \min\{1 ,\frac{2 d_\gamma }{\gamma}(Q-\alpha)\} $, the function $x\mapsto x^p$ is concave, hence subadditive, so summing~\eqref{eqn-annulus-moment} over all $k \in \BB N_0$ gives
\allb \label{eqn-diam-alpha-sum}
\BB E\left[ \left( e^{-\xi h_{\BB r}(0)}  \frk c_{\BB r}^{-1} \BB r^{ \alpha\xi} \sup_{z,w\in B_{\BB r}(0) } D_{h^\alpha}(z,w) \right)^p \right] 
&\leq \sum_{k=0}^\infty\BB E\left[ \left( e^{-\xi h_{\BB r}(0)}  \frk c_{\BB r}^{-1} \BB r^{ \alpha\xi} \sup_{z,w\in A_k } D_{h^\alpha}(z,w) \right)^p \right] \notag \\
&\preceq \sum_{k= 0}^\infty \exp\left(    - \left( \xi p (Q-\alpha) - \frac{p^2 \xi^2}{2}   \right) k  + o_k(k)  \right)  \notag \\
&\preceq 1   .
\alle 
This gives~\eqref{eqn-diam-moment-alpha} in the case when  $0 < p  < \min\{1 ,\frac{2 d_\gamma}{\gamma}(Q-\alpha)\} $.  
In the case when $1 \leq p <  \min\{ \frac{4 d_\gamma}{ \gamma^2} , \frac{2 d_\gamma}{\gamma}(Q-\alpha)  \}$,~\eqref{eqn-diam-moment-alpha} follows from a similar calculation with the triangle inequality for the $L^p$ norm used in place of sub-additivity. 

Finally, we know that the restriction of $D_{h^\alpha}$ to $\BB C\setminus \{0\}$ induces the Euclidean topology (see the discussion just above Theorem~\ref{thm-pt-to-circle-moment}), so to check that that $D_{h^\alpha}$ induces the Euclidean topology, we need to show that a.s.\ $\sup_{z,w\in B_{e^{-k}}(0)} D_{h^\alpha}(z,w) \rta 0$ as $k \rta\infty$.
This follows from the bound~\eqref{eqn-diam-alpha-sum} applied with $\BB r =1$ and the Borel-Cantelli lemma. 
\end{proof}

\subsection{H\"older continuity}
\label{sec-holder}

We will prove the following more quantitative version of Theorem~\ref{thm-optimal-holder} which is required to be uniform across scales.

\begin{prop} \label{prop-holder-uniform}
Fix a compact set $K\subset\BB C$ and exponents $\chi \in (0,\xi(Q-2))$ and $\chi' > \xi(Q+2)$. For each $\BB r > 0$, it holds with polynomially high probability as $\ep\rta 0$, at a rate which is uniform in $\BB r$, that
\eqb \label{eqn-holder-uniform}
\left|\frac{u-v}{\BB r} \right|^{\chi'} \leq  \frk c_{\BB r}^{-1} e^{-\xi h_{\BB r}(0)}  D_h\left( u,v  \right) \leq \left|\frac{u-v}{\BB r} \right|^\chi , 
 \quad\forall u,v\in \BB r K \: \text{with} \: |u-v| \leq \ep \BB r .
\eqe
\end{prop}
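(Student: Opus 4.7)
The plan is to establish the bounds uniformly at all dyadic scales $2^{-k}\BB r$, $k\in\BB N$, and then chain them via the triangle inequality. For each $k$, let $\mcl Z_k$ be a $2^{-k-1}\BB r$-net of $\BB r K$ with $|\mcl Z_k|=O(2^{2k})$, chosen so that any two points $u,v\in\BB r K$ with $|u-v|\leq 2^{-k-1}\BB r$ lie simultaneously in $B_{2^{-k}\BB r}(z)$ for some $z\in\mcl Z_k$. For the upper bound, I would apply Proposition~\ref{prop-diam-moment} (translated by $z$ via Axiom~\ref{item-metric-translate}, with $\BB r$ replaced by $2^{-k}\BB r$, and $K,U$ replaced by suitable concentric balls) to obtain, for each $p\in(0,4d_\gamma/\gamma^2)$, a uniform-in-$k,z,\BB r$ bound
\[
\BB E\bigl[\bigl(\frk c_{2^{-k}\BB r}^{-1} e^{-\xi h_{2^{-k}\BB r}(z)}\, \op{diam}(B_{2^{-k-1}\BB r}(z); B_{2^{-k}\BB r}(z))\bigr)^p\bigr]\leq C_p .
\]
Using Theorem~\ref{thm-metric-scaling} to write $\frk c_{2^{-k}\BB r}/\frk c_{\BB r}=2^{-k\xi Q+o_k(k)}$, and using the independence of the normalized diameter from $Y_z:=h_{2^{-k}\BB r}(z)-h_{\BB r}(0)$ (a Gaussian with variance $k\log 2+O(1)$, uniformly in $z\in\BB r K$ and $\BB r$ by scaling) to integrate out $Y_z$ through its moment generating function, this should give
\[
\BB E\bigl[\bigl(\frk c_{\BB r}^{-1} e^{-\xi h_{\BB r}(0)}\,\op{diam}(B_{2^{-k-1}\BB r}(z))\bigr)^p\bigr]\leq 2^{k(-p\xi Q+p^2\xi^2/2)+o_k(k)} .
\]
Markov's inequality with threshold $2^{-k\chi}$ and a union bound over $\mcl Z_k$ then give a failure probability bounded by $2^{k(2+p\chi-p\xi Q+p^2\xi^2/2)+o_k(k)}$. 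Optimizing at $p^*=(\xi Q-\chi)/\xi^2$ makes the exponent equal to $2-(\xi Q-\chi)^2/(2\xi^2)$, which is strictly negative precisely when $\chi<\xi(Q-2)$; at the critical value $p^*=2/\xi=2d_\gamma/\gamma$ lies in the admissible range $(0,4d_\gamma/\gamma^2)$ because $\gamma<2$. Summing the resulting geometric series over $k\geq\log_2\ep^{-1}$ yields polynomial-in-$\ep$ decay.

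For the lower bound I would use Proposition~\ref{prop-pt-to-circle-moment} with $\alpha=0$ (translated by $z$, at scale $2^{-k}\BB r$) to get $\BB E[(\frk c_{2^{-k}\BB r}^{-1} e^{-\xi h_{2^{-k}\BB r}(z)} D_h(z,\bdy B_{2^{-k}\BB r}(z)))^{-q}]\leq C_q$ for every $q>0$. The same independence-plus-Gaussian calculation gives
\[
\BB P\bigl[\exists z\in\mcl Z_k:\,D_h(z,\bdy B_{2^{-k}\BB r}(z))<2^{-k\chi'}\frk c_{\BB r} e^{\xi h_{\BB r}(0)}\bigr]\leq 2^{k(2+q(\xi Q-\chi')+q^2\xi^2/2)+o_k(k)} .
\]
Optimizing at $q^*=(\chi'-\xi Q)/\xi^2$ yields exponent $2-(\chi'-\xi Q)^2/(2\xi^2)$, strictly negative iff $\chi'>\xi(Q+2)$. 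Chaining (upper bound): on the good event, if $|u-v|\in(2^{-k-2}\BB r,2^{-k-1}\BB r]$, then $u,v\in B_{2^{-k}\BB r}(z)$ for some $z\in\mcl Z_k$ and $D_h(u,v)\leq \op{diam}(B_{2^{-k}\BB r}(z))\leq 2^{-k\chi''}\frk c_{\BB r}e^{\xi h_{\BB r}(0)}$ for a slight enlargement $\chi''\in(\chi,\xi(Q-2))$; the $O(1)$ geometric constant is absorbed into the exponent using $|u-v|/\BB r\leq\ep$. Chaining (lower bound): picking $z\in\mcl Z_{k+3}$ with $|u-z|\leq 2^{-k-4}\BB r$ forces $v$ to lie outside $B_{2^{-k-3}\BB r}(z)$, so any path from $u$ to $v$ must cross $\bdy B_{2^{-k-3}\BB r}(z)$, giving $D_h(u,v)\geq D_h(z,\bdy B_{2^{-k-3}\BB r}(z))-D_h(u,z)$, with the last term absorbed via the upper bound at a much finer dyadic scale.

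The main obstacle is the decoupling of the normalized local diameter from the circle average $h_{2^{-k}\BB r}(z)$: without it, a naive H\"older split loses a constant and produces an exponent strictly smaller than $\xi(Q-2)$. To achieve it, I would use the Markov property of the whole-plane GFF together with Weyl scaling (Axiom~\ref{item-metric-f}): on $B_{2^{-k}\BB r}(z)$ decompose $h=h_\varphi+h_\psi$ with $h_\varphi$ the harmonic extension of $h|_{\bdy B_{2^{-k}\BB r}(z)}$ and $h_\psi$ an independent zero-boundary GFF, so that $D_h=e^{\xi h_\varphi}\cdot D_{h_\psi}$ on this ball and $h_{2^{-k}\BB r}(z)=h_\varphi(z)$. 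Since Gaussian Dirichlet data on the boundary circle decompose orthogonally into their mean and a mean-zero part, the corresponding harmonic extensions are independent, which makes the ``local scale'' $e^{\xi h_\varphi(z)}$ factor out cleanly and leaves a remainder independent of $h_\varphi(z)=h_{2^{-k}\BB r}(z)$. Keeping this remainder in the precise form to which Proposition~\ref{prop-diam-moment} (stated for a whole-plane GFF plus a continuous function) applies requires a little bookkeeping with harmonic fluctuations, which is handled by standard Gaussian tail bounds for $\sup_{B_{2^{-k-1}\BB r}(z)}|h_\varphi-h_\varphi(z)|$.
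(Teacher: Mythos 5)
Your upper bound is correct and is essentially the paper's argument: the paper's Lemma~\ref{lem-ep-diam} performs exactly the moment computation you describe (independence of the normalized internal diameter of $B_{\ep\BB r}(z)$ from the circle average $h_{2\ep\BB r}(z)-h_{\BB r}(z)$, via Axioms~\ref{item-metric-local} and~\ref{item-metric-f}, followed by the Gaussian moment generating function and Theorem~\ref{thm-metric-scaling}), arriving at the same exponent $\ep^{\xi Q p - \xi^2 p^2/2}$, the same optimizer $p^*=(\xi Q-\chi)/\xi^2$, and the same union bound over a lattice at each dyadic scale (Lemma~\ref{lem-holder-upper}). Your decoupling mechanism in the last paragraph is more elaborate than necessary --- the orthogonal decomposition of the boundary data is not needed, since the circle average $h_{2\ep\BB r}(z)$ is already independent of the field $(h-h_{2\ep\BB r}(z))|_{B_{2\ep\BB r}(z)}$ that determines the normalized internal metric --- but it is not wrong.

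The lower bound, however, has a genuine gap in the chaining step. You write $D_h(u,v)\geq D_h(z,\bdy B_{2^{-k-3}\BB r}(z)) - D_h(u,z)$ and propose to absorb $D_h(u,z)$ using the H\"older \emph{upper} bound at a finer scale. But the upper bound only gives $D_h(u,z)\leq 2^{-(k+4)\chi}\frk c_{\BB r}e^{\xi h_{\BB r}(0)}$ (up to constants), while the quantity you are trying to produce is of order $2^{-k\chi'}\frk c_{\BB r}e^{\xi h_{\BB r}(0)}$ with $\chi' > \xi(Q+2) > \xi(Q-2) > \chi$. Hence for large $k$ the subtracted term $D_h(u,z)$ is typically \emph{much larger} than the target lower bound (and than $D_h(z,\bdy B_{2^{-k-3}\BB r}(z))$ itself, which is only of order $2^{-ks}$ for $s$ near $\chi'$), so the difference is negative and the inequality yields nothing. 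Taking $z$ in a net fine enough that $2^{-m\chi}\ll 2^{-k\chi'}$ does not rescue the argument, because the net would then contain $2^{2m}$ points with $m\gtrsim k\chi'/\chi$, destroying the union bound. The correct fix --- and what the paper does in Lemma~\ref{lem-ep-cross} and Lemma~\ref{lem-holder-inverse} --- is to avoid the subtraction entirely: lower-bound $D_h(u,v)$ by the set-to-set crossing distance $D_h\left(B_{\ep\BB r}(z),\bdy B_{2\ep\BB r}(z)\right)$ for a lattice point $z$ with $u\in B_{\ep\BB r}(z)$ and $v\notin B_{2\ep\BB r}(z)$, since any path from $u$ to $v$ must cross this annulus. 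Proposition~\ref{prop-two-set-dist} supplies all negative moments of this crossing distance uniformly in $z$ and the scale, and the same Gaussian decoupling computation then gives the failure exponent $(s-\xi Q)^2/(2\xi^2)$, which beats the union bound exactly when $\chi'>\xi(Q+2)$.
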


We will actually prove a slightly stronger version of the upper bound for $D_h$ in Proposition~\ref{prop-holder-uniform}, which bounds internal distances relative to a small neighborhood of $u$ instead of just distances along paths in all of $\BB C$; see Lemma~\ref{lem-holder-upper} just below. 
This stronger version is used in~\cite{gm-uniqueness}. 

For the proof of Proposition~\ref{prop-holder-uniform}, we assume that $Q >2$ and we fix a compact set $K\subset\BB C$. 
The basic idea of the proof of the upper bound in~\eqref{eqn-holder-uniform} is to apply Proposition~\ref{prop-diam-moment} to Euclidean balls of radius $\ep$ and take a union bound over many such Euclidean balls which cover $K$.
The basic idea for the proof of the lower bound in~\eqref{eqn-holder-uniform} is to apply the lower bound in Proposition~\ref{prop-two-set-dist} to lower bound the $D_h$-distance across Euclidean annuli of the form $B_{2\ep}(z) \setminus B_\ep(z)$, then take a union bound over many such annuli whose inner balls cover $K$. 
We first prove an upper bound for $D_h$-distances in terms of Euclidean distances. For this purpose we will use the following consequence of Propositions~\ref{prop-diam-moment} and~\ref{prop-internal-moment}.

\begin{lem} \label{lem-ep-diam}   
For each $s  \in (0,\xi Q)$, each $\BB r>0$, and each $z \in \BB r K $,
\eqb \label{eqn-ep-diam}
\BB P\left[ \sup_{u,v \in B_{\ep \BB r }(z)} D_h\left( u,v ; B_{2\ep \BB r}(z) \right) \leq \ep^s \frk c_{\BB r} e^{\xi h_{\BB r}(0)} \right] \geq 1 -    \ep^{\tfrac{(  \xi Q - s)^2}{2\xi^2} + o_\ep(1)}   , \quad \text{as $\ep\rta 0$},
\eqe  
uniformly over the choices of $\BB r$ and $z\in \BB r K$. 
Furthermore, if we let $S^{\ep \BB r}(z)$ be the square of side length $\ep \BB r$ centered at $z$, then for $\BB r > 0$ and $z\in \BB r K$, the $D_h$-internal diameter of $S^{\ep \BB r}(z)$ satisfies
\eqb \label{eqn-ep-diam-square}
\BB P\left[ \sup_{u,v \in S^{\ep \BB r}(z)} D_h\left( u,v ; S^{ \ep \BB r}(z) \right) \leq \ep^s \frk c_{\BB r} e^{\xi h_{\BB r}(0)} \right] \geq 1 -    \ep^{\tfrac{(  \xi Q - s)^2}{2\xi^2} + o_\ep(1)}  , \quad \text{as $\ep\rta 0$},
\eqe  
uniformly over the choices of $\BB r$ and $z\in \BB r K$. 
\end{lem}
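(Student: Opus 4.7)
The plan is to deduce both bounds from Propositions \ref{prop-diam-moment} and \ref{prop-internal-moment} (applied with ``$\BB r$'' replaced by $\ep\BB r$) combined with the Gaussian tail of the GFF circle averages. First I would introduce the scaled random variable
\[
Y := \frk c_{\ep\BB r}^{-1} e^{-\xi h_{\ep\BB r}(z)} \sup_{u,v \in B_{\ep\BB r}(z)} D_h\!\left(u,v;B_{2\ep\BB r}(z)\right) .
\]
By translation invariance (Axiom~\ref{item-metric-translate}) and Proposition~\ref{prop-diam-moment} applied with $K = \ol{B_1(0)}$, $U = B_2(0)$, and $\ep\BB r$ in place of $\BB r$, the moments $\BB E[Y^p]$ are bounded by a constant $C_p$ that is independent of $\ep$, $\BB r$, and $z\in\BB r K$, for every $p < 4d_\gamma/\gamma^2$. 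Using Theorem~\ref{thm-metric-scaling} to write $\frk c_{\ep\BB r}/\frk c_{\BB r} = \ep^{\xi Q + o_\ep(1)}$, the event appearing in \eqref{eqn-ep-diam} becomes $\{Y \le \ep^{s-\xi Q + o_\ep(1)} e^{-\xi F}\}$, where $F := h_{\ep\BB r}(z) - h_{\BB r}(0)$ is a centered Gaussian of variance $\log(1/\ep) + O(1)$ (uniformly in $z\in\BB r K$ since $K$ is compact).

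With $\alpha := Q - s/\xi > 0$, the next step is to balance the Gaussian tail of $F$ against the polynomial tail of $Y$ given by the moment bound. For a parameter $a < \alpha$ to be optimized, I would split on $\{F > a\log(1/\ep)\}$: the Gaussian tail gives $\BB P[F > a\log(1/\ep)] \le \ep^{a^2/2 + o_\ep(1)}$, while on the complementary event $e^{-\xi F} \ge \ep^{\xi a}$, so the failure event implies $Y > \ep^{-\xi(\alpha - a) + o_\ep(1)}$. Markov's inequality with the moment bound, in the limit as $p \nearrow 4d_\gamma/\gamma^2$, yields $\BB P[Y > \ep^{-\xi(\alpha - a) + o_\ep(1)}] \le \ep^{(4/\gamma)(\alpha - a) + o_\ep(1)}$. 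Allowing $a$ to tend to $\alpha$ appropriately as $\ep \to 0$, both terms become $\ep^{\alpha^2/2 + o_\ep(1)} = \ep^{(\xi Q - s)^2/(2\xi^2) + o_\ep(1)}$. The uniformity in $\BB r$ and in $z\in \BB r K$ is inherited from the uniform constants in Proposition~\ref{prop-diam-moment} and from the fact that $\mathrm{Var}(F) = \log(1/\ep) + O(1)$ uniformly for $z\in\BB r K$.

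For the square estimate~\eqref{eqn-ep-diam-square}, I would run the same argument verbatim, with Proposition~\ref{prop-internal-moment} replacing Proposition~\ref{prop-diam-moment} to produce the analogous moment bound for the $D_h$-internal diameter of the square $S^{\ep\BB r}(z)$. The main obstacle is the optimization step: since the moment bound on $Y$ is only polynomial of order $4d_\gamma/\gamma^2$, reaching the sharp Gaussian exponent $\alpha^2/2$ requires that the parameter $a$ approach $\alpha$ at a rate that simultaneously keeps $(4/\gamma)(\alpha - a)$ above $\alpha^2/2$ up to an $o_\ep(1)$ correction and keeps the Gaussian exponent $a^2/2$ within $o_\ep(1)$ of $\alpha^2/2$. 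Making this two-sided squeeze work is the delicate point; the exponent $4d_\gamma/\gamma^2 = 4/(\gamma\xi)$ (and hence the number $4/\gamma$ appearing in the Markov bound) is exactly large enough to absorb the mismatch into $o_\ep(1)$, and this is precisely why the sharp Gaussian constant $(\xi Q - s)^2/(2\xi^2)$ appears in the answer.
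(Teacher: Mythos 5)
Your overall strategy (reduce to the moment bounds of Propositions~\ref{prop-diam-moment} and~\ref{prop-internal-moment} at scale $\ep\BB r$ plus the Gaussian behavior of the circle-average increment) is the right one, but the final optimization step contains a genuine error. After your split on $\{F > a\log\ep^{-1}\}$ the failure probability is bounded by $\ep^{a^2/2 + o_\ep(1)} + \ep^{(4/\gamma)(\alpha-a)+o_\ep(1)}$, i.e.\ by $\ep^{\min\{a^2/2,\,(4/\gamma)(\alpha-a)\}+o_\ep(1)}$. For this minimum to equal $\alpha^2/2 - o_\ep(1)$ you would need simultaneously $a \ge \alpha - o_\ep(1)$ and $\alpha - a \ge \gamma\alpha^2/8 - o_\ep(1)$, which is impossible for $\alpha>0$; letting $a\to\alpha$ sends the second exponent to $0$, not to $\alpha^2/2$. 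The best a single union-bound split can give is $\max_a \min\{a^2/2,(4/\gamma)(\alpha-a)\}$, attained at the $a$ where the two exponents balance, and this is strictly smaller than the claimed $(\xi Q-s)^2/(2\xi^2)$. The "two-sided squeeze" you flag as the delicate point does not in fact close.

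The paper's proof avoids this by never decoupling the two sources of randomness with a union bound. By Axioms~\ref{item-metric-local} and~\ref{item-metric-f}, the rescaled internal diameter $\frk c_{\ep\BB r}^{-1}e^{-\xi h_{2\ep\BB r}(z)}\sup_{u,v}D_h(u,v;B_{2\ep\BB r}(z))$ is \emph{independent} of the Gaussian increment $h_{2\ep\BB r}(z)-h_{\BB r}(z)$, so the $p$th moment of the product factors: the increment contributes its moment generating function $\ep^{-\xi^2p^2/2+o_\ep(1)}$, the scaling constants contribute $\ep^{\xi Qp+o_\ep(1)}$ via Theorem~\ref{thm-metric-scaling}, and the normalized diameter contributes an $\ep$-independent constant. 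A single application of Chebyshev then gives exponent $p\xi Q - p^2\xi^2/2 - ps$, optimized at $p=(\xi Q-s)/\xi^2$, which yields exactly $(\xi Q-s)^2/(2\xi^2)$; the point is that this optimal $p$ is at most $4/(\xi\gamma)$ (because $Q\le 4/\gamma$ for $\gamma<2$), so it lies inside the range where Proposition~\ref{prop-diam-moment} applies. If you prefer to keep a decomposition over the value of $F$, you would need both a fine (e.g.\ dyadic) partition of the range of $F$ and the independence just described, so that the two exponents \emph{add} rather than compete; your proposal invokes neither. Finally, the replacement of $h_{\BB r}(z)$ by $h_{\BB r}(0)$ should be done at the very end by a Gaussian tail bound, as the increment that is independent of the internal metric is the one centered at $z$, not at $0$.
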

\begin{proof}
We know that $h_{2\ep\BB r}(z) - h_{\BB r}(z)$ is centered Gaussian of variance $\log\ep^{-1}   - \log 2$ and is independent from $(h-h_{2\ep\BB r}(z))|_{B_{2\ep \BB r}(z)}$. 
By Axioms~\ref{item-metric-local} and~\ref{item-metric-f}, $h_{2\ep\BB r}(z) - h_{\BB r}(z)$ is also independent from the internal metric
\eqbn
D_{h-h_{2\ep\BB r}(z)}\left( u,v ; B_{2\ep \BB r}(z) \right)
= e^{-\xi h_{2\ep\BB r}(z)} D_h\left( u,v ; B_{2\ep \BB r}(z) \right) .
\eqen
Consequently, we can apply Theorem~\ref{thm-metric-scaling} and Proposition~\ref{prop-diam-moment} (with $\ep\BB r$ in place of $\BB r$) together with the formula $\BB E[e^X] = e^{\op{Var}(X)/2}$ for a Gaussian random variable $X$ to get that for $p\in (0,4/(\gamma\xi))$, 
\allb \label{eqn-ep-diam-moment}
&\BB E\left[ \left( \frk c_{\BB r}^{-1} e^{- \xi h_{\BB r}(0)} \sup_{u,v \in B_{\ep \BB r }(z)} D_h\left( u,v ; B_{2\ep \BB r}(z) \right) \right)^p \right] \notag \\
&\qquad = \left( \frac{\frk c_{\ep \BB r}}{\frk c_{\BB r}} \right)^p \BB E\left[  e^{\xi p (h_{\ep\BB r}(z) - h_{\BB r}(z)} \right]  \BB E\left[ \left( \frk c_{\ep \BB r}^{-1} e^{- \xi h_{\ep \BB r}(z)} \sup_{u,v \in B_{\ep \BB r }(z)} D_h\left( u,v ; B_{2\ep \BB r}(z) \right) \right)^p \right] \notag \\
&\qquad \leq \ep^{\xi Q p - \xi^2 p^2/2 + o_\ep(1)} ,
\alle 
with the $o_\ep(1)$ uniform over all $\BB r >0$ and $z\in\BB C$. 

By~\eqref{eqn-ep-diam-moment} and the Chebyshev inequality,  
\eqb
\BB P\left[ \sup_{u,v\in B_{\ep\BB r}(z) } D_h\left( u , v ; B_{2\ep \BB r}(z) \right)   > \ep^s \frk c_{\BB r} e^{\xi h_{\BB r}(z)} \right] \leq   \ep^{ p \xi Q  - \frac{p^2\xi^2}{2} - p s +o_\ep(1)}  .
\eqe 
The exponent on the right side is maximized for $p = (\xi Q - s)/\xi^2$, which is always at most $4/(\xi\gamma)$ for $s  > 0$ (since $\gamma < 2$) and is positive provided $s < \xi Q$. 
Making this choice of $p$ gives~\eqref{eqn-ep-diam} but with $h_{\BB r}(z)$ in place of $h_{\BB r}(0)$. The random variables $h_{\BB r}(z) - h_{\BB r}(0)$ for $z\in\BB r K $ are Gaussian with variance bounded above by a constant depending only on $K$. Consequently, we can apply the Gaussian tail bound to get~\eqref{eqn-ep-diam} in general. 

The bound~\eqref{eqn-ep-diam-square} is proven similarly but with Proposition~\ref{prop-internal-moment} used in place of Proposition~\ref{prop-diam-moment}. 
\end{proof}

We can now prove a slightly sharper version of the upper bound of Proposition~\ref{prop-holder-uniform}. 

\begin{lem} \label{lem-holder-upper}
For each $\chi \in (0,\xi(Q-2))$ and each $\BB r > 0$, it holds with polynomially high probability as $\ep\rta 0$, at a rate which is uniform in $\BB r$, that
\eqb \label{eqn-holder-upper}
 \frk c_{\BB r}^{-1} e^{-\xi h_{\BB r}(0)}  D_h\left( u,v ; B_{ 2|u-v| }(u) \right) \leq \left|\frac{u-v}{\BB r} \right|^\chi , 
 \quad\forall u,v\in \BB r K \: \text{with} \: |u-v| \leq \ep \BB r .
\eqe
Furthermore, it also holds with polynomially high probability as $\ep\rta 0$, at a rate which is uniform in $\BB r$, that for each $k\in\BB N_0$ and each $2^{-k}\ep \BB r \times 2^{-k}\ep \BB r$ square $S$ with corners in $2^{-k}\ep \BB r \BB Z^2$ which intersects $\BB r K$, we have
\eqb \label{eqn-holder-upper-square}
 \frk c_{\BB r}^{-1} e^{-\xi h_{\BB r}(0)} \sup_{u,v \in S} D_h\left( u,v ; S \right) \leq (2^{-k} \ep)^\chi .
\eqe
\end{lem}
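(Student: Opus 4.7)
The plan is to apply Lemma~\ref{lem-ep-diam} at every dyadic scale below $\ep\BB r$ and take a union bound over scales. The key arithmetic is that the exponent $(\xi Q-s)^2/(2\xi^2)$ in the tail of Lemma~\ref{lem-ep-diam} exceeds $2$ precisely when $s<\xi(Q-2)$, which is exactly the slack needed to dominate the $O(\ep^{-2}2^{2k})$ factor counting sets at scale $2^{-k}\ep\BB r$. Accordingly I fix once and for all an exponent $s\in(\chi,\xi(Q-2))$—possible because $Q>2$ for every $\gamma\in(0,2)$—prove the bound with $s$ in place of $\chi$, and then absorb the gap using $(2^{-k}\ep)^{s-\chi}\leq\ep^{s-\chi}\leq 1$ for $\ep$ small. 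I will first establish \eqref{eqn-holder-upper-square} because \eqref{eqn-ep-diam-square} directly bounds the internal $D_h$-diameter of a square by its own internal metric; \eqref{eqn-holder-upper} will then follow by applying the ball form \eqref{eqn-ep-diam} at points near midpoints of pairs.

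For \eqref{eqn-holder-upper-square}, let $\mcl S_k$ be the collection of dyadic squares of side $2^{-k}\ep\BB r$ with corners in $2^{-k}\ep\BB r\,\BB Z^2$ that meet $\BB r K$, so $|\mcl S_k|=O(\ep^{-2}2^{2k})$ (implicit constant depending on $K$). Applying \eqref{eqn-ep-diam-square} with $\ep$ replaced by $2^{-k}\ep$ at the center of each $S\in\mcl S_k$ (after enlarging $K$ by a bounded amount so these centers lie in the enlarged set) and taking a union bound,
\[
\BB P\!\left[\exists\,S\in\mcl S_k:\sup_{u,v\in S}D_h(u,v;S)>(2^{-k}\ep)^{s}\frk c_{\BB r}e^{\xi h_{\BB r}(0)}\right]\leq C\,\ep^{-2}2^{2k}(2^{-k}\ep)^{(\xi Q-s)^2/(2\xi^2)+o_\ep(1)}.
\]
Writing $\alpha:=(\xi Q-s)^2/(2\xi^2)-2>0$, this is bounded by $C\cdot 2^{-k\alpha}\ep^{\alpha+o_\ep(1)}$; summing over $k\geq 0$ gives a polynomially small (in $\ep$) bound on the bad event, and on its complement the $s$-to-$\chi$ absorption yields \eqref{eqn-holder-upper-square}.

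For \eqref{eqn-holder-upper}, the geometric observation is that for any pair $u,v$ with $r:=|u-v|$ and any $z$ within $r/8$ of the midpoint $m=(u+v)/2$, we have $u,v\in B_{5r/8}(z)$ and $B_{5r/4}(z)\subseteq B_{7r/4}(u)\subset B_{2r}(u)$. Hence applying \eqref{eqn-ep-diam} at such a $z$ with radius $5r/(8\BB r)$ in place of $\ep$ yields $D_h(u,v;B_{2r}(u))\leq(5r/(8\BB r))^{s}\frk c_{\BB r}e^{\xi h_{\BB r}(0)}$. To discretize uniformly, I take a geometric sequence of scales $r_k=(3/4)^k\ep\BB r$ together with nets $\mcl M_k\subset\BB C$ at spacing of order $r_k$; at scale $k$ this treats all pairs with $|u-v|\in r_k\cdot[4/3,2]$ (up to the net-spacing error), and because $3/4>2/3$ consecutive windows overlap and together cover $(0,\ep\BB r]$. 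The union bound over $(k,z)$ with $z\in\mcl M_k$—of size $O(\ep^{-2}(4/3)^{2k})$ at scale $k$—is parallel to the square case and again gives polynomially small failure probability, while the constant $(5/8)^{s}$ and the factor $(r/r_k)^{s}\leq 2^{s}$ are absorbed by the $s>\chi$ margin for $\ep$ small.

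The main obstacle is the tight containment requirement $B_{\text{outer}}(z)\subseteq B_{2|u-v|}(u)$ in the ball version. Because the outer-to-inner ratio in \eqref{eqn-ep-diam} is fixed at $2$, a single scale $r_k$ can only serve pairs with $|u-v|$ in a window of ratio $3/2$, so purely dyadic scales would leave gaps in $(0,\ep\BB r]$; this is what forces the finer geometric progression $r_k=(3/4)^k\ep\BB r$ (any ratio in $(2/3,1)$ would do). Once the scale family is in place, the rest of the argument—the choice of $s$, the dominating exponent $(\xi Q-s)^2/(2\xi^2)>2$, and the absorption of multiplicative constants into the H\"older exponent via $\ep\to 0$—is parallel to the square case.
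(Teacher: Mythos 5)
Your proof is correct and follows the same strategy as the paper's, which simply applies Lemma~\ref{lem-ep-diam} with $s=\chi$ at scales $2^{-k}\ep$ together with a union bound over a grid of centers and over $k$, the whole point being that $(\xi Q-s)^2/(2\xi^2)>2$ exactly when $s<\xi(Q-2)$. Your ratio-$3/4$ scale progression for \eqref{eqn-holder-upper} is a legitimate (and in fact needed) refinement of the paper's terse dyadic recipe: since the outer-to-inner ratio in \eqref{eqn-ep-diam} is fixed at $2$, purely dyadic scales cannot achieve the containment of the outer ball in $B_{2|u-v|}(u)$ for every $|u-v|$ in a dyadic window, a detail the paper's two-line proof glosses over. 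One arithmetic slip: $B_{5r/4}(z)\subseteq B_{15r/8}(u)$ rather than $B_{7r/4}(u)$, but since $15/8<2$ this does not affect the argument.
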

\begin{proof}
The bound~\eqref{eqn-holder-upper} follows from~\eqref{eqn-ep-diam}, applied with $s = \chi$ and with $2^{-k} \ep$ for $k\in\BB N_0$ in place of $\ep$, together with a union bound over all $z\in B_{\ep\BB r}(K)\cap (2^{-k-2}\ep\BB r \BB Z^2) $ and then over all $k\in\BB N_0$. 
The bound~\eqref{eqn-holder-upper-square} similarly follows from~\eqref{eqn-ep-diam-square}.  
\end{proof}

To prove the H\"older continuity of the Euclidean metric w.r.t.\ $D_h$, we first need the following estimate which plays a role analogous to Lemma~\ref{lem-ep-diam}. 

\begin{lem} \label{lem-ep-cross}
For each $s  > \xi Q$, each $\BB r>0$, and each $ z\in \BB r K $, 
\eqb \label{eqn-ep-cross}
\BB P\left[  D_h\left( B_{\ep\BB r}(z) , \bdy B_{2\ep\BB r}(z) \right) \geq \ep^s \frk c_{\BB r} e^{\xi h_{\BB r}(0)} \right] \geq 1 -  \ep^{\tfrac{( s - \xi Q)^2}{2\xi^2} + o_\ep(1)}  ,\quad
\text{as $\ep\rta 0$},
\eqe   
uniformly over the choices of $\BB r$ and $z\in \BB r K$. 
\end{lem}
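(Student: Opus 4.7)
The plan is to mirror the structure of the proof of Lemma~\ref{lem-ep-diam}, but using negative moment bounds (coming from Proposition~\ref{prop-two-set-dist}) in place of the positive moment bounds of Proposition~\ref{prop-diam-moment}, and then optimizing a Chebyshev estimate. The setup is: decompose $D_h$-distances using Weyl scaling (Axiom~\ref{item-metric-f}) and the circle average at the intermediate scale $2\ep\BB r$, so that
\[
D_h\!\left( B_{\ep\BB r}(z),\bdy B_{2\ep\BB r}(z)\right)
= e^{\xi h_{2\ep\BB r}(z)}\, Y,\qquad
Y := D_{h-h_{2\ep\BB r}(z)}\!\left( B_{\ep\BB r}(z),\bdy B_{2\ep\BB r}(z)\right).
\]
By locality (Axiom~\ref{item-metric-local}) together with Weyl scaling, $Y$ is determined by $(h-h_{2\ep\BB r}(z))|_{B_{2\ep\BB r}(z)}$, and hence is independent of $h_{2\ep\BB r}(z)$ (and, more generally, of $h_{2\ep\BB r}(z)-h_r(w)$ for any $r\geq 2\ep\BB r$ and $w$ outside $B_{2\ep\BB r}(z)$).

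Next, I would apply Proposition~\ref{prop-two-set-dist} to the field $h-h_{2\ep\BB r}(z)$ (translated so the base point is $z$), with $\BB r \leftrightarrow 2\ep\BB r$, $K_1=\overline{B_{1/2}(0)}$, $K_2=\bdy B_1(0)$, and $U=\BB C$. Since the circle average of the shifted field at scale $2\ep\BB r$ around $z$ vanishes, this yields that for every $M>0$,
\[
\BB P\!\left[\,Y\ge A^{-1}\frk c_{2\ep\BB r}\,\right]\ge 1-O_A(A^{-M}),
\]
uniformly in $\ep,\BB r$ and in $z\in\BB r K$. In particular $Y/\frk c_{2\ep\BB r}$ has finite negative moments of all orders, bounded uniformly. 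Combining this independence with the Gaussian random variable $h_{2\ep\BB r}(z)-h_{\BB r}(0)$, which has variance $\log\ep^{-1}+O_{\ep}(1)$ uniformly over $z\in\BB r K$ (first write it as the sum of $h_{2\ep\BB r}(z)-h_{\BB r}(z)$ and $h_{\BB r}(z)-h_{\BB r}(0)$; the first has variance $\log\ep^{-1}+O(1)$ and the second has bounded variance for $z\in\BB r K$), and Theorem~\ref{thm-metric-scaling} to get $\frk c_{2\ep\BB r}/\frk c_{\BB r}=\ep^{\xi Q+o_\ep(1)}$, I compute for any $p>0$:
\[
\BB E\!\left[\left(\frk c_{\BB r}^{-1} e^{-\xi h_{\BB r}(0)}\, D_h(B_{\ep\BB r}(z),\bdy B_{2\ep\BB r}(z))\right)^{-p}\right]
\le \ep^{-\xi Q p - \frac{p^2\xi^2}{2}+o_\ep(1)},
\]
using $\BB E[e^{-p\xi X}]=e^{p^2\xi^2\op{Var}(X)/2}$ for the Gaussian factor together with the independence of $Y$ and $h_{2\ep\BB r}(z)-h_{\BB r}(0)$.

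Finally, applying Chebyshev's inequality gives
\[
\BB P\!\left[\frk c_{\BB r}^{-1} e^{-\xi h_{\BB r}(0)}\, D_h(B_{\ep\BB r}(z),\bdy B_{2\ep\BB r}(z))\le \ep^s\right]
\le \ep^{\,p(s-\xi Q)-\frac{p^2\xi^2}{2}+o_\ep(1)},
\]
and I would optimize over $p>0$ by taking $p=(s-\xi Q)/\xi^2$, which is positive because $s>\xi Q$. This yields the stated exponent $(s-\xi Q)^2/(2\xi^2)$, with the $o_\ep(1)$ uniform in $\BB r$ and in $z\in \BB r K$. The only mild subtlety is bookkeeping the uniformity: the uniform rate in $o_\ep(1)$ from Theorem~\ref{thm-metric-scaling}, the uniform negative-moment bound on $Y$ from Proposition~\ref{prop-two-set-dist}, and the uniform variance bound on $h_{\BB r}(z)-h_{\BB r}(0)$ for $z\in\BB r K$ all combine to give a rate in~\eqref{eqn-ep-cross} that is uniform over $\BB r>0$ and $z\in\BB r K$. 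I do not expect any serious obstacle; the main point to verify carefully is the independence of $Y$ from $h_{2\ep\BB r}(z)-h_{\BB r}(0)$, which follows from the standard Markov decomposition of the GFF across the circle $\bdy B_{2\ep\BB r}(z)$ together with Axioms~\ref{item-metric-local} and~\ref{item-metric-f}.
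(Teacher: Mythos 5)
Your proposal is correct and follows essentially the same route as the paper: the paper's proof of Lemma~\ref{lem-ep-cross} likewise invokes Proposition~\ref{prop-two-set-dist} for uniform negative moments of the normalized crossing distance, repeats the moment computation of~\eqref{eqn-ep-diam-moment} (independence of the circle-average increment from the internal metric via Axioms~\ref{item-metric-local} and~\ref{item-metric-f}, plus Theorem~\ref{thm-metric-scaling}), and then applies Chebyshev with the same optimal choice $p=(s-\xi Q)/\xi^2$. The only differences are cosmetic bookkeeping (normalizing at scale $2\ep\BB r$ rather than $\ep\BB r$, and handling the replacement of $h_{\BB r}(z)$ by $h_{\BB r}(0)$ inside the variance rather than by a separate Gaussian tail step).
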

\begin{proof}
The proof is similar to that of Lemma~\ref{lem-ep-diam} but we use Proposition~\ref{prop-two-set-dist} instead of Proposition~\ref{prop-diam-moment}.
Proposition~\ref{prop-two-set-dist} implies that $\frk c_{\ep\BB r}^{-1} e^{-\xi h_{\ep\BB r}(z)} D_h\left( B_{\ep\BB r}(z) , \bdy B_{2\ep\BB r}(z) \right)$ has finite moments of all negative orders which are bounded above uniformly over all $z\in\BB C$ and $\BB r > 0$. 
By the same calculation as in~\eqref{eqn-ep-diam-moment}, for each $p > 0$ we have 
\allb \label{eqn-ep-cross-moment}
 \BB E\left[ \left( \frk c_{\BB r}^{-1} e^{- \xi h_{\BB r}(z)} D_h\left( B_{\ep\BB r}(z) , \bdy B_{2\ep\BB r}(z) \right) \right)^{-p} \right]  
 = \ep^{- \xi Q p - \xi^2 p^2/2 + o_\ep(1)} ,
\alle 
uniformly over all $z\in\BB C$ and $\BB r > 0$. Applying the Chebyshev inequality and setting $p = (s-\xi Q)/\xi^2$ gives~\eqref{eqn-ep-cross} with $h_{\BB r}(z)$ in place of $h_{\BB r}(0)$. For $z\in\BB r K$, we can replace $h_{\BB r}(z)$ with $h_{\BB r}(0)$ via exactly the same argument as in the proof of Lemma~\ref{lem-ep-diam}. 
\end{proof}

\begin{lem} \label{lem-holder-inverse}
For each $\chi'  > \xi(Q+2)$ and each $\BB r >0$, it holds with polynomially high probability as $\ep\rta 0$, at a rate which is uniform in $\BB r$, that
\eqb \label{eqn-holder-inverse}
 \frk c_{\BB r}^{-1} e^{-\xi h_{\BB r}(0)}  D_h\left( u,v   \right) \geq \left|\frac{u-v}{\BB r} \right|^{\chi'} , 
 \quad\forall u,v\in K \: \text{with} \: |u-v| \leq \ep .
\eqe 
\end{lem}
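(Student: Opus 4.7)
The plan is to mirror the proof of Lemma~\ref{lem-holder-upper}, but using Lemma~\ref{lem-ep-cross} in place of Lemma~\ref{lem-ep-diam}, so that annulus-crossing lower bounds replace diameter upper bounds. Fix $s \in (\xi(Q+2),\chi')$; this interval is nonempty by the hypothesis on $\chi'$. For each $k\in\BB N_0$, set $\rho_k := 2^{-k}\ep\BB r$, and cover a bounded neighborhood of $\BB r K$ with a grid $\mcl Z_k$ of spacing $\rho_k/8$, so $\#\mcl Z_k = O(\rho_k^{-2}) = O((2^{-k}\ep)^{-2})$. For each $z\in\mcl Z_k$, consider the event
\[
G_k(z) := \left\{ D_h\bigl(B_{\rho_k}(z),\,\bdy B_{2\rho_k}(z)\bigr) \geq (2^{-k}\ep)^s\, \frk c_{\BB r} e^{\xi h_{\BB r}(0)} \right\} .
\]
By Lemma~\ref{lem-ep-cross} (with $2^{-k}\ep$ in place of $\ep$), $\BB P[G_k(z)^c] \leq (2^{-k}\ep)^{(s-\xi Q)^2/(2\xi^2) + o_\ep(1)}$, uniformly over $\BB r$ and $z$.

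A union bound over the scales $k$ and over $\mcl Z_k$ yields
\[
\BB P\!\left[\,\exists\, k\in\BB N_0,\, z\in\mcl Z_k:\, G_k(z)^c\,\right] \leq \sum_{k=0}^\infty O\!\bigl((2^{-k}\ep)^{-2}\bigr)(2^{-k}\ep)^{(s-\xi Q)^2/(2\xi^2) + o_\ep(1)} .
\]
Because $s>\xi(Q+2)$ gives $(s-\xi Q)^2/(2\xi^2) > 2$, the geometric sum over $k$ converges and the total is $O(\ep^{(s-\xi Q)^2/(2\xi^2) - 2 + o_\ep(1)})$, which is polynomially small in $\ep$ at a rate uniform in $\BB r$. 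Suppose all $G_k(z)$ hold. Given $u,v\in\BB r K$ with $r := |u-v| \leq \ep\BB r$, pick the unique $k$ with $\rho_k \in [r/6,r/3]$ and a grid point $z\in\mcl Z_k$ with $|u-z| \leq \rho_k/2$. Then $u\in B_{\rho_k}(z)$ while $|z-v| \geq r - \rho_k/2 > 2\rho_k$, so any path from $u$ to $v$ crosses the annulus $\BB A_{\rho_k,2\rho_k}(z)$. Hence
\[
D_h(u,v) \geq D_h\bigl(B_{\rho_k}(z),\,\bdy B_{2\rho_k}(z)\bigr) \geq (2^{-k}\ep)^s\,\frk c_{\BB r} e^{\xi h_{\BB r}(0)} \geq C_0\, (r/\BB r)^s\, \frk c_{\BB r} e^{\xi h_{\BB r}(0)}
\]
for a constant $C_0>0$. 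Since $s<\chi'$ and $r/\BB r \leq \ep$, for all sufficiently small $\ep$ we have $C_0 (r/\BB r)^s \geq (r/\BB r)^{\chi'}$, giving~\eqref{eqn-holder-inverse}.

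The main obstacle is tuning the exponent so that the union bound actually closes: each scale $k$ contributes $\rho_k^{-2}$ grid points, so we need an annulus-crossing exponent strictly larger than $2$ for summability, and the quadratic dependence $(s-\xi Q)^2/(2\xi^2)$ forces $s > \xi(Q+2)$, which in turn forces the sharp hypothesis $\chi'>\xi(Q+2)$. A secondary technical point is ensuring the $o_\ep(1)$ term in Lemma~\ref{lem-ep-cross} is uniform in $\BB r$ and in the scale $2^{-k}\ep$ simultaneously — this is inherited directly from the uniformity statement in Lemma~\ref{lem-ep-cross}, but must be invoked carefully so that a single choice of small $\ep$ controls all scales $k\in\BB N_0$ at once.
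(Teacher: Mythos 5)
Your proof is correct and follows essentially the same route as the paper: a multi-scale union bound over dyadic scales $2^{-k}\ep\BB r$ and grid points near $\BB r K$, using the annulus-crossing lower bound of Lemma~\ref{lem-ep-cross}, with the exponent $(s-\xi Q)^2/(2\xi^2)>2$ exactly compensating the $O((2^{-k}\ep)^{-2})$ grid points per scale. (The paper's one-line proof cites~\eqref{eqn-ep-diam}, evidently a typo for~\eqref{eqn-ep-cross}; you identified the right ingredient, and your choice of an intermediate exponent $s\in(\xi(Q+2),\chi')$ cleanly absorbs the multiplicative constant.)
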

\begin{proof}
This follows from~\eqref{eqn-ep-diam}, applied with $s = \chi'$ and with $2^{-k} \ep$ for $k\in\BB N_0$ in place of $\ep$, together with a union bound over all $z\in B_{\ep\BB r}(K)\cap (2^{-k-2}\ep\BB r \BB Z^2) $ and then over all $k\in\BB N_0$. 
\end{proof}

\begin{proof}[Proof of Proposition~\ref{prop-holder-uniform}]
Combine Lemmas~\ref{lem-holder-upper} and \ref{lem-holder-inverse}.
\end{proof}

To conclude the proof of Theorem~\ref{thm-optimal-holder}, we need to check that the H\"older exponents $\xi(Q-2)$ and $(\xi(Q+2))^{-1}$ are optimal.

\begin{lem} \label{lem-no-holder}
Let $V\subset \BB C$ be an open set. Almost surely, the identity map from $V$, equipped with the Euclidean metric, to $(V,D_h|_V)$ is \emph{not} H\"older continuous with any exponent greater than $ \xi(Q-2)$. Furthermore, the inverse of this map is \emph{not} H\"older continuous with any exponent greater than $\xi^{-1} (Q+2)^{-1}$.  
\end{lem}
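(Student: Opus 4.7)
The plan is to exhibit, for any $\chi > \xi(Q-2)$, a sequence of pairs $(u_n,v_n) \in V \times V$ with $|u_n-v_n| \to 0$ along which $D_h(u_n,v_n)/|u_n-v_n|^{\chi} \to \infty$, and symmetrically for the inverse map. The key input is the existence of thick points of the whole-plane GFF: by the Hu--Miller--Peres result~\cite{hmp-thick-pts} cited in the introduction, for every $\alpha \in [-2,2]$ the set of $z \in V$ with $h_{\ep}(z) = (\alpha + o_{\ep}(1))\log \ep^{-1}$ along some deterministic subsequence $\ep_n \to 0$ is a.s.\ dense in $V$. Combined with Proposition~\ref{prop-two-set-dist} (which controls the distance across an annulus in terms of $\frk c_{\ep_n} e^{\xi h_{\ep_n}(z)}$) and Theorem~\ref{thm-metric-scaling} (which tells us $\frk c_{\ep_n} = \ep_n^{\xi Q + o_n(1)}$), distances across annuli around an $\alpha$-thick point behave like $\ep_n^{\xi(Q-\alpha)+o_n(1)}$.

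For the identity map direction, given $\chi > \xi(Q-2)$, I would choose $\alpha \in (Q-\chi/\xi,\,2)$ so that $\xi(Q-\alpha) < \chi$. By a union bound over a countable dense set of candidate points in $V$ and dyadic scales $\ep_n = 2^{-n}$, applying the lower bound in Proposition~\ref{prop-two-set-dist} with the $A$-parameter taken as a slowly growing function of $n$, one gets that a.s.\ at every such candidate point $z$,
\[
D_h\bigl(B_{\ep_n/2}(z),\,\partial B_{\ep_n}(z)\bigr) \;\geq\; A_n^{-1}\frk c_{\ep_n} e^{\xi h_{\ep_n}(z)} .
\]
Intersecting this with the a.s.\ dense set of $\alpha$-thick points from~\cite{hmp-thick-pts}, one finds $z \in V$ where both conditions hold simultaneously along a subsequence; substituting the thickness estimate gives $D_h(z,v_n) \geq \ep_n^{\xi(Q-\alpha)+o_n(1)}$ for every $v_n \in \partial B_{\ep_n}(z)$, so $D_h(z,v_n)/|z-v_n|^{\chi} \geq \ep_n^{\xi(Q-\alpha)-\chi+o_n(1)} \to \infty$, contradicting Hölder continuity with exponent $\chi$.

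For the inverse map, given $\chi' > \xi^{-1}(Q+2)^{-1}$, I would symmetrically choose $\alpha$ close enough to $-2$ that $\xi(Q-\alpha)\chi' > 1$, and use the \emph{upper} bound in Proposition~\ref{prop-two-set-dist} (with, say, $K_1$ a small closed disk around $0$ and $K_2 = \partial \BB D$, neither being a singleton) at an $\alpha$-thick point $z$. This produces points $z'_n \in \overline{B_{\ep_n/2}(z)}$ and $v'_n \in \partial B_{\ep_n}(z)$ (so $|z'_n-v'_n| \in [\ep_n/2,3\ep_n/2]$) with
\[
D_h(z'_n,v'_n) \;\leq\; A_n\,\frk c_{\ep_n}e^{\xi h_{\ep_n}(z)} \;\leq\; \ep_n^{\xi(Q-\alpha)-o_n(1)} .
\]
Then $|z'_n-v'_n|/D_h(z'_n,v'_n)^{\chi'} \gtrsim \ep_n^{\,1-\xi(Q-\alpha)\chi'+o_n(1)} \to \infty$ by the choice of $\alpha$, contradicting Hölder continuity of the inverse with exponent $\chi'$.

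The main obstacle is the simultaneous control required in the last step of each direction: the Proposition~\ref{prop-two-set-dist} bounds must hold at the random $\alpha$-thick point $z$ (which itself depends on $h$ at all scales), rather than at a deterministic $z$. I plan to handle this by first fixing a countable dense deterministic lattice $Z_n \subset V$ at each scale $\ep_n$, applying Proposition~\ref{prop-two-set-dist} jointly over $(z,n) \in Z_n \times \BB N$ with $A_n$ growing like a small power of $n$ (this is possible by superpolynomial decay and Borel--Cantelli), and then showing that a.s.\ the $\alpha$-thick point in $V$ can be chosen to lie in $\bigcup_n Z_n$, or equivalently that for $\eta>0$ the event $\{h_{\ep_n}(z) \geq (\alpha-\eta)\log\ep_n^{-1}\}$ is realized at some $z \in Z_n$ infinitely often a.s.\ --- a standard second moment / Gaussian lower tail argument, since $|\alpha|<2$ ensures the threshold is below the maximum order of the circle averages.
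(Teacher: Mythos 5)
Your strategy is viable and genuinely different from the paper's. The paper does not invoke the existence theorem for thick points from~\cite{hmp-thick-pts} at all: it produces the thick point by sampling $\BB z$ from the $\alpha$-LQG measure of the zero-boundary part of $h|_V$ and reweighting by the total mass, so that under the reweighted (mutually absolutely continuous) law one has $h = \wt h - \alpha\log|\cdot-\BB z| + \frk g_{\BB z}$ with $\BB z$ independent of $\wt h$. Both directions of the lemma then follow directly from the sharp two-sided point-to-circle asymptotics of Proposition~\ref{prop-dist-to-int} applied to a field with a log singularity at a point independent of the field; no discretization or union bound is needed. Your route instead requires two inputs that you correctly identify but only sketch: (a) that at infinitely many scales $\ep_n$ some lattice point $z\in Z_n$ satisfies $h_{\ep_n}(z)\geq(\alpha-\eta)\log\ep_n^{-1}$ for $\alpha$ near $2$ (symmetrically, a very negative average for $\alpha$ near $-2$) --- this is a real second-moment computation for the logarithmically correlated family $\{h_{\ep_n}(z)\}_{z\in Z_n}$, or alternatively follows from~\cite{hmp-thick-pts} combined with a modulus-of-continuity estimate for $w\mapsto h_{\ep_n}(w)$; and (b) the simultaneous validity of the Proposition~\ref{prop-two-set-dist} bounds at all lattice points at all scales. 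The trade-off is that your argument is more elementary in its use of the metric estimates (only the annulus bounds of Proposition~\ref{prop-two-set-dist} and Theorem~\ref{thm-metric-scaling}) but heavier in GFF bookkeeping, whereas the paper's is shorter and yields the clean asymptotic $D_h(\BB z,\bdy B_r(\BB z)) = r^{\xi(Q-\alpha)+o_r(1)}$.

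There is one concrete error in the plan: taking $A_n$ to grow like a small power of $n$ does not make the union bound in (b) work. At scale $\ep_n=2^{-n}$ you must union over $|Z_n|\asymp\ep_n^{-2}=2^{2n}$ lattice points, and superpolynomial decay in $A_n=n^c$ only gives a single-point failure probability decaying faster than every power of $n$, which cannot beat the factor $2^{2n}$; the resulting series is not summable for any $p$. You must instead take $A_n=\ep_n^{-\delta}$ for a small fixed $\delta>0$: then the single-point failure probability is $O_{\ep_n}(\ep_n^{\delta p})$ for every $p$, and choosing $p>2/\delta$ makes the union bound summable, so Borel--Cantelli applies. The cost is an extra factor $\ep_n^{\pm\delta}$ in your distance bounds, which is harmless in the conclusion because $\delta$ can be taken arbitrarily small and your $\alpha$ is chosen strictly inside $(-2,2)$. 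With this correction, and with step (a) carried out, the argument goes through.
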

\begin{proof}
The idea of the proof is to use Proposition~\ref{prop-dist-to-int} to study $D_h$-distances as we approach an $\alpha$-thick point of $h$ for $\alpha$ close to $2$ or to $-2$. 
To produce such a thick point, we will sample a point from the $\alpha$-LQG measure induced by the zero-boundary part of $h|_V$. 
By Axiom~\ref{item-metric-f}, we can assume without loss of generality that $h$ is normalized so that $h_1(0) = 0$. 
We can also assume without loss of generality that $V$ is bounded with smooth boundary.   
Let $h^V$ be the zero-boundary part of $h|_V$, so that $h - h^V$ is harmonic on $V$.

Let $\alpha \in (-2,2)$ which we will eventually send to either $-2$ or $2$, and let $\mu_{h^V}^\alpha$ be the $\alpha$-LQG measure induced by $h^V$. 
Also let $\BB z$ be sampled uniformly from $\mu_h^\alpha$, normalized to be a probability measure.
Let $\wt{\BB P}$ be the law of $(h  ,\BB z)$ weighted by the total mass $\mu_{h^V}^\alpha(V)$, so that under $\wt{\BB P}$, $h $ is sampled from its marginal law weighted by $\mu_{h^{V}}^\alpha(V)$ and conditional on $h$, $\BB z$ is sampled from $\mu_{h^{V}}^\alpha$, normalized to be a probability measure.
By a well-known property of the $\alpha$-LQG measure (see, e.g.,~\cite[Lemma A.10]{wedges}), a sample $(h ,\BB z)$ from the law $\wt{\BB P}$ can be equivalently be produced by first sampling $\wt h $ from the unweighted marginal law of $h$, then independently sampling $\BB z$ uniformly from Lebesgue measure on $\BB S'$ and setting $h  = \wt h  - \alpha \log|\cdot-\BB z| + \frk g_{\BB z}$, where $\frk g_{\BB z} : V \rta\BB R$ is a deterministic continuous function.  
 
By Proposition~\ref{prop-dist-to-int} (applied with the field $\wt h - \alpha \log|\cdot - \BB z|$ in place of $h^\alpha$), the fact that $\frk g_{\BB z}$ is a.s.\ bounded in a neighborhood of $\BB z$ (by continuity), and the Borel-Cantelli lemma, we find that a.s.\ 
\allb \label{eqn-no-holder-int}
D_{h }\left( \BB z , \bdy B_r(\BB z)  \right)  
= r^{  o_r(1)} \frac{c_r}{r^{\alpha\xi}} \int_0^\infty e^{\xi \wt h_{r e^{-t} }(\BB z)  - \xi (Q-\alpha) t o_t(t) } \, dt  ,
\alle 
where here the $o_t(t)$ is deterministic and tends to 0 as $t\rta\infty$ (it comes from the error $\psi(t)$ in Proposition~\ref{prop-dist-to-int}) and the $o_r(1)$ denotes a random variable which tends to 0 a.s.\ as $r\rta 0$.
The description in the preceding paragraph shows that conditional on $\BB z$, the process $t\mapsto \wt h_{r e^{-t}}(\BB z) - \wt h_{r }(\BB z)$ evolves as a standard linear Brownian motion. Consequently, the Gaussian tail bound shows that with probability tending to 1 as $r\rta 0$,   
\eqb \label{eqn-no-holder-int-asymp}
\int_0^\infty e^{\xi \wt h_{r e^{-t}}(\BB z)  - \xi (Q-\alpha) t +  o_t(t) } \, dt  =  r^{o_r(1)}  e^{\xi \wt h_r(\BB z)}  = r^{ o_r(1)} . 
\eqe 
By plugging~\eqref{eqn-no-holder-int-asymp} into~\eqref{eqn-no-holder-int} and using the fact that $\frk c_r = r^{\xi Q  + o_r(1)}$ (Theorem~\ref{thm-metric-scaling}), it therefore follows that with probability tending to 1 as $r\rta 0$, 
\eqbn
D_{h }\left( \BB z , \bdy B_r(\BB z)  \right)  =   r^{   \xi ( Q-\alpha) + o_r(1) } . 
\eqen
Since $\alpha$ can be made arbitrarily close to 2, this shows the desired lack of H\"older continuity for identity map $(V,|\cdot|) \rta (V, D_h)$.   
Since $\alpha$ can be made arbitrarily close to $-2$, we also get the desired lack of H\"older continuity for the inverse map $(V,D_h) \rta (V , |\cdot|)$. 
\end{proof}

\section{Constraints on the behavior of $D_h$-geodesics}
\label{sec-geodesic-bounds}

Let $D$ be a weak $\gamma$-LQG metric. 
By Lemma~\ref{lem-infinite-dist}, for a whole-plane GFF $h$, the metric space $(\BB C , D_h)$ is a boundedly compact length space (i.e., closed bounded subsets are compact) so there is a $D_h$-geodesic --- i.e., a path of minimal $D_h$-length --- between any two points of $\BB C$~\cite[Corollary 2.5.20]{bbi-metric-geometry}. 
In this section we will apply the main results of this paper to prove two estimates which constrain the behavior of $D_h$-geodesics. 
The first of these estimates, Proposition~\ref{prop-line-path}, tells us that paths which stay in a small Euclidean neighborhood of a straight line or an arc of the boundary of a circle have large $D_h$-lengths. In particular, $D_h$-geodesics are unlikely to stay in such a neighborhood.
The second estimate, Proposition~\ref{prop-geo-bdy}, says that a $D_h$-geodesic cannot spend a long time near the boundary of a $D_h$-metric ball.

\subsection{Lower bound for $D_h$-distances in a narrow tube}
\label{sec-line-path}

\begin{prop} \label{prop-line-path}
Let $L\subset\BB C$ be a compact set which is either a line segment, an arc of a circle, or a whole circle and fix $ b > 0$. 
For each $\BB r > 0$ and each $p > 0$, it holds with probability at least $1 -  \ep^{p^2/(2\xi^2) + o_\ep(1)}$ that
\eqb \label{eqn-line-path}
 \inf\left\{ D_h\left(u,v ; B_{\ep\BB r}(\BB r L ) \right) : u,v \in B_{\ep\BB r}(\BB r L ) , |u-v| \geq b\BB r \right\} \geq \ep^{ p + \xi Q - 1-\xi^2/2 } \frk c_{\BB r} e^{\xi h_{\BB r}(0)}  ,
\eqe
where the rate of the $o_\ep(1)$ depends on $L, b,  p$ but not on $\BB r$.
\end{prop}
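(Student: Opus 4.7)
The plan is to show that any path confined to the narrow tube $B_{\ep\BB r}(\BB r L)$ must cross many pairwise disjoint ``small'' annuli centered on $\BB r L$, and then to sum up the lower bounds on $D_h$-distances across each of these annuli supplied by Proposition~\ref{prop-two-set-dist}.

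First, cover $\BB r L$ by a collection of points $w_1,\dots,w_N\in\BB r L$ spaced at mutual Euclidean distance at least $4\ep\BB r$ (and at most a constant multiple of $\ep\BB r$) along $\BB r L$, so that $N\asymp \ep^{-1}$ and the annuli $A_i:=B_{2\ep\BB r}(w_i)\setminus B_{\ep\BB r}(w_i)$ are pairwise disjoint. Since $L$ is either a line segment or a circular arc (or full circle), and the tube has transverse width $\ep\BB r$, the inner disk $B_{\ep\BB r}(w_i)$ contains a full cross-section of the tube at $w_i$. Consequently, any continuous path in $B_{\ep\BB r}(\BB r L)$ whose endpoints $u,v$ satisfy $|u-v|\geq b\BB r$ must pass through the cross-section at a definite positive fraction $c(L,b)$ of the indices $i$, and each such passage forces a traversal of $A_i$. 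The main geometric step is this combinatorial/topological lemma, which I expect to prove by projecting the path onto $\BB r L$ and bounding below the number of ``new'' $w_i$'s it reaches.

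Apply Proposition~\ref{prop-two-set-dist} (with $\ep\BB r$ in place of $\BB r$ and a tail parameter $\ep^{-\zeta}$ for an auxiliary small $\zeta>0$) at each annulus $A_i$ and take a union bound over the $N\asymp\ep^{-1}$ values of $i$. Using Theorem~\ref{thm-metric-scaling} to write $\frk c_{\ep\BB r}=\ep^{\xi Q+o_\ep(1)}\frk c_{\BB r}$, this yields, with superpolynomially high probability as $\ep\to 0$, the simultaneous lower bounds
\begin{equation*}
D_h\bigl(\bdy B_{\ep\BB r}(w_i),\bdy B_{2\ep\BB r}(w_i)\bigr)\geq \ep^{\xi Q+o_\ep(1)}\frk c_{\BB r}\,e^{\xi h_{\ep\BB r}(w_i)},\qquad i=1,\dots,N.
\end{equation*}
Combining this with the crossing lemma and the triangle inequality gives, outside an event of superpolynomially small probability,
\begin{equation*}
\inf_{u,v}D_h(u,v;B_{\ep\BB r}(\BB r L))\geq \ep^{\xi Q+o_\ep(1)}\frk c_{\BB r}\,e^{\xi h_{\BB r}(0)}\, Y_\ep,\qquad Y_\ep:=\sum_{i\in I}e^{\xi(h_{\ep\BB r}(w_i)-h_{\BB r}(0))},
\end{equation*}
with $|I|\gtrsim \ep^{-1}$.

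It therefore suffices to prove the one-sided concentration estimate
\begin{equation*}
\BB P\!\left[\,Y_\ep<\ep^{p-1-\xi^2/2+o_\ep(1)}\right]\leq \ep^{p^2/(2\xi^2)+o_\ep(1)}.
\end{equation*}
Since $\BB E\,e^{\xi(h_{\ep\BB r}(w_i)-h_{\BB r}(0))}=\ep^{-\xi^2/2+o_\ep(1)}$, the target bound is exactly that $Y_\ep\geq \ep^{p}\,\BB E\,Y_\ep$ with the indicated failure probability. This is the main obstacle and is a lower-tail concentration statement for a log-normal sum, i.e., a one-dimensional Gaussian multiplicative chaos type estimate along $\BB r L$. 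The strategy I would use is Cameron--Martin on the whole-plane GFF: let $\phi_\ep$ be a Cameron--Martin function supported in a neighborhood of $\BB r L$ with $\phi_\ep(w_i)\approx (p/\xi)\log\ep^{-1}$ and $\|\phi_\ep\|_H^2=(p^2/\xi^2)\log\ep^{-1}+O(1)$ (the logarithmic singularity can be regularised at scale $\ep\BB r$). Under the shifted measure the field $h$ is replaced by $h+\phi_\ep$, so each $G_i$ increases by $\approx (p/\xi)\log\ep^{-1}$ and hence $Y_\ep$ is multiplied by $\ep^{-p}$; the bound then follows from a first-moment/Jensen argument applied to $Y_\ep e^{\xi\phi_\ep}$, which controls the shifted sum around its mean $\ep^{-1-\xi^2/2}$. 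Returning to the original measure, the Radon--Nikodym cost $\exp(-\|\phi_\ep\|_H^2/2)=\ep^{p^2/(2\xi^2)+o_\ep(1)}$ supplies precisely the claimed probability bound. Assembling these pieces and letting the auxiliary $\zeta\to 0$ produces the proposition.
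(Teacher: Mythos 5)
Your geometric reduction (disjoint annuli of radius $\asymp \ep\BB r$ centered at $\asymp \ep^{-1}$ points along $\BB r L$, the crossing/separation argument, the application of Proposition~\ref{prop-two-set-dist} with a union bound, and Theorem~\ref{thm-metric-scaling} to convert $\frk c_{\ep\BB r}$ into $\ep^{\xi Q + o_\ep(1)}\frk c_{\BB r}$) is exactly the paper's Steps 1--2, and you correctly isolate the remaining content as the lower-tail estimate $\BB P[Y_\ep < \ep^{p-1-\xi^2/2+o_\ep(1)}] \leq \ep^{p^2/(2\xi^2)+o_\ep(1)}$ for the log-normal sum $Y_\ep$. (One small uniformity point you gloss over: the infimum in \eqref{eqn-line-path} ranges over all pairs $u,v$, hence over all sub-intervals $[k_1',k_2']$ of indices; the paper handles this by fixing a constant-order family of intervals $[k_1,k_2]$, one of which is contained in every admissible $[k_1'+2,k_2'-2]$, and union-bounding.)

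The gap is in your proof of the concentration estimate itself; the Cameron--Martin argument fails for two independent reasons. First, the energy computation is wrong. A single truncated log-bump $\frac{p}{\xi}\log\frac{\BB r}{|\cdot - w_i|\vee \ep\BB r}$ indeed has Dirichlet energy $(p/\xi)^2\log\ep^{-1}$, but it only lifts the field by $(p/\xi)\log\ep^{-1}$ near the single point $w_i$; at $w_j$ it contributes only $\frac{p}{\xi}\log\frac{1}{\ep|i-j|}$, which is $O(1)$ for $|i-j|\asymp\ep^{-1}$. Any $\phi_\ep$ with $\phi_\ep(w_i)\approx m:=(p/\xi)\log\ep^{-1}$ at \emph{all} $\asymp\ep^{-1}$ points (hence $\gtrsim m$ on a set whose condenser capacity relative to scale $\BB r$ is bounded below, since the $w_i$ are $\ep\BB r$-dense in the macroscopic curve $\BB r L$) has Dirichlet energy $\gtrsim m^2 = (p/\xi)^2(\log\ep^{-1})^2$, not $(p^2/\xi^2)\log\ep^{-1}$. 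Second, and more fundamentally, the direction of the Girsanov argument is backwards: tilting the law of $h$ by $\phi_\ep$ so that $Y_\ep$ becomes typically large, and paying $\exp(-\|\phi_\ep\|_H^2/2)$, is the recipe for a \emph{lower} bound on an upper-tail probability $\BB P[Y_\ep \geq \ep^{-p}\,\BB E Y_\ep]$. It does not bound the lower tail $\BB P[Y_\ep \leq \ep^{p}\,\BB E Y_\ep]$ from above, because the Radon--Nikodym derivative $d\BB P/d\wt{\BB P} = \exp(-(h,\phi_\ep)_\nabla + \|\phi_\ep\|_H^2/2)$ is unbounded on the event in question. The paper's route is different: Paley--Zygmund (using first and second moments of $\sum_k e^{\xi X_k}$, where $X_k = h_{\ep\BB r}(z_k^\ep) - h_{\BB r}(0)$) produces a set $B$ of probability $\geq a$ on which the sum is $\geq a\ep^{-1-\xi^2/2}$, and then a Gaussian concentration inequality in the $L^\infty$ norm (Lemma~\ref{lem-gaussian-concentration}) shows that the field vector is within $L^\infty$-distance $(p/\xi)\log\ep^{-1}$ of $B$ except with probability $\exp\bigl(-\tfrac{(\lambda - C\sigma)^2}{2\sigma^2}\bigr) = \ep^{p^2/(2\xi^2)+o_\ep(1)}$, where $\sigma^2 = \max_k\op{Var}(X_k) = \log\ep^{-1}+O(1)$; being $L^\infty$-close to $B$ forces $\sum_k e^{\xi X_k}\geq a\ep^{p-1-\xi^2/2}$. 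It is the single-coordinate variance $\log\ep^{-1}$, fed through Gaussian isoperimetry, that produces the exponent $p^2/(2\xi^2)$ — not the Cameron--Martin norm of a global shift, which is of order $(\log\ep^{-1})^2$ and would correspond to a superpolynomially small probability. To repair your argument you would need to replace the tilt by such a concentration (or convexity/decomposition) argument.
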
 

By~\cite[Theorem 1.9]{ang-discrete-lfpp}, for each $\gamma \in (0,2)$ we have $\xi Q \leq 1$ and hence $\xi Q - 1 - \xi^2/2 < 0$. Therefore, the power of $\ep$ on the right side of~\eqref{eqn-line-path} is negative for small enough $p$. Hence, Proposition~\ref{prop-line-path} implies that when $\ep$ is small and $u,v\in B_{\ep \BB r}(\BB r L)$ with $|u-v| \geq b\BB r$, it holds with high probability that $D_h\left(u,v; B_{\ep\BB r}(\BB rL)\right)$ is much larger than $D_h(u,v)$. In particular, a $D_h$-geodesic from $u$ to $v$ cannot stay in $B_{\ep\BB r}(L)$.

\begin{proof}[Proof of Proposition~\ref{prop-line-path}]
\noindent\textit{Step 1: bounding distances in terms of circle averages.}
View $ L$ as a path $[0,| L|] \rta \BB C$ parametrized by Euclidean unit speed. 
For $k\in [0,| L|/(6\ep)]_{\BB Z}$, let $z_k^\ep := \BB r L(6 k \ep )$. 
Then the balls $B_{3\ep\BB r}(z_k^\ep)$ are disjoint and the balls $B_{7\ep\BB r}(z_k^\ep)$ cover $B_{\ep\BB r}(\BB r L)$. 
 
Fix $\zeta\in (0,1)$, which we will eventually send to zero. 
By Proposition~\ref{prop-two-set-dist} and a union bound, it holds with superpolynomially high probability as $\ep\rta 0$ that 
\eqb \label{eqn-line-path-annulus}
D_h\left( B_{2\ep \BB r}(z_k^\ep) ,  B_{3\ep \BB r}(z_k^\ep) \right) 
\geq \ep^\zeta \frk c_{\ep \BB r} e^{\xi h_{\ep\BB r}(z_k^\ep)} ,\quad
\forall k \in  [0,| L|/(6\ep)]_{\BB Z} .
\eqe
Henceforth assume that~\eqref{eqn-line-path-annulus} holds. The idea of the proof is that a path in $B_{\ep\BB r}(\BB r L)$ has to cross between the inner and outer boundaries of a large number of the annuli $B_{3\ep\BB r}(z_k^\ep) \setminus B_{2\ep\BB r}(z_k^\ep)$. Thus~\eqref{eqn-line-path-annulus} reduces our problem to proving a lower bound for the sum of the quantities $\ep^\zeta \frk c_{\ep \BB r} e^{\xi h_{\ep\BB r}(z_k^\ep)}$ for these annuli, which in turn can be proven using Theorem~\ref{thm-metric-scaling} and basic estimates for the circle average process.
\medskip 

\noindent\textit{Step 2: lower-bounding lengths of paths in $B_{\ep\BB r}(\BB r L)$ in terms of circle averages.}
There is a constant $c > 0$ depending only on $b$ and $L$ such that for small enough $\ep > 0$ (depending only on $b$ and $L$), the following is true.
If $u,v\in B_{\ep\BB r}(\BB r L)$ satisfy $|u-v| \geq b\BB r$, there are integers $0 \leq k_1' < k_2' \leq | L|/(6\ep)$ such that $k_2'-k_1' \geq c \ep^{-1}$, $u \in B_{7\ep\BB r}(z_{k_1'}^\ep)$, and $v\in B_{7\ep\BB r}(z_{k_2'}^\ep)$. 
Each path from $u$ to $v$ in $B_{\ep\BB r}(\BB r L)$ must enter $B_{2\ep\BB r}(z_k^\ep)$ for each $k\in [k_1'+2,k_2'-2]_{\BB Z}$, and hence must cross the annulus $\BB A_{2\ep \BB r , 3\ep \BB r}(z_k^\ep)  $ for each such $k$. 
Combining this with~\eqref{eqn-line-path-annulus} shows that  
\eqb \label{eqn-line-path-sum}
D_h\left( u , v ; B_{\ep\BB r}(\BB r L)   \right) \geq   \ep^{ \zeta}   \frk c_{\ep \BB r} \sum_{k=k_1'+2}^{k_2'-2} e^{\xi h_{\ep\BB r}(z_k^\ep)} . 
\eqe
\medskip 

\noindent\textit{Step 3: proof conditional on a circle average estimate.}
We claim that for any fixed $k_1,k_2 \in [0,|L|/(6\ep)]_{\BB Z}$ with $k_2-k_1 \geq (c/2) \ep^{-1}$ and any $p > 0$, 
\eqb \label{eqn-line-path-sum-lower}
\BB P\left[ \sum_{k=k_1}^{k_2} e^{\xi h_{\ep\BB r}(z_k^\ep)} \geq \ep^{p-1-\xi^2/2} e^{\xi h_{\ep\BB r}(0)} \right] \geq 1 - \ep^{\frac{p^2}{2\xi^2} + o_\ep(1)}  
\eqe
where the rate of the $o_\ep(1)$ depends on $L, b,  p$ but not on $\BB r$ or the particular choice of $k_1,k_2$.
We will prove~\eqref{eqn-line-path-sum-lower} just below using standard Gaussian estimates. 

Let us first conclude the proof assuming~\eqref{eqn-line-path-sum-lower}. 
We can find a constant-order number of pairs $k_1,k_2\in [0,|L|/(6\ep)]_{\BB Z}$ with $k_2-k_1 \geq (c/2) \ep^{-1}$ such that for small enough $\ep$ (depending only on $L$ and $b$), each interval $[k_1' + 2 ,k_2' - 2] \subset [0,|L|/(6\ep)]_{\BB Z}$ with $|k_2'-k_1'| \geq c\ep^{-1}$ contains one of the intervals $[k_1,k_2]$. 

By applying~\eqref{eqn-line-path-sum-lower} (with $p-2\zeta$ in place of $p$) to each such pair $k_1,k_2$, then taking a union bound, we get that with probability at least $1 -  \ep^{\frac{(p-2\zeta)^2}{2\xi^2} + o_\ep(1)}   $, the sum on the right side of~\eqref{eqn-line-path-sum} is bounded below by $ \ep^{p-1-\xi^2/2 - 2\zeta} e^{\xi h_{\ep\BB r}(0)} $ simultaneously for every possible choice of $k_1',k_2'$.
By~\eqref{eqn-line-path-sum}, with probability at least $1 -  \ep^{\frac{(p-2\zeta)^2}{2\xi^2} + o_\ep(1)}   $ it holds simultaneously for each $u,v\in B_{\ep\BB r}(\BB r L)$ satisfying $|u-v| \geq b\BB r$ that
\eqb
D_h\left( u , v ; B_{\ep\BB r}(\BB r L)   \right) \geq \ep^{p-1-\xi^2/2 -\zeta} \frk c_{\ep \BB r} e^{\xi h_{\BB r}(0)} \geq \ep^{ p + \xi Q -1-\xi^2/2 -\zeta + o_\ep(1)} \frk c_{  \BB r} e^{\xi h_{\BB r}(0)} 
\eqe
where in the second inequality we use Theorem~\ref{thm-metric-scaling}.
Sending $\zeta\rta 0$ now gives~\eqref{eqn-line-path}.
\medskip 

\noindent\textit{Step 4: proof of the circle average estimate.}
The rest of the proof is devoted to proving the inequality~\eqref{eqn-line-path-sum-lower}. 
To lighten notation, write $X_k := h_{\ep\BB r}(z_k^\ep) - h_{\BB r}(0)$. 
By the calculations in~\cite[Section 3.1]{shef-kpz} (and the scale invariance of the law of $h$, modulo additive constant), the $X_k$'s are jointly centered Gaussian with variances satisfying
\eqb \label{eqn-exp-sum-var}
\op{Var}(X_k) = \log\ep^{-1} + O (1),
\eqe 
where here $O(1)$ denotes a quantity which is bounded above and below by constants depending only on $L,b$ (not on $\ep,\BB r,j,k$).
Since $z_k^\ep = \BB r L(6 k \ep )$ and $L$ is parametrized by Euclidean unit speed, we also have the following covariance formula for $j\not=k$:
\eqb \label{eqn-exp-sum-covar}
\op{Cov}\left( X_j ,X_k \right)  = \log \left( \frac{\BB r}{|z_j^\ep -z_k^\ep|} \right) + O (1)  = \log\left(\frac{1}{\ep |k-j|} \right) + O (1) .
\eqe 

Recall the formula $\BB E[e^X] = e^{\op{Var}(X)/2}$ for a centered Gaussian random variable $X$. 
Applying this to the $X_k$'s and recalling~\eqref{eqn-exp-sum-var} and the fact that $k_2-k_1 \asymp \ep^{-1}$ gives 
\eqb \label{eqn-exp-sum-first-moment}
\BB E\left[ \sum_{k=k_1}^{k_2} e^{\xi X_k} \right] \asymp \ep^{-1 -\xi^2/2} ,
\eqe
with the implicit constant depending only on $L,b$. 
From~\eqref{eqn-exp-sum-var} and~\eqref{eqn-exp-sum-covar} we obtain $\op{Var}(X_j+X_k) =  \log\left( \ep^{-4} |k-j|^{-2} \right) + O(1)$ for $j\not=k$.
Hence
\allb \label{eqn-exp-sum-second-moment}
\BB E\left[ \left( \sum_{k=k_1}^{k_2} e^{\xi X_k} \right)^2 \right]
&= \sum_{k=k_1}^{k_2} \BB E\left[ e^{2\xi X_k} \right] +  2 \sum_{k=k_1}^{k_2} \sum_{j=k+1}^{k_2} \BB E\left[ e^{\xi(X_j + X_k)} \right] \notag\\
&\preceq \ep^{-1-2\xi^2} +  2 \ep^{-2\xi^2} \sum_{k=k_1}^{k_2} \sum_{j=k+1}^{k_2}  |j-k|^{-\xi^2} \notag \\
&\preceq \ep^{-1-2\xi^2}  + \ep^{-2-\xi^2} \preceq \ep^{-2-\xi^2}
\alle
with the implicit constants depending only on $L,b$, where in the last inequality we use that $\xi < 2/d_2 <  1$, so $1+2\xi^2 < 2+\xi^2$. 

By~\eqref{eqn-exp-sum-first-moment}, \eqref{eqn-exp-sum-second-moment}, and the Payley-Zygmund inequality, we find that there is a constant $a = a(L) > 0$ such that
\eqb \label{eqn-use-pz}
\BB P\left[ \sum_{k=k_1}^{k_2} e^{\xi X_k}  \geq a \ep^{-1-\xi^2/2} \right] \geq a .
\eqe
To improve the lower bound for this probability, we will apply the following elementary Gaussian concentration bound (see, e.g.,~\cite[Lemma 2.1]{dzz-heat-kernel}):

\begin{lem} \label{lem-gaussian-concentration}
For any $a >0$, there exists $C = C(a) > 0$ such that the following is true.
Let $\mathbf X = (X_1,\dots,X_n)$ be a centered Gaussian vector taking values in $\BB R^n$ and let $\sigma^2 := \max_{1\leq j \leq n} \op{Var}(X_j)$. 
If $B\subset \BB R^n$ such that $\BB P[X\in B] \geq a$, then for any $\lambda \geq C\sigma$,
\eqb
\BB P\left[ \inf_{\mathbf x \in B} |\mathbf X - \mathbf x|_\infty > \lambda \right] \leq e^{-\tfrac{(\lambda-C\sigma)^2}{2\sigma^2} } ,
\eqe
where $|\cdot|_\infty$ is the $L^\infty$ norm on $\BB R^n$. 
\end{lem}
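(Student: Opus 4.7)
The plan is to derive the lemma from the standard Gaussian concentration inequality (Borell-TIS) applied to the $\ell^\infty$-distance function $f(\mathbf{x}) := \inf_{\mathbf x' \in B} |\mathbf x - \mathbf x'|_\infty$. First I would observe that $f$ is $1$-Lipschitz with respect to the $\ell^\infty$ metric on $\BB R^n$, hence (since $|\cdot|_\infty \leq |\cdot|_2$) is also $1$-Lipschitz with respect to the Euclidean metric. The Borell-TIS inequality then yields, for each $t > 0$, the two-sided concentration
\eqbn
\BB P\bigl[ f(\mathbf X) \geq \BB E[f(\mathbf X)] + t \bigr] \leq e^{-t^2/(2\sigma^2)}
\quad\text{and}\quad
\BB P\bigl[ f(\mathbf X) \leq \BB E[f(\mathbf X)] - t \bigr] \leq e^{-t^2/(2\sigma^2)} .
\eqen

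The next step is to use the hypothesis $\BB P[\mathbf X \in B] \geq a$ to produce a quantitative upper bound for $\BB E[f(\mathbf X)]$ in terms of $\sigma$ and $a$. Since $f(\mathbf X) = 0$ whenever $\mathbf X \in B$, we have $\BB P[f(\mathbf X) \leq 0] \geq a$. Setting $\mu := \BB E[f(\mathbf X)]$ and applying the lower-tail bound with $t = \mu$ (assuming $\mu > 0$; otherwise there is nothing to prove) gives $a \leq \BB P[f(\mathbf X) \leq 0] \leq e^{-\mu^2/(2\sigma^2)}$, so that $\mu \leq \sigma \sqrt{2 \log(1/a)}$. Defining $C = C(a) := \sqrt{2\log(1/a)}$ (or any larger constant depending only on $a$), we conclude $\BB E[f(\mathbf X)] \leq C\sigma$.

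Finally, combining this with the upper-tail concentration bound, for any $\lambda \geq C\sigma$,
\eqbn
\BB P\bigl[ f(\mathbf X) > \lambda \bigr]
= \BB P\bigl[ f(\mathbf X) > \mu + (\lambda - \mu) \bigr]
\leq e^{-(\lambda - \mu)^2/(2\sigma^2)}
\leq e^{-(\lambda - C\sigma)^2/(2\sigma^2)} ,
\eqen
which is the desired estimate. No step here strikes me as a serious obstacle: the only mild subtlety is ensuring the bound for $\mu$ goes through uniformly in the Gaussian vector, but this is immediate because the argument depends on $\mathbf X$ only through $\sigma = \max_j \op{Var}(X_j)^{1/2}$, which controls the Lipschitz-to-concentration conversion in Borell-TIS.
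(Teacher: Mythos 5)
Your overall strategy is sound and is essentially the standard one (the paper does not prove this lemma itself; it cites \cite[Lemma 2.1]{dzz-heat-kernel}, whose argument is the same concentration-of-Lipschitz-functions argument you outline, adapted from the $\ell^2$ to the $\ell^\infty$ setting). But there is a genuine gap in your first step. The implication ``$f$ is $1$-Lipschitz w.r.t.\ the Euclidean metric, hence $f(\mathbf X)$ concentrates at rate $e^{-t^2/(2\sigma^2)}$ with $\sigma^2=\max_j\op{Var}(X_j)$'' is false as stated. Gaussian concentration for Lipschitz functions applies to a \emph{standard} Gaussian vector $Z$; to use it for $\mathbf X$ you must write $\mathbf X=AZ$ and compute the Euclidean Lipschitz constant of $z\mapsto f(Az)$. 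If you only use that $f$ is $1$-Lipschitz w.r.t.\ $|\cdot|_2$, you get $|f(Az)-f(Az')|\le |A(z-z')|_2\le \|A\|_{\op{op}}\,|z-z'|_2$, and $\|A\|_{\op{op}}^2$ is the largest eigenvalue of the covariance matrix, which can be as large as $n\sigma^2$ (take all coordinates equal to a single standard Gaussian). So the Euclidean route yields a much weaker rate, and your displayed concentration bounds do not follow from it. (Also, Borell--TIS in its usual form concerns suprema of centered Gaussian processes, which $f(\mathbf X)$ is not; what is needed is the Lipschitz-function form of Gaussian concentration.)

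The fix is one line, and it is precisely where your $\ell^\infty$ observation must be invoked: with $a_1,\dots,a_n$ the rows of $A$, one has $|A(z-z')|_\infty=\max_j|a_j\cdot(z-z')|\le\max_j|a_j|_2\,|z-z'|_2=\sigma|z-z'|_2$, since $|a_j|_2^2=\op{Var}(X_j)\le\sigma^2$. Hence $z\mapsto f(Az)$ is $\sigma$-Lipschitz w.r.t.\ the Euclidean metric on the standard Gaussian space, and concentration then gives your two tail bounds with the correct variance proxy $\sigma^2$, uniformly in the covariance structure. With that correction, the remainder of your argument --- using $\BB P[f(\mathbf X)=0]\ge a$ and the lower tail to deduce $\BB E[f(\mathbf X)]\le\sigma\sqrt{2\log(1/a)}=:C\sigma$, then shifting in the upper tail --- is correct.
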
 

We now apply Lemma~\ref{lem-gaussian-concentration} with $a$ as in~\eqref{eqn-use-pz}, with $\sigma^2 = \log\ep^{-1} + O(1)$ (recall~\eqref{eqn-exp-sum-var}), with
\eqb
B = \left\{ (x_{k_1} , \dots , x_{k_2}) \in \BB R^{k_1+k_2+1} : \sum_{k=k_1}^{k_2} e^{\xi x_k} \geq a \ep^{-1-\xi^2/2} \right\} ,
\eqe
and with $\lambda = \frac{p}{\xi} \log\ep^{-1}$. This shows that with probability $1 - \ep^{p^2/(2\xi^2) + o_\ep(1)}$, there exists $(x_{k_1} , \dots , x_{k_2}) \in B$ such that $\max_{k \in [k_1,k_2]_{\BB Z}} |X_k - x_k| \leq \frac{p}{\xi} \log\ep^{-1}$. 
If this is the case, then
\eqb
\sum_{k=k_1}^{k_2} e^{\xi X_k} \geq \ep^p \sum_{k=k_1}^{k_2} e^{\xi x_k} \geq a \ep^{p -1-\xi^2/2  } .
\eqe
Since $X_k = h_{\ep\BB r}(z_k^\ep) - h_{\BB r}(0)$, this implies~\eqref{eqn-line-path-sum-lower}.  
\end{proof}

\subsection{$D_h$-geodesics cannot trace the boundaries of $D_h$-metric balls}
\label{sec-geo-bdy}

For $s >0$ and $z\in\BB C$, we write $\mcl B_s(z;D_h)$ for the $D_h$-metric ball of radius $s$ centered at $z$. 
The following proposition prevents a $D_h$-geodesic from spending a long time near the boundary of a $D_h$-metric ball.

\begin{prop} \label{prop-geo-bdy}
For each $ M  > 0$ and each $\BB r >0$, it holds with superpolynomially high probability as $\ep\rta 0$, at a rate which is uniform in the choice of $\BB r$, that the following is true.
For each $s > 0$ for which $\mcl B_s(0;D_h)\subset B_{\ep^{-M} \BB r}(0)$ and each $D_h$-geodesic $P$ from 0 to a point outside of $\mcl B_s(0;D_h)$,  
\eqb \label{eqn-geo-bdy}
\op{area}\left( B_{\ep \BB r}(P) \cap B_{\ep \BB r}\left(\bdy\mcl B_s(0;D_h) \right) \right) \leq \ep^{2 - 1/ M} \BB r^2,
\eqe
where $\op{area}$ denotes 2-dimensional Lebesgue measure.  
\end{prop}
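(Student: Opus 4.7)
The starting point is the identity $D_h(0, P(t)) = t$, which holds because $P$ is a $D_h$-geodesic from $0$ parametrized by $D_h$-length, juxtaposed with the fact that $D_h(0,\cdot) \equiv s$ on $\bdy \mcl B_s(0; D_h)$. Combined with the H\"older continuity of $D_h$ with respect to the Euclidean metric, this forces the set of ``bad'' times $J := \{t : P(t) \in B_{\ep\BB r}(\bdy \mcl B_s(0; D_h))\}$ to be confined to a short $D_h$-interval around $s$: if $w \in \bdy\mcl B_s(0;D_h)$ is within Euclidean distance $\ep\BB r$ of $P(t)$, then $|t - s| = |D_h(0,P(t)) - D_h(0,w)| \leq D_h(P(t), w)$, which the upper H\"older bound controls. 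The plan is then to cover $P(J)$ by finitely many pieces of small Euclidean diameter and bound the area of its $\ep\BB r$-neighborhood by counting them.

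To implement this, I set $\wt{\BB r} := \ep^{-M}\BB r$ and $\wt\ep := \ep^{1+M}$, so that $\wt\ep \wt{\BB r} = \ep\BB r$ and $\mcl B_s(0; D_h) \subset B_{\wt{\BB r}}(0)$. Fix H\"older exponents $\chi \in (0, \xi(Q-2))$ and $\chi' > \xi(Q+2)$ close to their extremes. Proposition~\ref{prop-holder-uniform} at scale $\wt{\BB r}$ with $K = \ol{B_1(0)}$ gives, with polynomially high probability as $\ep \to 0$ uniformly in $\BB r$, the two-sided bounds $(|u-v|/\wt{\BB r})^{\chi'} \mcl N(\wt{\BB r}) \leq D_h(u,v) \leq (|u-v|/\wt{\BB r})^\chi \mcl N(\wt{\BB r})$ for $u,v \in \ol{B_{\wt{\BB r}}(0)}$ with $|u-v| \leq \ep\BB r$, where I abbreviate $\mcl N(\wt{\BB r}) := \frk c_{\wt{\BB r}} e^{\xi h_{\wt{\BB r}}(0)}$. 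The upper bound immediately forces $J \subset [s-\delta, s+\delta]$ for $\delta := \ep^{\chi(1+M)} \mcl N(\wt{\BB r})$. The standard contrapositive using the lower bound---if $u,v$ on a $D_h$-geodesic satisfy $D_h(u,v) \leq L := \ep^{\chi'(1+M)} \mcl N(\wt{\BB r})$, then $|u-v| \leq \ep\BB r$, since otherwise the sub-geodesic exits $B_{\ep\BB r}(u)$ at a point violating the lower H\"older bound---shows that if I partition $[s-\delta,s+\delta]$ into $N := \lceil 2\delta/L\rceil$ sub-intervals $I_1,\dots,I_N$ of $D_h$-length at most $L$, then each image $P(I_j)$ has Euclidean diameter $\leq \ep\BB r$. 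Hence
\[
\op{area}\bigl(B_{\ep\BB r}(P(J))\bigr) \;\lesssim\; N \cdot (\ep\BB r)^2 \;\lesssim\; \ep^{\,2 - (\chi'-\chi)(1+M)}\,\BB r^2.
\]

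The main difficulty is that $\chi' - \chi$ can only be taken as small as $\approx 4\xi$, so the exponent produced by this naive covering matches $2 - 1/M$ only when $M$ is small. To reach $2 - 1/M$ for arbitrary $M > 0$, a refinement is needed; the natural route is to invoke Proposition~\ref{prop-line-path}: one covers $\bdy\mcl B_s(0;D_h)$ at an intermediate Euclidean scale $\rho$ (to be optimized) by short circular arcs, and applies that proposition to the $\ep\BB r$-tube around each arc to force the Euclidean extent of $P$ inside the tube to be much smaller than the crude covering estimate gives. Summing over arcs and optimizing $\rho$ should produce the stated exponent. Superpolynomially high probability in $\ep$ would then follow by choosing the parameter $p$ in Proposition~\ref{prop-line-path} large enough that the per-arc failure probability $\ep^{p^2/(2\xi^2)}$ survives a union bound over the $O(\ep^{-O(M)})$ arcs and over a polynomial number of Euclidean scales.
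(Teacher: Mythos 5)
Your opening observation is sound and is indeed the right starting point: since $D_h(0,P(t))=t$ and $D_h(0,\cdot)\equiv s$ on $\bdy\mcl B_s(0;D_h)$, the geodesic can only be near the ball boundary during a short window of $D_h$-time. Your covering of that window by $N\asymp \delta/L$ sub-geodesics of Euclidean diameter $\leq \ep\BB r$ is also correct as far as it goes. But, as you yourself note, it yields the exponent $2-(\chi'-\chi)(1+M)$ with $\chi'-\chi\geq 4\xi$, which is weaker than $2-1/M$ except for very small $M$ (and degrades further if you push $\chi,\chi'$ toward extremes to improve the probability, which in any case Proposition~\ref{prop-holder-uniform} only supplies at a polynomial rate, not the superpolynomial rate required). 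So the argument as written does not prove the proposition, and everything hinges on the proposed ``refinement.''

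That refinement does not work: Proposition~\ref{prop-line-path} applies only to a deterministic line segment or circular arc $L$, whereas $\bdy\mcl B_s(0;D_h)$ is a random fractal set. There is no way to cover the $\ep\BB r$-neighborhood of $\bdy\mcl B_s(0;D_h)$ by thin tubes around a controlled number of circular arcs, and no version of that proposition for metric ball boundaries is available (indeed, controlling geodesics near metric ball boundaries is exactly what Proposition~\ref{prop-geo-bdy} is for). The missing idea is a multiplicative, rather than additive, use of the identity $D_h(0,P(t))=t$. The paper covers $B_{\ep^{-\wt M}\BB r}(0)$ by $C$-good balls of radii in $[2\ep\BB r,\ep^{1-\zeta}\BB r]$ (Lemma~\ref{lem-geo-bdy-annuli}) and tracks the successive excursions of $P$ into $B_{\ep\BB r}(\bdy\mcl B_s(0;D_h))$ at scale $\ep^{1-\zeta}\BB r$. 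Each excursion starts in a good ball $B_k$ that intersects $\bdy\mcl B_s(0;D_h)$; since the $D_h$-diameter of $\bdy B_k$ is at most $C D_h(\bdy B_k,\bdy(2B_k))$ and $P$ is a geodesic from $0$, one gets $\tau_k-s\leq C(\sigma_k-\tau_k)$, where $[\tau_k,\sigma_k]$ is the time to cross $(2B_k)\setminus B_k$. Because these intervals are disjoint, the crossing times grow geometrically in $k$, and the two-point lower bound of Lemma~\ref{lem-geo-bdy-ratio} then caps the number of excursions at $O(\log\ep^{-1})$. This gives area at most $\ep^{2-2\zeta+o_\ep(1)}\BB r^2$ with $\zeta<1/(2M)$ arbitrary, and crucially the exponent does not deteriorate as the probability is strengthened, which is what your interval-counting scheme cannot achieve.
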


\begin{figure}[ht!]
\begin{center}
\includegraphics[scale=1]{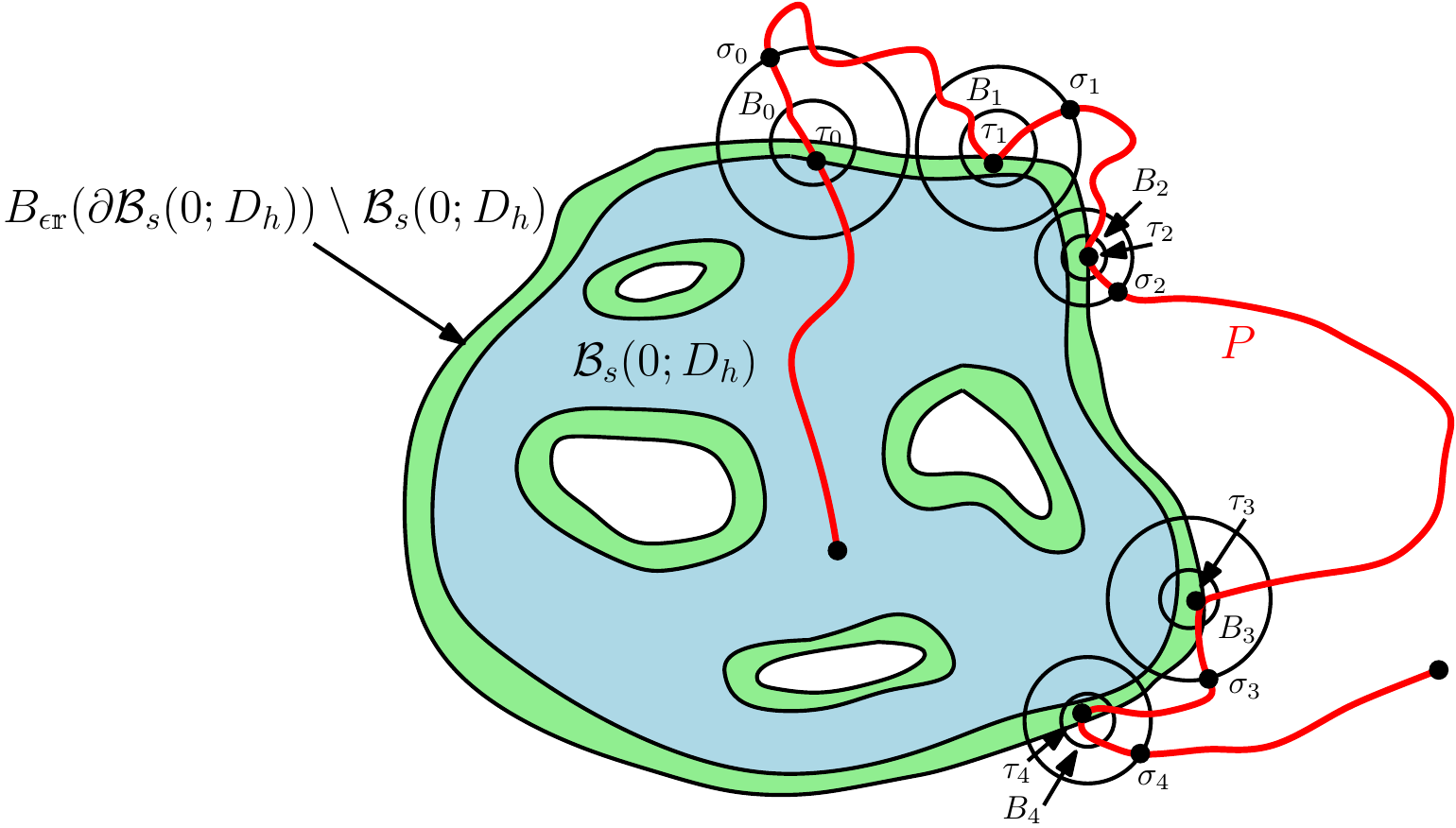} 
\caption{\label{fig-geo-bdy} Illustration of the proof of Proposition~\ref{prop-geo-bdy}. 
By considering successive times at which $P$ enters $B_{\ep \BB r}(\mcl B_s(0;D_h))$, we can find $K\in\BB N$ and a collection of $K$ $C$-good Euclidean balls $B_0,\dots,B_K$ with radii in $[2\ep\BB r , \ep^{1-\zeta}\BB r]$ with the following properties:
(a) each $B_k$ intersects $\bdy \mcl B_s(0;D_h)$; (b) the $D_h$-geodesic $P$ crosses the annuli $(2B_k)\setminus B_k$ for $k\in [0,K-1]_{\BB Z}$ in numerical order; and (c) the balls of radii $4\ep^{1-\zeta}\BB r$ with the same centers as the $B_k$'s cover $P\cap B_{\ep\BB r}(\mcl B_s(0;D_h))$.
This last property implies that $\op{area}\left( B_{\ep \BB r}(P) \cap B_{\ep \BB r}(\bdy\mcl B_s(0;D_h) \right) \leq \op{const} \times \ep^{2-2\zeta  } \BB r^2 K$, so we are left to bound $K$. 
To this end, we show using the definition~\eqref{eqn-C-good-def} of a $C$-good ball and the fact that $P$ is a $D_h$-geodesic that $D_h(\bdy B_k , \bdy (2B_k))$ increases exponentially in $k$. Due to Lemma~\ref{lem-geo-bdy-ratio}, this implies that $K \leq \ep^{-1/(2M)}$. 
}
\end{center}
\end{figure}

For $C>1$, $z\in\BB C$, and $r>0$, we say that the Euclidean ball $B_r(z)$ is \emph{$C$-good} if 
\eqb \label{eqn-C-good-def}
 \sup_{u,v\in \bdy B_r(z)} D_h\left(u,v; \BB A_{r/2,2r}(z) \right) \leq  C D_h\left( \bdy B_r(z) , \bdy B_{2r}(z)\right)  .
\eqe
To prove Proposition~\ref{prop-geo-bdy}, we will consider $C$-good balls which intersect $\bdy \mcl B_s(0;D_h)$ and which are hit by a given $D_h$-geodesic started from 0. See Figure~\ref{fig-geo-bdy} for an illustration and outline of the proof. 

\begin{lem} \label{lem-geo-bdy-annuli}
For each $\zeta \in (0,1)$ and each $M >0$, there exists $C = C(\zeta, M) > 1$ such that for each $\BB r > 0$, it holds with probability at least $1-O_\ep(\ep^M)$, at a rate which is uniform in $\BB r$, that the Euclidean ball $B_{\ep^{-M} \BB r}(0)$ can be covered by $C$-good balls with radii in $ [2\ep \BB r , \ep^{1-\zeta} \BB r]$. 
\end{lem}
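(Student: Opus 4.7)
The overall strategy mirrors that of Lemma~\ref{lem-good-annulus-all}: for each candidate center $z$, I will apply the near-independence lemma \ref{lem-annulus-iterate} across logarithmically many scales to force at least one $C$-good ball among many trials, and then union bound over a polynomial grid of centers. Two preliminary facts set this up. First, the event that $B_r(z)$ is $C$-good is measurable with respect to $(h - h_{3r}(z))|_{\BB A_{r/2,2r}(z)}$: by Axiom~\ref{item-metric-local}, the internal distance $D_h(u,v;\BB A_{r/2,2r}(z))$ is determined by $h|_{\BB A_{r/2,2r}(z)}$, and the crossing distance $D_h(\bdy B_r(z),\bdy B_{2r}(z))$ is realized by paths in $\ol{\BB A_{r,2r}(z)}$ (by the standard first/last crossing argument) and hence is also determined by $h$ restricted to $\BB A_{r/2,2r}(z)$. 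Since the $C$-good condition is a ratio of two such distances and Axiom~\ref{item-metric-f} shows that adding a constant to $h$ multiplies each distance by the same factor, the event is invariant under an additive shift of $h$. Second, using Axioms~\ref{item-metric-coord} and~\ref{item-metric-translate}, for any $q \in (0,1)$ I can find $C = C(q) > 1$ so that for every $z \in \BB C$ and $r > 0$,
\begin{equation*}
\BB P\left[\sup_{u,v\in\bdy B_r(z)} D_h(u,v;\BB A_{r/2,2r}(z)) \leq C^{1/2} \frk c_r e^{\xi h_r(z)}\right] \geq \tfrac{1+q}{2}
\end{equation*}
and similarly $\BB P[D_h(\bdy B_r(z),\bdy B_{2r}(z)) \geq C^{-1/2} \frk c_r e^{\xi h_r(z)}] \geq \tfrac{1+q}{2}$. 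Intersecting gives $\BB P[B_r(z) \text{ is } C\text{-good}] \geq q$.

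Next I would apply Lemma~\ref{lem-annulus-iterate}, with the parameters $s_1$, $s_2$ chosen small enough that the annuli $\BB A_{r_k/2,2r_k}(z)$ are disjoint (e.g. take $r_k = 8^{-k} \ep^{1-\zeta} \BB r$ for $k = 0, 1, \ldots, K-1$ with $K = \lfloor \zeta \log_8 \ep^{-1}\rfloor$, and work with the rescaled sequence $\tilde r_k = 4 r_k$ so that each annulus $\BB A_{r_k/2, 2r_k}(z) = \BB A_{\tilde r_k/8, \tilde r_k/2}(z)$ fits inside the template $\BB A_{s_1 \tilde r_k, s_2 \tilde r_k}(z)$). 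After translating so that $z = 0$, Lemma~\ref{lem-annulus-iterate} applied with $b \in (0,1)$ small enough that $bK \geq 1$ yields that, for $q$ close enough to $1$ (equivalently $C$ large enough), the number of $C$-good scales among $k=0,\dots,K-1$ is at least $1$ except on an event of probability $\leq c e^{-aK}$. Since $K \asymp \zeta \log(1/\ep)$, choosing $q$ sufficiently close to $1$ as a function of $\zeta$ and $M$ turns this into a bound of the form $1 - \ep^{3M+3}$.

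Finally I would take a union bound over $z$ in the lattice $B_{\ep^{-M}\BB r}(0) \cap (\ep\BB r \BB Z^2)$, which has cardinality $O_\ep(\ep^{-2M-2})$. Apart from an event of probability $O_\ep(\ep^M)$, every such lattice point $z$ admits some radius $r(z) \in [2\ep\BB r, \ep^{1-\zeta}\BB r] \cap \{8^{-k} \ep^{1-\zeta}\BB r\}_{k\in\BB N}$ for which $B_{r(z)}(z)$ is $C$-good. Because $r(z) \geq 2\ep\BB r$, each such ball contains $B_{2\ep\BB r}(z)$, so the collection $\{B_{r(z)}(z) : z \in B_{\ep^{-M}\BB r}(0) \cap (\ep\BB r\BB Z^2)\}$ covers $B_{\ep^{-M}\BB r}(0)$. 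The hard step is obtaining the one-scale bound $\BB P[B_r(z) \text{ is } C\text{-good}] \geq q$ uniformly in $z$ and $r$; everything else is a direct book-keeping exercise using the independence-across-annuli lemma and a union bound, exactly as in Lemma~\ref{lem-good-annulus-all}.
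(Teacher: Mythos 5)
Your argument is correct and is essentially the paper's approach: the paper disposes of this lemma in one line by citing Lemma~\ref{lem-good-annulus-all} (applied with $\ep^{1-\zeta}$ in place of $\ep$ and $\nu\in(0,\frac{1}{1-\zeta}-1)$), and what you have written is precisely an unpacking of that lemma's proof --- one-scale positive probability for the good-ball event via Axioms~\ref{item-metric-translate} and~\ref{item-metric-coord}, locality and scale-additivity of the event, Lemma~\ref{lem-annulus-iterate} over logarithmically many disjoint dyadic-type annuli, and a union bound over an $O_\ep(\ep^{-2M-2})$ grid of centers. You could shorten this to a direct citation, noting only that $E_r(w;C)$ from~\eqref{eqn-annulus-event} implies $B_r(w)$ is $C^2$-good.
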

\begin{proof}
This is an immediate consequence of Lemma~\ref{lem-good-annulus-all} applied with $\ep^{1-\zeta}$ in place of $\ep$ and any choice of $\nu  \in (0, \frac{1}{1-\zeta}  -1)$. 
\end{proof}


We will also need the following easy consequence of the distance bounds from Section~\ref{sec-a-priori-estimates}. 

\begin{lem} \label{lem-geo-bdy-ratio}
For each $M>0$, there exists $A = A(M) > 0$ such that for each $\BB r > 0$, the following holds with probability $1-O_\ep(\ep^M)$ as $\ep\rta 0$, at a rate which is uniform in $\BB r$. For each $z,w\in B_{\ep^{- M} \BB r}(0)$ with $|z-w| \geq \ep \BB r$, 
\eqb \label{eqn-geo-bdy-ratio}
D_h(z,w) \geq \ep^A \sup_{u,v\in B_{\ep^{-  M} \BB r}(0) } D_h(u,v) .
\eqe 
\end{lem}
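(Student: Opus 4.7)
The plan is to sandwich both sides of~\eqref{eqn-geo-bdy-ratio} between powers of $\ep$ multiplying the common reference quantity $\frk c_{\BB r} e^{\xi h_{\BB r}(0)}$, with each inequality holding with probability $1-O_\ep(\ep^M)$. For the upper bound on $\sup_{u,v \in B_{\ep^{-M}\BB r}(0)} D_h(u,v)$, I would cover $B_{\ep^{-M}\BB r}(0)$ by $O(1)$ translated copies of a square of side $\ep^{-M}\BB r$ and apply Proposition~\ref{prop-internal-moment} at that scale via Axiom~\ref{item-metric-translate}. Chebyshev on a fixed positive moment then yields a bound of the form $\sup_{u,v} D_h(u,v) \leq \ep^{-N_1}\frk c_{\ep^{-M}\BB r} e^{\xi h_{\ep^{-M}\BB r}(0)}$ with probability $1-O_\ep(\ep^M)$ for some $N_1=N_1(M)$. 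Theorem~\ref{thm-metric-scaling} replaces $\frk c_{\ep^{-M}\BB r}/\frk c_{\BB r}$ by at most $\ep^{-M\xi Q+o_\ep(1)}$, while the Gaussian tail of the Brownian motion $t\mapsto h_{\BB r e^t}(0)-h_{\BB r}(0)$ replaces $h_{\ep^{-M}\BB r}(0)$ by $h_{\BB r}(0)$ at a cost of a further power of $\ep^{-1}$, producing some exponent $A_1=A_1(M)>0$ with $\sup_{u,v\in B_{\ep^{-M}\BB r}(0)} D_h(u,v) \leq \ep^{-A_1}\frk c_{\BB r} e^{\xi h_{\BB r}(0)}$.

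For the matching lower bound on $D_h(z,w)$, I would discretize $B_{\ep^{-M}\BB r}(0)$ by a grid $G := B_{\ep^{-M}\BB r}(0)\cap ((\ep\BB r/10)\BB Z^2)$, of cardinality $O_\ep(\ep^{-2-2M})$. For each $z_k\in G$, Proposition~\ref{prop-two-set-dist} applied at Euclidean scale $\ep\BB r$ (with $U=\BB C$ and $K_1,K_2$ the circles of radii $1/4$ and $1/2$ centered at $z_k$) gives $D_h(\bdy B_{\ep\BB r/4}(z_k),\bdy B_{\ep\BB r/2}(z_k)) \geq A^{-1}\frk c_{\ep\BB r}e^{\xi h_{\ep\BB r}(z_k)}$ with superpolynomially high probability as $A\to\infty$, uniformly in $\BB r$ and $z_k$. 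Taking $A$ to be a fixed power of $\ep^{-1}$ and union bounding over $G$ make this simultaneous over all $z_k\in G$ with probability $1-O_\ep(\ep^M)$. For any pair $z,w\in B_{\ep^{-M}\BB r}(0)$ with $|z-w|\geq \ep\BB r$, picking the nearest grid point $z_k$ to $z$ satisfies $z\in B_{\ep\BB r/4}(z_k)$ while $w\notin B_{\ep\BB r/2}(z_k)$, so every path from $z$ to $w$ must cross the corresponding annulus and inherits the lower bound. Theorem~\ref{thm-metric-scaling} comparing $\frk c_{\ep\BB r}$ with $\frk c_{\BB r}$, together with a Gaussian union bound for $\max_{z_k\in G}|h_{\ep\BB r}(z_k)-h_{\BB r}(0)|$ (whose variances are at most $(M+1)\log\ep^{-1}+O(1)$), then yields an exponent $A_2=A_2(M)>0$ with $\inf\{D_h(z,w):z,w\in B_{\ep^{-M}\BB r}(0),\,|z-w|\geq\ep\BB r\} \geq \ep^{A_2}\frk c_{\BB r}e^{\xi h_{\BB r}(0)}$.

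Intersecting the two high-probability events and setting $A:=A_1+A_2$ will then prove~\eqref{eqn-geo-bdy-ratio}, with uniformity in $\BB r$ inherited from Proposition~\ref{prop-internal-moment}, Proposition~\ref{prop-two-set-dist}, Theorem~\ref{thm-metric-scaling}, and the scale invariance of the circle-average law modulo additive constant. The main obstacle is budgeting the tail parameters to absorb the $\ep^{-2-2M}$-sized union bound over $G$: the superpolynomial-in-$A$ decay in Proposition~\ref{prop-two-set-dist} and the Gaussian tail $\ep^{C^2/(2\sigma^2)}$ controlling the maximum circle-average deviation each need to dominate this polynomial factor, which can be arranged by choosing the corresponding tail parameters sufficiently large in terms of $M$.
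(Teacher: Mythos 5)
Your proposal is correct and follows essentially the same route as the paper: the upper bound via the diameter moment bound (Proposition~\ref{prop-diam-moment}/\ref{prop-internal-moment}) at scale $\ep^{-M}\BB r$ combined with Theorem~\ref{thm-metric-scaling} and a Gaussian tail estimate, and the lower bound via Proposition~\ref{prop-two-set-dist} applied to annuli centered at an $(\ep\BB r/10)$-grid together with a crossing argument and a union bound over circle averages. The parameter budgeting you flag at the end is exactly how the paper absorbs the polynomially large union bound.
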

\begin{proof}
We will prove a lower bound for the left side of~\eqref{eqn-geo-bdy-ratio} (see~\eqref{eqn-geo-bdy-lower}) and an upper bound for the right side of~\eqref{eqn-geo-bdy-ratio} (see~\eqref{eqn-geo-bdy-upper}), then compare them.

By Proposition~\ref{prop-two-set-dist} and a union bound, it holds with superpolynomially high probability as $\ep\rta 0$ that
\eqb \label{eqn-geo-bdy-across}
D_h(\bdy B_{\ep\BB r/4}(x) ,\bdy B_{\ep \BB r/2}(x)) \geq \ep \frk c_{\ep \BB r} e^{\xi h_{\ep \BB r}(x)} ,\quad\forall x \in B_{\ep^{-M}\BB r}(0)\cap \left( \frac{\ep \BB r}{8} \BB Z^2 \right) .
\eqe 
The circle averages $h_{\ep\BB r}(x) - h_{\BB r}(0)$ for $x\in B_{\ep^{-M}\BB r}(0)$ are Gaussian with variance at most $(M+1)\log\ep^{-1}$. By the Gaussian tail bound and a union bound, if we choose $A_0 =A_0(M)$ to be sufficiently large, then it holds with probability $1-O_\ep(\ep^M)$ that
\eqb \label{eqn-geo-bdy-circle-avg}
| h_{\ep \BB r}(x) - h_{\BB r}(0)| \leq A_0\log\ep^{-1} \quad\forall x \in B_{\ep^{-M}\BB r}(0)\cap \left( \frac{\ep \BB r}{8} \BB Z^2 \right)  .
\eqe
By Theorem~\ref{thm-metric-scaling},  
\eqb \label{eqn-geo-bdy-const}
\frk c_{\ep\BB r} = \ep^{\xi Q + o_\ep(1)} \frk c_{\BB r} .
\eqe
If $z,w\in B_{\ep^{-M}\BB r}(0)$ with $|z-w| \geq \ep \BB r$, then any path from $z$ to $w$ must cross between the inner and outer boundaries of an annulus of the form $B_{\ep\BB r/2}(x)  \setminus B_{\ep \BB r/4}(x) $ for some $x \in B_{\ep^{-M}\BB r}(0)\cap \left( \frac{\ep \BB r}{8} \BB Z^2 \right)$.
Combining this last observation with~\eqref{eqn-geo-bdy-across} shows that with superpolynomially high probability as $\ep\rta 0$, $D_h(z,w)$ is at least the right side of~\eqref{eqn-geo-bdy-across} for each such $z,w$. 
We then apply~\eqref{eqn-geo-bdy-circle-avg} and~\eqref{eqn-geo-bdy-const} to lower-bound the right side of~\eqref{eqn-geo-bdy-across}. This shows that with probability $1-O_\ep(\ep^M)$,  
\eqb \label{eqn-geo-bdy-lower}
D_h(z,w) \geq \ep^{\xi A_0  + \xi Q + 1 + o_\ep(1)} \frk c_{\BB r} e^{\xi h_{\BB r}(0)}, \quad\forall z,w\in B_{\ep^{-M}\BB r}(0) \:\text{with}\: |z-w|\geq \ep\BB r .
\eqe

By Proposition~\ref{prop-diam-moment}, 
\eqb \label{eqn-geo-bdy-moment}
\BB E\left[\frk c_{\ep^{-M}\BB r}^{-1} e^{-\xi h_{\ep^{-M}\BB r}(0)} \sup_{u,v\in B_{\ep^{-  M} \BB r}(0) } D_h(u,v) \right] \preceq 1 ,
\eqe
with the implicit constant uniform over all $\BB r > 0$ and $\ep \in (0,1)$. 
By Theorem~\ref{thm-metric-scaling}, $\frk c_{\ep^{-M}\BB r} = \ep^{-\xi Q M + o_\ep(1)} \frk c_{\BB r}$.
By the Gaussian tail bound, we can find $A_1 = A_1(M) > 0$ such that with probability $1-O_\ep(\ep^M)$, we have $|h_{\ep^{-M}\BB r}(0) - h_{\BB r}(0)| \leq A_0 \log\ep^{-1}$.
Combining these estimates with~\eqref{eqn-geo-bdy-moment} and Markov's inequality shows that with probability $1-O_\ep(\ep^M)$,
\eqb \label{eqn-geo-bdy-upper}
 \sup_{u,v\in B_{\ep^{-  M} \BB r}(0) } D_h(u,v) \leq \ep^{-\xi A_1 -\xi Q M - M  + o_\ep(1)} \frk c_{\BB r} e^{\xi h_{\BB r}(0)} .
\eqe
Combining~\eqref{eqn-geo-bdy-lower} and~\eqref{eqn-geo-bdy-upper} gives~\eqref{eqn-geo-bdy-ratio} for any choice of $A> \xi A_1 +\xi Q M + M +\xi A_0  + \xi Q + 1$. 
\end{proof}

\begin{proof}[Proof of Proposition~\ref{prop-geo-bdy}] 
\noindent\textit{Step 1: defining a regularity event.}
For $\wt M >0$, $\zeta\in (0,1)$, $C > 1$, and $A > 1$, let $G_{\BB r}^\ep = G_{\BB r}^\ep(\wt M,\zeta,C,A)$ be the event that the following is true. 
\begin{enumerate}
\item The ball $B_{\ep^{-\wt M} \BB r}(0)$ can be covered by $C$-good Euclidean balls with radii in $[2\ep \BB r,\ep^{1-\zeta} \BB r]$. 
\item For each $z,w\in B_{\ep^{-\wt M} \BB r}(0)$ with $|z-w| \geq \ep \BB r$, 
\eqb \label{eqn-holder-cont-bdy}
D_h(z,w) \geq \ep^A \sup_{u,v\in B_{\ep^{-\wt M} \BB r}(0) } D_h(u,v) .
\eqe 
\end{enumerate}
By Lemmas~\ref{lem-geo-bdy-annuli} and~\ref{lem-geo-bdy-ratio}, for any $\wt M >0$ and $\zeta\in(0,1)$ we can find $C,A > 1$ for which 
\eqb
\BB P[G_{\BB r}^\ep] \geq 1 - O_\ep(\ep^{\wt M} ),\quad \text{uniformly over all $\BB r > 0$} .
\eqe
Henceforth assume that $G_{\BB r}^\ep$ occurs for such a choice of $C,A$ and that $\wt M > M$.  
\medskip

\noindent\textit{Step 2: reducing to a bound for the number of excursions of a geodesic.}
Let $s > 0$ such that $\mcl B_s(0 ; D_h) \subset B_{\ep^{-M} \BB r}(0)$ and let $P$ be a $D_h$-geodesic from 0 to a point outside of $\mcl B_s(0;D_h)$. 
Let $\tau_0 = s$ and inductively for $k\in \BB N$ let $\tau_k$ be the first time $t$ after the exit time of $P$ from $B_{4\ep^{1-\zeta} \BB r}(P(\tau_{k-1}))$ for which $P(t) \in B_{\ep \BB r}(\bdy\mcl B_s )$, or $\tau_k = \infty$ if no such time exists. 
Let $K$ be the smallest $k\in \BB N$ for which $\tau_k = \infty$. 

We claim that there exists a constant $c > 0 $ depending on $C,A$ such that $K\le c\log\ep^{-1}$ on $G_{\BB r}^\ep$.
If this is the case, then $P \cap B_{\ep \BB r}(\bdy\mcl B_s )$ can be covered by at most $c \log\ep^{-1} $ Euclidean balls of radius $4\ep^{1-\zeta} \BB r$. 
This means that $\op{area}\left( B_{\ep \BB r}(P) \cap B_{\ep \BB r}(\bdy\mcl B_s(0;D_h) \right) \leq 4\pi \ep^{2-2\zeta + o_\ep(1)} \BB r^2$. Choosing $\zeta < 1/(2M)$ and sending $\wt M \rta \infty$ then concludes the proof.
Hence we only need to prove a logarithmic upper bound for $K$ assuming that $G_{\BB r}^\ep$ occurs.  
\medskip

\noindent\textit{Step 3: bounding excursions using $C$-good balls.}
For $k\in [0,K-1]_{\BB Z}$, we can find a $C$-good Euclidean ball $B_k$ with radius in $[\ep \BB r, \ep^{1-\zeta}\BB r]$ which contains $P(\tau_k)$. 
Write $2B_k$ for the Euclidean ball with the same center as $B_k$ and twice the radius of $B_k$. 
Let $\sigma_k$ be the first time after $\tau_k$ at which $P$ exits $2B_k$. 
The time $\sigma_k$ is smaller than the exit time of $P$ from $B_{4\ep^{1-\zeta} \BB r}(P(\tau_{k }))$. 
Consequently, the definition of the $\tau_k$'s shows that $\sigma_k \in [\tau_k , \tau_{k+1}]$ for each $k\in [0,K]_{\BB Z}$. 
  
Since $P$ is a $D_h$-geodesic and $P$ crosses the annulus $(2B_k)\setminus B_k$ between times $\tau_k$ and $\sigma_k$, 
\eqb \label{eqn-C-good-cross}
\sigma_k - \tau_k \geq D_h(\bdy B_k , \bdy (2B_k) ) . 
\eqe
We now argue that 
\eqb \label{eqn-tau-upper}
\tau_k \leq s  + C D_h(\bdy B_k ,\bdy(2B_k))  .
\eqe
Indeed, since $B_k$ intersects $B_{\ep \BB r}(\bdy\mcl B_s(0;D_h))$ and has radius at least $2\ep \BB r$, it follows that $B_k$ intersects $\bdy\mcl B_s(0;D_h)$.  
Let $z\in \bdy\mcl B_s(0;D_h)$ and let $t\in [\tau_k , \sigma_k]$ such that $P(t) \in \bdy B_k$ (such a $t$ exists by the definition of $\sigma_k$). 
By the definition of a $C$-good ball, the $D_h$-diameter of $\bdy B_k$ is at most $C D_h(\bdy B_k , \bdy (2B_k) )$.
Hence
\alb
\tau_k\leq t\leq D_h(0,z)+D_h(z,P(t)) \leq s+ C D_h(\bdy B_k, \bdy (2B_k)) ,
\ale
which is~\eqref{eqn-tau-upper}. 

By~\eqref{eqn-C-good-cross} and~\eqref{eqn-tau-upper} and the fact that the intervals $[\tau_k , \sigma_k] \subset [s,\infty)$ are disjoint, we get
\eqbn
\sum_{j=0}^{k-1} (\sigma_j - \tau_j ) \leq \tau_k - s \leq C (\sigma_k - \tau_k ) .
\eqen
This holds for each $k\in [0,K-1]_{\BB Z}$, from which we infer that
\eqb \label{eqn-C-good-dist}
\sigma_{K-1}  - \tau_{K-1} \geq C^{-1} (1+C^{-1})^K (\sigma_0 -\tau_0 ) .
\eqe 

By the definition of $\sigma_0$, we have $|P(\sigma_0) - P(\tau_0)| = \ep \BB r$. 
Moreover, since $P(\tau_{K-1}) \in B_{\ep\BB r}(B_s(0;D_h))$, $B_s(0;D_h) \subset B_{\ep^{-M} \BB r}(0)$, and $\wt M > M$, we have $P(\sigma_{K-1}) , P(\tau_{K-1}) \in B_{\ep^{-\wt M} \BB r}(0)$. 
By~\eqref{eqn-holder-cont-bdy} in the definition of $G_{\BB r}^\ep$, it follows that
\eqb
\sigma_0 -\tau_0  \geq \ep^A  (\sigma_{K-1}  - \tau_{K-1}) .
\eqe
Combining this with~\eqref{eqn-C-good-dist} shows that $C^{-1} (1+C^{-1})^K  \leq \ep^{-A}$, which means that $K \leq \frac{A}{\log(1+C^{-1})} \log\ep^{-1} + O_\ep(1)$, as required.  
\end{proof}

\bibliography{cibiblong,cibib}
\bibliographystyle{hmralphaabbrv}

\subsubsection*{Author's addresses}

\medskip

\noindent J.\ Dub\'edat, Columbia University, New York, USA, 
\textit{Email:} dubedat@math.columbia.edu

\medskip

\noindent H.\ Falconet, Columbia University, New York, USA, 
\textit{Email:} hugo.falconet@columbia.edu

\medskip

\noindent E.\ Gwynne, University of Cambridge, Cambridge, UK,
\textit{Email:} eg558@cam.ac.uk

\medskip

\noindent J.\ Pfeffer, Massachusetts Institute of Technology, Cambridge, USA,
\textit{Email:} pfeffer@mit.edu

\medskip
 
\noindent X.\ Sun, Columbia University, New York, USA, 
\textit{Email:} xinsun@math.columbia.edu

\end{document}